\numberwithin{equation}{section}
\theoremstyle{plain}
\newtheorem{theorem}[subsection]{Theorem}
\newtheorem{proposition}[subsection]{Proposition}
\newtheorem{lemma}[subsection]{Lemma}
\newtheorem{corollary}[subsection]{Corollary}
\theoremstyle{definition}
\newtheorem{definition}[subsection]{Definition}
\theoremstyle{remark}
\newtheorem{remark}[subsection]{Remark}
\newtheorem{example}{Example}
\renewcommand{\leq}{\leqslant}
\renewcommand{\geq}{\geqslant}
\newsavebox{\proofbox}
\savebox{\proofbox}{\begin{picture}(7,7)  \put(0,0){\framebox(7,7){}}\end{picture}}
\newcommand{\md}[1]{\ensuremath{(\operatorname{mod}\, #1)}}
\newcommand\Z{\mathbb{Z}}
\newcommand\R{\mathbb{R}}
\newcommand\T{\mathbb{T}}
\newcommand\N{\mathbb{N}}
\newcommand\A{\mathbf{A}}
\newcommand\G{\mathbf{G}}
\newcommand\GL{\operatorname{GL}}
\newcommand\Spec{\operatorname{Spec}}
\newcommand\F{\mathbb{F}}
\newcommand\Q{\mathbb{Q}}
\newcommand\n{\mathfrak{n}}
\newcommand\eps{\varepsilon}
\newcommand\id{{\operatorname{id}}}
\newcommand\st{{\operatorname{st}}}
\newcommand\diam{{\operatorname{diam}}}
\newcommand{\ultra}{{{}^*}}
\begin{document}

\title{The structure of approximate groups}

\author{Emmanuel Breuillard}
\address{Laboratoire de Math\'ematiques\\
B\^atiment 425, Universit\'e Paris Sud 11\\
91405 Orsay\\
FRANCE}
\email{emmanuel.breuillard@math.u-psud.fr}

\author{Ben Green}
\address{Centre for Mathematical Sciences\\
Wilberforce Road\\
Cambridge CB3 0WA\\
England }
\email{b.j.green@dpmms.cam.ac.uk}

\author{Terence Tao}
\address{Department of Mathematics, UCLA\\
405 Hilgard Ave\\
Los Angeles CA 90095\\
USA}
\email{tao@math.ucla.edu}

\subjclass{11B30, 20N99}

\begin{abstract}  Let $K \geq 1$ be a parameter. A $K$-approximate group is a finite set $A$ in a (local) group which contains the identity, is symmetric, and such that $A \cdot A$ is covered by $K$ left translates of $A$.

The main result of this paper is a qualitative description of approximate groups as being essentially finite-by-nilpotent, answering a conjecture of H. Helfgott and E. Lindenstrauss. This may be viewed as a generalisation of the Freiman-Ruzsa theorem on sets of small doubling in the integers to arbitrary groups.

We begin by establishing a correspondence principle between approximate groups and locally compact (local) groups that allows us to recover many results recently established in a fundamental paper of Hrushovski. In particular we establish that approximate groups can be approximately modeled by Lie groups.

To prove our main theorem we apply some additional arguments essentially due to Gleason.  These arose in the solution of Hilbert's fifth problem in the 1950s.

Applications of our main theorem include a finitary refinement of Gromov's theorem, as well as a generalized Margulis lemma conjectured by Gromov and a result on the virtual nilpotence of the fundamental group of Ricci almost nonnegatively curved manifolds.
\end{abstract}

\maketitle

\renewcommand{\abstractname}{R\'esum\'e}

\begin{abstract} Soit $K \geq 1$ un param\`etre. On appelle groupe $K$-approximatif toute partie finie $A$ d'un groupe (ou d'un groupe local) qui est sym\'etrique, contient l'identit\'e et est telle que $A \cdot A$ peut \^etre recouvert par au plus $K$ translat\'es \`a gauche de $A$.

Le r\'esultat principal de cet article montre que tout groupe approximatif est, grossi\`erement, fini-par-nilpotent, ce qui r\'epond par l'affirmative \`a une conjecture de H.Helfgott et de E.Lindenstrauss. On peut interpr\'eter ce th\'eor\`eme comme une g\'en\'eralisation \`a un groupe quelconque du th\'eor\`eme de Freiman-Ruzsa sur la structure des parties finies \`a petit doublement du groupe additif des entiers relatifs.

Nous commen\c{c}ons par \'etablir un principe de correspondence entre les groupes approximatifs et les groupes (ou groupes locaux) localement compacts, puis nous en d\'eduisons de nombreuses cons\'equences issues d'un important article r\'ecent de Hrushovski. En particulier, nous montrons que tout groupe approximatif peut-\^etre repr\'esent\'e par un groupe de Lie.

Pour d\'emontrer notre th\'eor\`eme principal, nous appliquons des arguments, en substance dus \`a Gleason  pour la plupart, qui ont vus le jour dans le contexte de la solution du cinqui\`eme probl\`eme de Hilbert dans les ann\'ees 50.

En guise d'application du th\'eor\`eme principal, nous montrons une version fine du th\'eor\`eme de Gromov, ainsi qu'un lemme de Margulis g\'en\'eralis\'e conjectur\'e par Gromov, et un r\'esultat sur la presque nilpotence des groupes fondamentaux des vari\'et\'es \`a courbure de Ricci presque positive.
\end{abstract}

\maketitle

\setcounter{tocdepth}{1}	

\tableofcontents

\section{Introduction}

\textsc{Approximate groups.} A fair proportion of the subject of additive
combinatorics is concerned with approximate analogues of exact algebraic
properties, and the extent to which they resemble those algebraic
properties. In this paper we are concerned with sets that are approximately closed under multiplication, which we do not necessarily assume to be commutative, and more specifically with \emph{approximate groups}.  These are finite non-empty sets $A$ with group-like properties which we shall state precisely later. First we will motivate the definition of an approximate group with some discussion and examples.

Suppose first of all that $A$ is a finite subset of some ambient group $G = (G,\cdot)$. This is the setting considered in essentially all of the existing literature, and the one of importance in applications.  However, as we shall see later, our method of proof is in fact more naturally adapted to a more general setting, in which $A$ lies in a \emph{local} group rather than a global one.

It is easy to see that a finite non-empty subset $A$ of $G$ is a genuine
subgroup if, and only if, we have $xy^{-1} \in A$ whenever $x,y \in A$.
Perhaps the most natural way in which a set $A$ may be \emph{approximately}
a subgroup, then, is if the set $A \cdot A^{-1} := \{xy^{-1} : x,y \in A\}$ has
cardinality not much bigger than the cardinality of $A$: for example, we might ask that $|A\cdot A^{-1}| \leq K|A|$ for
some constant $K$.

Sets with this property or with the closely related property $|A^2| \leq K|A|$, where $A^2 := A \cdot A = \{ xy: x,y \in A \}$, are said to have \emph{small doubling}, and this is
indeed a commonly encountered condition in various fields of mathematics, in particular in additive combinatorics. It is a
perfectly workable notion of approximate group in the abelian setting and
the celebrated Freiman-Ruzsa theorem, Theorem \ref{frei-ruz} below, describes subsets of $\Z$
with this property. However in \cite{tao-noncommutative} it was noted that in noncommutative
settings a somewhat different, though closely related, notion of approximate
group is more natural: $A$ is an approximate group if it is symmetric in the sense that the identity $\id$ lies in $A$, if $a^{-1} \in A$ whenever $a \in A$, and if $A \cdot A$ is covered by $K$ left-translates of $A$.

As suggested above we consider in this paper a slightly more general (and perhaps more natural, in retrospect) ``local'' definition of approximate group in which there is no ambient global group $G$.   It will be convenient to introduce the following definition. This requires the concept of a \emph{local group}, which is discussed at some length in Appendix \ref{local-sec}.

\begin{definition}[Multiplicative set]\label{hundred-def}  A \emph{multiplicative set} is a finite non-empty set $A$ contained in a (symmetric) local group $G = (G,\cdot)$, such that the product set $(A \cup A^{-1})^{200}$ is well-defined, where $A^{-1} := \{ a^{-1}: a \in A \}$ is the inverse of $A$. Strictly speaking, one should refer to the pair $(A,G)$ as the multiplicative set rather than just $A$, but we will usually abuse notation and omit the ambient local group $G$.

In some (abelian) examples, we will use additive group notation $G = (G,+)$ rather than multiplicative notation $G = (G,\cdot)$.  In such cases, we will refer to multiplicative sets as \emph{additive sets} instead.
\end{definition}

Clearly, any finite non-empty subset of a (global) group $G$ is a multiplicative set. The reader should probably keep this model case in mind throughout a first reading of this paper. Indeed the additional generality afforded by the local setting is only needed at a single, albeit critical, place in the argument in Section \ref{endgame}.  One should informally think of a multiplicative set $A$ as a set that behaves ``as if'' it were in a global group, so long as one only works ``locally'' in the sense that one only considers products of up to $200$ elements of $A$ and their inverses.
The exponent $200$ in Definition \ref{hundred-def} is somewhat arbitrary, but for the purposes of studying approximate groups, the exact choice of this exponent is not important in practice, so long as it is at least $8$ (see Theorem \ref{scs} for a precise formalisation of this assertion). For the reader familiar with Freiman homomorphisms (cf. \cite[\S 5.3]{tv-book}), we remark that these are essentially the morphisms in the category of multiplicative sets.

\begin{definition}[Approximate groups]\label{approx-group-def}
Let $K \geq 1$. A \emph{$K$-approximate group} is a multiplicative set $A$ with the following properties:
\begin{enumerate}
\item the set $A$ is symmetric in the sense that $\id \in A$ and $a^{-1} \in A$ if $a \in A$;
\item there is a symmetric subset $X \subset A^3$ with $|X| \leq K$
such that $A \cdot A \subseteq X \cdot A$.
\end{enumerate}
We will sometimes refer to actual (global) groups as \emph{genuine} groups, in order to distinguish them from approximate groups.  We define a \emph{global $K$-approximate group} to be a $K$-approximate group $A$ that lies inside a global group $G$.  We refer to $K$ as the \emph{covering parameter} of the approximate group $A$.
\end{definition}

\begin{remark} We will also have occasion to deal with infinite $K$-approximate groups, which are defined exactly as ordinary $K$-approximate groups, except that they are no longer required to be finite sets. A convex body in a Euclidean space, or a small ball in a Lie group, are examples of infinite approximate groups. Later we will introduce the important notion of an \emph{ultra approximate group}, which is another example. However, by default, approximate groups in this paper will be understood to be finite unless otherwise stated.\end{remark}

The connection between sets with small doubling and the apparently stronger property of being an approximate group was worked out in \cite{tao-noncommutative}, building on work of Ruzsa \cite{ruzsa-finite-field}; see Remark \ref{dubex} below.

When we speak of an ``approximate group'' we shall generally imagine that $K$ is fixed (e.g. $K = 10$) and that $|A|$ is large. Let us give some examples.\vspace{11pt}

\begin{example}[Finite group]\label{fing} A $1$-approximate group is the same thing as a finite group.
\end{example}

\begin{example}[Arithmetic/geometric progression]\label{ap} If $N \in \N$ is a natural number, then the arithmetic progression $P(1;N) := \{-N,\dots,N\}$ (which one can view inside the (additive) global group $\Z$, or the local group $\{-200N,\ldots,200N\}$) is a $2$-approximate group. More generally, if $G = (G,\cdot)$ is any (global) group and $g \in G$ then the geometric progression $P(g,N) := \{g^{-N},\dots, g^N\}$ is a $2$-approximate group.
\end{example}

\begin{example}[Generalised arithmetic progression]\label{gap}  Let $G = (G,+)$ be an abelian group, let $u_1,\ldots,u_r \in G$ for some $r \geq 0$, and let $N_1,\dots,N_r > 0$ be real numbers.  We refer to the set
\begin{equation*}
P(u_1,\ldots,u_r; N_1,\ldots,N_r) := \{n_1 u_1 + \dots + n_r u_r : n_1,\ldots,n_r \in \Z; |n_1| \leq N_1, \dots , |n_r| \leq N_r\}
\end{equation*}
as a \emph{generalised arithmetic progression} of rank $r$.  One easily verifies that this is a $2^r$-approximate group.
\end{example}

\begin{example}[Homomorphic images]\label{hom}  Let $\phi: G \to H$ be a homomorphism between local or global groups.  If $A$ is a $K$-approximate subgroup of $G$, then $\phi(A)$ is a $K$-approximate subgroup of $H$.
This observation can be generalised to the case when $\phi$ is a \emph{Freiman homomorphism} (of order $3$) rather than a group homomorphism; see \cite[\S 5.3]{tv-book} for more discussion. Indeed, Freiman homomorphisms are very similar to homomorphisms of local groups, although for technical reasons we will rely on the latter concept rather than the former.

Conversely, if $B$ is a $K$-approximate subgroup of $H$, $\phi$ is surjective, and $\ker(\phi)$ is finite, then $\phi^{-1}(B)$ is a $K$-approximate subgroup of $G$.  In the latter case one can view the $K$-approximate group $\phi^{-1}(B)$ as a ``finite extension'' of the $K$-approximate group $B$ by the genuine group $\ker(\phi)$.
\end{example}

\begin{example}[Large subsets]  Let $A$ be a $K$-approximate group, and let $A'$ be a symmetric neighbourhood of the identity in $A$ such that $A$ is covered by $K'$ left-translates of $A'$.  Then $A'$ is a $KK'$-approximate group.  This hints that approximate groups are considerably more numerous than genuine groups, because the latter property is preserved under passage to ``large'' subsets, whereas the former is not.
\end{example}

\begin{example}[Heisenberg example]\label{heisen-ex} Let $G$ be the free nilpotent group of step $2$ generated by two generators $u_1,u_2$.  More concretely, one can take $G$ to be the \emph{Heisenberg group}
\begin{equation}\label{heisendef}
G := \left(
\begin{smallmatrix}
1 & \Z & \Z \\
0 & 1 & \Z \\
0 & 0 & 1
\end{smallmatrix}
\right)
\end{equation}
with generators
\begin{equation*}
u_1 := \left(
\begin{smallmatrix}
1 & 1 & 0 \\
0 & 1 & 0 \\
0 & 0 & 1
\end{smallmatrix}
\right) \quad \mbox{and} \quad u_2 := \left(
\begin{smallmatrix}
1 & 0 & 0 \\
0 & 1 & 1 \\
0 & 0 & 1
\end{smallmatrix}
\right).
\end{equation*}
Consider also the commutator
\begin{equation*}
[u_1,u_2] := u_1^{-1}u_2^{-1}u_1 u_2 = \left(
\begin{smallmatrix}
1 & 0 & 1 \\
0 & 1 & 0 \\
0 & 0 & 1
\end{smallmatrix}
\right);
\end{equation*}
one has
$$ \left(
\begin{smallmatrix}
1 & n_1 & n_{12} \\
0 & 1 & n_2 \\
0 & 0 & 1
\end{smallmatrix}
\right) = u_1^{n_1} u_2^{n_2} [u_1,u_2]^{n_{12}}$$
for all integers $n_1,n_2,n_{12}$.

Let $N_1,N_2 \geq 10$ be real numbers.  Define the \emph{nilprogression} $P(u_1,u_2;N_1,N_2)$ to be the set of all words in $u_1, u_1^{-1}, u_2, u_2^{-1}$ that involve at most $N_1$ occurrences of $u_1, u_1^{-1}$ and at most $N_2$ occurrences of $u_2, u_2^{-1}$.  It is not difficult to verify that $P(u_1,u_2;N_1,N_2)$ is a symmetric neighbourhood of the identity which contains the set
$$
\{u_1^{n_1}u_2^{n_2}[u_1,u_2]^{n_{12}} : |n_1| \leq N_1/10, |n_2| \leq N_2/10,
|n_{12}| \leq N_{1}N_{2}/10\}
$$
and is contained in the set
$$
\{u_1^{n_1}u_2^{n_2}[u_1,u_2]^{n_{12}} : |n_1| \leq 10 N_1, |n_2| \leq 10 N_2,
|n_{12}| \leq 10 N_{1}N_{2}\}.
$$
One can easily verify that $P(u_1,u_2;N_1,N_2)$ is a $K$-approximate group  for some absolute constant $K$ (for instance, one could take $K=100$).
\end{example}

\begin{remark} The above example was constructed inside the Heisenberg group.  Later on we will discuss a generalisation of this example to arbitrary nilpotent groups.  These examples, which we will call \emph{nilprogressions}, will be needed to state the precise version of our main theorem (Theorem \ref{main-theorem}) below. We will define them later in this introduction.
\end{remark}

\begin{example}[Direct products] The direct product of a $K_1$-approximate group and a $K_2$-approximate group is a $K_1 K_2$-approximate group, and so one may build up examples of approximate groups using both subgroups and nilprogressions. \end{example}

\begin{example} [Helfgott's example]\label{helfex} The following example of Helfgott\footnote{See {\tt terrytao.wordpress.com/2009/06/21/freimans-theorem-for-solvable-groups/\#comment-39705}.} is a less obvious way of combining a subgroup and a nilprogression.

 Let $A \subseteq \GL_3(\F_p)$ be the following set of $3 \times 3$ matrices:
\[ A := \left\{ \left(\begin{smallmatrix} r^n & x & z \\ 0 & s^n & y \\ 0 & 0 & (rs)^{-n}\end{smallmatrix}\right) : x,y,z \in \F_p, -N \leq n \leq N\right\}.\] Here, $r,s \in \F_p^{\times}$ are fixed and $N$ is large yet much smaller than $p$. Then $A$ is a $O(1)$-approximate group.

Note that $A$ has the following form: it admits a subgroup $H$, normalised by $A$, such that $A/H$ is a a geometric progression. Indeed
\[ H = \left\{ \left(\begin{smallmatrix} 1 & x & z \\ 0 & 1 & y \\ 0 & 0 & 1\end{smallmatrix}\right) : x,y,z \in \F_p\right\}.\]
In the language of Example \ref{hom}, $A$ is a finite extension of a geometric progression by the finite group $H$.
\end{example}

Each of the above examples was rather ``algebraic'' in nature, whereas the definition of approximate group is somewhat combinatorial.  We also have some more combinatorial criteria for generating approximate groups using sets of small doubling or tripling.

\begin{remark}[Relationship between small doubling and approximate groups]\label{dubex}  Let $A$ be a non-empty finite subset of a global group $G$.  If $|A^3| \leq K|A|$, then the set $H := (A \cup \{\id\} \cup A^{-1})^2$ is\footnote{Here and in the rest of the paper we use $X = O_K(Y)$, $X \ll_K Y$, or $Y \gg_K X$ for two (standard) quantities $X,Y$ and a (standard) parameter $K$ to denote the assertion that $|X| \leq C_K Y$ for some (standard) quantity $C_K>0$ depending only on $K$, and similarly for other choices of subscripted parameters.  We also adopt an analogous notation for nonstandard quantities; see Appendix \ref{nsa-app}.} a $O(K^{O(1)})$-approximate group that contains $A$; see \cite[Theorem 3.9]{tao-noncommutative}.  In a similar vein if $|A^2| \leq K|A|$ or $|A \cdot A^{-1}| \leq K|A|$, then there exists a $O(K^{O(1)})$-approximate group $H$ of size $|H|=O(K^{O(1)}|A|)$ such that $A$ can be covered by $O(K^{O(1)})$ left-translates $gH$ of $H$; see \cite[Theorem 4.6]{tao-noncommutative}.
\end{remark}

Our aim in this paper is to ``describe'' the structure of approximate subgroups in an arbitrary ambient group in terms of more explicit
algebraic objects such as those listed in the examples. Here is one form of our main result in this regard.

\begin{theorem}[Main theorem, simple form]\label{hl-conj}  Let $A$ be a global $K$-approximate group, thus it is contained in a \textup{(}global\textup{)} group $G$.  Then there exists a subgroup $G_0$ of $G$ and a finite normal subgroup $H$ of $G_0$ with the following properties:
\begin{enumerate}
\item $A$ can be covered by $O_K(1)$ left-translates of $G_0$;
\item $G_0/H$ is nilpotent and finitely generated of rank\footnote{The \emph{rank} of a finitely generated group is the least number of generators required to generate the group. The \emph{step} is the length of the lower central series, minus 1.} and step at most $O_K(1)$;
\item $A^4$ contains $H$ and a generating set of $G_0$.
\end{enumerate}
\end{theorem}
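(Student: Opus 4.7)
The plan is to follow the roadmap outlined in the abstract: use a correspondence principle to pass from the finite combinatorial setting to a locally compact (local) group, apply Gleason--Yamabe and related Lie-theoretic machinery to produce a Lie group model, and then translate nilpotence of a connected Lie subgroup back to the original approximate group. Throughout, I would argue by contradiction and compactness, so the exponent $4$ in conclusion (iii) and the finite quantifiers become automatic at the end.

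The first step is the passage to an ultraproduct. Assume for contradiction there is a sequence of counterexamples $A_n$, all being $K$-approximate groups for the fixed $K$, and form the ultraproduct ultra approximate group $\A := \prod_{n \to \infty} A_n$ sitting inside an ambient ultra group $\G$. The desired conclusion transfers to the existence of an \emph{internal} subgroup $\G_0 \leq \G$ and an internal finite normal subgroup $\mathbf{H} \triangleleft \G_0$ with the three analogous properties, which is what we now try to build. The point is that $\A$ is a very rigid object: it is closed under countable intersections and has an abundance of internal subsets.

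Next I would invoke the correspondence principle, which is the ultraproduct counterpart of Hrushovski's Lie model theorem: $\A$ admits a \emph{good model} $\pi : \langle \A \rangle \to L$ with $L$ locally compact, $\pi(\A)$ precompact with nonempty interior, and $\pi^{-1}$ of a neighbourhood of the identity contained in a bounded power of $\A$. Applying the Gleason--Yamabe theorem to $L$ and quotienting out a small compact normal subgroup $N \triangleleft L$, one obtains a connected Lie group $L'$ together with a morphism $\A \to L'$ whose image has nonempty interior. The fact that $\A\cdot\A$ is covered by $K$ translates of $\A$ pushes down to a doubling bound on $\pi(\A)$, which via Bieberbach-type volume arguments forces $\dim L' = O_K(1)$.

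The heart of the matter, and the hard part, is showing that the identity component of $L'$ is \emph{nilpotent} of step $O_K(1)$. This is where the Gleason-style escape norm arguments from Hilbert's fifth problem enter: using one-parameter subgroup analysis and the commutator estimates for small elements, one proves that the adjoint action on the Lie algebra of $L'$ of any element arising from $\pi(\A)$ is unipotent, so $L'^\circ$ is nilpotent, and combined with bounded dimension it is nilpotent of bounded step and rank.

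Finally I would descend back to $\A$, and then to the original $A_n$. Set $\G_0 := \pi^{-1}(U)$ for $U$ a sufficiently small precompact symmetric open neighbourhood of the identity in $L$, which sits inside $\A^{O_K(1)}$ and is a genuine subgroup of $\G$; let $\mathbf{H} := \pi^{-1}(N) \cap \G_0$, which is internal finite (``hyperfinite of bounded nonstandard size'') and normal in $\G_0$. Then $\G_0/\mathbf{H}$ maps densely into the bounded-dimensional nilpotent Lie group $L'^\circ$ and is therefore nilpotent of bounded step and finitely generated of bounded rank. A cover of $\pi(\A)$ by boundedly many translates of $U$ yields (i), and careful bookkeeping of which power of $\A$ one is in at each stage ensures that $\mathbf{H}$ and a chosen generating set of $\G_0$ lie in $\A^4$, giving (iii). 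Transferring back via the ultrafilter contradicts the assumed counterexample and proves the theorem. The principal obstacle is the bounded-step nilpotence in the Lie model; everything else is compactness and careful local-global manipulation.
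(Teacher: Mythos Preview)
Your outline captures the overall architecture correctly (ultraproduct, locally compact model, Gleason--Yamabe, descent), but there are two substantive gaps.

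\textbf{Nilpotence of the Lie model.} You assert that ``the adjoint action on the Lie algebra of $L'$ of any element arising from $\pi(\A)$ is unipotent, so $L'^\circ$ is nilpotent,'' and attribute this to Gleason-type commutator estimates. This step is not justified, and in fact the Gleason lemmas in the Hilbert's fifth problem literature do not give nilpotence: small balls in a semisimple Lie group also have bounded doubling and satisfy the Gleason escape estimates. The paper proves nilpotence by a completely different mechanism. One builds an \emph{escape norm} $\|g\|_{e,\A'}$ on a strong ultra approximate subgroup $\A'$, proves Gleason-type product, conjugation, and commutator bounds for it, and then --- using the \emph{nonstandard finiteness} of $\A'$ in an essential way --- picks an element $u$ of minimal nonzero escape norm. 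The commutator bound $\|[g,u]\|_e \ll \|g\|_e \|u\|_e < \|u\|_e$ forces $u$ to be central; one then quotients by the geometric progression $P(u;1/\|u\|_e)$, shows the Lie model drops dimension, and iterates. Nilpotence of $L$ is a \emph{byproduct} of this induction (Proposition~\ref{nilmodel}), not an input. Your proposal skips exactly the step the paper identifies as the crux, and where finiteness is indispensable.

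\textbf{Descent to $\G_0$.} Taking $\G_0 := \pi^{-1}(U)$ for a precompact open neighbourhood $U$ does not produce a subgroup of $\G$; it is merely a symmetric neighbourhood of the identity. Moreover, even granting that $\G_0/\mathbf{H}$ embeds in a nilpotent Lie group of bounded dimension, this alone does not yield finite generation of bounded rank. The paper instead first produces a \emph{coset nilprogression} $P = P_H(u_1,\ldots,u_r;N_1,\ldots,N_r) \subset \A^4$ of bounded rank and step (this is Theorem~\ref{main-theorem}, the technical heart), and only then defines $G_0 := \langle P \rangle$; the bounded rank and step of $G_0/H$ are inherited directly from the nilprogression generators.
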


In particular, the group $G_0$ is finite-by-nilpotent, and hence also virtually nilpotent. Indeed, the stabiliser in $G_0$ of the conjugation action on $H$ has finite index in $G_0$ and is a central extension of a finite index subgroup of $G_0/H$, and therefore is also nilpotent.



By specialising Theorem \ref{hl-conj} to the combinatorial examples in Remark \ref{dubex} we obtain an analogous structure theorem for sets of small doubling.

\begin{corollary}[Freiman-type theorem]\label{hl-conj-dub}  Let $A$ and $B$ be finite non-empty subsets in a \textup{(}global\textup{)} group $G$ such that $|AB| \leq K |A|^{\frac{1}{2}}|B|^{\frac{1}{2}}$. Then there exists a subgroup $G_0$ of $G$ and a finite normal subgroup $H$ of $G_0$ with the following properties:
\begin{enumerate}
\item $A$ can be covered by at most $O_K(1)$ right translates of $G_0$;
\item $G_0/H$ is nilpotent and finitely generated of rank and step $O_K(1)$. In particular, $G_0$ is finite-by-nilpotent and hence also virtually nilpotent.
\end{enumerate}
\end{corollary}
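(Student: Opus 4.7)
The plan is to reduce the hypothesis $|AB| \leq K|A|^{1/2}|B|^{1/2}$ to a situation where Theorem \ref{hl-conj} applies, by producing an approximate group that controls $A$. The main engine is the noncommutative Pl\"unnecke--Ruzsa machinery developed in \cite{tao-noncommutative} and recalled in Remark \ref{dubex} above.

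The first step is to convert the mixed, asymmetric bound $|AB| \leq K|A|^{1/2}|B|^{1/2}$ into a small-tripling bound for the single set $A$ (and dually for $B$). Using the noncommutative Ruzsa triangle-type inequalities, one shows that $|A|$ and $|B|$ are comparable up to a factor $K^{O(1)}$ and moreover that $|A \cdot A^{-1}| \leq K^{O(1)}|A|$. A further Ruzsa-style manipulation upgrades this to $|A^3| \leq K^{O(1)}|A|$, since in the noncommutative setting small doubling propagates to small tripling with only polynomial losses in the constants.

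The second step is to invoke Remark \ref{dubex}: the small tripling of $A$ implies that the symmetric set $H := (A \cup \{\id\} \cup A^{-1})^2$ is a $K^{O(1)}$-approximate group containing $A$, of cardinality $O(K^{O(1)}|A|)$. One then applies Theorem \ref{hl-conj} to this global approximate group inside $G$, obtaining a subgroup $G_0$ of $G$ and a finite normal subgroup $H_0$ of $G_0$ such that $H$ is covered by $O_K(1)$ left translates of $G_0$, and $G_0/H_0$ is finitely generated nilpotent of rank and step $O_K(1)$. A fortiori $A \subseteq H$ is covered by $O_K(1)$ translates of $G_0$; passing to inverses converts left translates into right translates as required in (i). Conclusion (ii) is immediate from the corresponding conclusion of Theorem \ref{hl-conj}, and the final virtual nilpotence assertion follows by the discussion immediately after Theorem \ref{hl-conj}.

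The main obstacle I expect is in the first step: extracting a single-set tripling bound from the asymmetric hypothesis $|AB| \leq K|A|^{1/2}|B|^{1/2}$ requires choosing the right sequence of noncommutative Ruzsa inequalities and is not wholly automatic, since standard commutative tricks such as Pl\"unnecke's inequality do not transfer verbatim. Once such a bound is produced, everything else reduces mechanically to applying the main theorem to a canonical approximate envelope of $A$.
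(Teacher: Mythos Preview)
Your overall strategy is exactly the paper's: produce a $K^{O(1)}$-approximate group that controls $A$, then feed it into Theorem \ref{hl-conj}. The paper does this in one line by citing \cite[Theorem 4.6]{tao-noncommutative}, which takes the asymmetric hypothesis $|AB| \leq K|A|^{1/2}|B|^{1/2}$ as input and directly outputs an $O(K^{O(1)})$-approximate group $A'$ with $A$ covered by $O(K^{O(1)})$ right-translates of $A'$.

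Your proposed intermediate step, however, contains a genuine gap. The assertion that ``in the noncommutative setting small doubling propagates to small tripling with only polynomial losses'' is false. A standard counterexample: take $A = H \cup \{x\}$ where $H$ is a large finite subgroup and $x$ is chosen so that $H \cap x^{-1}Hx = \{\id\}$. Then $|A^2| \leq 3|H|+1 \ll |A|$, but $A^3 \supseteq HxH$ has size $|H|^2$, so $|A^3|/|A|$ is unbounded. Similarly, $|AA^{-1}|$ small does not by itself force $|A^3|$ small. So you cannot reach the tripling hypothesis needed for the first clause of Remark \ref{dubex}.

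The fix is simply not to aim for tripling. Either invoke \cite[Theorem 4.6]{tao-noncommutative} directly on the hypothesis $|AB| \leq K|A|^{1/2}|B|^{1/2}$ (as the paper does), or, if you do first extract a bound of the form $|A^2| \leq K^{O(1)}|A|$ or $|AA^{-1}| \leq K^{O(1)}|A|$, use the \emph{second} clause of Remark \ref{dubex}, which already produces an approximate group $H'$ with $A$ covered by $O(K^{O(1)})$ translates of $H'$ --- no tripling required. In either case you land on an approximate group to which Theorem \ref{hl-conj} applies, and your concluding paragraph goes through unchanged.
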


\begin{proof} By \cite[Theorem 4.6]{tao-noncommutative}, there exists a $O(K^{O(1)})$-approximate group $A'$ of size $O(K^{O(1)} |A|)$ such that $A$ can be covered by $O(K^{O(1)})$ right translates of $A'$ and $B$ can be covered by $O(K^{O(1)})$ left translates of $A'$. We may thus apply Theorem \ref{hl-conj} to $A'$.
\end{proof}

Theorem \ref{hl-conj} (or Corollary \ref{hl-conj-dub}) answers in the affirmative a conjecture that we have been referring to as the \emph{Helfgott-Lindenstrauss Conjecture}, on account of its having been raised  independently in private communications by both Harald Helfgott and Elon Lindenstrauss.  In fact, the conjecture is reasonably explicit in the comments surrounding \cite[Theorem 1.1]{helfgott-sl3}.

\begin{remark}[The linear case]
Various forms of the main theorem are also known in groups of Lie type of bounded dimension, as a consequence of results of many authors \cite{breuillard-green-unitary, bg, bgt, gill-helfgott, gill-helfgott-small,helfgott-sl2, helfgott-sl3, hrush, pyber-szabo}.  For instance, in \cite{gill-helfgott} an analogue of Theorem \ref{hl-conj} was established in the case when $G$ is a solvable algebraic group of bounded dimension over a finite field of prime order.  In that case, the group $G_0/H$ has bounded rank, and the number of cosets of $G_0$ needed to cover $A$ is \emph{polynomial} in $K$. We have no examples to rule out the possibility that this polynomiality in $K$ holds in all groups $G$, perhaps at the cost of weakening the rank and step bounds on $G_0/H$. Unfortunately our methods, which rely on ultrafilter arguments, give no quantitative bounds on the covering number whatsoever.
\end{remark}

\begin{remark}[Bounds on the nilpotent group]\label{bounds-nilpotent} Our method  allows us to give an explicit bound on the dimension (rank and step) of the nilpotent group $G_0/H$ in Theorem \ref{hl-conj} at the expense of replacing $A^4$ in item (iii) by a larger power of $A$. Namely, if we allow for $H$ and the generating set of $G_0$ to be contained in $A^{12}$, then we may ensure that the nilpotent group $G_0/H$ is $\ell$-nilpotent with $\ell = O(K^2 \log K)$. If we are happy to go as far as $A^{O_K(1)}$, then this may be further reduced to $\ell \leq 6 \log_2 K$. Here we say that a group is \emph{$\ell$-nilpotent} if it admits a generating set $u_1,\ldots,u_\ell$ such that $[u_i,u_j] \in \langle u_{j+1}, \ldots, u_\ell \rangle$ for all $i<j$. In particular such a group admits a normal series with cyclic factors of length at most $\ell$, and so is also nilpotent of step at most $\ell$. We refer the reader to Theorem \ref{dimension-bound} and to Section \ref{rank-reduction} for a detailed statement and proof.
\end{remark}

\begin{remark} Note that no bound is provided on the size of the finite group $H$ in Theorem \ref{hl-conj}, other than that it is finite.  Indeed, by considering $A$ to be a large finite simple group it is not difficult to see that $H$ can be arbitrarily large.
\end{remark}

We will in fact prove a much more precise version of Theorem \ref{hl-conj} involving a slightly complicated type of approximate group which we call a \emph{coset nilprogression}. We discuss this concept in some detail in the next section. For many applications, however, Theorem \ref{hl-conj} is quite sufficient.\vspace{11pt}

\textsc{Applications.} We now give a small selection of applications to growth in groups and to Riemannian geometry; a greater variety is assembled in \S \ref{gromov-sec}, which also contains proofs of these statements. \vspace{11pt}

\emph{Polynomial growth conditions and Gromov's theorem.} Firstly, Theorem \ref{hl-conj} yields a quick proof of Gromov's theorem \cite{gromov} on groups of polynomial growth.

\begin{theorem}[Gromov's theorem]\label{gromov-thm}
Let $G$ be a group of polynomial growth. That is, $G$ is generated by a finite symmetric set $S$, and there are constants $C$ and $d$ such that $|S^n| \leq Cn^d$ for all $n \in \N$. Then $G$ is virtually nilpotent.
\end{theorem}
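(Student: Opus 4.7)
The plan is to derive Gromov's theorem from Theorem~\ref{hl-conj} by locating a scale where the ball in $G$ has bounded tripling, applying the structure theorem to the resulting approximate group, and then using polynomial growth once more to show that the virtually nilpotent subgroup produced has finite index in $G$.

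The first step is a standard pigeonhole argument. If no $n \geq N_0$ satisfied $|S^{3n}| \leq K |S^n|$, then iterating along the sequence $N_0, 3N_0, 9N_0, \ldots$ would yield $|S^{3^k N_0}| > K^k |S^{N_0}|$, which for any $K > 3^d$ contradicts the polynomial bound $|S^{3^k N_0}| \leq C(3^k N_0)^d$ once $k$ is large. Hence, with $K$ depending only on $d$, there are arbitrarily large scales $n$ with $|S^{3n}| \leq K |S^n|$. Fixing such an $n$ and applying Remark~\ref{dubex} with $A = S^n$, the set $B := S^{2n}$ is an $O(K^{O(1)})$-approximate group containing $S^n$.

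Applying Theorem~\ref{hl-conj} to $B$ produces a subgroup $G_0 \leq G$ and a finite normal subgroup $H$ of $G_0$ such that $G_0/H$ is finitely generated nilpotent of rank and step $O_K(1)$, the set $B$ is covered by $M = O_K(1)$ left cosets of $G_0$, and a generating set of $G_0$ is contained in $B^4 \subseteq S^{8n}$. In particular $G_0$ is virtually nilpotent; it therefore suffices to show $[G : G_0] < \infty$, for then a virtually nilpotent subgroup of finite index forces $G$ itself to be virtually nilpotent.

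This finite-index step is the main obstacle. My intended strategy is a volume comparison. Since $G_0$ is generated by elements of $S$-word-length at most $8n$, the Bass--Guivarc'h formula gives polynomial growth $|G_0 \cap S^{8nk}| \gg k^{d_0}$ inside $G$ for some integer $d_0$, while the positive density $|B \cap G_0| \geq |B|/M$ together with the approximate-group structure of $B$ should pin $d_0$ down to the full growth exponent $d$ of $G$. If $[G:G_0]$ were infinite, one could select distinct coset representatives $g_1, \ldots, g_N$ of bounded word-length $L$, and the disjoint union $\bigsqcup_{j} g_j (G_0 \cap S^{8nk}) \subseteq S^{L+8nk}$ of size at least $Nck^{d_0}$ would, when compared with the polynomial upper bound $C(L+8nk)^d$, force $N$ to be bounded provided $d_0 = d$. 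Establishing the identity $d_0 = d$ cleanly is the subtle point, and in practice it is likely easier to run the argument using the sharper coset-nilprogression refinement of Theorem~\ref{hl-conj} rather than the simple form above.
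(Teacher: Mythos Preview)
Your setup (pigeonhole to find a scale with bounded doubling/tripling, pass to an approximate group, apply Theorem~\ref{hl-conj}) matches the paper's. The divergence, and the gap, is entirely in the finite-index step.

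Your growth-comparison scheme needs $d_0 = d$, but you only extract the single inequality $|G_0 \cap S^{2n}| \geq |B|/M$ at one fixed scale $n$. That is not enough to pin down the growth exponent of $G_0$: a priori $d_0$ could be any integer $\leq d$, and your disjoint-coset count $N k^{d_0} \ll (L+8nk)^d$ gives no bound on $N$ when $d_0 < d$. Trying to upgrade this runs into the problem that you know nothing about how the $S$-word metric on $G$ restricts to $G_0$ beyond the one generating-set inclusion, and you cannot invoke Bass--Guivarc'h for $G$ itself without circularity.

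The paper sidesteps all of this with a one-line pigeonhole on cosets. The point is that the scale $n$ can be chosen as large as you like in advance. Since $S^{2n}$ meets at most $M = O_d(1)$ left cosets of $G_0$, the increasing chain
\[
G_0 \subseteq S G_0 \subseteq S^2 G_0 \subseteq \cdots \subseteq S^{2n} G_0
\]
takes at most $M$ distinct values; so as soon as $2n > M$ there is some $m < 2n$ with $S^{m+1} G_0 = S^m G_0$. Left-multiplying by $S$ repeatedly gives $S^{m+k} G_0 = S^m G_0$ for all $k$, hence $G = \langle S \rangle G_0 = S^m G_0 \subseteq S^{2n} G_0$, and therefore $[G:G_0] \leq M$. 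No growth exponents, no Bass--Guivarc'h, no appeal to the sharper coset-nilprogression statement.
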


\begin{remark} In fact, our arguments show that there is some function $f : \N \rightarrow \N$, $f(n) \rightarrow \infty$, such that, if $G$ does not have polynomial growth, then $|S^n| \geq n^{f(n)}$ for all $n$. We do not get an explicit function $f$. However, if the control parameter $O_K(1)$ in Theorem \ref{main-theorem} were known to be polynomial in $K$, we could take $f(n) = c\log n$. The best (in fact only) lower bound known for this function at present is $(\log \log n)^c$, due to Shalom and the third author \cite{shalom-tao}. It is conjectured by some, in the absence of any examples to the contrary, that $f(n)  > n^c$, and possibly even that $|S^n| \geq e^{c\sqrt{n}}$.
\end{remark}

In \cite{hrush} Hrushovski also gave a derivation of Gromov's theorem from his Lie model theorem (see Theorem \ref{lie-model} below). He in fact proved a strengthening of Gromov's theorem (see \cite[Theorem 7.1]{hrush} or Theorem \ref{hrush-grom} below). We will be able to recover Hrushovski's result more directly (see Corollary \ref{main-cor} below). In fact, our approach can also yield the following other strengthening of Gromov's theorem, which is uniform in the size of the generating set $S$ and appears to be new. Recall that if $\ell \in \N$ then we say that a group is $\ell$-nilpotent if it admits a generating set $u_1,\ldots,u_\ell$ such that $[u_i,u_j] \in \langle u_{j+1}, \ldots, u_\ell \rangle$ for all $i<j$.

\begin{theorem}\label{petrunin-again} Let $d>0$. Then there is $n_0=n_0(d)>0$ such that if $G$ is a group generated by a finite symmetric set $S$ with $1 \in S$ for which $|S^n| \leq n^d |S|$ for some $n \geq n_0(d)$, then $G$ is virtually nilpotent. In fact $G$ has a normal subgroup of index at most $O_d(1)$ which is finite-by-\textup{(}$O(d)$-nilpotent\textup{)}.
\end{theorem}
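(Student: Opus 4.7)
The strategy is to extract an approximate group from the one-scale growth hypothesis and then apply Theorem \ref{hl-conj}. First I would pigeonhole over the scales $3^0, 3^1, \ldots, 3^k$ with $k := \lfloor \log_3 n \rfloor$: since
$\prod_{i=0}^{k-1} |S^{3^{i+1}}|/|S^{3^i}| = |S^{3^k}|/|S| \leq n^d$, restricting the product to the upper half $k/2 \leq i \leq k-1$ yields some ratio of size at most $n^{d/(k/2)} \leq 3^{2d}$, provided $n \geq n_0(d)$ is sufficiently large. Thus there exists $m$ with $\sqrt{n}/3 \leq m \leq n$ satisfying $|S^{3m}| \leq 3^{2d}|S^m|$. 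As $S^m$ is symmetric and contains the identity, Remark \ref{dubex} identifies $A := S^{2m} = (S^m \cup \{1\} \cup S^{-m})^2$ as a $K$-approximate group with $K = O_d(1)$; moreover $S \subseteq S^m \subseteq A$.

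Applying Theorem \ref{hl-conj}, in the refined form of Remark \ref{bounds-nilpotent} (since $\log_2 K = O(d)$, we get nilpotency class $\ell \leq 6\log_2 K = O(d)$ at the cost of enlarging $A^4$ to $A^{O_d(1)}$), to $A$ produces a subgroup $G_0 \leq G$ and a finite normal subgroup $H \triangleleft G_0$ such that $A$ is contained in $C = O_d(1)$ left cosets of $G_0$, the quotient $G_0/H$ is finitely generated and $O(d)$-nilpotent, and $A^{O_d(1)} = S^{O_d(1)\cdot m}$ contains both $H$ and a generating set of $G_0$. Since $G = \langle S \rangle \subseteq \langle A \rangle$, the whole group $G$ is generated by $G_0$ together with $C$ coset representatives drawn from $A$.

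The crucial additional step is to show that the index $[G:G_0]$ is bounded by $O_d(1)$: Theorem \ref{hl-conj} by itself gives no such bound. The image of $S$ in $G/G_0$ has at most $C$ elements, so the Schreier coset graph of $(G,S)$ on $G/G_0$ is connected, of bounded local complexity, and $|\pi(S^n)| = |S^n G_0/G_0|$ measures its ball of radius $n$. Combining the Pl\"unnecke--Ruzsa-type bound $|A^k| \leq K^{O(k)}|A|$ (valid for the approximate group $A$) with the single-scale hypothesis $|S^n| \leq n^d |S|$ and the fact that $m \geq \sqrt{n}/3$, one shows that $|\pi(S^n)|$ cannot grow without bound as one iterates within the range where both bounds apply; otherwise the two growth estimates would be inconsistent. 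This forces $[G:G_0] \leq M_0(d) = O_d(1)$. Once this is in hand, setting $N := \bigcap_{g \in G} g G_0 g^{-1}$ yields a normal subgroup of $G$ with $[G:N] \leq [G:G_0]! = O_d(1)$; the subgroup $N \cap H$ is finite and normal in $N$, and $N/(N \cap H)$ embeds in $G_0/H$, hence is $O(d)$-nilpotent. Thus $N$ is the desired normal subgroup of $G$.

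The main obstacle is the finite-index step. The polynomial-growth hypothesis only gives information at the single scale $n$, whereas the approximate-group structure of $A$ only controls scales comparable to $m$. Bounding $[G:G_0]$ requires carefully stitching together these two ranges of control, exploiting the flexibility in the pigeonhole to make $m$ grow at least like a fractional power of $n$ so that the two regimes overlap usefully; this is where the threshold $n_0(d)$ is ultimately consumed.
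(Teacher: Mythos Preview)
Your overall architecture matches the paper's: pigeonhole to find a scale with bounded tripling, extract an $O_d(1)$-approximate group $A = S^{2m}$, apply Theorem \ref{hl-conj} with Remark \ref{bounds-nilpotent}, and then bound $[G:G_0]$. The place where you diverge is the finite-index step, and there you have made it much harder than it is.

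You describe the index bound as ``the main obstacle'' requiring Pl\"unnecke--Ruzsa estimates, Schreier graph growth, and a delicate matching of scales. In fact the paper dispatches it in one line, and your sketch does not actually complete the argument. Here is the point you are missing. Theorem \ref{hl-conj} already tells you that $A = S^{2m}$ is covered by $C = O_d(1)$ left cosets of $G_0$. Since $S^j \subseteq S^{2m}$ for every $0 \leq j \leq 2m$, the sets $S^j G_0/G_0$ all have cardinality at most $C$. They form a non-decreasing chain in $j$, so as soon as $2m > C$ (which is exactly where $n_0(d)$ gets used, via your lower bound $m \geq \sqrt{n}/3$) there must exist $j < 2m$ with $S^{j+1} G_0 = S^j G_0$. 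Multiplying on the left by $S$ repeatedly gives $S^{j+k} G_0 = S^j G_0$ for all $k \geq 0$, and since $S$ generates $G$ this forces $G = S^j G_0$. Hence $[G:G_0] \leq C = O_d(1)$. No growth comparison or Pl\"unnecke argument is needed.

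Once you have $[G:G_0] = O_d(1)$, your final paragraph (passing to the normal core $N = \bigcap_g g G_0 g^{-1}$ and observing that $N/(N\cap H)$ embeds in $G_0/H$) is fine; the paper does essentially the same thing in the more detailed Corollary \ref{petrunin}.
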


\begin{proof} The proof of this (and hence of Theorem \ref{gromov-thm}) is a short enough deduction that we can give it here in the introduction. We refer the reader to Section \ref{gromov-sec} for more details. Let $N = N(d)$ be a large quantity to be specified later, and let $n_0$ be sufficiently large depending on $N$ and $d$.  By the pigeonhole principle and the hypothesis $|S^n| \leq n^d|S|$ we see that if $n_0$ is sufficiently large depending on $N$ then there exists $n'$,  $N \leq n' \leq n_0/100$, such that $|S^{100n'}| \leq (200)^d |S^{n'}|$. By Corollary \ref{isgroup} (which is quite easy) this implies that $S^{2n'}$ is a $e^{O(d)}$-approximate group.  By our main theorem, Theorem \ref{hl-conj} (and Remark \ref{bounds-nilpotent}), we can thus find a finite-by-($O(d)$-nilpotent) and hence virtually nilpotent group $G_0$ such that $S^{2n'}$ is covered by $O_{d}(1)$ left-translates of $G_0$.  By the pigeonhole principle, if $N$ is large enough, we can find a nonnegative $m < 2n'$ such that $S^{m+1} G_0 = S^m G_0$. Multiplying on the left by $S$ repeatedly we conclude that $S^{m+k} G_0 = S^m G_0$ for all $k \geq 0$.  Since $S$ generates $G$, we conclude that $G = S^m G_0 = S^{2n'} G_0$.  Since $S^{2n'}$ was covered by $O_d(1)$ left-translates of $G$, $G_0$ has index $O_d(1)$ in $G$, and so $G$ is also virtually nilpotent.
\end{proof}

\emph{Riemannian manifolds.} A. Petrunin suggested to us some years ago\footnote{See also {\tt http://mathoverflow.net/questions/11091}.} that a result such as Corollary \ref{petrunin-again} would give a purely group-theoretical proof of a theorem of Fukaya and Yamaguchi \cite{fukaya-yamaguchi} according to which fundamental groups of almost non-negatively curved manifolds are virtually nilpotent. Recall that a closed manifold $M$ is said to be almost non-negatively curved if one can find a sequence $g_n$ of Riemannian metrics on it for which $\diam(M,g_n) \leq 1$ while $K_{M,g_n} \geq -1/n$ where $K_{M,g_n}$ is the sectional curvature. Indeed, a simple application of the Bishop-Gromov inequalities combined with Corollary \ref{petrunin} yields the following improvement assuming only a lower bound on the Ricci curvature and an upper bound on the diameter.

\begin{corollary}[Ricci gap]\label{ricci-again} Given $d \in \N$, there is $\eps(d)>0$ such that the following holds. Let $M = (M,g)$ be an $d$-dimensional compact Riemannian manifold with Ricci curvature bounded below by $-\eps$ and diameter $\diam(M) \leq 1$. Then $\pi(M)$ is virtually nilpotent.
\end{corollary}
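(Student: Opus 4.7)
The plan is to reduce to Theorem \ref{petrunin-again} by equipping $\Gamma := \pi(M)$ with a finite symmetric generating set $S$ containing $\id$ whose growth $|S^n|$ is polynomially controlled. First I would pass to the Riemannian universal cover $(\tilde M, \tilde g)$, on which $\Gamma$ acts freely, isometrically and cocompactly, and on which the lower Ricci bound $\operatorname{Ric}_{\tilde g} \geq -\eps$ is inherited since the cover is a local isometry. Fix a basepoint $\tilde x \in \tilde M$ over some $x \in M$ and let $D$ be the Dirichlet fundamental domain centred at $\tilde x$, so that $D \subseteq B(\tilde x, \diam(M)) \subseteq B(\tilde x, 1)$, $\vol(D) = \vol(M)$, and the $\Gamma$-translates of $D$ tile $\tilde M$ with pairwise disjoint interiors. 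I would then define
\[
S := \{ \gamma \in \Gamma : d(\gamma \tilde x, \tilde x) \leq 2 \};
\]
a standard chain-of-fundamental-domains argument applied to a minimising geodesic from $\tilde x$ to $\gamma_0 \tilde x$ shows that $S$ is finite, symmetric, contains $\id$, generates $\Gamma$, and satisfies $S^n \subseteq \{ \gamma : d(\gamma \tilde x, \tilde x) \leq 2n \}$ for every $n \geq 1$.

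Next I would convert $|S^n|$ and $|S|$ into volumes of balls in $\tilde M$. For $\gamma \in S^n$ one has $\gamma D \subseteq B(\tilde x, 2n+1)$, and disjointness of interiors yields $|S^n|\, \vol(D) \leq \vol(B(\tilde x, 2n+1))$. Conversely, any $p \in B(\tilde x, 1)$ lies in some $\gamma D$; then $\gamma^{-1} p \in D \subseteq B(\tilde x, 1)$, so $d(\gamma \tilde x, p) = d(\tilde x, \gamma^{-1} p) \leq 1$, which combined with $d(p, \tilde x) \leq 1$ gives $d(\gamma \tilde x, \tilde x) \leq 2$, placing $\gamma$ in $S$. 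Hence $\vol(B(\tilde x, 1)) \leq |S|\, \vol(D)$, and dividing,
\[
\frac{|S^n|}{|S|} \;\leq\; \frac{\vol(B(\tilde x, 2n+1))}{\vol(B(\tilde x, 1))}.
\]

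Finally I would invoke the Bishop-Gromov volume comparison theorem: under $\operatorname{Ric}_{\tilde g} \geq -\eps$, the right-hand side is at most $V_{k,d}(2n+1)/V_{k,d}(1)$, where $V_{k,d}(r)$ denotes the volume of a ball of radius $r$ in the $d$-dimensional simply connected model of constant sectional curvature $-k = -\eps/(d-1)$. As $\eps \to 0$ this upper bound converges, uniformly on any bounded range of $n$, to the Euclidean value $(2n+1)^d$. To close the argument, I would apply Theorem \ref{petrunin-again} with exponent $d' := 2d$ to obtain a threshold $n_0 = n_0(2d)$, which by enlargement we may assume satisfies $n_0 \geq 6$, and then choose $\eps = \eps(d) > 0$ small enough that
\[
\frac{V_{\eps/(d-1),\,d}(2n_0+1)}{V_{\eps/(d-1),\,d}(1)} \;\leq\; 2(2n_0+1)^d \;\leq\; 2 \cdot 3^d\, n_0^d \;\leq\; n_0^{2d}.
\]
Then $|S^{n_0}| \leq n_0^{2d}\, |S|$, and Theorem \ref{petrunin-again} yields that $\pi(M)$ is virtually nilpotent. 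No step presents a serious difficulty; the only delicate point is the mismatch between the Bishop-Gromov exponent $d$ and the polynomial exponent fed into Theorem \ref{petrunin-again}, which is absorbed by working with $2d$ in place of $d$.
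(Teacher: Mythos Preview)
Your proposal is correct and follows essentially the same route as the paper's proof of Corollary~\ref{ricci}: pass to the universal cover, take the generating set $S$ to be the deck transformations displacing a basepoint by a bounded amount (you use radius~$2$ where the paper uses~$3$), compare $|S^n|/|S|$ to a ratio of ball volumes via the Dirichlet domain, apply Bishop--Gromov, and feed the resulting polynomial growth at a single scale into Theorem~\ref{petrunin-again} (the paper uses Corollary~\ref{petrunin}). The only differences are cosmetic constants.
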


This result is known to differential geometers and follows from the works of Cheeger-Colding \cite{cheeger-colding} and Kapovitch and Wilking \cite{kapovitch-wilking}. We refer the reader to Section \ref{hrush-grom} for more discussion and references concerning the above result. We only note that Corollary \ref{petrunin} yields in fact an explicit bound on the nilpotency class, namely that after passing to a subgroup of $\pi_1(M)$ with index $O_d(1)$ and quotienting by a finite normal subgroup, we obtain a $O(d)$-nilpotent group. \vspace{11pt}

\emph{Generalised Margulis lemma.} Another corollary of Theorem \ref{hl-conj} is a ``generalised Margulis lemma'' for metric spaces of a type conjectured by Gromov in \cite[\S 5.F]{gromov-pansu-lafontaine}.
A metric space $X$ is said to have \emph{bounded packing} with packing constant $K$ if there is $K>0$ such that every ball of radius $4$ in $X$ can be covered by at most $K$ balls of radius $1$. Say that a subgroup $\Gamma$ of isometries of $X$ acts \emph{discretely} on $X$ if every orbit is discrete in the sense that $\{\gamma \in \Gamma : \gamma \cdot x \in \Sigma\}$ is finite for every $x \in X$ and for every bounded set $\Sigma \subseteq X$.

\begin{corollary}[Generalized Margulis Lemma]\label{gromov-conj} Let $K \geq 1$ be a parameter. Then there is some $\eps(K) > 0$ such that the following is true. Suppose that $X$ is a metric space with packing constant $K$, and that $\Gamma$ is a subgroup of isometries of $X$ which acts discretely. Then for every $x \in X$ the ``almost stabiliser'' $\Gamma_\eps(x) = \langle S_{\eps}(x)\rangle$, where $S_{\eps}(x) : =\{ \gamma \in \Gamma: d(\gamma \cdot x,x) < \eps \}$, is virtually nilpotent.
\end{corollary}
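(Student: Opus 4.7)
The plan is to treat $A:=S_\eps(x)$ as a $K$-approximate group arising from the packing hypothesis, deduce polynomial growth for $\Gamma_\eps(x)=\langle A\rangle$ by iterating the packing at dyadic scales, and finally invoke the uniform Gromov-type theorem (Theorem \ref{petrunin-again}). Choose $\eps=\eps(K)>0$ matched to the packing scale; for concreteness take $\eps=2$, so that $A\cdot A\subseteq S_4(x)$ is directly controlled by the packing decomposition $B(x,4)\subseteq\bigcup_{i=1}^K B(y_i,1)$. Discreteness of the $\Gamma$-action makes $A$ finite, while symmetry and $\id\in A$ are automatic. For each $\gamma\in A\cdot A$, pigeonhole the point $\gamma\cdot x$ into some ball $B(y_i,1)$, and choose once and for all in each non-empty ball a representative $\gamma_i\in A^2$; any other $\gamma\in A\cdot A$ whose orbit point lies in the same ball satisfies $d(\gamma_i\cdot x,\gamma\cdot x)<2$, whence $\gamma_i^{-1}\gamma\in S_2(x)=A$. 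Setting $X:=\{\gamma_i,\gamma_i^{-1}:i=1,\ldots,K\}\subset A^2\subseteq A^3$, one has $A\cdot A\subseteq X\cdot A$ with $|X|\leq 2K$, so $A$ is a $2K$-approximate group.

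Iterating the same pigeonhole at doubled scales yields $|S_{2^k\eps}(x)|\leq K^k|S_\eps(x)|$ for all $k\geq 0$, and hence the polynomial growth bound
\[ |A^n| \;\leq\; |S_{n\eps}(x)| \;\leq\; n^{\log_2 K}\,|A| \quad\text{for all } n\geq 1. \]
Thus $\Gamma_\eps(x)$ is finitely generated (by $A$) and has polynomial growth of degree at most $\log_2 K$. Applying Theorem \ref{petrunin-again} with $d=\log_2 K$ (whose hypothesis $|A^n|\leq n^d|A|$ holds for all $n\geq n_0(d)$), one concludes that $\Gamma_\eps(x)$ is virtually nilpotent: more precisely, it has a normal subgroup of index $O_K(1)$ which is finite-by-$(O(\log K)$-nilpotent$)$.

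The principal technical subtlety is in the second step: the packing hypothesis as stated concerns only the radii $(1,4)$, whereas the iteration tacitly uses packing at each dyadic scale $(2^j,2^{j+2})$. In the natural geometric contexts (Riemannian manifolds of bounded geometry, and in particular the Ricci-curvature setting of Corollary \ref{ricci-again}) this rescaling is automatic, and one simply takes $\eps=\eps(K)$ matched to the given scale. In full generality one should instead argue at a single scale by applying Theorem \ref{hl-conj} directly to the $2K$-approximate group $A$, obtaining a virtually nilpotent subgroup $G_0\leq\Gamma_\eps(x)$ with $A$ contained in $O_K(1)$ left cosets of $G_0$, and then establishing $[\Gamma_\eps(x):G_0]<\infty$ by a Gromov-style pigeonhole on the monotone sequence $A^kG_0$, analogous to the short proof of Theorem \ref{petrunin-again} given in the introduction.
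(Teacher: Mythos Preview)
Your opening computation is right and matches the paper: the packing at the single scale $(1,4)$ gives that $S_2(x)$ is a $2K$-approximate group (equivalently, $|S_2(x)^2|\le K|S_2(x)|$). You also correctly identify that the polynomial-growth step $|A^n|\le n^{\log_2 K}|A|$ is unjustified, since it would require packing at every dyadic scale rather than at a single one.

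The real gap is in your proposed single-scale fix. With $\eps=2$ you take $A=S_2(x)$ to be \emph{both} the generating set of $\Gamma_\eps(x)$ and the approximate group to which Theorem~\ref{hl-conj} is applied. You then obtain a virtually nilpotent $G_0$ with $A$ covered by $C=O_K(1)$ cosets of $G_0$, and propose to pigeonhole the monotone sequence $A^kG_0$. But that pigeonhole needs an \emph{a priori} upper bound on the number of $G_0$-cosets met by $A^k$ for some $k>C$; all you have is the bound $C$ for $k=1$, and the approximate-group property only gives the useless exponential bound $|A^kG_0/G_0|\le C(2K)^{k-1}$. In the proof of Theorem~\ref{petrunin-again} this works because the approximate group there is $S^{2n'}$ with $n'$ large relative to $C$, so the monotone chain $(S^mG_0)_{m\le 2n'}$ has room to stabilise before the bound $C$ is exhausted. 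Here that room is absent.

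The paper's remedy is exactly to separate the two roles: keep $A:=S_2(x)$ as the approximate group, but shrink the generating set by taking $\eps:=2/K'$, where $K'=K'(K)$ is the constant from Corollary~\ref{main-cor}. Then $S:=S_\eps(x)$ satisfies $S^{K'}\subseteq S_2(x)=A$, and Corollary~\ref{main-cor} (whose proof is precisely the pigeonhole you have in mind, applied to $S^m$ with $m\le K'$ rather than to $A^m$) yields $[\langle S\rangle:G_0]\le O_K(1)$. So the missing idea is not the pigeonhole itself but the choice of $\eps$: one must take it small enough that a power $S_\eps^{K'}$ of the generating set still fits inside the fixed-scale approximate group $S_2(x)$.
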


Note that the space $X$ is not assumed to be a manifold. The traditional Margulis lemma estbalishes a similar statement for subgroups of isometries of pinched negatively curved manifolds, or more generally under a curvature lower bound.\vspace{11pt}

\emph{Approximate groups and polynomial growth.} Finally we remark on an additive-combinatorial application, which asserts that approximate groups have large subsets with ``polynomial growth''.

\begin{theorem}[Approximate groups are locally of polynomial growth]\label{local-poly}
Suppose that $A$ is a global $K$-approximate group. Then $A^4$ contains a $O_K(1)$-approximate group $A'$ with $(A')^4 \subset A^4$ and $|A'| \gg_K |A|$ such that $|(A')^m| \ll_K m^{O_K(1)} |A'|$ for all $m \geq 1$.
\end{theorem}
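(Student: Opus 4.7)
The plan is to invoke a more refined version of the main theorem than Theorem \ref{hl-conj}, namely the coset nilprogression form of Theorem \ref{main-theorem} alluded to in the introduction, which upgrades Theorem \ref{hl-conj} by producing an explicit coset nilprogression $P$ of rank and step $O_K(1)$ contained in $A^4$ such that $A$ is covered by $O_K(1)$ left-translates of $P$. A direct pigeonhole against this covering gives $|P| \gg_K |A|$.

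The constraint $(A')^4 \subset A^4$ does not hold verbatim for $A' = P$, since $P^4$ lies a priori inside $A^{16}$. To fix this I would pass to a contracted coset nilprogression $P'$ obtained by dividing each defining radius of $P$ by a large constant $C = C(K)$. The standard Baker--Campbell--Hausdorff / commutator calculus for coset nilprogressions of step $s$ then gives $(P')^4 \subset P$, hence $(P')^4 \subset A^4$. Because the rank $r$ and step $s$ of $P$ are both $O_K(1)$, the cardinality loss under this contraction is at most $C^{O(rs)} = O_K(1)$, so that $|P'| \gg_K |P| \gg_K |A|$. I would then set $A' := P'$.

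It remains to verify that $A'$ is an $O_K(1)$-approximate group enjoying polynomial growth $|(A')^m| \ll_K m^{O_K(1)} |A'|$. The approximate group property comes from the fact that the product of two (coset) nilprogressions is contained in a bounded dilation of either, so $A' \cdot A'$ is covered by $O_K(1)$ translates of $A'$; symmetry and inclusion of the identity are built into the definition of a coset nilprogression. For polynomial growth, one observes that $(A')^m$ sits inside the $O(m)$-fold dilation of $A'$ (once one absorbs iterated commutator terms up to step $s$), whose cardinality is bounded by $m^D |A'|$ with $D = D(r,s) = O_K(1)$; this is the standard lattice-point count in Mal'cev coordinates, a finitary avatar of the Bass--Guivarc'h growth formula for nilpotent groups.

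The main obstacle is really only making rigorous the refined coset nilprogression form of the main theorem together with the dilation, multiplication, and cardinality calculus for such objects; once these are in place, the proof is a single pigeonhole followed by a rescaling and a standard polynomial growth estimate. An alternative, more pedestrian route would start from Theorem \ref{hl-conj} as stated, pigeonhole $A$ into a single coset of $G_0$ to produce a large set $B \subset G_0$, and then appeal to the polynomial growth of the virtually nilpotent group $G_0$ to extract a polynomial-growth approximate subgroup of $B \cdot B^{-1}$; this route, however, still requires the commutator calculus to enforce $(A')^4 \subset A^4$ and is noticeably less clean than working directly with coset nilprogressions.
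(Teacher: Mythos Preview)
Your proposal is correct and is essentially the paper's own argument: the paper states that Theorem \ref{local-poly} is an immediate consequence of Theorem \ref{main-theorem} (the coset nilprogression in $O_K(1)$-normal form inside $A^4$ with $|P|\gg_K|A|$) together with Proposition \ref{poly} (polynomial growth of coset nilprogressions in normal form), with Lemma \ref{cosn} supplying both the $\eps$-shrinking step that forces $(A')^4\subset P\subset A^4$ and the $O_K(1)$-approximate group property for the shrunk progression. Two minor remarks: Theorem \ref{main-theorem} gives $|P|\gg_K|A|$ directly (the covering of $A$ by $O_K(1)$ translates of $P$ is then a consequence via Ruzsa's covering lemma, not the other way around), and the normal-form axioms already encode the commutator calculus you need, so there is no separate appeal to Baker--Campbell--Hausdorff or Mal'cev coordinates required.
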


This theorem is an immediate consequence of Theorem \ref{main-theorem} and Proposition \ref{poly} below.

\begin{remark} The above argument converted nilpotent structure (or more precisely, coset nilprogression structure, see below) to polynomial growth.  In the reverse direction, there is the result of Sanders \cite{sanders-monomial} in certain monomial groups, in which polynomial growth is shown to imply a metric ball type structure, at least under the (rather strong) restriction that the approximate group $A$ is normal in the ambient group $G$.
\end{remark}

\noindent\textsc{Acknowledgments.} EB is supported in part by the ERC starting grant 208091-GADA. He also acknowledges support from MSRI where part of this work was finalized. TT is supported by a grant from the MacArthur Foundation, by NSF grant DMS-0649473, and by the NSF Waterman award.

The first author would like to thank E.~Lindenstrauss from whom he first learned about these questions and for several related discussions. We also acknowledge the huge intellectual debt we owe to prior work of Hrushovski \cite{hrush}, without which we would not have started this project. We are grateful to him for several enlightening discussions regarding his work and the subject matter of the present paper. We also thank I.~Goldbring and L.~van den Dries for showing us a preliminary version of their notes on Hilbert's fifth problem and its local versions, and J.~Lott for help with the references. Finally, all three authors would like to thank T.~Sanders for a number of valuable discussions concerning this work.

\section{Coset nilprogressions and a more detailed version of the Main Theorem}\label{precise-sec}

This section concerns the more precise variants of our main theorem, whose existence we hinted at in the first introductory section. Let us first recall the fundamental inverse sumset theorem for abelian approximate groups. This was first introduced by Freiman \cite{freiman-book}, and a simplified argument was subsequently given in the paper \cite{ruzsa-freiman} of Ruzsa. Here is the theorem in the torsion-free setting. Recall the notion of a generalised arithmetic progression, defined in Example \ref{gap} above.

\begin{theorem}[Freiman-Ruzsa theorem]\label{frei-ruz} Let $G = (G,+)$ be a torsion-free \textup{(}global\textup{)} abelian group, and let $K \geq 2$
be a parameter. Suppose that $A \subseteq G$ is a $K$-approximate
group. Then $4A = A+A+A+A$ contains a generalised arithmetic progression
$$P = P(u_1,\ldots,u_r; N_1,\ldots,N_r)$$
with $r \leq \log^{O(1)} K$ and $|P| \gg e^{-\log^{O(1)} K} |A|$.  In particular $A$ can be covered by $O( e^{\log^{O(1)} K})$ translates of $P$.
\end{theorem}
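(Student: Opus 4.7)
The plan is to adapt the classical Ruzsa embedding strategy, with the quantitative gains provided by Sanders's quasipolynomial form of the Bogolyubov--Chang lemma. First I would reduce to the case $G = \Z$. Since $A$ is finite and $G$ is torsion-free abelian, the subgroup $\langle A \rangle$ is a finitely generated free abelian group, hence isomorphic to $\Z^d$ for some $d \geq 0$. A generic $\Z$-linear projection $\pi : \Z^d \to \Z$ may be chosen to be injective on the finite set $8A - 8A$, which makes $\pi$ a Freiman $8$-isomorphism on $A$ and reduces matters to $A \subseteq \Z$. Throughout, the Pl\"unnecke--Ruzsa inequalities applied to the $K$-approximate hypothesis supply the bounds $|mA - nA| \leq K^{O(m+n)}|A|$.

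Next I would pass from $\Z$ to a cyclic group of controlled size. Ruzsa's modelling lemma yields a prime $N \leq C_K |A|$, a subset $A_0 \subseteq A$ with $|A_0| \geq |A|/2$, and a Freiman $8$-isomorphism $\psi : A_0 \to A' \subseteq \Z/N\Z$ satisfying $|A'|/N \geq c_K$. The crucial input is then Sanders's quantitative Bogolyubov--Chang theorem applied to $A'$: because $A'$ has density $\geq c_K$ in $\Z/N\Z$, the set $2A' - 2A'$ contains a Bohr set $B(\Lambda, \rho) := \{x \in \Z/N\Z : \|\xi x / N\| \leq \rho \text{ for all } \xi \in \Lambda\}$ with $|\Lambda| \leq \log^{O(1)} K$ and $\rho \geq 1/\log^{O(1)} K$.

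The final step is to find a generalised arithmetic progression inside this Bohr set and then pull back. Applying Minkowski's second theorem in the geometry of numbers to the lattice cut out by the Bohr frequencies produces a proper progression $P_0$ of rank $r = |\Lambda|$ satisfying $P_0 \subseteq B(\Lambda, \rho) \subseteq 2A' - 2A'$ and $|P_0| \gg (\rho/r)^r N \geq e^{-\log^{O(1)} K} |A|$. Since $A$ is symmetric we have $2A' - 2A' = 4A'$, and the Freiman $8$-isomorphism $\psi^{-1}$ pulls $P_0$ back to a generalised arithmetic progression $P \subseteq 4A_0 \subseteq 4A$ of the same rank $r$ and the same cardinality. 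Undoing the projection $\pi$ by lifting the generators of $P$ to $\Z^d \subseteq G$ preserves the progression structure and places $P$ inside $4A$. The covering assertion $A \subseteq X + P$ with $|X| = O(e^{\log^{O(1)} K})$ then follows from Ruzsa's covering lemma applied to the pair $(A, P)$, using $|A| \leq e^{\log^{O(1)} K}|P|$ together with $|A + P| \leq |5A| \leq K^{O(1)}|A|$.

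The main obstacle is obtaining the polylogarithmic bounds $|\Lambda|, 1/\rho \leq \log^{O(1)} K$ in the Bogolyubov step. A direct application of Chang's theorem together with Ruzsa's original argument would yield only polynomial-in-$K$ rank and an exponentially small density of the form $e^{-K^{O(1)}}$; the claimed bounds genuinely require Sanders's delicate $L^2$ almost-periodicity machinery for Bourgain systems. Every other ingredient (Pl\"unnecke--Ruzsa, Ruzsa modelling, Minkowski's second theorem, and Ruzsa covering) is essentially routine and well documented in the literature.
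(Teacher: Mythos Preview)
Your proposal is correct and matches the paper's approach: the paper simply cites Sanders \cite{sanders-freiman} for the main statement and then invokes the Ruzsa covering lemma (Lemma~\ref{loc}) for the final covering assertion, while you have sketched the actual content of Sanders's argument (reduction to $\Z$, Ruzsa modelling into $\Z/N\Z$, the quasipolynomial Bogolyubov--Chang step, geometry of numbers, and pullback), followed by the same covering lemma.
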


\begin{proof}  See \cite{sanders-freiman} for the main part of this; the final assertion is then a consequence of Ruzsa's covering lemma, Lemma \ref{loc}.  For earlier results of this type with weaker bounds on $r$ and $P$, see \cite{chang-freiman,ruzsa-freiman}.  In \cite{green-tao-compression} it was noted that one can take $r$ as small as $\lfloor \log_2 K + \eps\rfloor$ for any $\eps > 0$, at the cost of decreasing the size of $|P|$ somewhat; see also \cite{bilu-2,bilu} for prior results along these lines.
\end{proof}

Roughly speaking, Theorem \ref{frei-ruz} asserts that, in a global torsion-free abelian group such as the integers $\Z$, approximate groups are ``controlled'' by generalised arithmetic progressions of bounded rank.  In the case of abelian groups with torsion, the class of generalised arithmetic progressions is not sufficient, as one must also now deal with the example of finite genuine groups (Example \ref{fing}).  It is thus natural to introduce the concept of a \emph{coset progression} $H+P$: the sum of a finite genuine group $H$ and a generalised arithmetic progression $P = P(u_1,\ldots,u_r;N_1,\ldots,N_r)$. This concept is sufficient for the formulation of a Freiman type theorem in an arbitrary abelian group.

\begin{theorem}[Abelian Freiman-Ruzsa theorem]\label{frei-ruz-gen} Let $G = (G,+)$ be a \textup{(}global\textup{)} abelian group, and let $K \geq 2$
be a parameter. Suppose that $A \subseteq G$ is a $K$-approximate
group. Then $4A$ contains a coset progression $H+P$, where
$$P = P(u_1,\ldots,u_r; N_1,\ldots,N_r)$$
is a generalised arithmetic progression with $r \leq \log^{O(1)} K$, $H$ is a finite abelian subgroup disjoint from $P$, and $|H+P| = |H| |P| \gg e^{-\log^{O(1)} K} |A|$.  In particular, $A$ can be covered by $O( e^{\log^{O(1)} K})$ translates of $H+P$.
\end{theorem}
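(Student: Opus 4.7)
The plan is to follow the by-now-standard two-step approach of Green--Ruzsa, upgraded using the polylogarithmic Bogolyubov--Ruzsa estimates of Sanders. The torsion-free case (Theorem \ref{frei-ruz}) handles progressions but has no mechanism to produce the finite subgroup $H$; the general case requires one extra geometric step to extract $H$ from a Bohr-like structure.

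\textbf{Step 1 (Bogolyubov--Ruzsa in a general abelian group).} First I would apply Sanders' version of the Bogolyubov--Ruzsa lemma, which works in an arbitrary abelian group, directly to $A$. Passing to the subgroup $\langle A\rangle$ generated by $A$ and using the symmetry of $A$, one has $4A=2A-2A$, and the convolution $1_A * 1_{-A}*1_A*1_{-A}$ is supported in $4A$. A Fourier-analytic argument (where the Pontryagin dual $\widehat{\langle A\rangle}$ replaces $\R/\Z$, accommodating torsion) shows that this convolution is bounded below on the Bohr set $B(\Gamma,\eta):=\{x: |1-\chi(x)|<\eta \text{ for all }\chi\in \Gamma\}$, where $\Gamma\subseteq \widehat{\langle A\rangle}$ is the $\eta$-spectrum of $1_A$. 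Sanders' refinement of Chang's theorem then produces a subset $\Gamma'\subseteq \widehat{\langle A\rangle}$ of cardinality $r\leq \log^{O(1)}K$ whose associated Bohr set $B\subseteq B(\Gamma,\eta)$ still has density $\gg e^{-\log^{O(1)}K}$ in $\langle A\rangle$ and sits inside $4A$.

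\textbf{Step 2 (From Bohr sets to coset progressions).} Next I would inscribe a coset progression inside $B$. One has a short exact sequence $0\to T\to \langle A\rangle\to \langle A\rangle/T\to 0$, where $T$ is the torsion subgroup of $\langle A\rangle$. Restricted to $T$, the $r$ characters in $\Gamma'$ cut out a finite subgroup $H\subseteq T$ on which every $\chi\in \Gamma'$ is trivial, and the complementary free (or divisible) part of $\langle A\rangle/T$ spanned by $\Gamma'$ admits a Minkowski-type basis. A standard geometry-of-numbers argument in $\R^r$ (as in the original Ruzsa argument) then produces vectors $u_1,\ldots,u_r$ and lengths $N_1,\ldots,N_r$ such that the generalized arithmetic progression $P=P(u_1,\ldots,u_r;N_1,\ldots,N_r)$ satisfies $H+P\subseteq B\subseteq 4A$ and $|H+P|=|H|\,|P|\gg 2^{-r}\,|B|\gg e^{-\log^{O(1)}K}|A|$. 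Disjointness of $H$ from $P$ can be arranged by shrinking the $N_i$ by a factor controlled by the free rank, which is absorbed into the exponential loss.

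\textbf{Step 3 (Covering).} Finally, the covering assertion follows from Ruzsa's covering lemma (Lemma \ref{loc}): since $H+P\subseteq 4A$ is contained in $A^{O(1)}$ and $|H+P|\gg e^{-\log^{O(1)}K}|A|$, Plünnecke--Ruzsa gives $|A+(H+P)|\leq K^{O(1)}|A|$, and hence $A$ is covered by $\leq K^{O(1)}|A|/|H+P|\ll e^{\log^{O(1)}K}$ translates of $H+P$.

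The main obstacle is Step 1: obtaining the polylogarithmic rank $r\leq \log^{O(1)}K$ rather than merely polynomial-in-$K$ requires the full strength of Sanders' almost-periodicity method applied in the presence of torsion, and is the only part of the argument which is not essentially routine once the torsion-free version is available. Steps 2 and 3 are standard adaptations of the abelian Freiman toolkit.
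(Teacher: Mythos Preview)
Your proposal is correct and is essentially the content of the reference the paper cites: the paper does not give its own proof of this theorem but simply refers to Sanders \cite{sanders-freiman} (with Green--Ruzsa \cite{green-ruzsa} for the earlier, weaker-bound version), and your three-step outline (Sanders' polylogarithmic Bogolyubov--Ruzsa, geometry of numbers to pass from Bohr sets to coset progressions, Ruzsa covering) is exactly the architecture of that cited argument.

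One small imprecision worth flagging in Step~2: the finite subgroup $H$ is not naturally extracted from the torsion subgroup of $\langle A\rangle$ as you describe, but rather as the common kernel $\bigcap_{\chi\in\Gamma'}\ker\chi$ of the characters defining the Bohr set; the quotient $\langle A\rangle/H$ then embeds (via the characters) into a torus where the Minkowski/geometry-of-numbers step produces the progression $P$. Your version conflates two filtrations, but the correct one is no harder and yields the same bounds.
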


\begin{proof} Again, see \cite{sanders-freiman}; see also \cite{green-ruzsa} for an earlier result in this direction.
\end{proof}

We turn now to the business of dropping the commutativity assumption. We will also drop the assumption that $A$ is contained in a \emph{global} group and merely assume that $A$ is a subset of a \emph{local} group $G$. Informally, this means that we will not require the multiplication  law to be defined everywhere in $G$, but only in a certain neighborhood of $\id$. We refer the reader to Appendix \ref{local-sec} for a precise definition and basic properties; see also \cite[IV.3]{serre} for a discussion of the closely related notion of \emph{group chunk}.  We generalise the concept of a generalised arithmetic progression to this setting as follows.

\begin{definition}[Non-commutative progression] Let $u_1,\ldots,u_r$ be $r$ elements in a local group $G = (G,\cdot)$, and let $N_1,\ldots,N_r$ be $r$ positive \emph{real numbers}. If all products  $g_1\ldots g_n$ are well-defined in $G$, where each $g_i$ is equal to one of $u_j$ or $u_j^{-1}$ and, for each $j=1,\ldots,r$, the formal expression\footnote{For this definition, we consider $u_i$ and $u_j$ to be distinct formal expressions when $i \neq j$, even if $u_i$ and $u_j$ take the same value in $G$, and similarly for $u_i^{-1}, u_j^{-1}$.  Thus, for instance, $P(u_1,u_2;1,1)$ contains $u_1 u_2$ even if $u_1,u_2$ are equal.} $u_j$ and its inverse $u_j^{-1}$ appear at most $N_j$ times, then we call the set of such products a non-commutative progression of rank $r$ and side lengths $N_1,\ldots,N_r$ and we denote it by $P(u_1,\ldots,u_r; N_1,\ldots,N_r)$. We refer to $r$ as the \emph{rank} of the non-commutative progression.
\end{definition}

\begin{remark} One can view non-commutative progressions as multiparameter variants of balls in a word metric.  For instance when all $N_j$ take the same value $N$ and one is working in a global group, the progression $P(u_1,\ldots ,u_r;N,\ldots ,N)$ is comparable with the  word ball $B(N)$ of radius $N$ in the group $\langle u_1,\ldots u_r \rangle$ for the word metric with generating set $\{u_1,\ldots ,u_r\}$ in the sense that $B(N) \subseteq P(u_1,\ldots ,u_r;N,\ldots ,N) \subseteq B(rN)$.\end{remark}

In the global abelian setting, all generalised arithmetic progressions of bounded rank are automatically approximate groups with a bounded covering parameter $K$.  This is not the case in general non-abelian groups, even in the global setting.  For instance, if $F$ is the free non-abelian group on two generators $e_1, e_2$, then the non-commutative progression $P(e_1,e_2; N,N)$ (which, as remarked earlier, is essentially the ball of radius $N$ in $F$) grows exponentially in $N$, and one can easily verify that $P(e_1,e_2; N,N)$ is only a $K$-approximate group for $K$ growing exponentially in $N$.  However, the situation is much closer to the abelian case if the ambient group $G$ is \emph{nilpotent}. Given the link between progressions and balls, the reader familiar with Gromov's theorem on groups of polynomial growth \cite{gromov} (to be discussed later on) will not find this surprising.
  Indeed, it can be shown (though we will not do so here) that if $G$ is a global nilpotent group of step $s$, a non-commutative progression $P(u_1,\ldots,u_r;N_1,\ldots,N_r)$ in $G$ will be a $O_{r,s}(1)$-approximate group if $N_1,\ldots,N_r$ are sufficiently large depending on $r$ and $s$.

This motivates the following definition. Given some generators $u_1,\ldots,u_r$, let us recursively define an \emph{iterated commutator of degree $k$} involving these generators for a natural number $k \geq 1$ by declaring $u_1^{\pm 1},\ldots,u_r^{\pm 1}$ to be the iterated commutators of degree $1$, and $[g,h]$ to be a iterated commutator of degree $j+k$ whenever $g, h$ are iterated commutators of weight $j,k$ respectively for some $j,k \geq 1$.  Thus for instance $[[u_2,u_3^{-1}],[u_2^{-1},u_4]]$ is an iterated commutator of $u_1,u_2,u_3,u_4$ of degree $4$.

\begin{definition}[Nilprogression]\label{nilprogression-def} Suppose that $G$ is a local group and that $s \geq 0$ is an integer. A \emph{nilprogression} of rank $r$ and $s$ is a non-commutative progression $P(u_1,\ldots ,u_r;N_1,\ldots ,N_r)$ with the property that every iterated commutator of degree $s+1$ in the generators $u_1,\ldots ,u_r$ is well-defined and equals the identity $\id$.
\end{definition}

\begin{example} The generalised arithmetic progressions $P(u_1,\ldots,u_r;N_1,\ldots,N_r)$ in Example \ref{gap} is a nilprogression (in additive notation) of rank $r$ and step $1$.  The set $P(u_1,u_2;N_1,N_2)$ in Example \ref{heisen-ex} is a nilprogression of rank $2$ and step $2$.
\end{example}

It can be shown (though we shall not do so here) that if $N_1,\ldots,N_r$ are sufficiently large depending on $r,s$, and $P(u_1,\ldots,u_r;CN_1,\ldots,CN_r)$ is a well-defined nilprogression of step $s$ for some sufficiently large $C$ depending on $r, s$, then $P(u_1,\ldots,u_r; N_1,\ldots,N_r)$ is a $O_{r,s}(1)$-approximate group.

The concept of a nilprogression as defined above is related to, though not quite identical with, the one given in \cite{breuillard-green}. As a byproduct of our proof methods, we will be able to work with a more tractable subclass of nilprogressions, which we will call \emph{nilprogressions in $C$-normal form}. These generalise the notion of a \emph{proper} generalised arithmetic progression in the additive combinatorics literature, and are also close in spirit to the nilprogressions introduced in \cite{tao-solvable}.

\begin{definition}[$C$-normal form]\label{normal-def}
Let $C \geq 1$.  A noncommutative progression \[ P(u_1,\ldots ,u_r;N_1,\ldots ,N_r)\] is said to be in \emph{$C$-normal form} if the following axioms are obeyed.
\begin{enumerate}
\item (Upper-triangular form) For every $i,j$ with $1 \leq i < j \leq r$ and for all four choices of signs $\pm$ one has
\begin{equation}\label{comm}
[u_i^{\pm 1},u_j^{\pm 1}] \in P\left( u_{j+1}, \ldots, u_r; \frac{C N_{j+1}}{N_i N_j}, \ldots, \frac{C N_r}{N_i N_j} \right).
\end{equation}
In particular, $[u_i,u_r] = \id$ whenever $1 \leq i < r$.
\item (Local properness) The expressions $u_1^{n_1} \ldots u_r^{n_r}$ are distinct as $n_1,\ldots,n_r$ range over integers with $|n_i| \leq \frac{1}{C} N_i$, $i = 1,\dots,r$.
\item (Volume bound) One has
\begin{equation}\label{cnn}
 \frac{1}{C} (2\lfloor N_1\rfloor +1) \ldots (2\lfloor N_r\rfloor +1) \leq |P| \leq C (2\lfloor N_1\rfloor +1) \ldots (2\lfloor N_r\rfloor+1).
 \end{equation}
\end{enumerate}
\end{definition}

The somewhat ugly expression $(2\lfloor N_1\rfloor +1) \ldots (2\lfloor N_r\rfloor +1)$ is convenient to have in \eqref{cnn} for some minor technical reasons, but it would not do much harm for the reader to mentally substitute $N_1 \ldots N_r$ for this expression instead if desired.  The volume bound \eqref{cnn} is morally (up to some degradation in the constants $C$) implied by the other axioms of a nilprogression in $C$-normal form, when the $N_1,\ldots,N_r$ are sufficiently large, and one is working in a global group (or at least if one assumes $P(u_1,\ldots ,u_r;DN_1,\ldots ,DN_r)$ to be well-defined for some sufficiently large $D=D_{r,s}$), but for some further minor technical reasons it is convenient to state this bound explicitly in the definition.

\begin{example} The generalised arithmetic progressions $P(u_1,\ldots,u_r; N_1,\ldots,N_r)$ in Example \ref{gap} will be in $1$-normal form if it is proper, i.e. if all the expressions $n_1 u_1 + \ldots + n_r u_r$ for $|n_i| \leq N_i$ are distinct.
\end{example}

\begin{example} The set $P(u_1,u_2;N_1,N_2)$ in Example \ref{heisen-ex} is not in $C$-normal form for any bounded $C$, because $[u_1,u_2]$ is non-trivial.  However, the closely related nilprogression
$$ P( u_1, u_2, [u_1,u_2]; N_1, N_2, N_1 N_2 )$$
of rank $3$ and step $2$ is in $1$-normal form.  The two sets are ``comparable'' in a number of ways; for instance, one can easily verify that
$$ P(u_1,u_2; \frac{1}{C} N_1, \frac{1}{C} N_2) \subset P( u_1, u_2, [u_1,u_2]; N_1, N_2, N_1 N_2 ) \subset P(u_1,u_2; C N_1, C N_2) $$
for some absolute constant $C$ (e.g. one can take $C=100$).
\end{example}

\begin{remark}\label{step} Note that in the global group case, the step of a nilprogression in $C$-normal form is less or equal to its rank.
\end{remark}

In Lemma \ref{cosn} we will show that any non-commutative progression $P(u_1,\ldots,u_r;N_1,\ldots,N_r)$ in $C$-normal form is ``essentially'' a $O_{r,C}(1)$-approximate group.  More precisely, we will show that $P(u_1,\ldots,u_r; \eps N_1,\ldots,\eps N_r)$ is a $O_{r,C,\eps}(1)$-approximate group whenever $\eps>0$ is sufficiently small and the $N_i$'s are sufficiently large depending on $C, r$ . We will also show that every element of $P(u_1,\ldots,u_r; \eps N_1,\ldots,\eps N_r)$ can be rewritten in the form $u_1^{n_1} \ldots u_r^{n_r} h$, where $h \in H$ and $|n_i|=O_{r,s}(\eps N_i)$, while conversely every such product with $|n_i| \leq \eps N_i$ obviously belongs to $P(u_1,\ldots,u_r; \eps N_1,$ $\ldots,\eps N_r)$.

Just as in the abelian case, we need to account for genuine subgroups. The analogue of coset progression is a coset nilprogression, a concept we first define in the simpler setting of global groups.

\begin{definition}[Global coset nilprogression]\label{coset-nilprogression-def-global}
Let $G$ be a (global) group. By a coset nilprogression of rank $r$ and step $s$ in $G$, we mean a set $P$ of the form $\pi^{-1}(Q)$, where $G_0$ is a subgroup of $G$, $H$ is a finite normal subgroup of $G_0$, $\pi: G_0 \to G_0/H$ is the quotient map, and $Q$ is a nilprogression of rank $r$ and step $s$ in $G_0/H$.

We say that $P$ is in \emph{$C$-normal form} if $Q$ is in $C$-normal form.
\end{definition}

We can extend this definition to local groups, using the local notion of quotient group reviewed in Lemma \ref{quotient}.

\begin{definition}[(Local) coset nilprogression]\label{coset-nilprogression-def}
Let $G$ be a (local) group, which we endow with the discrete topology. By a coset nilprogression of rank $r$ and step $s$ in $G$, we mean a set $P$ of the form $\pi^{-1}(Q)$, where $H$ is a finite genuine subgroup of $G$ with a cancellative normalising neighbourhood $G_0$, $W$ is a neighbourhood of $H$ in $G_0$ with $W^6 \subset G_0$, $WH=HW=W$, $\pi: W \to W/H$ is the quotient map defined in Lemma \ref{quotient}, and $Q$ is a nilprogression of rank $r$ and step $s$ in $W/H$.

We say that $P$ is in \emph{$C$-normal form} if $Q$ is in $C$-normal form.

We call $H$ the \emph{finite group} associated with $P$, and $Q$ the \emph{nilprogression} associated with $P$.  If $Q = P(u_1,\ldots,u_r;N_1,\ldots,N_r)$, then we write $P = P_H(u_1,\ldots,u_r;N_1,\ldots,N_r)$.
\end{definition}

\begin{example} A subgroup is a coset nilprogression of rank $0$ and step $0$.  More generally, the direct product of a subgroup with a nilprogression of rank $r$ and step $s$ is a coset nilprogression of rank $r$ and step $s$.  The coset nilprogression will be in $C$-normal form if the associated nilprogression is.
\end{example}

\begin{example} The set $A$ constructed in Example \ref{helfex} is a coset nilprogression of rank $1$ and step $1$, and is also in $1$-normal form as long as $N < \frac{p-1}{2}$.
\end{example}

Again, coset nilprogressions in normal form are essentially approximate groups; see Lemma \ref{cosn} for a precise version of this statement.

We are now ready to state our main technical theorem, which among other things implies Theorem \ref{hl-conj}, and whose proof will occupy the bulk of this paper.

\begin{theorem}[Main theorem] \label{main-theorem}
Let $A$ be a $K$-approximate group. Then $A^4$ contains a coset nilprogression $P$ of rank and step $O_K(1)$ and $|P| \gg_K |A|$.
Furthermore, $P$ can be taken to be in $O_K(1)$-normal form.
\end{theorem}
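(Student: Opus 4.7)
The plan is to argue by contradiction through the ultraproduct correspondence principle and Hrushovski's Lie model theorem (Theorem \ref{lie-model}). Suppose the statement fails: then for some fixed $K$ there exists a sequence of $K$-approximate groups $A_n$ such that no coset nilprogression $P$ of rank and step bounded by $n$, in $n$-normal form, with $|P| \geq |A_n|/n$, is contained in $A_n^4$. Taking a nonstandard ultraproduct $\A = \prod_{n \to \omega} A_n$ yields an ultra approximate group in a local group setting. The correspondence principle passes this to a locally compact (local) group, and Hrushovski's Lie model theorem produces a ``good model'' $\pi : \langle \A \rangle \to L$, where $L$ is a connected Lie group and $\pi$ is a locally defined approximate homomorphism such that $\pi(\A)$ is precompact and contains a neighbourhood of the identity.

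The second step is where the Gleason-type arguments enter, and this is also where I expect the main difficulty. Working inside the Lie model $L$, one uses an escape-norm and one-parameter subgroup analysis (of the kind developed in the solution to Hilbert's fifth problem) to control commutators on the pullback side. The key point is to show that, after quotienting by the finite ``kernel'' $\mathbf{H} := \ker(\pi) \cap \A^2$ (which turns out to be a genuine finite normal subgroup of a cancellative normalising neighbourhood $\G_0 \subset \A^4$), the remaining structure is controlled by a nilpotent Lie algebra $\mathfrak{l}$ of dimension $O_K(1)$. Concretely, choose a basis $X_1, \ldots, X_r$ of $\mathfrak{l}$ adapted to its lower central series, and lift the one-parameter subgroups $\exp(t X_i) \subset L$ to generators $u_1,\ldots, u_r \in \A/\mathbf{H}$. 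Calibrating scales $N_1,\ldots,N_r > 0$ so that $u_i^{N_i}$ corresponds to a step of unit size in the appropriate graded piece of $\mathfrak{l}$, and applying the Baker-Campbell-Hausdorff formula on the $L$-side, yields the upper-triangular commutator bounds \eqref{comm} required by $C$-normal form.

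With $Q := P(u_1,\ldots, u_r; cN_1,\ldots, cN_r)$ for a suitable small $c > 0$, and with the basis chosen adapted to the graded lower central series, local properness in Definition \ref{normal-def}(ii) follows from injectivity of $\pi$ on the set $\{ u_1^{n_1}\cdots u_r^{n_r} : |n_i| \leq \tfrac{1}{C}N_i\}$ modulo $\mathbf{H}$, which in turn follows from injectivity of the polynomial coordinate map on a neighbourhood in the nilpotent Lie group, and the volume bound \eqref{cnn} then follows from comparing Haar measure on the image in $L$ with the counting measure. Setting $P := \pi^{-1}(Q) \subset \A^4$ produces a coset nilprogression in $O(1)$-normal form inside $\A^4$ with $|P| \gg |\A|$ in the nonstandard sense; this translates back to the finite setting, contradicting the assumed sequence of counterexamples.

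The hard part is the second step: Hrushovski's theorem delivers only a Lie model, with no a priori bound on the nilpotency class or on the dimension in terms of $K$, and in general $L$ need not even be nilpotent. The essence of the Gleason-type argument is to use the commutator structure of an approximate group (captured through escape-norm estimates and the fact that $A$ is covered by few translates of itself) to rule out any non-nilpotent behaviour in the connected identity component of $L$ at small scales, and moreover to bound the dimension by $O_K(1)$. Executing this cleanly while simultaneously arranging the three conditions in Definition \ref{normal-def} --- especially balancing local properness against the volume estimate --- is the technical heart of the proof, and will likely occupy the bulk of the paper's later sections.
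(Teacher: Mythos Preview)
Your overall framework --- contradiction, ultraproduct, Lie model, then translate back --- is correct and matches the paper. The reduction to the NSS case by quotienting out $\ker(\pi)$ is also essentially right (though this quotient requires first passing to a \emph{strong} approximate subgroup, constructed via the Lie model, so that the Gleason lemmas apply and the zero-escape-norm set is a genuine internal subgroup). Where your proposal diverges from the paper, and where it has a genuine gap, is in the construction of the nilprogression itself.

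Your plan is to choose a basis $X_1,\ldots,X_r$ of $\mathfrak{l}$ adapted to the lower central series, lift the one-parameter subgroups to elements $u_i \in \A/\mathbf{H}$ with $\pi(u_i) = \exp(\epsilon_i X_i)$, and read off the upper-triangular relation \eqref{comm} from Baker--Campbell--Hausdorff on the $L$-side. This does not work. Although $\A/\mathbf{H}$ embeds injectively into $L$, the commutator $[u_i,u_j]$ --- a specific element of $\A/\mathbf{H}$, hence of $L$ --- need not be an \emph{integer word} in the chosen $u_k$ for $k>j$. For instance, already in a free step-$3$ nilpotent Lie group the BCH expansion of $[\exp(\epsilon_1 X_1),\exp(\epsilon_2 X_2)]$ produces coefficients like $\tfrac{1}{2}\epsilon_1^2\epsilon_2$ on the cubic basis vectors; no choice of scalings $\epsilon_k$ makes all such quotients integral simultaneously. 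So \eqref{comm} fails, and with it local properness and the volume bound. Relatedly, nilpotency of $L$ is not an input here but a \emph{consequence} of the actual construction (Proposition~\ref{nilmodel}); your proposal to ``rule out non-nilpotent behaviour'' via escape norms is pointing in the right direction but is not how the argument runs.

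The paper's route is genuinely different: it builds the nilprogression one generator at a time, by induction on $\dim L$ (Lemma~\ref{induction}). At each stage one picks $u\in\A'$ of \emph{minimal nonzero escape norm} --- this step uses finiteness of the $A_\n$ in an essential way --- and the Gleason commutator estimate $\|[g,h]\|_{e} \ll \|g\|_{e}\|h\|_{e}$, applied on the approximate-group side rather than on $L$, forces $u$ to be central in a slightly smaller approximate subgroup. One then quotients \emph{locally} by the geometric progression $\{u^n:|n|\le 1/\|u\|_{e}\}$ (not by the global group $\langle u\rangle$; this local quotient is what preserves the NSS property), shows the quotient has a Lie model of dimension $\dim L - 1$, and recurses. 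The upper-triangular form then comes for free: the newest generator $u_{r+1}=u$ is exactly central, and the relations among $u_1,\ldots,u_r$ are lifted from the inductive hypothesis via a delicate lifting lemma (Lemma~\ref{lift}) that again uses the escape norm to pick good representatives. So the Gleason lemmas are not merely a device for establishing nilpotency of $L$; they are the engine that produces each generator and each commutator relation of the nilprogression.
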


We remark that precursor results to this theorem in the case of nilpotent or solvable groups were obtained in \cite{breuillard-green,bg,fisher-katz-peng,gill-helfgott, tao-noncommutative,tao-solvable}.  Theorem \ref{main-theorem} also provides an independent proof of a qualitative version of the abelian results of Theorem \ref{frei-ruz} and Theorem \ref{frei-ruz-gen}, which, in contrast to the other known proofs of these results, manages to almost completely avoid the use of Fourier analysis\footnote{However, our argument still uses results relating to Hilbert's fifth problem which require Fourier-analytic tools, such as Pontryagin duality, even in the abelian setting.}.

It is easy to see that Theorem \ref{main-theorem} implies Theorem \ref{hl-conj}, by taking $G_0$ to be the global group generated by $P$. The key point here is that a group generated by a set $u_1,\ldots,u_r$ is nilpotent of step at most $s$ if every iterated commutator of the $u_1,\ldots,u_r$ of degree $s+1$ is trivial. A proof of this assertion may be found in Hall's book \cite{MHall}.\vspace{11pt}

By standard non-commutative product estimates, we can also establish the following Freiman-type theorem for sets of bounded doubling.

\begin{corollary}[Freiman-type theorem] \label{main-theorem-doubling}
Let $K\geq 1$. Let $G$ be a \textup{(}global\textup{)} group and $A, B$ be finite non-empty subsets of $G$ such that $|AB|\leq K|A|^{1/2}|B|^{1/2}$.  Then there exists a coset nilprogression $P$ of rank and step $O_K(1)$ with $|P| \gg_K |A|$ which is in $O_K(1)$-normal form, such that $A$ can be covered by $O_K(1)$ left-translates of $P$, and $B$ can be covered by $O_K(1)$ right-translates of $P$.
\end{corollary}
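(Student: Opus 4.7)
The strategy closely follows the template of Corollary \ref{hl-conj-dub}: reduce the bilateral hypothesis on $(A,B)$ to a single approximate group, apply Theorem \ref{main-theorem}, and then propagate the conclusion back using Ruzsa covering.

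First, I would invoke a bilateral version of \cite[Theorem 4.6]{tao-noncommutative}. The hypothesis $|AB| \leq K|A|^{1/2}|B|^{1/2}$, combined with the trivial bound $|AB| \geq \max(|A|, |B|)$, forces $|A|$ and $|B|$ to be comparable up to a factor of $K^2$, so that non-commutative Pl\"unnecke-Ruzsa calculus applies symmetrically. One thereby obtains an $O(K^{O(1)})$-approximate group $A'$ with $|A'| \ll_K |A|$, together with finite sets $X, Y$ of cardinality $O(K^{O(1)})$ such that $A \subseteq X A'$ and $B \subseteq A' Y$.

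Next, I would apply Theorem \ref{main-theorem} to $A'$, producing a coset nilprogression $P \subseteq (A')^4$ of rank and step $O_K(1)$, in $O_K(1)$-normal form, with $|P| \gg_K |A'| \gg_K |A|$. This already gives the required structural object; what remains is to cover $A$ and $B$ by few translates of $P$.

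By Lemma \ref{cosn} the coset nilprogression $P$ is itself an $O_K(1)$-approximate group. Since $P \subseteq (A')^4$ and $|P| \gg_K |A'|$, we have $|PA'|, |A'P| \leq |(A')^5| \ll_K |A'| \ll_K |P|$, so Ruzsa's covering lemma (Lemma \ref{loc}) yields finite sets $X', Y'$ of cardinality $O_K(1)$ with $A' \subseteq X' P$ and $A' \subseteq P Y'$. Composing the inclusions, $A \subseteq (XX')\, P$ and $B \subseteq P\, (Y' Y)$, which are precisely the desired coverings by $O_K(1)$ left- and right-translates of $P$ respectively.

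The only subtle point is ensuring that the Pl\"unnecke-Ruzsa extraction in the first step really can be arranged so that a single approximate group $A'$ controls $A$ on the left and $B$ on the right simultaneously; this is standard but requires care with left/right conventions in the non-commutative Ruzsa calculus. Once this is in place, the rest is essentially bookkeeping.
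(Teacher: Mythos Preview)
Your proposal is correct and follows essentially the same route as the paper, which simply invokes Theorem \ref{main-theorem} together with \cite[Theorem 4.6]{tao-noncommutative} without further elaboration. The extra Ruzsa-covering step you supply to pass from translates of the approximate group $A'$ to translates of the coset nilprogression $P$ is the natural way to unpack that one-line combination.
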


\begin{proof}  This follows immediately from combining Theorem \ref{main-theorem} with \cite[Theorem 4.6]{tao-noncommutative}.
\end{proof}

In Section \ref{rank-reduction}, we will show the following explicit bounds on the rank and step of $P$.

\begin{theorem}[Bounds on the rank and step of the nilprogression]\label{dimension-bound} In Corollary \ref{main-theorem-doubling} \textup{(}and in Theorem \ref{main-theorem} if  $A$ is assumed to be a \emph{global} $K$-approximate group\textup{)}, at the expense of replacing the conclusion $P \subseteq A^4$ with the weaker statement that $P \subseteq A^{12}$, the coset nilprogression $P$ can be taken to have rank and step at most $O(K^2 \log K)$ while remaining in $O_K(1)$-normal form. Moreover, if we settle for the weaker inclusion $P \subset A^{O_K(1)}$, one can ensure that $P$ has rank and step at most $6\log_2 K$ \textup{(}while still remaining in $O_K(1)$-normal form\textup{)}.
\end{theorem}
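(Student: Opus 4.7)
The plan is to start from the coset nilprogression $P_0 \subseteq A^4$ produced by Theorem \ref{main-theorem}, which has rank and step $r_0 = O_K(1)$ and is in $O_K(1)$-normal form with $|P_0| \gg_K |A|$. From this starting point I would run a rank-reduction procedure that successively replaces $P_0$ by a sub-coset-nilprogression of strictly smaller rank at the cost of enlarging the ambient power of $A$. The step bound then follows automatically from Remark \ref{step}, since in a $C$-normal-form nilprogression the step is at most the rank.

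The core mechanism is a volume-doubling comparison. For a coset nilprogression $P = P_H(u_1,\ldots,u_r;N_1,\ldots,N_r)$ in $C$-normal form, the local properness and volume bound \eqref{cnn} in Definition \ref{normal-def} give $|P\cdot P|/|P| \geq c(C,r)\,2^r$, while the approximate-group hypothesis $P \subseteq A^m$ together with $|P|\geq c_K|A|$ gives $|P\cdot P| \leq |A^{2m}| \leq (K^{2m-1}/c_K)|P|$. Comparing these inequalities yields $r \leq (2m-1)\log_2 K + O_K(1)$, which is logarithmic in $K$ once the $K$-dependent error is absorbed into iterations. To realise the sharp $6\log_2 K$ bound I would apply a one-step reduction: given $P$ in $C$-normal form, pigeonhole the elements of $P$ into translates of the rank-$(r-1)$ subprogression $P^{-} = P_H(u_1,\ldots,u_{r-1};N_1,\ldots,N_{r-1})$ by powers of $u_r$, select the most populous translate, and absorb the $u_r$-contribution into the remaining side lengths using the upper-triangular commutator relations \eqref{comm}. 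The resulting sub-coset-nilprogression lies in $P^3 \subseteq A^{3m}$ and remains in $O_C(1)$-normal form after renormalising.

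Iterating this reduction $O_K(1)$ times starting from $P_0 \subseteq A^4$ produces the sharp bound $r \leq 6\log_2 K$ inside $A^{O_K(1)}$; terminating the iteration earlier (after enough steps to keep the ambient exponent bounded by $12$) yields the weaker bound $r = O(K^2\log K)$ with $P \subseteq A^{12}$. The main obstacle is the non-commutative rank-reduction lemma itself: dropping the generator $u_r$ perturbs the commutator relations \eqref{comm} for all remaining generators, so restoring $O_K(1)$-normal form after the reduction requires a careful renormalisation of the side lengths $N_i$, possibly an enlargement of the finite subgroup $H$, and tight control on how the normal-form constant $C$ degrades from one iteration to the next. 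Ensuring that this degradation is uniform enough to keep the cumulative ambient power below $12$ in the first bound (and below $A^{O_K(1)}$ in the second), and that the constant in front of $\log_2 K$ comes out exactly to $6$, is where the detailed bookkeeping takes place.
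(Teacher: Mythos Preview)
Your volume-doubling comparison has a genuine gap: the inequality $|P\cdot P|/|P| \geq c(C,r)\,2^r$ does not follow from the $C$-normal-form axioms. Local properness in Definition~\ref{normal-def} only guarantees distinctness of $u_1^{n_1}\cdots u_r^{n_r}$ for $|n_i|\leq N_i/C$; it says nothing about the range $|n_i|\lesssim 2N_i$ that $P^2$ would occupy. A concrete counterexample: take $r=1$, $H$ trivial, $G=\Z/(2N_1+1)\Z$, and $u_1$ a generator. Then $P = P(u_1;N_1)$ equals all of $G$, is in $1$-normal form, yet $P^2=P$, so the doubling ratio is $1$, not $2$. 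Torsion (or near-torsion) in the ambient group defeats any such lower bound. Your rank-reduction step has a related defect: passing to a single $u_r^{n_r}$-translate of $P^-$ loses a factor of order $N_r$ in cardinality, which is unbounded in $K$; since $u_r$ is central by the upper-triangular axiom \eqref{comm}, there is no commutator mechanism to ``absorb'' it into the remaining side lengths, nor can you enlarge $H$ by $\langle u_r\rangle$ when that group is infinite. So neither half of the mechanism preserves $|P|\gg_K|A|$.

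The paper circumvents exactly this torsion obstruction by working not with the nilprogression but with the Lie model. One slices $\A^4$ against the preimage of an open subgroup of the locally compact model (Lemma~\ref{slice} keeps the covering parameter polynomial in $K$, which is the whole point), obtains a connected nilpotent Lie model $L$, and quotients by its maximal compact central subgroup $N$. The doubling bound is then applied to the image of $\A'$ inside the \emph{simply connected} nilpotent Lie group $L/N$, where Lemma~\ref{dub} holds because the exponential map is a global diffeomorphism and $\log(A^2)\supseteq 2\cdot\log A$; this gives $\dim(L/N)\leq 6\log_2 K$ with no wraparound issues. For the $6\log_2 K$ bound one then uses that the compact kernel $\phi^{-1}(N)$ lands in $\pi(\A'^m)$ for some standard $m$, yielding $P\subset A^{O_K(1)}$. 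For the $A^{12}$ statement one cannot wait for such an $m$, so the compact torus $N$ must be handled directly: this is done by a Fourier-analytic Bogolyubov--Chang argument on $\T^d$ (Lemma~\ref{subtorus}) locating a subtorus of codimension $O(K^2\log K)$ inside $4A$, which is the origin of that particular bound.
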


It is likely that the numerical constants $6$ and $12$ here can be improved, but we will not pursue such improvements here.\vspace{11pt}

\emph{Local approximate groups can be embedded in global groups.} As we have remarked above, the approximate groups $A$ considered in this paper are \emph{local} in the sense that we do not need to assume that $A$ lies in a global group $G$. However as a consequence of Theorem \ref{main-theorem}, the more detailed version of our main theorem, we have the following statement. It asserts that, at least at the qualitative level, there is in fact no loss of generality in dealing with the global case.

\begin{theorem}\label{local-global}
Suppose that $A$ is a $K$-approximate group. Then $A^4$ contains a $O_K(1)$-approximate group $A'$ with $(A')^4 \subset A^4$ and $|A'| \gg_K |A|$ which is isomorphic to a subset of a global group $G$.
\end{theorem}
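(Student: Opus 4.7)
The idea is that Theorem \ref{main-theorem} already shows $A^4$ contains a coset nilprogression $P$ with the structural properties we want, and coset nilprogressions in $C$-normal form are, by design, algebraically close enough to elements of a genuine nilpotent group that they can be lifted to a global model.

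Concretely, apply Theorem \ref{main-theorem} to produce a coset nilprogression $P = P_H(u_1,\dots,u_r; N_1,\dots,N_r) \subseteq A^4$ in $C$-normal form with $C=O_K(1)$, rank and step $O_K(1)$, and $|P| \gg_K |A|$. By Lemma \ref{cosn}, for a sufficiently small $\delta = \delta(K) > 0$ the shrunken coset nilprogression
\[ A' := P_H(u_1,\dots,u_r; \delta N_1,\dots,\delta N_r) \]
is itself a $O_K(1)$-approximate group, satisfies $(A')^4 \subseteq P \subseteq A^4$, and has $|A'| \gg_K |A|$. So the size, approximate-group, and containment conclusions of Theorem \ref{local-global} hold automatically; the only remaining task is to realise $A'$ as a multiplicative subset of a global group.

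To do so, we build such a global group $\tilde G$ directly from the $C$-normal form data. Axiom \eqref{comm} provides explicit integer identities $[u_i,u_j] = u_{j+1}^{m^{ij}_{j+1}} \cdots u_r^{m^{ij}_r}$ in the local quotient $W/H$. Let $\tilde N$ be the group presented on generators $v_1,\dots,v_r$ subject to exactly these commutator relations together with the relations saying that every iterated commutator of degree $s+1$ in the $v_i$ is trivial; this is a finitely generated nilpotent group of step at most $s$. Choose lifts $\tilde u_i \in W$ of the $u_i$; their conjugation action on $H$ gives $\alpha_i \in \operatorname{Aut}(H)$, and because the local commutator identities among the $\tilde u_i$ modulo $H$ match the defining relations of $\tilde N$, the assignment $v_i \mapsto \alpha_i$ extends to a homomorphism $\alpha: \tilde N \to \operatorname{Aut}(H)$. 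Assembling the cocycle data recording, for each pair $i<j$, the element of $H$ by which the local product $\tilde u_i \tilde u_j$ differs from its normal-form counterpart, we obtain a global group $\tilde G$ that is an extension of $\tilde N$ by $H$. Finally define $\Phi : A' \to \tilde G$ by
\[ \Phi\bigl(\tilde u_1^{n_1}\cdots \tilde u_r^{n_r}\, h\bigr) := v_1^{n_1}\cdots v_r^{n_r}\, h \qquad (|n_i| \leq \delta N_i,\ h \in H); \]
local properness in the $C$-normal form together with uniqueness of Malcev-type coordinates in $\tilde N$ make $\Phi$ well-defined and injective, and the coincidence of commutator and cocycle relations between $A'$ and $\tilde G$ makes $\Phi$ product-preserving on $A'$.

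The principal obstacle is this last step: one must check that when two elements of $A'$ are multiplied and rearranged into normal form, only commutator identities already encoded in $\tilde N$ (and cocycles already encoded in $\tilde G$) are invoked, and that the resulting exponents stay within the range where local properness yields injectivity on both sides. This bookkeeping is essentially what Lemma \ref{cosn} provides; with that lemma in hand, choosing $\delta$ small enough in terms of $C$, $r$, $s$ and the commutator exponents $m^{ij}_k$ completes the construction.
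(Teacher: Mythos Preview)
Your overall strategy matches the paper's exactly: apply Theorem \ref{main-theorem}, shrink the resulting coset nilprogression via Lemma \ref{cosn}, and embed the shrunken object in a global group built from the normal-form data. This last embedding is precisely the content of Lemma \ref{normglob} in the paper, and your sketch of its proof is along the right lines. However, several of the steps you treat as routine are the actual substance of that lemma.

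First, axiom \eqref{comm} does \emph{not} hand you monomial identities $[u_i,u_j]=u_{j+1}^{m^{ij}_{j+1}}\cdots u_r^{m^{ij}_r}$; it only says the commutator lies in a \emph{progression} $P(u_{j+1},\dots,u_r;\dots)$, i.e.\ is an arbitrary word. Reducing that word to ordered monomial form requires an iterative collecting process (moving each $u_k$ leftwards, spawning new higher-index letters via \eqref{comm}, and controlling the resulting exponents), and this process only stays inside the local group when all the $N_i$ are sufficiently large compared to $C,r$. The paper carries this out explicitly and then handles the degenerate case of small $N_i$ by a separate pigeonhole reduction; you omit both. Second, your injectivity argument appeals to ``uniqueness of Malcev-type coordinates in $\tilde N$'', but nothing in your presentation of $\tilde N$ guarantees this a priori: extra relations (in particular the step-$s$ ones you impose) could in principle collapse distinct monomials. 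The paper avoids this by not imposing nilpotency at all and instead observing that the bare commutator relations make the abstract group an iterated semidirect product with $\Z$, so one can project off the $e_i$ one at a time. Third, your extension-by-cocycle description of $\tilde G$ hides a nontrivial verification (the cocycle condition); the paper sidesteps this by simply writing down a presentation with generators $H\cup\{e_1,\dots,e_r\}$ and the relations \eqref{vivj-2}, \eqref{vii} transplanted verbatim. Finally, Lemma \ref{cosn} concerns cardinality and the approximate-group property only; it is not the source of the algebraic bookkeeping you need for the homomorphism check.
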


This theorem follows from Theorem \ref{main-theorem} and the fact (which we prove in Lemma \ref{normglob}) that a large portion of a coset nilprogression in normal form can be embedded in a global group.  This theorem can be viewed as a discrete analogue to a recent result of Goldbring and van den Dries \cite{dries}, who established that every locally compact local group is locally isomorphic near the identity to a locally compact \emph{global} group (thus there is a neighbourhood of the identity in the former group that is isomorphic to a neighbourhood of the identity in the latter group).  One should also compare this result with Lie's third theorem that every local Lie group is locally isomorphic to a global Lie group (see Theorem \ref{lie-third} and the discussion in Serre's book \cite{serre}).

\section{Ultra approximate groups and Hrushovski's Lie Model Theorem}\label{corresp}

In the next section we will give an outline of the argument we shall use to prove Theorem \ref{main-theorem}. An extremely important component of it will be a \emph{Lie Model Theorem} that implicitly appears in a remarkable paper of Hrushovski \cite[Theorem 4.2]{hrush}, which provided the foundation for much of the work here, and for which we will give a self-contained proof later in this paper.  We can state this theorem very informally as follows:

\begin{theorem}[Hrushovski's Lie Model Theorem, informal version]\label{hrushovski-model}  In a suitable limit, an approximate group is virtually modelled by a precompact neighbourhood of the identity in a Lie group.
\end{theorem}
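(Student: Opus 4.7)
The plan is to combine nonstandard analysis with the structure theory of locally compact groups coming from the solution to Hilbert's fifth problem. Given a sequence $(A_n)$ of $K$-approximate groups inside groups $G_n$ with $K$ fixed, take a non-principal ultrafilter $\mathcal{U}$ on $\N$ and form the ultraproduct $\A := \prod_{n \to \mathcal{U}} A_n$ inside the nonstandard group $\G := \prod_{n \to \mathcal{U}} G_n$. By the transfer principle, $\A$ is an internal $K$-approximate group; this is the \emph{ultra approximate group} referenced in the theorem. The formal version of the theorem would then assert the existence of a group homomorphism $\pi \colon \langle \A \rangle \to L$ to a connected Lie group $L$, such that $\pi(\A)$ is a precompact neighbourhood of the identity in $L$ and $\ker \pi$ is contained inside every internal symmetric neighbourhood $U \subset \A$ of the identity that is commensurable with $\A$.

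First I would construct a locally compact local group from $\A$. Let $\mathcal{N}$ denote the collection of internal symmetric sets $U \ni \id$ with $U \cdot U \subset \A^4$ that are \emph{commensurable} with $\A$, meaning that $\A$ is covered by a standard finite number of left-translates of $U$. Define $H := \bigcap_{U \in \mathcal{N}} U$, an external (i.e.\ not necessarily internal) normal subgroup of $\langle \A \rangle$. On the quotient $L_0 := \langle \A \rangle / H$, one declares the images of the sets $U \in \mathcal{N}$ to form a neighbourhood base at the identity. The target is to check that this yields a locally compact Hausdorff topological group structure in which the image of $\A$ is a compact neighbourhood of the identity.

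Next I would invoke the Gleason--Yamabe theorem: every locally compact group has an open subgroup $L_0'$ admitting a compact normal subgroup $N \triangleleft L_0'$ with $L_0'/N$ a connected Lie group. Composing the projection to $L_0$ with the reduction $L_0' \to L_0'/N$ on a sufficiently small neighbourhood produces the desired model $\pi$ from (a positive-density piece of) $\A$ into a Lie group $L := L_0'/N$, with $\pi(\A)$ mapping onto a precompact neighbourhood of the identity. The word \emph{virtually} in the informal statement absorbs exactly the passage to the bounded-index open subgroup $L_0'$ and the quotient by the compact normal subgroup $N$. Transferring the conclusion back through the ultraproduct by a standard overspill/underspill argument yields the corresponding statement for $\mathcal{U}$-almost-all $A_n$.

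The main obstacle is the construction and verification of the locally compact Hausdorff topology on $L_0$. Two key facts must be established: that $\mathcal{N}$ is a filter base, i.e.\ for any $U_1, U_2 \in \mathcal{N}$ there exists $U_3 \in \mathcal{N}$ with $U_3 \cdot U_3 \subset U_1 \cap U_2$; and that some element of $\mathcal{N}$ has compact closure in $L_0$. Both of these depend on the $\aleph_1$-saturation property of the ultraproduct combined with approximate-group tools (e.g.\ Sanders-type arguments producing small symmetric approximate subgroups inside $\A$, and Croot--Sisask-type almost-periodicity to refine them into commensurable neighbourhoods). Hausdorffness reduces tautologically to the definition of $H$, while continuity of multiplication and inversion follows from the symmetry and $U \cdot U \subset \A^4$ conditions on members of $\mathcal{N}$. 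These technical ingredients constitute the core of Hrushovski's original argument and the self-contained reformulation promised in the paper.
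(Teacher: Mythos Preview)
Your outline is essentially the same strategy the paper follows: Sanders--Croot--Sisask combinatorics to build a nested family of commensurable neighbourhoods, countable saturation of the ultraproduct to obtain local compactness, and then the Gleason--Yamabe machinery from Hilbert's fifth problem to upgrade to a Lie model. The paper packages the first two steps slightly differently---it uses the normal variant of Sanders (Theorem~\ref{scs-normal}) to get $(\A_{i+1}^2)^{\A_0^{100}}\subseteq\A_i$, then runs an explicit Birkhoff--Kakutani construction to produce a left-invariant \emph{pseudometric} rather than declaring a neighbourhood base abstractly---and it works throughout in the category of \emph{local} groups, invoking Goldbring's local analogue (Theorem~\ref{goldbring-thm}) of Gleason--Yamabe rather than the global version. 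Your global formulation is closer to Hrushovski's original and to Proposition~\ref{weak-global}; the paper's local setup is needed later for the inductive endgame in \S\ref{endgame}.

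One small correction: your final sentence about ``transferring the conclusion back\ldots\ for $\mathcal{U}$-almost-all $A_n$'' is misplaced. The Lie model is a structure on the ultra approximate group $\A$ itself; there is no meaningful Lie model of an individual finite $A_n$. Finitary consequences (e.g.\ the strong approximate group property, or Hrushovski's structure theorem~\ref{hst}) are extracted later by separate {\L}o\'s-theorem arguments applied to first-order statements, not by transferring the model map $\pi$.
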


Of course, to make this theorem more precise, one has to formalise terms such as ``suitable limit'', ``virtually'', and ``modelled''.  We shall do so presently, but first we point out that Theorem \ref{hrushovski-model} is very similar in spirit to a key step \cite[\S 7]{gromov} in Gromov's proof of his  celebrated theorem on groups of polynomial growth, which we state informally as follows.

\begin{theorem}[Gromov's Lie Model Theorem, informal version]\label{gromov-model}  In a suitable limit, a group of polynomial growth can be modeled by a finite-dimensional locally compact space with a transitive isometric action of a Lie group.
\end{theorem}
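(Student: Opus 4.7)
\textbf{Proof proposal for Theorem \ref{gromov-model}.} The plan is to formalise the ``suitable limit'' via (pointed) Gromov--Hausdorff convergence of rescaled Cayley graphs, and then invoke the Montgomery--Zippin solution to Hilbert's fifth problem on transformation groups to extract the Lie structure. Concretely, fix a group $G$ of polynomial growth of degree at most $d$ with finite symmetric generating set $S$, and endow $G$ with the word metric $d_S$. For a sequence $n_k \to \infty$ (to be chosen), form the pointed metric spaces $(G, \tfrac{1}{n_k} d_S, \id)$. The first task is to show this sequence is precompact in the pointed Gromov--Hausdorff topology. This follows from a doubling estimate: the polynomial growth hypothesis $|S^n| \leq C n^d$ implies (via a pigeonhole/averaging argument on the doubling ratios $|S^{2n}|/|S^n|$) that along a suitable subsequence $n_k$, each rescaled ball of radius $R$ can be covered by $O_{R,d}(1)$ rescaled balls of radius $r$ uniformly in $k$. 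Gromov's compactness theorem for doubling pointed metric spaces then produces a subsequential limit $(X, d_\infty, x_0)$, which is a complete, locally compact, proper metric space of finite Hausdorff dimension bounded in terms of $d$.

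Next I would promote the $G$-action to the limit. Left-multiplication gives an isometric action of $G$ on each $(G, \tfrac{1}{n_k} d_S, \id)$, and a standard Arzel\`a--Ascoli argument (applied to isometries of a proper metric space) lets one extract, from any bounded sequence of group elements $g_k \in G$, a subsequential limit isometry of $X$. Let $L$ denote the closure in $\operatorname{Isom}(X)$ (with the compact-open topology) of the set of all isometries of $X$ obtained this way; since $\operatorname{Isom}(X)$ is locally compact for a proper metric space, $L$ is a locally compact topological group. Transitivity of the action of $L$ on $X$ is inherited from the fact that, at each finite stage, $G$ acts transitively on its own Cayley graph, so any two points of $X$ can be connected by a limit isometry.

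The decisive step is the invocation of Hilbert's fifth problem (the main obstacle, and the deepest ingredient): by the Montgomery--Zippin theorem, a locally compact group acting transitively, effectively, and by isometries on a connected, locally connected, finite-dimensional, locally compact metric space is necessarily a Lie group. One must therefore verify that $X$ is connected and locally connected (which comes from the fact that it is a Gromov--Hausdorff limit of sequences of points in $\tfrac{1}{n_k}$-discretisations of a geodesic-like space, together with the doubling condition ensuring no ``exotic'' topology at small scales) and that $L$ acts effectively (which one arranges by quotienting by the closed normal subgroup of isometries acting trivially on $X$). The finite-dimensionality is ensured by the quantitative doubling bound derived from polynomial growth.

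I would finish by remarking that the resulting Lie group $L$ is nontrivial in a strong sense: the rescaled balls $\tfrac{1}{n_k} S^{n_k}$ converge to a nontrivial compact neighbourhood of $x_0$ in $X$, so the limit is far from a point. This parallels \emph{exactly} the mechanism in Hrushovski's Lie Model Theorem (Theorem \ref{hrushovski-model}), which replaces the metric Gromov--Hausdorff limit by an ultraproduct/nonstandard limit of approximate groups but likewise culminates in an appeal to Hilbert's fifth problem to produce a Lie model. The main technical obstacle in both settings is the same: turning qualitative control (polynomial growth, respectively small doubling) into the hypotheses required by Montgomery--Zippin.
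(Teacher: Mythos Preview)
The paper does not actually prove this statement; it is recorded only as an informal version of a key step in Gromov's original paper \cite{gromov}, and the surrounding text attributes the argument to that source. What the paper does say about the proof is exactly what you have outlined: take a pointed Gromov--Hausdorff limit of the rescaled Cayley graphs (precompactness coming from the polynomial growth/doubling condition), pass to the isometry group of the limit space, and invoke the Montgomery--Zippin theorem to conclude that this isometry group is a Lie group. Your sketch is therefore consistent with the approach the paper describes, and your closing remark correctly identifies the structural parallel with Theorem~\ref{hrushovski-model} that the paper is drawing.

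One cautionary note on your sketch itself (not a discrepancy with the paper, which gives no details): the verification that the limit space $X$ is connected, locally connected, and finite-dimensional in the sense required by Montgomery--Zippin is genuinely delicate in Gromov's argument and is not a formality; your parenthetical justification (``no exotic topology at small scales'') glosses over what is in fact one of the harder technical points. Likewise, the transitivity of the limiting action requires a careful diagonal/approximation argument rather than simply ``inheriting'' transitivity from the finite stages. These are the places where a full proof would need real work.
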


To deepen the analogy between the two results, we note that Theorem \ref{hrushovski-model} and Theorem \ref{gromov-model} both require the deep body of results surrounding the solution to Hilbert's fifth problem on the topological description of the category of Lie groups (see \cite{montgomery-zippin-book}) in order to bring into view the Lie structure, which is not manifestly present when one first takes a limit.  There are however some technical differences between the precise formulations of Theorem \ref{hrushovski-model} and Theorem \ref{gromov-model}.  In the latter theorem, one has a group $G$ (of polynomial growth) generated by a finite set $S$. This gives a metric on $G$, the word metric given by the generating set $S$. Gromov then looks at the discrete balls $S^n$, $n = 1,2,3\dots$ ``from a distance'' to get some continuous limit metric space $X$. For example if $G = \Z$ and $S = \{-1,0,1\}$, then $S^n = \{-n,\dots, n\}$, and it is heuristically clear that these discrete intervals $S^n$, after rescaling by $n$, ``converge'' in a suitable sense to the continuous interval $[-1,1] \subseteq \R$.

To effect this limit, Gromov introduced what is now known as \emph{Gromov-Hausdorff convergence} of a sequence of metric spaces. In subsequent work of van der Dries  and Wilkie \cite{dries-wilkie} a slightly different approach, using ultralimits (or non-standard analysis) was pioneered. This construction is now known, in the geometric group theory literature, as the \emph{asymptotic cone}.

The asymptotic cone, then, is (a quotient of) an ultraproduct of the sequence of balls $(S^n)_{n \in \N}$.  We will use a similar limit\footnote{In \cite{hrush}, more saturated limits (not necessarily constructed using ultrafilters) were also considered, but we will not need such constructions here.} in order to formalise Theorem \ref{hrushovski-model}, namely an \emph{ultraproduct} $\A$ of an arbitrary sequence $(A_\n)_{\n \in \N}$ of $K$-approximate groups, an object we call an \emph{ultra approximate group}.  We now define this term more precisely.

\begin{definition}[Ultra approximate group]\label{ultra-approx-group}  Throughout this paper, we fix a non-principal ultrafilter $\alpha \in \beta \N\backslash \N$ (see Lemma \ref{ultralem} for a definition of this concept).  If $K > 0$ is a real number then an \emph{ultra $K$-approximate group} is an ultraproduct $\A := \prod_{\n \to \alpha} A_\n$, where each $A_\n$ is a (standard) $K$-approximate group.   Thus, $\A$ is the space of all formal limits $\lim_{\n \to \alpha} a_\n$ with $a_\n \in A_\n$, where two formal limits $\lim_{\n \to \alpha} a_\n$ and $\lim_{\n \to \alpha} a'_\n$ are considered equal if $a_\n = a'_\n$ for all $\n$ sufficiently close to $\alpha$ (i.e. for all $\n$ in an $\alpha$-large subset of $\N$).  See Appendix \ref{nsa-app} for more discussion on ultraproducts. Often we will not need to refer to $K$ explicitly, in which case we speak simply of an \emph{ultra approximate group}.
\end{definition}

Note that we allow the approximate groups $A_\n$ to lie in different ambient groups $G_\n$ (much as the notion of Gromov-Hausdorff convergence also does not require the spaces $X_\n$ involved to all live in a common ambient space).  Ultraproducts are a model-theoretic limit, in contrast to the more geometric notion of a limit defined by Gromov-Hausdorff convergence.  There are two key properties of these model-theoretic limits that make them convenient to use for our purposes.  The first is \emph{{\L}os's theorem}, which roughly speaking asserts that any property that can be stated in the language of first-order logic holds for an ultraproduct $\A = \prod_{\n \to \alpha} A_\n$ if and only if it holds for those $A_\n$ with $\n$ sufficiently close to $\alpha$; see Theorem \ref{los-param}.  The second is \emph{countable saturation}, which we will use to establish the completeness of a certain (pseudo)metric space associated to an ultra approximate group; see Proposition \ref{locally-compact-model}.

Next, we discuss what it would mean to\footnote{Our use of the term ``model'' here is not, strictly speaking, the precise notion that is used in model theory, but is closer to the notion of a ``Freiman model'' from additive combinatorics, as used for instance in \cite{ruzsa-freiman}, \cite{green-ruzsa}.} ``model'' an ultra approximate group $\A$.  Informally, a model would seek to describe the ``coarse-scale'' behaviour of $\A$, and in particular be able to predict when an orbit $\id, a, a^2, a^3, \ldots$ of an element $a$ of $\A$ will ``escape'' $\A$, while ignoring the ``fine-scale'' behaviour of $\A$.  Such a model will be formalised by a homomorphism $\phi: \A^8 \to L$ of local groups that obey certain good properties (see Definition \ref{good-model-def} below).  Before we present this formal definition, though, we first discuss some key examples of ultra approximate groups and their models.

\begin{example}[Nonstandard finite groups]\label{nfg}  Suppose that $A_\n$ is a sequence of (standard) finite groups; then the ultraproduct $\A := \prod_{\n \to \alpha} A_\n$ is an ultra approximate group.  In this case, $\A$ is in fact a genuine group, with group operation given by the law
$$ (\lim_{\n \to \alpha} a_\n) \cdot (\lim_{\n \to \alpha} b_n) := \lim_{\n \to \alpha}(a_n b_n).$$
We will refer to such groups as \emph{nonstandard finite groups}.  A typical example of a nonstandard finite group is the nonstandard cyclic group\footnote{This group is the analogue of the profinite completion $\hat \Z = \lim_{\leftarrow} \Z/n\Z$ of the integers, but is built using the machinery of ultralimits rather than inverse limits.  The two groups are however not  identical.  For instance, $\hat \Z$ is torsion-free, whereas $\Z/N\Z$ can contain torsion; for example if $N$ is even, or equivalently if the set of even natural numbers is $\alpha$-large, then $\Z/N\Z$ contains the element $N/2 \mod N$, which has order $2$. But see Remark \ref{inv} below for a link between ultraproducts and inverse limits.}
$$ \Z/N\Z := \prod_{\n \to \alpha} \Z/\n\Z,$$
where $N \in \ultra \N$ is the nonstandard natural number
\begin{equation}\label{ndef}
N := \lim_{\n \to \alpha} \n.
\end{equation}

In a nonstandard finite group $\A$, there are no elements that ever escape $\A$: if $a \in \A$, then one has $a^n \in \A$ for all $n\in \N$.  As such, it will turn out that $\A$ can be modeled by a trivial homomorphism $\phi: \A \to \{\id\}$ to the trivial group.
\end{example}

\begin{example}[Nonstandard intervals]\label{nsi}  Now consider the sequence $A_\n := P(1;\n) = \{-\n,\ldots,\n\}$ of (standard) arithmetic progressions in $\Z$.  The ultraproduct $\A := \prod_{\n \to \alpha} A_\n$ can be viewed as the nonstandard arithmetic progression $\A = P(1;N) = \{-N,\ldots,N\}$ in the nonstandard integers $\ultra \Z := \prod_{\n \to \alpha} \Z$, where $N$ was defined in \eqref{ndef}.  Then $\A$ is an ultra approximate group, and it can also be viewed as a local group inside the nonstandard integers $\ultra \Z$.

Consider now the map $\pi : \A \rightarrow \R$ defined by
$$\pi(\lim_{\n \to \alpha} a_\n) := \st \lim_{\n \to \alpha} \frac{a_{\n}}{{\n}},$$
where $\st x$ is the standard part of a nonstandard real $x$ (see Appendix \ref{nsa-app}).  Thus, for every standard $\eps > 0$, one has
$$ \pi(\lim_{\n \to \alpha} a_\n) - \eps \leq \frac{a_\n}{\n} \leq \pi(\lim_{\n \to \alpha} a_\n)  + \eps$$
for all $\n$ sufficiently close to $\alpha$.   One may also write
$$ \pi(a) = \st \frac{a}{N}$$
for all $a \in \A$.  The map $\pi$ is a homomorphism of local groups from $\A$ into $[-1,1]$. It is surjective since, for any $\gamma \in [-1,1]$, the nonstandard integer
$$ x := \lfloor \gamma N \rfloor = \lim_{\n \to \infty} \lfloor\gamma \n\rfloor,$$
where $\lfloor \rfloor$ is the integer part function, has image $\pi(x) = \gamma$.  The kernel $\ker(\pi)$ is the set of $x \in \A$ with $x=o(N)$ (thus if $x = \lim_{\n \to \alpha} x_\n$ and $\eps>0$ is standard, then $|x_\n| \leq \eps \n$ an $\alpha$-large set of $\n$).  For instance, every standard integer lies in $\ker(\pi)$, as do some non-standard integers such as $\lfloor \sqrt{N} \rfloor = \lim_{\n \to \infty} \lfloor \sqrt{\n}\rfloor$.

There are similar maps from\footnote{Strictly speaking, as we are currently in an additive setting, one should write $m\A = \A + \ldots + \A$ rather than $\A^m = \A \cdot \ldots \cdot \A$ here.} $\A^m$ to $[-m,m]$ for any fixed natural number $m$, which by abuse of notation we also call $\pi$.   Informally, these maps \emph{model} $\A$ by the interval $[-1,1]$, and more generally model $\A^m$ by $[-m,m]$.  In this particular case, the model $\pi: \A \to [-1,1]$ of the ultraproduct $\A$ can be viewed as a limiting object for models $\pi_\n: A_\n \to [-1,1]$ of the individual factors $A_\n$, by defining $\pi_\n(a) := \frac{a}{\n}$.  However, in more general situations, the model for the ultraproduct is only a limit for \emph{approximate} models of the factors, and this is one reason why we need to work in the ultraproduct setting as much as we do.

The model $\pi: \A^m \to [-m,m]$ is not injective: if $\pi(a)$ is trivial, this does not imply that $a$ is trivial.  However, $\pi$ does have an injectivity-like property which will be important later, which roughly speaking asserts that if $\pi(a)$ is \emph{small}, then $a$ is \emph{small}.  For instance, observe that if $a \in \A^{1000}$ is such that\footnote{This claim is not quite true when $\pi(a)$ is $-1$ or $+1$, as can be seen for instance by considering $a = N+1 = \lim_{\n \to \alpha} \n+1$.} $\pi(a) \in (-1,1)$, then $a \in \A$.    This property on the model $\pi$ can be used to derive some important facts about the ultraproduct $\A$; for instance it implies the \emph{escape property} that if $a, a^2, \ldots, a^{100}$ all lie in $\A^{10}$, then $a$ lies in $\A$.  These sorts of escape properties will play a major role in our arguments in later sections.
\end{example}

\begin{example}[Generalised arithmetic progression]  We still work in the integers $\Z$, but now take $A_\n$ to be the rank two generalised arithmetic progression
$$ A_\n := P(1,\n^{10}; \n,\n) := \{ a+b\n^{10}: a,b \in \{-\n,\ldots,\n\}\}.$$
Then the ultraproduct $\A := \prod_{\n \to \alpha} A_\n$ is the subset of the nonstandard integers $\ultra \Z$ of the form
$$ \A = P(1,N^{10};N,N) = \{ a+bN^{10}: a,b \in \{-N,\ldots,N\}\}.$$
This is an ultra approximate group which can be modeled by the Euclidean plane $\R^2$, using the model maps $\pi: \A^m \to \R^2$ defined for each standard $m$ by the formula
$$ \pi( a + bN^{10} ) := \left( \st \frac{a}{N}, \st \frac{b}{N} \right)$$
whenever $a,b = O(N)$.  The image $\pi(\A^m)$ is then the square $[-m,m]^2$.  As before, if $a \in \A^{1000}$ is such that $\pi(a) \in (-1,1)^2$, then $a \in \A$; this can be used to conclude that if $a,a^2,\ldots,a^{100} \in \A^{10}$, then $a \in \A$.  Note here that while $\A$ lives in a ``one-dimensional'' group $\ultra \Z$, the model $\R^2$ is ``two-dimensional''.  This is also reflected in the volume growth of the powers $A_\n^m$ of $A_\n$ for small $m$ and large $\n$, which grow quadratically rather than linearly in $m$.
\end{example}

\begin{example}[Heisenberg box, I]  This example is related to the Heisenberg example in Example \ref{heisen-ex}. We take each $A_\n$ to be the ``nilbox''
\begin{equation}\label{nilbox}
 A_\n := \left\{ \left(\begin{smallmatrix} 1 & x_{\n} & z_{\n} \\ 0 & 1 & y_{\n} \\ 0 & 0 & 1\end{smallmatrix}\right)
\in \left(\begin{smallmatrix} 1 & \Z & \Z \\ 0 & 1 & \Z \\ 0 & 0 & 1\end{smallmatrix}\right)
: |x_{\n}|, |y_{\n}| \leq {\n}, |z_{\n}| \leq \n^{2}\right\}.
\end{equation}
This is not quite an approximate group because it is not quite symmetric (cf. Example \ref{heisen-ex}), but we will ignore this technicality for sake of exposition. In any case it can be repaired in a number of ways, for instance by replacing $A_\n$ with $A_\n \cup A_\n^{-1}$.  Once again we consider the ultraproduct $\A := \prod_{\n \rightarrow \alpha} A_\n$; this is a subset of the nilpotent (nonstandard) group $\left(\begin{smallmatrix} 1 & \ultra \Z & \ultra \Z \\ 0 & 1 & \ultra \Z \\ 0 & 0 & 1\end{smallmatrix}\right)$, consisting of all elements $\left(\begin{smallmatrix} 1 & x & z \\ 0 & 1 & y \\ 0 & 0 & 1\end{smallmatrix}\right)$ with $|x|, |y| \leq N$ and $|z| \leq N^2$; again, this is a (discrete) local group.

Consider now the map
\[ \pi : \A^8 \rightarrow \left(\begin{smallmatrix} 1 & \R & \R \\ 0 & 1 & \R \\ 0 & 0 & 1\end{smallmatrix}\right) \] defined by
\begin{equation}\label{pidef}
 \pi\left( \left(\begin{smallmatrix} 1 & x & z \\ 0 & 1 & y \\ 0 & 0 & 1\end{smallmatrix}\right)\right) := \left(\begin{smallmatrix} 1 & \st \frac{x}{N} & \st \frac{z}{N^2} \\ 0 & 1 & \st \frac{y}{N} \\ 0 & 0 & 1\end{smallmatrix}\right).
 \end{equation}
This is easily seen to be a homomorphism (of local groups) to the Heisenberg group, whose image is the compact set
\begin{equation}\label{heisenbox}
\left\{ \left(\begin{smallmatrix} 1 & x & z \\ 0 & 1 & y \\ 0 & 0 & 1\end{smallmatrix}\right)
\in \left(\begin{smallmatrix} 1 & \R & \R \\ 0 & 1 & \R \\ 0 & 0 & 1\end{smallmatrix}\right)
: |x|, |y|, |z| \leq 1\right\}.
\end{equation}
Informally, $\pi$ models $\A$ (or $\A^8$) by what is essentially a unit ball in this Lie group.
As before, we have the injectivity-like property that if $a \in \A^{1000}$ is such that $\pi(a)$ is sufficiently close to the identity, then $a \in \A$; as such, one can again establish the \emph{escape property} that if $a, a^2, \ldots, a^{100}$ all lie in $\A^{10}$, then $a$ lies in $\A$.
\end{example}

\begin{example}[Heisenberg box, II]  This is a variant of the preceding example, in which the (not quite) approximate groups $A_\n$ now take the form
\begin{equation}\label{tertius}
A_\n := \left\{ \left(\begin{smallmatrix} 1 & x_{\n} & z_{\n} \\ 0 & 1 & y_{\n} \\ 0 & 0 & 1\end{smallmatrix}\right) : |x_\n|, |y_\n| \leq \n, |z_{\n}| \leq \n^{10}\right\}
\end{equation}
so that the ultralimit $\A := \prod_{\n \rightarrow \alpha} A_\n$ takes the form
\[ \A := \left\{ \left(\begin{smallmatrix} 1 & x & z \\ 0 & 1 & y \\ 0 & 0 & 1\end{smallmatrix}\right) \in \left(\begin{smallmatrix} 1 & \ultra \Z & \ultra \Z \\ 0 & 1 & \ultra \Z \\ 0 & 0 & 1\end{smallmatrix}\right): |x|, |y| \leq N, |z| \leq N^{10}\right\}.\]
 Now consider the map
\[ \pi : \A^8 \rightarrow \R^3\] defined by
\[ \pi\left( \left(\begin{smallmatrix} 1 & x & z \\ 0 & 1 & y \\ 0 & 0 & 1\end{smallmatrix}\right) \right)= \left(\st \frac{x}{N}, \st \frac{y}{N}, \st \frac{z}{N^{10}}\right).\]
The image of this map is the unit cube $[-1,1]^3$, and is in particular compact.  It is also a homomorphism of local groups, since
\[ \pi \left(\left(\begin{smallmatrix} 1 & x & z \\ 0 & 1 & y \\ 0 & 0 & 1\end{smallmatrix}\right)\left(\begin{smallmatrix} 1 & x' & z' \\ 0 & 1 & y' \\ 0 & 0 & 1\end{smallmatrix}\right)\right) = \left(\st \frac{x + x'}{N}, \st \frac{y + y'}{N}, \st \frac{z+ z' + x y'}{N^{10}}\right),\]
but the nonstandard real $x y'/N^{10} = O(N^2/N^{10})$ is infinitesimal, and so the previous expression is equal to
\[ \left(\st\frac{x+x'}{N}, \st \frac{y+y'}{N}, \st \frac{z+z'}{N^{10}}\right)\]
which establishes the homomorphic nature of $\pi$.

Here we note that the homomorphism $\pi: \A^8 \to \R^3$ is not associated to any exact homomorphisms $\pi_\n$ from $\A_n^8$ to $\R^3$.  Instead, it is only associated to \emph{approximate} homomorphisms
$$ \pi_\n\left( \left(\begin{smallmatrix} 1 & x_\n & z_\n \\ 0 & 1 & y_\n \\ 0 & 0 & 1\end{smallmatrix}\right) \right) := \left(\frac{x_\n}{\n}, \frac{y_\n}{\n}, \frac{z_\n}{\n^{10}}\right)$$
into $\R^3$.  Such approximate homomorphisms are somewhat less pleasant to work with than genuine homomorphisms; one of the main reasons why we work in the ultraproduct setting is so that we can use genuine group homomorphisms, or at least local group homomorphisms, throughout the paper.

Note that the preceding example \eqref{nilbox} admits a homomorphism $\tilde\pi$ onto the abelian group $\R^2$ by composing the map \eqref{pidef} with the natural map from $\left(\begin{smallmatrix} 1 & \R & \R \\ 0 & 1 & \R \\ 0 & 0 & 1\end{smallmatrix}\right)$ to its abelianisation $\R^2$. However the kernel of $\tilde\pi$ is, for us, too ``big''. In particular it contains every $\left(\begin{smallmatrix} 1 & 0 & z \\ 0 & 1 & 0 \\ 0 & 0 & 1\end{smallmatrix}\right)$, and in particular contains elements of $\A^8$ not in $\A$. By contrast there are no such elements in the example \eqref{tertius}.  In particular, we can still use the model $\pi$ to establish the same escape property for $\A$ as before, namely that whenever $a, \ldots, a^{100} \in \A^{10}$, one has $a \in \A$.

We also note the sets $A_\n^m$ for small $m$ and large $\n$ grow cubically in $m$ in this example, and quartically in $m$ in the previous example.  This is consistent with the model groups having homogeneous dimension $3$ in the current example and $4$ in the previous example.
\end{example}

In all the above examples, the model group $L$ was a Lie group.  We give now give some examples to show that the model need not \emph{initially} be of Lie type, but can then be replaced with a Lie model after some modification.

\begin{example}[Nonstandard cyclic group, revisited]
The first example is the nonstandard cyclic group $\A := \Z/2^N\Z = \prod_{\n \to \alpha} \Z/2^\n \Z$.  This is a nonstandard finite group and can thus be modeled by the trivial group $\{\id\}$ as discussed in Example \ref{nfg}.  However, it can also be modeled by the compact abelian group $\Z_2$ of $2$-adic integers using the model $\pi: \A \to \Z_2$ defined by the formula
$$ \pi(a) := \lim_{n \to \infty} a \md{2^n}$$
where for each standard natural number $n$, $a \md{2^n} \in \{0,\ldots,2^{n-1}\}$ is the remainder of $a$ modulo $2^n$ (this is well-defined in $\A$) and the limit is in the $2$-adic metric.  Note that the image $\pi(\A)$ of $\A$ is the entire group $\Z_2$, and conversely the preimage of $\Z_2$ in $\A^8 = \A$ is trivially all of $\Z/2^N\Z$; as such, one can quotient out $\Z_2$ in this model and recover the trivial model of $\A$.
\end{example}

\begin{example}[Nonstandard abelian $2$-torsion group]\label{2tor}
In a similar spirit to the preceding example, the nonstandard $2$-torsion group $\A := (\Z/2\Z)^N = \prod_{\n \to \alpha} (\Z/2\Z)^\n$ can be modeled by the compact abelian group $(\Z/2\Z)^\N$ by the formula
$$ \pi(a) := \lim_{n \to \infty} \pi_n(a)$$
where $\pi_n: \A \to (\Z/2\Z)^n$ is the obvious projection, and the limit is in the product topology of $(\Z/2\Z)^\N$.  As before, we can quotient out $(\Z/2\Z)^\N$ and model $\A$ instead by the trivial group.
\end{example}

\begin{remark}\label{inv} The above two examples can be generalised to model any nonstandard finite group $G = \prod_{\n \to \alpha} G_\n$ equipped with surjective homomorphisms from $\G_{\n+1}$ to $\G_\n$ by the inverse limit of the $G_\n$.
\end{remark}

\begin{example}[Lamplighter group]  Let $G$ be the lamplighter group $\Z \ltimes (\Z/2\Z)^\Z$, where $\Z$ acts on $(\Z/2\Z)^\Z$ by the shift $T: (\Z/2\Z)^\Z$ defined by $T(a_n)_{n \in \Z} := (a_{n+1})_{n \in \Z}$. Thus the group law in $G$ is given by
$$ (i,x) (j,y) := (i+j, x+T^i y).$$
For each $\n$, we then set $A_\n \subseteq G$ to be the set
$$ A_\n := \{ (i,x) \in G: i \in \{-1,0,+1\}; x \in (\Z/2\Z)^\n \},$$
where we identify $(\Z/2\Z)^\n$ with the space of elements $(a_n)_{n \in \Z}$ of $(\Z/2\Z)^\Z$ such that $a_n \neq 0$ only for $n \in \{1,\ldots,\n\}$.  These sets $A_\n$ are not quite approximate groups because they are not symmetric, but they are close enough to approximate groups for this discussion. For instance, they have bounded doubling or bounded tripling, and $A_\n \cup A_\n^{-1}$ is an approximate group.  We model the ultraproduct $\A := \prod_{\n \to \alpha} A_\n \subset \Z \ltimes \ultra (\Z/2\Z)^\Z$ by the group
$$G \times_\Z G := \{ ((i,x),(j,y)) \in G \times G: i = j \}$$
using the map
$$ \pi( (i, \lim_{\n \to \alpha} (a^{(\n)}_n)_{n \in \Z} ) ) :=
( (i, (\lim_{\n \to \alpha} a^{(\n)}_n)_{n \in \Z} ), (i, (\lim_{\n \to \alpha} a^{(\n)}_{n+\n} )_{n \in \Z} ) ).$$
Roughly speaking, $\pi(a)$ captures the behaviour of $a$ at the two ``ends'' of $(\Z/2\Z)^N$.  The image $\pi(\A)$ of $\A$ under this model is then the compact neighbourhood of the identity
$$ \pi(\A) = \{ ((i,x),(i,y)) \in G: i \in \{-1,0,+1\}, x \in (\Z/2\Z)^\N, y \in (\Z/2\Z)^{\Z \backslash \N} \}$$
where we embed $(\Z/2\Z)^\N$ and $(\Z/2\Z)^{\Z \backslash \N}$ as subgroups of $(\Z/2\Z)^\Z$ in the usual manner.  One can also compute the images $\pi(\A^m)$ for larger values of $m$, although they are a bit more complicated.  One can verify the escape property that if $g,g^2,\ldots,g^{100} \in \pi(\A^{10})$ for some $g \in G \times G$, then $g \in \pi(\A)$; here it is essential that we use both of the two factors of $G \times_\Z G$, as the claim is false if we project $\pi$ to just one of the two factors $G$, or to the base group $\Z$.  So, in this case, one needs a moderately complicated (though still locally compact) group $G \times_\Z G$ to properly\footnote{This can also be seen from volume growth considerations: $A_\n^m$ grows like $4^m$, which is also the rate of volume growth of $\pi(\A)$ in $G \times_\Z G$, whereas the volume growth in a single factor $G$ would only grow like $2^m$, and the volume growth in $\Z$ is only linear in $m$.} model $\A$ and its powers $\A^m$.  However, if we pass to the large subset $\A'$ of $\A$ defined by $\A' := \prod_{\n \to\alpha} A'_\n$, where
$$ A'_\n := \{ (i,x) \in G: i = 0; x \in (\Z/2\Z)^\n \}$$
then $\A'$ is now a nonstandard finite group (isomorphic to the group $(\Z/2\Z)^N$ considered in Example \ref{2tor} and can be modeled simply by the trivial group $\{\id\}$.  Thus we see that we can sometimes greatly simplify the modeling of an ultra approximate group by passing to a large ultra approximate subgroup.
\end{example}

Let us formalise the properties enjoyed by the above examples in the following definition, which will play a key role in this paper.

\begin{definition}[Good models]\label{good-model-def}
Let $\A$ be an ultra approximate group. A \emph{good model} for $\A$ is a symmetric local topological group $L$ (see Definition \ref{local-def}), together with a homomorphism $\pi : \A^8 \rightarrow L$ of local groups with the following properties:
\begin{enumerate}
\item \textup{(Thick image)} There exists an open neighbourhood of the identity $U_0$ in $L$ such that $\pi^{-1}(U_0) \subseteq \A$ and $U_0 \subseteq \pi(\A)$. In particular $\ker\pi \subseteq \A$;
\item \textup{(Compact image)} $\pi(\A)$ is contained in a compact set.
\item \textup{(Approximation by ``internal'' sets)} Suppose that $F \subseteq U \subseteq U_0$, where $F$ is compact and $U$ is open. Then
there is an ultraproduct $\A' = \prod_{\n \to \alpha} A'_\n$ of finite sets $A'_\n \subseteq A_\n$ such that $\pi^{-1}(F) \subseteq \A' \subseteq \pi^{-1}(U)$.
\end{enumerate}
We will often abuse notation and refer to just $L$ or $\pi$ as the good model for $\A$, rather than the pair $(L,\pi)$.
\end{definition}

\begin{remark} Properties (i) and (ii) together imply that $L$ is locally compact. We leave it to the reader to check that the examples given above have all of the properties of this definition.  One can think of a good model as accurately describing the ``coarse-scale'' structure of the ultra approximate group $\A$, without directly controlling the ``fine-scale'' structure.  For instance, in the example \eqref{tertius} which is ``abelian at coarse scales'' but ``$2$-step nilpotent at fine scales'', the model $\pi$ only detects the abelian structure and not the $2$-step nilpotent structure.
\end{remark}

\begin{remark}\label{ult} In (iii), if $F$ and $U$ are symmetric neighbourhoods of the identity, then $\A'$ can be chosen to be symmetric (since one can replace $\A'$ with $\A' \cap (\A')^{-1}$).   As $L$ is locally compact, we may shrink $U$ to be precompact; then $U^2$ can be covered by finitely many translates of $F$, and thus $\A'$ is then an ultra approximate group.
\end{remark}

Finally, we need to explain the adjective ``virtually'' in Theorem \ref{hrushovski-model}.  In group theory, ``virtually'' means ``after passing to a finite index subgroup''.  Note that a subgroup $G'$ of a group $G$ has finite index if and only if $G$ can be covered by finitely many left-translates -- or, equivalently, right-translates -- of $G'$.  This motivates the following definition.

\begin{definition}[Large approximate subgroups]\label{large}  Let $\A, \A'$ be ultra approximate groups.  We say that $\A'$ is a \emph{large ultra approximate subgroup} of $\A$ if one has $(\A')^4 \subset \A^4$, and $\A$ can be covered by finitely many left-translates of $\A'$.
\end{definition}

\begin{remark} It would be more aesthetically pleasing to have $\A' \subset \A$ instead of $(\A')^4 \subset \A^4$, but we need the exponent $4$ in the inclusion for some minor technical reasons.  Note that the property of being a large ultra approximate subgroup is transitive.
\end{remark}

We are now in a position to state Hrushovski's Lie Model Theorem.

\begin{theorem}[Hrushovski Lie Model Theorem]\label{lie-model}
Let $\A$ be an ultra approximate group. Then there is a large ultra approximate subgroup $\A'$ of $\A$ such that $\A'$ admits a local Lie group as a good model.
\end{theorem}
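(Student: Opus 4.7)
The plan is to proceed in two stages: first, exhibit a large ultra approximate subgroup of $\A$ that admits a \emph{locally compact} good model; then upgrade this locally compact model to a local Lie model via the Gleason--Yamabe structure theorem.

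\textbf{Stage 1 (Locally compact model).} Inside $\A$, I would construct a nested decreasing sequence of symmetric ultra approximate subgroups
\[
\A \supseteq \mathbf{B}_1 \supseteq \mathbf{B}_2 \supseteq \mathbf{B}_3 \supseteq \cdots
\]
all contained in $\A^4$, with the two key stability properties $\mathbf{B}_{k+1}^{100} \subseteq \mathbf{B}_k$ and $[\A, \mathbf{B}_{k+1}] \subseteq \mathbf{B}_k$ for every $k \geq 1$. At each finite index $\n$, such nested sequences inside $A_\n$ are produced by a non-abelian ``commutator stability'' lemma of Croot--Sisask / Sanders type, and one forms $\mathbf{B}_k$ as the ultraproduct of these finite approximants. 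Set $H := \bigcap_{k \geq 1} \mathbf{B}_k$; this is a genuine subgroup of the local group generated by $\A^8$, and is normalised by $\A^8$ thanks to the commutator control. Let $L := \langle \A^8 \rangle / H$, and equip $L$ with the group topology whose neighbourhood basis of the identity is $\{\mathbf{B}_k H / H\}_{k \geq 1}$. The projection $\pi : \A^8 \to L$ is then a local group homomorphism; Hausdorffness is automatic, and local compactness follows from the fact that each $\mathbf{B}_k / H$ is totally bounded (being covered by finitely many translates of $\mathbf{B}_{k+1}/H$ by the approximate-group property) combined with the \emph{countable saturation} of the ultraproduct, which provides completeness of the associated uniform space. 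Properties (i)--(iii) of Definition \ref{good-model-def} then follow routinely; in particular the internal approximants required by (iii) are furnished directly by the $\mathbf{B}_k$.

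\textbf{Stage 2 (Upgrading to a Lie model).} I would now invoke the Gleason--Yamabe theorem: every locally compact group $L$ contains an open subgroup $L' \subseteq L$ admitting arbitrarily small compact normal subgroups $N \trianglelefteq L'$ with $L'/N$ a Lie group. Choose $N$ small enough to lie inside the neighbourhood $U_0$ of the model produced in Stage 1; then $\tilde L := L'/N$ is a local Lie group. Applying property (iii) of Definition \ref{good-model-def} to suitable compact and open neighbourhoods of the identity sitting inside $L'$, one extracts an internal ultra approximate group $\A'$ with $(\A')^4 \subseteq \A^4$ and $\pi(\A') \subseteq L'$. Since $\pi(\A)$ is precompact in $L$, it is covered by finitely many translates of any open neighbourhood of the identity in $L'$, so $\A$ is covered by finitely many translates of $\A'$; thus $\A'$ is a large ultra approximate subgroup of $\A$ in the sense of Definition \ref{large}. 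Composing $\pi|_{(\A')^8}$ with the quotient map $L' \to \tilde L$ then yields the desired good model of $\A'$ by the local Lie group $\tilde L$.

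\textbf{Main obstacle.} The principal difficulty will be Stage 1: producing the commutator-stable nested sequence $(\mathbf{B}_k)$ inside a \emph{noncommutative} approximate group, and verifying that the resulting quotient topology is truly locally compact rather than merely a topological group. The non-abelian commutator-stability lemma is the technical heart of the argument, and countable saturation is what makes the ultraproduct framework decisively more convenient than any finitary Gromov--Hausdorff analogue. Once a locally compact good model is in hand, Stage 2 is essentially a black-box application of the deep structural theory of locally compact groups developed around Hilbert's fifth problem.
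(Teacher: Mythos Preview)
Your two-stage plan matches the paper's strategy exactly: first build a locally compact model using a Sanders--Croot--Sisask nested sequence together with countable saturation for local compactness, then upgrade to a Lie model via the structure theory around Hilbert's fifth problem. The paper packages Stage 1 through a Birkhoff--Kakutani pseudometric rather than by directly quotienting by $H = \bigcap_k \mathbf{B}_k$, and uses conjugation-stability $(\A_{i+1}^2)^{\A_0^{100}} \subseteq \A_i$ in place of your commutator-stability $[\A,\mathbf{B}_{k+1}] \subseteq \mathbf{B}_k$, but these are cosmetic differences: the kernel of the pseudometric is precisely your $H$, and the two stability conditions serve the same purpose of making $H$ normal and the quotient operations continuous.

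There is, however, one genuine issue. Since $\A$ sits only in a \emph{local} group, the expression $\langle \A^8 \rangle$ is not available: one cannot form the global group generated by $\A^8$. Consequently the locally compact object you produce in Stage 1 is only a locally compact \emph{local} group (the paper obtains $\A^{32}/\!\sim$), and the global Gleason--Yamabe theorem does not apply. The fix, and this is what the paper does, is to invoke Goldbring's local analogue (Theorem \ref{goldbring-thm}): every locally compact local group, after restriction to a suitable neighbourhood of the identity, admits a compact normal subgroup whose quotient is a local Lie group. With that substitution your Stage 2 goes through essentially as written. If one restricts the theorem to \emph{global} ultra approximate groups then your argument works verbatim and recovers the paper's Proposition \ref{weak-global}.
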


We will prove this theorem in \S \ref{metric-sec}.  As stated above, the basic idea of the proof is to first establish that $\A$ itself admits a locally compact local group as a good model. Here results of multiplicative combinatorics, and in particular a lemma of Sanders \cite{sanders} (see also \cite{croot-sisask}), are critical.  Once this is done, Theorem \ref{lie-model} follows relatively quickly from the deep results in the literature on Hilbert's fifth problem.  This theorem will then play a key role in the proof of Theorem \ref{main-theorem} in two ways: firstly by allowing us to establish certain ``escape'' properties on (ultra) approximate groups that will be used to build useful metric structures on these groups; and secondly by giving a natural notion of the ``dimension'' of an (ultra) approximate group which we will need to induct on.  Note that one can invoke Lie's third theorem (Theorem \ref{lie-third}) to upgrade the local Lie group in Theorem \ref{lie-model} to a connected, simply connected, global Lie group, but for technical inductive reasons it will be more convenient to keep the model in the category of local Lie groups for now.

Theorem \ref{lie-model} will be proven in \S \ref{metric-sec}.  We will also establish a ``global'' variant of this theorem later, first in a weak form as Proposition \ref{weak-global} and then in a stronger form as Theorem \ref{strong-global}.

\section{An outline of the argument}\label{outline-sec}

In the previous section we introduced the notion of a (Hrushovski) Lie model, one of the key technical tools we will use to prove Theorem \ref{main-theorem}. In this section we outline the argument for this proof as a whole.

Our aim is to show that every $K$-approximate group is controlled in some sense by a coset nilprogression of rank and density $O_K(1)$. We shall prove this by contradiction, assuming that there is a sequence $(A_\n)_{\n \in \N}$ of $K$-approximate groups for which the statement fails in the limit for any given choice of implied constant in the $O_K(1)$ notation. In particular, the cardinality $|A_\n|$ will go to infinity as $\n \to \infty$. We assemble these approximate groups into an ultra approximate group $\A := \prod_{\n \rightarrow \alpha} A_\n$. Our assumption implies that $\A$ is not ``controlled'' in a certain sense by what we call an ultra coset nilprogression, which we now define.

\begin{definition}[Ultra coset nilprogression]  An \emph{ultra coset nilprogression} is an ultraproduct $P = \prod_{\n\to \alpha} P_\n$ of coset nilprogressions $P_\n = P( u_{1,\n},\ldots,u_{r,\n}; N_{1,\n}, \ldots, N_{r,\n})$ of fixed (standard) rank $r$ and step $s$.  We then say that $P$ has rank $r$ and step $s$.  If the $P_\n$ are also all in $C$-normal form for some (standard) $C$ independent of $\n$, we say that the ultra coset nilprogression is \emph{in normal form}.  We call $N_i := \lim_{\n \to \alpha} N_{i,\n}$ for $i=1,\ldots,r$ the \emph{lengths} of the ultra coset nilprogression, and say that the nilprogression is \emph{nondegenerate} if all the $N_i$ are unbounded.

We define the concept of an \emph{ultra nilprogression} similarly, but replacing ``coset nilprogression'' by ``nilprogression'' throughout.
\end{definition}

As with all ultraproducts, it suffices to have the $P_\n$ obey the stated properties for all $\n$ sufficiently close to $\alpha$, as one can redefine $P_\n$ arbitrarily on the remaining values of $\n$ without affecting the ultraproduct $P$.  Note that an ultra nilprogression $P$ can be expressed as
$$ P = P(u_1,\ldots,u_r; N_1, \ldots, N_r)$$
where $r$ is the rank, $u_1,\ldots,u_r$ are elements of the ambient nonstandard local group, and $N_1,\ldots,N_r$ are nonstandard positive reals.

To obtain the contradiction, then, it is sufficient to establish the following ultraproduct version of our main theorem.

\begin{theorem}\label{hl-conj-nonst}
Suppose that $\A$ is an ultra approximate group. Then $\A^4$ contains a nondegenerate ultra coset nilprogression $P$ in normal form with $|P|\gg |\A|$.
\end{theorem}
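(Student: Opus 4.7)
The plan is to argue by strong induction on the dimension of a Lie model. First I would apply Hrushovski's Lie Model Theorem (Theorem \ref{lie-model}) to extract a large ultra approximate subgroup $\A_1 \subseteq \A$, together with a good model $\pi : \A_1^8 \to L$ for some local Lie group $L$ of dimension $d$. Since large ultra approximate subgroups are transitive in the sense of Definition \ref{large}, and since any ultra coset nilprogression $P \subseteq \A_1^4$ with $|P| \gg |\A_1|$ automatically satisfies $P \subseteq \A^4$ and $|P| \gg |\A|$, we may replace $\A$ by $\A_1$ and induct on $d := \dim L$.

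For the base case $d = 0$, the local Lie group $L$ is discrete, so after shrinking the neighbourhood $U_0$ appearing in Definition \ref{good-model-def} we may assume $\pi$ sends $\A_1$ to $\{\id\}$ and hence $\A_1 \subseteq \ker \pi$. The approximation-by-internal-sets property then says that $\ker\pi$ is itself an ultra approximate group; countable saturation together with the thick-image axiom $\pi^{-1}(U_0) \subseteq \A_1$ lets us extract a genuine nonstandard finite normal subgroup $H \subseteq \A_1^4$ containing a positive proportion of $\A_1$. The group $H$ is an ultra coset nilprogression of rank and step $0$ and is vacuously in normal form, which closes the base case.

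For the inductive step $d \geq 1$, the idea is to peel off one generator of the nilprogression by using a one-parameter subgroup of $L$. This is exactly where the Gleason-type ``endgame'' arguments alluded to in the abstract enter: from the Lie structure of $L$ and the escape property implicit in the thick-image condition, one constructs a nonstandard element $u_1 \in \A_1^4$ that is central modulo a lower-dimensional piece of the model, together with a nonstandard length $N_1$ such that $u_1^{n} \in \A_1^4$ for $|n| \leq N_1$ while $u_1^{N_1+1}$ escapes. Dividing $\A_1$ by the ultra subgroup generated by $u_1$ (using the local-group quotient of Lemma \ref{quotient}) produces a new ultra approximate group whose good model has dimension $d-1$, so induction yields a nondegenerate ultra coset nilprogression $P_H(u_2, \ldots, u_r; N_2, \ldots, N_r)$ in $C$-normal form inside the quotient. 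Pulling back and prepending $u_1$ gives the candidate ultra coset nilprogression $P = P_H(u_1, u_2, \ldots, u_r; N_1, \ldots, N_r)$, with the volume bound $|P| \gg |\A|$ coming from the multiplicativity of indices along the induction and the thick-image axiom.

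The main obstacle is the Gleason construction of $u_1$ with the \emph{correct} quantitative centrality, namely enough control that the commutator bound \eqref{comm} of Definition \ref{normal-def} holds with $u_1$ playing the role of the leading generator. Concretely one needs $[u_1^{\pm 1}, u_j^{\pm 1}]$ to lie in the later factors $P(u_{j+1}, \ldots, u_r; \tfrac{C N_{j+1}}{N_1 N_j}, \ldots, \tfrac{C N_r}{N_1 N_j})$, which forces us to choose $u_1$ not merely so that it commutes with the Lie model to first order but so that its escape norm interacts correctly with those of the later generators produced by the induction. The local-properness and volume bounds in Definition \ref{normal-def} then follow from the local Lie structure of the model and an accounting of the infinitesimal quotients introduced at each stage of the induction; the nondegeneracy of $P$ is automatic once the $N_i$ are chosen as escape times, since the ultra approximate group $\A$ has been assumed to be genuinely ultra (i.e.\ $|\A|$ unbounded).
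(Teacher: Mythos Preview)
Your overall architecture---induct on the dimension of the Lie model, peel off one generator per step, quotient locally, lift a nilprogression from the quotient---is indeed the paper's strategy. But several load-bearing ideas are missing or misstated, and at least one is in the wrong order.

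First, the paper does \emph{not} run the induction directly on $\A$. It performs a one-time NSS reduction (Proposition~\ref{nss-reduct}): pass to a large strong ultra approximate subgroup $\A'$, take $H=\{g:\|g\|_{e,\A'}=0\}$ (a global internal normal subgroup, guaranteed by the Gleason product and conjugation estimates), and replace $\A'$ by $\A'/H$, which is NSS. The induction (Theorem~\ref{hl-conj-nonst-nil}) then takes place entirely in the NSS world, and the base case $\dim L=0$ is genuinely trivial there. Your attempt to absorb $H$ into the base case via countable saturation is vague, and more importantly you never explain why $\A'/H$ stays NSS through subsequent quotients; in the paper this is the content of the Lifting Lemma (Lemma~\ref{lift}).

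Second, your construction of $u_1$ is the wrong mechanism. The paper does not extract $u$ from a one-parameter subgroup of $L$; it takes $u$ to be an element of $\A'$ of \emph{minimal nonzero escape norm}. This is where the (nonstandard) finiteness of $\A$ is used in an absolutely essential way---the paper flags this as the one place in the argument that cannot be done for continuous approximate groups. Centrality of $u$ in a slightly smaller $\A''$ then follows from the Gleason commutator bound $\|[x,u]\|_e \ll \|x\|_e\|u\|_e$ together with minimality. Your ``central modulo a lower-dimensional piece of the model'' is not enough: you need actual commutation in $\A''$ so that the quotient by the geometric progression $P=\{u^n:|n|\le 1/\|u\|_e\}$ makes sense via Lemma~\ref{quotient}. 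And it must be the progression $P$, not ``the ultra subgroup generated by $u_1$''; quotienting by $\langle u\rangle$ can create torsion and destroy NSS.

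Third, the new generator goes \emph{last}, not first. In the paper $u$ becomes $u_{r+1}$, precisely because $u$ is central in $\A''$, so $[u_i,u_{r+1}]=\id$ sits at the bottom of the upper-triangular scheme \eqref{comm}. Placing it as $u_1$ would require $[u_1,u_j]$ to lie in $P(u_2,\ldots,u_r;\ldots)$, which you have no mechanism to arrange. Finally, the pull-back of the inductively obtained generators $\overline{u_1},\ldots,\overline{u_r}$ is not arbitrary: one must use Lemma~\ref{lift} to choose lifts $u_i$ with $\|u_i\|_{e,\A''}\ll\|\overline{u_i}\|_{e,\A''/P}$, and then the upper-triangular, local-properness and volume bounds are verified one by one using the product estimate from Proposition~\ref{sec-7-conclusion}. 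Your last paragraph correctly identifies this as ``the main obstacle'' but does not supply the argument.
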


Here $|P|\gg |\A|$ means that the non-standard numbers $|\A|$ and $|P|$ satisfy $|\A|=O(|P|)$, or in other words that there is a (standard) number $C>0$ such that $|A_\n| \leq C |P_\n|$ for an $\alpha$-large set of $\n \in \N$. See the end of Appendix \ref{nsa-app} for more information.

The Hrushovski Lie model theorem, Theorem \ref{lie-model}, will be a key tool in establishing this, as we discuss below. In addition to this theorem, a further fundamental additional concept in our argument will be the notion of an \emph{escape norm}.

\begin{definition}[Escape norm]\label{escaped}  Let $A$ be a multiplicative set.  For a group element $g \in A^{10}$, we define the \emph{escape norm} $\|g\|_{e,A} \in [0,1]$ to be the quantity
$$ \|g\|_{e,A} := \inf \left\{ \frac{1}{n+1}: n \in \N; g^i \in A \hbox{ for all } 0 \leq i \leq n \right\}.$$
Recall that by convention, the statement $g^i \in A$ is false if $g^i$ is not well-defined.
Now suppose that $\A$ is a nonstandard multiplicative set, i.e. an ultraproduct $\A = \prod_{\n \rightarrow \alpha} A_\n$ of standard multiplicative sets $A_\n$. If $g = \lim_{\n \to \alpha} g_\n$ is an element of $\A^{10}$, we define the \emph{escape norm} $\|g\|_{e,\A} \in \ultra [0,1]$ to be the quantity
$$ \|g\|_{e,\A} := \lim_{\n \to \alpha} \|g_\n\|_{e,A_\n}.$$
\end{definition}

The escape norm can always be defined, but there are some remarkable lemmas essentially due to Gleason \cite{gleason2} concerning its properties when $A$ is an approximate group. Specifically we will show in \S \ref{gleason-sec} that there is a set $A'$ controlling $A$ for which the escape norms satisfy (precise versions of) the following estimates:

\begin{enumerate}
\item \textup{(Product property)} If $g_1,\dots, g_n \in A'$ then $\Vert g_1 \dots g_n \Vert_{e,A'} \ll \Vert g_1 \Vert_{e, A'} + \dots + \Vert g_n \Vert_{e, A'}$;
\item \textup{(Conjugation property)} If $g,h \in A'$ then $\Vert h^{-1} g h \Vert_{e, A'} \ll \Vert g \Vert_{e, A'}$;
\item \textup{(Commutator property)} If $g,h \in A'$ then $\Vert [g,h] \Vert_{e,A'} \ll \Vert g \Vert_{e, A'} \Vert h \Vert_{e, A'}$.
\end{enumerate}

These estimates, which we shall informally term ``Gleason's lemmas'' , will be proven in \S \ref{gleason-sec}. They are valid in both the finitary and the ultralimit settings; the latter will be deduced, quite straightforwardly, from the former.

The remarks in the following paragraph pertain to the finitary situation. To prove the Gleason lemmas, the set $A'$ must be what we call a \emph{strong approximate group}. The precise definition of this is Definition \ref{sag-def}. It is by no means obvious that there is a large strong approximate group $A'$ contained in $A^4$, but this will follow from the Hrushovski Lie model theorem (Theorem \ref{lie-model}), basically because small balls in a Lie group are automatically strong approximate groups, and can then be pulled back by the model map.

One $A'$ has been defined, Gleason's lemmas are proven by an argument closely analogous to that of Gleason himself \cite{gleason2}. We shall say nothing further about the details here; the argument is self-contained and is discussed in \S \ref{gleason-sec}.

With Gleason's lemmas in hand, let us describe the rest of the argument.

Firstly, the set $H = \{ g : \Vert g \Vert_{e, A'} = 0 \}$ of elements which do not escape is a normal (genuine) subgroup of $A'$; this follows from (i) and (ii). We may quotient by $H$ to get an ultra approximate group $A_0 := A'/H$, all of whose non-identity elements have nonzero escape norm.  We shall call such approximate groups \emph{NSS approximate groups}, in analogy with the \emph{no small subgroups} property in the theory of locally compact groups.

Now, if $g_1 \in A_0^4$ is an element other than the identity with smallest (nonzero) $\Vert \cdot \Vert_{e, A_0}$-escape norm then we shall see that in fact, if $A'$ is chosen appropriately, $g_1 \in A_0$.  Item (iii) then implies that for any $h \in A_0$, $[g_1,h] \in A_0^4$ has smaller escape norm than $g_1$, and hence must be the identity. In other words, $g_1$ is central in $A_0$ and we may quotient again to get a new approximate group $A_1 := A_0 /\langle g_1\rangle$. We are being quite fuzzy at this point; in fact, the quotienting takes place in the category of \emph{local} groups and one is quotienting not by the entire group $\langle g_1\rangle$ but by an appropriate geometric progression within it.

Continuing in this vein we pick $g_2 \in A_1^4$ other than the identity with smallest $\Vert \cdot \Vert_{e, A_1}$-norm. We shall see that this norm is automatically nonzero, a consequence of the \emph{local} nature of the quotienting operation.

Continuing further, we pick $g_3,g_4,\dots$.

All of this makes sense at the level of ultralimits as well, and in this setting one can show that $\A_i$ has a Hrushovski Lie model $L_i$ with $\dim L_i < \dim L_{i-1}$ for all $i$. Because of this, the quotienting procedure terminates in finite time with an element $g_k$ and one concludes by reversing these finitely many quotienting operations that $\A$ is controlled by an ultra coset nilprogression with ``generators'' $H, g_1,\dots, g_k$, thereby leading to a proof of Theorem \ref{hl-conj-nonst}.

This concludes our brief summary of the argument. Let us summarise the content of the remaining core sections of the paper.

\begin{itemize}
\item In Section \ref{sanders-croot-sisask-sec}, we discuss results from multiplicative combinatorics, essentially due to Sanders and Croot-Sisask, which are relevant to the proof of Hrushovski's Lie model theorem.
\item In Section \ref{metric-sec}, we prove the Hrushovski Lie model theorem.
\item In Section \ref{finite-deductions-sec}, we use the Hrushovski Lie model theorem to construct strong approximate groups.
\item In Section \ref{gleason-sec}, we state and prove Gleason's lemmas.
\item In Section \ref{endgame}, we give details of the inductive strategy outlined above for constructing $H$ and $g_1,\dots, g_k$, and conclude the proof of Theorem \ref{main-theorem} (except for the rank bound).
\item In Section \ref{rank-reduction} we show that the rank and step of the coset nilprogression can be bounded by $6\log_2 K$ in the global case.
\item Section \ref{gromov-sec} is devoted to various applications to the growth of groups and to Riemannian geometry. We prove there the corollaries stated in the introduction.
\end{itemize}

\section{Sanders-Croot-Sisask theory}\label{sanders-croot-sisask-sec}

In the next section we will establish Hrushovski's Lie Model Theorem (Theorem \ref{lie-model}), in which an ultra approximate group is related first to a locally compact metrisable local group and then, via Goldbring's solution \cite{goldbring-local} of the local Hilbert's Fifth problem, to a local Lie group. In locally compact metrisable local groups we have total boundedness, which means that the unit (say) ball $B(\id,1) := \{x \in G: d(x,\id) \leq 1\}$ may be covered by $O_{\eps}(1)$ smaller balls $B(x_i,\eps) := \{x \in G : d(x,x_i) \leq \eps\}$. On the other hand, by continuity of the group operation, $B(\id,1)$ will contain high powers like $B(\id,\eps)^{100}$ for suitably small $\eps$.

It is not surprising, then, that we need tools for showing (roughly speaking)  that approximate groups $A$ contain high powers of somewhat smaller, but still quite large, approximate subgroups $A'$, which do not immediately escape $A$ in the sense that $(A')^m$ is contained inside $A$ (or perhaps a slightly larger set such as $A^4$) for a reasonably large value of $m$.  Such a tool is provided by a result from multiplicative combinatorics due to Sanders \cite{sanders} and to Croot-Sisask \cite[Theorem 1.6]{croot-sisask}, namely Theorem \ref{scs} below. We shall also need a ``normal'' variant of this result, which essentially follows by combining Theorem \ref{scs} with \cite[Lemma 13.1]{sanders-nonabelian-idempotent}. Our version of this is Theorem \ref{scs-normal} below, and once again we provide a self-contained proof.

Let us remark that by appealing to these results from multiplicative combinatorics we differ fairly substantially from the approach taken by Hrushovski \cite{hrush}, although one may perceive structural similarities in the model-theoretic arguments he uses.

All of the results below are essentially already in the literature, but always for subsets $A$ of some ambient (global) group $G$. As it turns out, though, the proofs of these results end up being equally valid for the more local setting of multiplicative sets. Indeed, most of the tools used in multiplicative combinatorics (with the notable exception of the Fourier transform) are already ``local'' in nature in that they only require one to do $O(1)$ multiplications.

Our first such tool is Ruzsa's covering lemma, which essentially allows one to select a ``complete set of coset representatives'' in the approximate group setting.

\begin{lemma}[Local Ruzsa covering lemma]\label{loc}  Let $A,B$ be finite sets, and suppose that $A \cup B$ is a multiplicative set. Then there exists a finite set $X \subseteq B$ with $|X| \leq |A B|/|A|$ and $B \subseteq A^2 X$. Similarly there exists a finite set $Y \subseteq B$ such that $|Y| \leq |B A|/|A|$ and $B \subseteq Y A^2$.
\end{lemma}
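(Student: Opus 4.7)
The plan is to apply the classical greedy ``maximal disjoint translates'' covering argument of Ruzsa, transferred verbatim to the local-group setting. For the first claim, I would choose a finite subset $X = \{x_1,\ldots,x_k\} \subseteq B$ that is maximal subject to the condition that the left translates $Ax_1,\ldots,Ax_k$ are pairwise disjoint. Such a maximal $X$ exists because $B$ is finite and the property is preserved under shrinking.

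The cardinality bound is immediate from the disjointness: the union $\bigsqcup_i Ax_i$ is a disjoint subset of $AB$ and each piece has size $|A|$, so $k\cdot |A| \leq |AB|$, i.e. $|X| \leq |AB|/|A|$. For the covering property, let $b \in B$ be arbitrary. By maximality of $X$, we cannot adjoin $b$ to $X$, so $Ab$ must meet some $Ax_i$. That gives $a_1 b = a_2 x_i$ with $a_1,a_2 \in A$, hence $b = a_1^{-1}a_2 x_i$. Interpreting $A$ in the usual symmetric convention for multiplicative sets (or, if one prefers, noting that the greedy argument literally gives $B \subseteq A^{-1}A X$, which coincides with $A^2 X$ whenever $A=A^{-1}$, as is the case in every application of the lemma in this paper), we conclude $B \subseteq A^2 X$.

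The second statement, involving a right-covering set $Y$, follows from exactly the mirror argument: select a maximal $Y = \{y_1,\ldots,y_\ell\} \subseteq B$ such that the right translates $y_i A$ are pairwise disjoint, obtain $|Y| \leq |BA|/|A|$ from the disjoint union $\bigsqcup_i y_i A \subseteq BA$, and invoke maximality to place each $b \in B$ in some $y_i A^{-1}A \subseteq Y A^2$.

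The only non-routine point to verify is that every product we form is in fact well-defined in the ambient local group $G$; this is the one place where the local setting could a priori cause trouble. But since $(A,G)$ and $(B,G)$ satisfy the multiplicative-set hypothesis for $A \cup B$, the full product set $(A \cup B \cup (A \cup B)^{-1})^{200}$ is already well-defined, and every product appearing above ($ab$, $a_1^{-1}a_2 x_i$, $a_1^{-1}a_2 y_i$) has length at most $3$ in letters from $A \cup B \cup A^{-1} \cup B^{-1}$. So locality is automatic, and there is no real obstacle: the entire proof is the one-paragraph greedy argument, with the local hypothesis playing only a bookkeeping role.
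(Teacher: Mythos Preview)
Your argument is correct and is essentially identical to the paper's own proof: both choose $X\subseteq B$ maximal with respect to the disjointness of the translates $Ax$, deduce $|X|\leq |AB|/|A|$ from the disjoint union inside $AB$, and use maximality to write any $b\in B$ as $a^{-1}a'x$. You even flag the same $A^{-1}A$ versus $A^2$ wrinkle that the paper quietly glosses over (its proof literally produces $b\in A^{-1}AX$ and then declares ``the claim follows''), and you handle the local-group well-definedness point that the paper leaves implicit.
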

\begin{proof}  Let $X$ be a subset of $B$ such that the sets $A \cdot x$ for $x \in X$ are disjoint, and such that $X$ is maximal with respect to set inclusion; then we have $|X| \leq |A  B|/|B|$.  If $b \in B$, then $A \cdot b$ and $A \cdot x$ must intersect for some $x$, thus $a \cdot b = a' \cdot x$ for some $a, a' \in A$.  Multiplying on the left by $a^{-1}$, we conclude that $b = a^{-1} \cdot a' \cdot x$, and the claim follows.
\end{proof}

A corollary of this is the following result, which allows one to produce an approximate group from a set with small growth.

\begin{corollary}\label{isgroup} Let $A$ be a symmetric multiplicative set, and suppose that $|A^5| \leq K |A|$. Then $A^2$ is a $2K$-approximate group.
\end{corollary}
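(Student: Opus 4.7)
The plan is to apply Ruzsa's covering lemma (Lemma \ref{loc}) to cover $A^4$ by a few left-translates of $A^2$, then symmetrize the covering set and verify the definition of an approximate group. This should be essentially a one-step argument, with the only small wrinkle being that approximate groups require a \emph{symmetric} covering set, which will cost us a factor of $2$.

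More precisely, first I would note that $A^2$ is symmetric (it contains $\id$ and is closed under inversion because $A$ is), so condition (i) of Definition \ref{approx-group-def} is immediate. For condition (ii), I would apply the left-translate version of Lemma \ref{loc} with its ``$A$'' taken to be $A$ and its ``$B$'' taken to be $A^4$. This produces a set $Y \subseteq A^4$ satisfying
\begin{equation*}
 |Y| \leq \frac{|A^4 \cdot A|}{|A|} = \frac{|A^5|}{|A|} \leq K
\end{equation*}
and $A^4 \subseteq Y \cdot A^2$.

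To make the covering set symmetric, set $X := Y \cup Y^{-1}$. Since $A$ is symmetric, $Y^{-1} \subseteq (A^4)^{-1} = A^4$, so $X \subseteq A^4 \subseteq (A^2)^3$ (using $\id \in A$ to embed $A^4$ in $A^6$). Also $|X| \leq 2|Y| \leq 2K$, and clearly $A^4 \subseteq Y \cdot A^2 \subseteq X \cdot A^2$. This verifies condition (ii) with covering parameter $2K$, proving that $A^2$ is a $2K$-approximate group.

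There is no real obstacle here; the argument is a direct application of the lemma, and the only mildly delicate step is the symmetrization that forces the factor of $2$. One should check en passant that the implicit ``multiplicative set'' conditions remain valid since all quantities involved live inside a bounded power of $A$, but this is routine given that $A$ was assumed to be a multiplicative set and $200 \gg 5$.
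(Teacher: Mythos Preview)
Your proof is correct and follows essentially the same route as the paper: apply the Ruzsa covering lemma (Lemma~\ref{loc}) to cover $A^4$ by at most $K$ translates of $A^2$, then symmetrise the covering set at the cost of a factor of $2$. The paper phrases the symmetrisation slightly differently, applying Lemma~\ref{loc} once on each side to get $X, Y$ with $A^4 \subseteq A^2 X$ and $A^4 \subseteq Y A^2$ and then taking $X \cup Y$, but since $A$ is symmetric this is equivalent to your $Y \cup Y^{-1}$.

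One tiny slip: you write ``using $\id \in A$'' to justify $A^4 \subseteq A^6$, but the hypotheses only give that $A$ is symmetric, not that $\id \in A$. The correct observation is $\id = a a^{-1} \in A^2$ for any $a \in A$, which is what you actually need (and which you already used to verify condition~(i)). This does not affect the argument.
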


\begin{proof}  Clearly $A^2$ is a symmetric set containing the identity.  Since $|A^5| \leq K|A| \leq K|A^2|$, we see from Lemma \ref{loc} that there exists $X \subseteq A^4$ with $|X| \leq K$ such that $A^4 \subseteq A^2 X$, and there similarly exists $Y \subseteq A^4$ with $|Y| \leq K$ such that $A^4 \subseteq YA^2$.  Taking the union of $X$ and $Y$ we obtain the claim.
\end{proof}

We turn now to the result of Sanders \cite{sanders} that drives our whole approach.

\begin{theorem}[Small neighbourhoods]\label{scs}
Suppose that $A$ is a $K$-approximate group, and let $m \geq 1$ be an integer. Then there is a $O_{K,m}(1)$-approximate group $S$ with $|S| \gg_{K,m} |A|$ such that $S^m \subseteq A^4$.
\end{theorem}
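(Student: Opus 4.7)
I would adopt the Sanders/Croot-Sisask strategy: construct a large set of $L^2$-almost periods for the convolution $f := 1_A * 1_A$ on the ambient (local) group, and observe that high powers of this set are forced into $A^4$.

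The central step is a non-abelian Croot-Sisask lemma. Fix a small parameter $\eta = \eta(K,m) > 0$ (to be chosen) and set $k = O(K/\eta^2)$. Sample $a_1, \ldots, a_k$ independently and uniformly from $A$, and form the empirical approximation
\[
\tilde f(x) := \frac{|A|}{k} \sum_{i=1}^k 1_A(a_i^{-1} x),
\]
an unbiased estimator of $f$. A direct variance computation, using the lower bound $\|f\|_{L^2}^2 \geq |A|^3/K$ (Cauchy-Schwarz on the support bound $|A^2| \leq K|A|$) together with $\E \|\tilde f - f\|_{L^2}^2 \leq |A|^3/k$, gives $\E \|\tilde f - f\|_{L^2}^2 \leq (\eta/3)^2 \|f\|_{L^2}^2$; by Markov, a set of sample tuples of mass at least $1/2$ in $A^k$ satisfies $\|\tilde f - f\|_{L^2} \leq (\eta/3)\|f\|_{L^2}$. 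A pigeonhole over the sample space $A^k$ then extracts a symmetric set $T \subseteq A \cdot A^{-1} = A^2$ with $|T| \gg_{K,m,\eta} |A|$ such that every $t \in T$ maps one ``good'' sample tuple to another; combined with the isometry property of the translation $\tau_t g(x) := g(t^{-1}x)$ on $L^2$ (counting measure), one obtains $\|\tau_t f - f\|_{L^2} \leq \eta \|f\|_{L^2}$ for all $t \in T$.

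The triangle inequality upgrades this to $\|\tau_t f - f\|_{L^2} \leq 2m \eta \|f\|_{L^2}$ for all $t \in T^{2m}$. Choosing $\eta$ small enough that $2m\eta < 1$, Cauchy-Schwarz yields $\langle \tau_t f, f \rangle \geq (1 - 2m\eta) \|f\|_{L^2}^2 > 0$, so the supports of $f$ and $\tau_t f$ must intersect. Since $f$ is supported in $A^2$ and $\tau_t f$ in $t \cdot A^2$, this forces $t \in A^2 \cdot A^{-2} = A^4$; hence $T^{2m} \subseteq A^4$. In particular $|T^{2m}| \leq |A^4| \leq K^{O(1)} |A| \ll_{K,m} |T|$, so $|T^5| \ll_{K,m} |T|$ (the easy cases $m \leq 2$ are handled by running the above argument with $m = 3$). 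Corollary \ref{isgroup} then gives that $S := T^2$ is an $O_{K,m}(1)$-approximate group with $|S| \geq |T| \gg_{K,m} |A|$ and $S^m \subseteq T^{2m} \subseteq A^4$.

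The principal obstacle is executing the Croot-Sisask sampling argument in the non-abelian, possibly local, group setting. Because $A$ is a multiplicative set (products of up to $200$ elements of $A \cup A^{-1}$ are well-defined), all the products that arise --- namely $a_i^{-1} x$ with $x \in A^2$, and $t = b a^{-1}$ with $a, b \in A$ --- are legitimate in the local group $G$. The second subtlety is the entropy loss in the pigeonhole: since the sample space $A^k$ has cardinality $|A|^k$ but we demand $|T| \geq c_{K,m,\eta} |A|$, the pigeonhole must be organised carefully, typically by taking $T$ to be the set of left-translates relating two good sample tuples that agree on all but a small number of coordinates. Once set up this way, the lower bound on $|T|$ depends only on $K$, $m$, and $\eta$, as required.
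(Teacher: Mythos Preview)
Your overall strategy is sound but takes a different route from the paper's. The paper follows Sanders: it studies $f(t) := \inf\{|AB|/|A| : B \subseteq A,\ |B| \geq t|A|\}$, uses pigeonhole to find a scale $t$ where $f(t^2/2K^2) \geq (1 - \tfrac{1}{100m})f(t)$, picks $B$ realising the infimum, and then via Cauchy--Schwarz extracts a large symmetric $S_0 \subseteq A^2$ such that $|ABh \cap AB| \geq (1 - \tfrac{1}{100m})|AB|$ for all $h \in S_0$; iterating this inequality gives $S_0^{m'} \subseteq A^4$ for all $m' < 100m$. You instead run Croot--Sisask almost-periodicity on $1_A * 1_A$, which the paper explicitly notes as an alternative. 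Both routes produce a large $T \subseteq A^2$ with $T^{2m} \subseteq A^4$ and then invoke Corollary~\ref{isgroup}; Sanders' argument is entirely deterministic and slightly more self-contained, while Croot--Sisask extends more naturally to $L^p$ variants and gives comparable quantitative bounds.

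Your description of the key pigeonhole, however, is not right: two good sample tuples related by a left-translate $t \neq \id$ do not ``agree on all but a small number of coordinates'' --- they agree on none. The correct pigeonhole is on the \emph{shape} $(a_1^{-1}a_2, \ldots, a_1^{-1}a_k) \in (A^2)^{k-1}$. There are at most $(K|A|)^{k-1}$ possible shapes and at least $|A|^k/2$ good tuples, so some shape is attained by a set $T' \subseteq A$ of first coordinates with $|T'| \geq |A|/(2K^{k-1})$; any two good tuples of the same shape are related by left-multiplication by an element of $T := T'(T')^{-1} \subseteq A^2$. With $k = O(Km^2)$ this gives $|T| \gg_{K,m} |A|$, and the remainder of your argument (telescoping to $T^{2m}$, positivity of $\langle \tau_t f, f\rangle$ forcing $t \in A^4$, then Corollary~\ref{isgroup}) goes through.
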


\begin{remark} Explicit bounds for the implied constants are given in, for example, \cite[Theorem 1.6]{croot-sisask}.  As much of the remainder of the argument is not explicitly effective with respect to bounds, we do not worry about such quantitative issues here. Similar remarks can be made in connection with the normal variant, Theorem \ref{scs-normal} below.
\end{remark}

\begin{proof}  We use the argument from \cite{sanders}, generalised to the setting of multiplicative sets. For the convenience of the reader, we reproduce it here.  A somewhat different proof of Theorem \ref{scs} can also be obtained by using the techniques of \cite{croot-sisask}.

For each $0 < t < 1$, let $f(t)$ denote the quantity
$$ f(t) := \inf \left\{ \frac{|A B|}{|A|}: B \subseteq A; |B| \geq t |A| \right\}.$$
Since $|A^2| \leq K |A|$, we have $1 \leq f(t) \leq K$
for all $0 < t < 1$.  By the pigeonhole principle, we can thus find $t \gg_{K,m} 1$ such that
\begin{equation}\label{fat}
 f\left(\frac{t^2}{2K^2}\right) \geq (1 - \frac{1}{100m}) f(t).
\end{equation}
Fix this $t$.  As there are only finitely many sets $B$ that make up the infimum for $f$, we can find a $B \subset A$ with $|B| \geq t|A|$ such that
\begin{equation}\label{ab}
 |A B| = f(t) |A|.
\end{equation}
For each $a \in A$, the set $B a$ has cardinality $|B|$ and is contained in $A^2$.
$$ \sum_{x \in A^2} \sum_{a \in A} 1_{B  a}(x) = |A| |B|$$
and hence by Cauchy-Schwarz we obtain
$$ \sum_{x \in A^2} (\sum_{a \in A} 1_{B  a}(x))^2 = \frac{|A|^2 |B|^2}{|A^2|}.$$
The left-hand side can be rewritten as \[ \sum_{a \in A} \sum_{a' \in A} |Ba \cap Ba'|,\]
and so by the pigeonhole principle, there exists $a_0 \in A$ such that
$$ \sum_{a \in A} |Ba\cap Ba_0| \geq \frac{|A| |B|^2}{|A^2|}.$$
Since $|B| \geq t|A|$ and $|A^2| \leq K|A|$, we thus have
$$ \sum_{a \in A} |Ba\cap Ba_0| \geq \frac{t^2}{K^2} |A|^2,$$
and hence we can find a subset $C$ of $A$ of cardinality
\begin{equation}\label{cbound}
|C| \geq t^2/2K^2 |A|
\end{equation}
such that $|Ba\cap Ba_0| \geq t^2 |A|/2K^2$
for all $a \in C$.  Multiplying by $a_0^{-1}$ and by $a^{-1}$, we see that $|Bh \cap B| \geq t^2|A|/2K^2$
for all $h \in S_0$, where $S_0 := a_0^{-1} C \cup C^{-1} a_0 \cup \{\id\}$ is a symmetric subset in $A^2$ containing the identity.  From \eqref{fat}, we conclude that
$$ A (Bh \cap B)| \geq \left(1-\frac{1}{100m}\right) f(t) |A|.$$
From \eqref{ab}, we conclude that
$$ | ABh \cap AB | \geq \left(1-\frac{1}{100m}\right) |AB|.$$
Using induction (and the hypothesis that $A^8$ is well-defined, noting that $B \subset A$ and $S_0 \subset A^2$) we then see that for any $1 \leq m' < 100 m$, the set $S_0^{m'}$ is well-defined and
$$ | ABh \cap AB | \geq \left(1-\frac{m'}{100m}\right) |AB|.$$
for all $h \in S_0^{m'}$, which in particular implies that $S_0^{m'} \subset A^4$.  On the other hand, from \eqref{cbound} we have $|S_0| \gg_{K,m} |A|$.  From Corollary \ref{isgroup} we see that $S := S_0^2$ is a $O_{K,m}(1)$-approximate group.  Since $S^m = S_0^{2m} \subset A^4$, we obtain the first claim of the lemma.  The second claim follows by applying the Ruzsa covering lemma (with $B := S_0$).
\end{proof}

\begin{remark}\label{consequence}
Let us pause to note a consequence of this result. We defined multiplicative sets to be ones in which one was at liberty to take up to $100$ multiplications (i.e. $A^{100}$ is well-defined), and the associative law would hold to this extent. Theorem \ref{scs}, or more accurately a close examination of the proof of it, says that if $A$ is an approximate group and a multiplicative set in which merely $8$ multiplications are allowed (i.e. $A^8$ is well-defined) then $A$ is $O_{m,K}(1)$-controlled by an $O_{m,K}(1)$-approximate group $A' = S$ in which up to $m$ multiplications are defined an associative. For this reason Theorem \ref{main-theorem} holds if only $8$ multiplications are allowed. We shall not dwell on such details further in this paper, allowing ourselves the luxury of $100$ multiplications.
\end{remark}

We turn now to proving a ``normal'' variant of Theorem \ref{scs}. Here, we use the notation
\[ a^b := b^{-1} a b \]
and
\[ A^B := \{ a^b : a \in A, b \in B\}\]
for elements $a,b$ and subsets $A,B$ of a local group.

\begin{theorem}[Small normal neighbourhoods]\label{scs-normal}
Suppose that $A$ is a $K$-approximate group, and let $m \geq 1$ be an integer. Let $S \subseteq A^4$ be a $K'$-approximate group with $|S| = \delta |A|$. Then there is an $O_{m,K,K',\delta}(1)$-approximate group $\tilde S$ with $|\tilde S| \gg_{K,K',m,\delta} |A|$ such that $(\tilde S^m)^{A^4} \subseteq S^4$.
\end{theorem}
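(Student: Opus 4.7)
}

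The strategy is to combine Theorem \ref{scs} with a Ruzsa covering reduction and an iterated averaging argument in the spirit of \cite[Lemma 13.1]{sanders-nonabelian-idempotent}. The essential idea is to reduce approximate normality under the (possibly large) set $A^4$ to approximate normality under a bounded set of conjugators, then peel off these conjugators one at a time.

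First I would apply Lemma \ref{loc} to $A^4$ and $S$. Since $A^4 \cdot S \subseteq A^8$ and $|A^8| \leq K^{O(1)}|A| = O_{K,\delta}(1) |S|$, we obtain a set $Y \subseteq A^4$ with $|Y| = O_{K,\delta}(1)$ such that $A^4 \subseteq Y \cdot S$. Writing any $a \in A^4$ as $a = ys$ with $y \in Y$, $s \in S$, we have for $g \in \tilde S^m$ the identity $a^{-1} g a = s^{-1}(y^{-1} g y) s$. Since $S$ is symmetric, $s^{-1} S^2 s \subseteq S^4$, so it suffices to produce $\tilde S$ satisfying $y^{-1} \tilde S^m y \subseteq S^2$ for every $y \in Y$. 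In other words, we need only enforce conjugation invariance (into $S^2$) for the $O_{K,\delta}(1)$ elements of $Y$.

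Next I would construct $\tilde S$ inductively in $N := |Y|$ steps, each step absorbing one element of $Y$. Enumerate $Y = \{y_1,\ldots,y_N\}$ and build a decreasing sequence of $O_{K,K',m,\delta}(1)$-approximate groups $S = S_0 \supseteq S_1 \supseteq \cdots \supseteq S_N =: \tilde S$, each of density $\gg_{K,K',m,\delta} |A|$, with the cumulative property that $y_j^{-1} S_i^{m_i} y_j \subseteq S^2$ for all $j\leq i$, where $m_i$ is a rapidly decreasing sequence arranged so that $m_N \geq m$. The inductive step from $S_i$ to $S_{i+1}$ is the heart of the matter. For it, set $y := y_{i+1}$ and note that the conjugate $y^{-1} S y$ is itself a $K'$-approximate group of the same cardinality as $S$, lying in $A^{O(1)}$. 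I would adapt the proof of Theorem \ref{scs} to this two-sided setting: define
\[
f(t) \; := \; \inf\left\{\frac{|S_i \cdot B|}{|S_i|}\; :\; B \subseteq S_i,\ |B|\geq t|S_i|,\ B \cdot y \subseteq S \cdot y\right\},
\]
or a suitable variant of this quantity that jointly measures multiplicative growth and conjugation twist. Pigeonholing to a scale where $f$ is essentially flat and running a Cauchy--Schwarz argument on the indicators of the translates $(y^{-1} S y)\cdot h$ (for $h$ ranging over the witness at that scale), one extracts a symmetric neighborhood of the identity whose high powers lie in $S_i^{O(1)} \cap y^{-1} S^{O(1)} y$. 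A final application of Theorem \ref{scs} inside this neighborhood then produces $S_{i+1}$ with $S_{i+1}^{m_{i+1}} \subseteq y^{-1} S^2 y$ and the desired density.

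The main obstacle is the averaging step above: one must simultaneously control multiplicative growth and the conjugation twist by $y$, a task not immediately reducible to Theorem \ref{scs} applied to a single approximate group, since the natural candidate intersection $S_i \cap y^{-1} S y$ need not itself have density $\gg |A|$ in general. The fix is precisely the Sanders-style argument: rather than intersecting sets directly, one works at the level of the doubling functional $f(t)$, extracting the neighborhood from near-flat scales. Once the inductive step is established, composing $N = O_{K,\delta}(1)$ applications gives $\tilde S$ with all parameters $O_{K,K',m,\delta}(1)$, and the covering reduction in the first paragraph then produces the desired $(\tilde S^m)^{A^4} \subseteq S^4$.
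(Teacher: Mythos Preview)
Your first reduction step --- using Ruzsa covering to replace conjugation by all of $A^4$ with conjugation by a finite set $Y$ of size $O_{K,\delta}(1)$ --- is correct and matches the paper's approach exactly. But after that point you take a much harder road than necessary, and the hard step of your argument is not actually carried out.

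The gap is in your inductive step. You acknowledge that the natural candidate $S_i \cap y^{-1} S y$ ``need not itself have density $\gg |A|$ in general'', and propose to fix this by a modified Sanders doubling-functional argument tracking both growth and conjugation twist. But this functional is never precisely defined, and the claimed extraction of a symmetric neighbourhood whose high powers land in $S_i^{O(1)} \cap y^{-1} S^{O(1)} y$ is simply asserted, not proven. There is no reason to expect the pigeonholing in $f(t)$ to produce anything compatible with the conjugate $y^{-1} S y$ unless that conjugate is built into $f$ from the start, and you have not explained how to do that while keeping the Cauchy--Schwarz step intact.

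More importantly, this detour is unnecessary. The paper's proof shows that your worry about small intersections is unfounded: it proves an approximate index formula (Corollary~\ref{approx-index-formula}) asserting that if $A_1,\ldots,A_k \subseteq A$ each have density $\geq \delta$ in a $K$-approximate group $A$, then $\bigcap_i A_i A_i^{-1}$ has size $\gg_{\delta,k,K}|A|$. One first applies Theorem~\ref{scs} to $S$ to get $S_0$ with $S_0^{4m+4} \subseteq S^4$, then covers $A^4 \subseteq \bigcup_i S_0^2 x_i$, and then simply sets $T := \bigcap_i x_i S_0^2 x_i^{-1}$. Since $x_i S_0^2 x_i^{-1} = (x_i S_0)(x_i S_0)^{-1}$ and each $x_i S_0$ is a dense subset of a fixed approximate group, the index formula guarantees $|T| \gg |A|$ directly --- no induction, no modified Sanders argument. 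Taking $\tilde S := T^2$ and unwinding gives $(\tilde S^m)^{A^4} \subseteq S_0^{4m+4} \subseteq S^4$ in one line. The moral: rather than peeling off conjugators one by one with increasingly delicate tools, intersect all the conjugates at once using the approximate analogue of $[G:H_1\cap H_2] \leq [G:H_1][G:H_2]$.
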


Theorem \ref{scs-normal} will be deduced from Theorem \ref{scs}.  To motivate the argument, let us first recall a standard lemma from group theory.

\begin{lemma}  Let $A$ be a finite group, and let $S$ be a subgroup of $A$ with $|S| \geq |A|/K$.  Then there exists a further subgroup $\tilde S \subset S$ of $A$ with $|\tilde S| \gg_K |A|$ which is normal in $A$.
\end{lemma}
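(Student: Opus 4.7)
The plan is to take $\tilde S$ to be the \emph{normal core} of $S$ in $A$, namely
\[
\tilde S := \bigcap_{a \in A} a S a^{-1}.
\]
This is visibly the largest normal subgroup of $A$ contained in $S$, and it is a subgroup of $S$, so the only issue is to lower-bound its size in terms of $K$.

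For that I would use the standard action of $A$ on the coset space $A/S$ by left multiplication, which furnishes a homomorphism $\phi : A \to \operatorname{Sym}(A/S)$. A direct computation shows $\ker \phi$ is exactly the normal core $\tilde S$. Since $|S| \geq |A|/K$ we have $[A:S] \leq K$, so $\operatorname{Sym}(A/S)$ has order at most $K!$. Therefore $[A:\tilde S]$ divides $K!$, which gives
\[
|\tilde S| \geq \frac{|A|}{K!} \gg_K |A|,
\]
as required.

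There is no real obstacle here; the main point is simply identifying the right candidate subgroup. I note that the bound $|\tilde S| \geq |A|/K!$ is quite poor quantitatively (super-exponential in $K$), but this is irrelevant for the qualitative statement of the lemma, which only asks for a bound of the form $\gg_K |A|$. This lemma is only being stated as motivation for the proof of Theorem~\ref{scs-normal}, whose purpose is precisely to furnish an \emph{approximate} analogue of this exact group-theoretic fact; so we make no effort to optimise.
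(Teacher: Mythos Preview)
Your proof is correct and uses the same candidate as the paper: the normal core $\tilde S = \bigcap_{a \in A} a S a^{-1}$. The only difference is in how the index of $\tilde S$ is bounded. You use the action of $A$ on $A/S$ to embed $A/\tilde S$ into $\operatorname{Sym}(A/S)$, obtaining $[A:\tilde S] \leq K!$. The paper instead writes $\tilde S = \bigcap_{i=1}^k x_i^{-1} S x_i$ over a set of $k \leq K$ coset representatives and iterates the index inequality $[G:H_1 \cap H_2] \leq [G:H_1][G:H_2]$, giving $[A:\tilde S] \leq K^K$. Your bound is actually slightly sharper, and your argument is the textbook one. The paper's choice is deliberate, however: the lemma is there solely to motivate Theorem~\ref{scs-normal}, and the index inequality is precisely the step that has a clean approximate-group analogue (Corollary~\ref{approx-index-formula}), whereas the action-on-cosets argument does not transfer as directly to the approximate setting.
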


Note that this lemma would easily yield Theorem \ref{scs-normal} from Theorem \ref{scs} in the special case when $A$ and $\tilde S$ are genuine groups and not merely approximate groups.

\begin{proof}
Let $x_1,\dots,x_k$ be a complete set of right coset representatives for $S$ in $A$, and set
\[ \tilde S = \bigcap x_i^{-1} S x_i = \bigcap_{x \in A} x^{-1} S x.\]
All the claims of the lemma are immediate, except for the claim that $|\tilde S| \gg_K |A|$.  However, this follows from iterating the fact that if $H_1, H_2 \leq G$ are subgroups of small index in a group $G$ then so is $H_1 \cap H_2$; in fact we have the well-known inequality
\begin{equation}\label{index-eq}
[G : H_1 \cap H_2] \leq [G : H_1] [G : H_2].
\end{equation}
\end{proof}

To adapt this argument to the approximate setting we need an analogue of \eqref{index-eq} for approximate groups.  This is provided by the following lemma.

\begin{lemma}
Suppose that $A$ is a $K$-approximate group and that $A_1, A_2 \subseteq A$ are sets with $|A_i| = \delta_i |A|$. Then $A_1 A^{-1}_1 \cap A_2 A^{-1}_2$ contains a set $B B^{-1}$ with $B \subseteq A$ and $|B| \geq \delta_1 \delta_2 |A|/K$.
\end{lemma}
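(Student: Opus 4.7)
The plan is to apply a pigeonhole argument to the ``difference representation function'' associated with $A_1$ and $A_2$, in the spirit of the standard Plünnecke--Ruzsa type manipulations used throughout additive combinatorics.

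More concretely, I would consider the representation function $r : A \cdot A \to \N$ defined by
\[ r(z) := \#\{(a_1,a_2) \in A_1 \times A_2 : a_1^{-1} a_2 = z\}, \]
which is well-defined because $A$ is symmetric (as an approximate group) and the products/inverses involved stay inside $(A \cup A^{-1})^{200}$, so $a_1^{-1} a_2 \in A^{-1} \cdot A = A \cdot A$. Summing over $z$ gives $\sum_z r(z) = |A_1||A_2| = \delta_1 \delta_2 |A|^2$, and the support of $r$ has at most $|A \cdot A| \leq K|A|$ elements since $A$ is a $K$-approximate group. By pigeonhole there exists $z^\ast \in A \cdot A$ with
\[ r(z^\ast) \geq \frac{\delta_1 \delta_2 |A|^2}{K|A|} = \frac{\delta_1 \delta_2 |A|}{K}. \]

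Next I would set $B := \{a_1 \in A_1 : a_1 z^\ast \in A_2\}$. For each such $a_1$ the second coordinate $a_2 = a_1 z^\ast$ is uniquely determined, so $|B| = r(z^\ast) \geq \delta_1 \delta_2|A|/K$, and clearly $B \subseteq A_1 \subseteq A$. Moreover $B z^\ast \subseteq A_2$, so
\[ B B^{-1} = (B z^\ast)(B z^\ast)^{-1} \subseteq A_2 A_2^{-1}, \]
and $B \subseteq A_1$ gives $BB^{-1} \subseteq A_1 A_1^{-1}$. Hence $BB^{-1} \subseteq A_1 A_1^{-1} \cap A_2 A_2^{-1}$, which is what we wanted.

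There is essentially no obstacle here beyond being careful that all the relevant products are well-defined in the local-group setting; but since everything takes place inside products of at most a few elements of $A \cup A^{-1}$, this is guaranteed by the multiplicative-set hypothesis built into the definition of a $K$-approximate group. The only choice that requires thought is which difference set to pigeonhole into, and taking $a_1^{-1} a_2$ is natural precisely because it sits in $A \cdot A$, whose size is controlled by the approximate group hypothesis; the factor $1/K$ in the conclusion is exactly the price of this pigeonholing.
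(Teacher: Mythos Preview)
Your proof is correct and follows essentially the same approach as the paper: pigeonhole on the representation function for $a_1^{-1}a_2$ inside $A^2$, then take $B$ to be one coordinate of the popular fibre. The only cosmetic difference is that the paper chooses $B$ to be the set of $a_2$-values (and verifies $BB^{-1}\subseteq A_1A_1^{-1}$ by computing $a_2'a_2^{-1}=a_1'a_1^{-1}$ directly), whereas you take the $a_1$-values and use the slick identity $BB^{-1}=(Bz^\ast)(Bz^\ast)^{-1}$; the underlying argument is identical.
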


\begin{proof}
Since $A^{-1}_1 A_2 \subseteq A^2$, we have $|A^{-1}_1 A_2| \leq K|A|$. It follows that there is some $x$ with at least $\delta_1\delta_2|A|/K$ representations as $a^{-1}_1 a_2$. Let $B$ be the set of all values of $a_2$ that appear.  Obviously $B B^{-1} \subseteq A_2 A^{-1}_2$. Suppose that $a_2, a'_2 \in B$. Then there are $a_1, a'_1$ such that $x = a_1^{-1} a_2 = (a'_1)^{-1} a'_2$, and so $a'_1 a^{-1}_1 = a'_2 a_2^{-1}$. Thus $B B^{-1}$ lies in $A_1A_1^{-1}$ as well.
\end{proof}

By iterating the above lemma we obtain the following corollary.

\begin{corollary}\label{approx-index-formula}
Suppose that $A$ is a $K$-approximate group and that $A_1,\dots, A_k \subseteq A$ are sets with $|A_i| \geq \delta |A|$ for each $i$. Then $|\bigcap_{i=1}^k A_i A_i^{-1}| \gg_{\delta,k,K} |A|$.
\end{corollary}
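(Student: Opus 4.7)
The plan is to prove this by induction on $k$, noting that the preceding lemma has been designed so that its conclusion is of exactly the right shape to be fed back in as an input. More precisely, the lemma takes two subsets $A_1, A_2 \subseteq A$ and produces a subset $B \subseteq A$ with $B B^{-1} \subseteq A_1 A_1^{-1} \cap A_2 A_2^{-1}$; the point is that $B B^{-1}$ is again of the form ``(subset of $A$) times its inverse'', so the output can immediately play the role of $A_1 A_1^{-1}$ in a further application.

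For the base case $k=1$, simply observe that $A_1 A_1^{-1}$ contains the identity, so $|A_1 A_1^{-1}| \geq |A_1| \geq \delta |A|$, and $A_1 A_1^{-1}$ itself has the form $B_1 B_1^{-1}$ with $B_1 := A_1 \subseteq A$ and $|B_1| \geq \delta |A|$. For the inductive step, assume that we have constructed $B_j \subseteq A$ with $|B_j| \geq \delta_j |A|$ such that $B_j B_j^{-1} \subseteq \bigcap_{i=1}^{j} A_i A_i^{-1}$, where $\delta_j > 0$ depends only on $\delta, j, K$. Applying the preceding lemma to the pair $B_j, A_{j+1} \subseteq A$ (both having density at least $\min(\delta_j, \delta)$ in $A$) produces $B_{j+1} \subseteq A$ with $|B_{j+1}| \geq \delta_j \delta |A| / K$ and
\[ B_{j+1} B_{j+1}^{-1} \subseteq B_j B_j^{-1} \cap A_{j+1} A_{j+1}^{-1} \subseteq \bigcap_{i=1}^{j+1} A_i A_i^{-1}. \]
Setting $\delta_{j+1} := \delta_j \delta / K$ continues the induction.

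After $k$ steps, we obtain $B_k \subseteq A$ with $|B_k| \geq \delta_k |A|$ and $B_k B_k^{-1} \subseteq \bigcap_{i=1}^{k} A_i A_i^{-1}$, where unwinding the recursion gives $\delta_k = \delta^k / K^{k-1}$. Since $B_k B_k^{-1}$ contains at least $|B_k|$ elements (e.g.\ by fixing any $b_0 \in B_k$ and considering $\{b_0 b^{-1} : b \in B_k\}$), we conclude
\[ \left| \bigcap_{i=1}^{k} A_i A_i^{-1} \right| \geq |B_k| \geq \frac{\delta^k}{K^{k-1}} |A| \gg_{\delta,k,K} |A|, \]
as claimed. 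The only thing that needs any care is checking that all products used are legitimate in the local group sense, but since all of the sets $B_j$ lie inside $A$, each $B_j B_j^{-1}$ lies inside the well-defined set $A^2$, so there is no issue. There is no genuine obstacle here; the content of the corollary lies entirely in the preceding lemma, and this is a routine induction.
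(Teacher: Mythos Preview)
Your proof is correct and takes essentially the same approach as the paper, which simply states that the corollary follows ``by iterating the above lemma''; you have spelled out this iteration explicitly. One minor quibble: in your base case the clause ``contains the identity, so $|A_1 A_1^{-1}| \geq |A_1|$'' is a non sequitur, but this is harmless since the actual bound you use at the end is the correct one via $\{b_0 b^{-1} : b \in B_k\}$.
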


Now we can prove Theorem \ref{scs-normal}.

\begin{proof}[Proof of Theorem \ref{scs-normal}]
By Theorem \ref{scs}, there is an $O_{l,K,K'}(1)$-approximate subgroup $S_0 \subseteq S^4$, \[ |S_0| \gg_{m,K, K',\delta} |A|,\] such that
\begin{equation}\label{inclu} S_0^{4m + 4} \subseteq S^4.\end{equation}
The Ruzsa covering lemma allows us to do the analogue of picking a complete set of coset representatives in the approximate group setting. Specifically, there are
$x_1,\dots,x_k$, $k = O_{m,\delta,K}(1)$, such that
\begin{equation}\label{covering}
A^4 \subseteq \bigcup_{i = 1}^k  S_0^2 x_i.
\end{equation}

Let us assume without loss of generality that $x_1 = \id$.

By Corollary \ref{approx-index-formula}, the set
\[ T := \bigcap_{i = 1}^k x_i S_0^{2} x_i^{-1}\]
has cardinality
\[ |T|  \gg_{m,K,K',\delta} |A|.\] We claim that the set $\tilde S := T^2$ has the required properties. First of all note that, by Corollary \ref{isgroup}, $\tilde S$ is indeed an $O_{m,K,K',\delta}(1)$-approximate group.

Next observe that, since $x_1 = \id$,
\begin{equation}\label{observ}  x_i^{-1} T x_i \subseteq S_0^{2}\end{equation} for each $i$.

Suppose that $x \in A^4$. Then,  by \eqref{covering}, there is some $i$ with $1 \leq i \leq k$ and some $s \in S_0^2$ such that $x = s x_i$. It follows from this, \eqref{observ} and \eqref{inclu} that \[ x^{-1} \tilde S^m x = x^{-1} T^{2m} x = s^{-1} x_i^{-1} T^{2m}  x_i s = s^{-1} (x_i^{-1} T x_i)^{2m} s \subseteq S_0^{4m + 4} \subseteq S^4.\] This concludes the proof.
\end{proof}

\section{Proof of the Hrushovski Lie model theorem}\label{metric-sec}

In this section we establish Theorem \ref{lie-model}. The reader may wish to reread \S \ref{corresp}, which gave an overview of this theorem.
We will deduce this theorem from the following two propositions.

\begin{proposition}[Locally compact model]\label{locally-compact-model}
Let $\A$ be an ultra approximate group. Then $\A^4$ admits a model $\pi: \A^{32} \rightarrow G$ by a metrisable locally compact local group $G$.
\end{proposition}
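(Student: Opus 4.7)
The strategy is to build a nested chain of ultra approximate subgroups of $\A$ that will play the role of a neighbourhood basis of the identity, use them to define a pseudometric on $\A^{32}$, and recover the desired locally compact local group as the quotient by the kernel of this pseudometric. The two main inputs will be the normal form of the Sanders--Croot--Sisask theorem (Theorem \ref{scs-normal}), which will produce neighbourhoods of the identity that are both small and approximately normal, and the countable saturation of the ultraproduct, which will provide completeness.

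\textbf{Step 1: construction of the neighbourhood basis.} Transferring Theorem \ref{scs-normal} to the ultra setting via {\L}os's theorem, construct iteratively a decreasing sequence
$$ \A \supseteq S_1 \supseteq S_2 \supseteq \cdots $$
of ultra approximate subgroups, each a large ultra approximate subgroup of $\A$, such that $(S_{n+1}^{100})^{\A^{64}} \subseteq S_n$ for every $n$. The inclusion $S_{n+1}^2 \subseteq S_n$ refines the scales, and the approximate normality will ensure that the (possibly external) genuine subgroup $H := \bigcap_n S_n$ of the ambient nonstandard local group is normalised by $\A^{64}$.

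\textbf{Steps 2 and 3: metric and local compactness.} Define a pseudometric on $\A^{32}$ by
$$ d(x,y) := \inf\{2^{-n} : xy^{-1} \in S_n\}, $$
with the convention that the infimum equals $1$ if $xy^{-1} \notin S_0$; the triangle inequality follows from $S_n \cdot S_n \subseteq S_{n-1}$ (after an irrelevant rescaling) and the kernel is exactly $H$. The quotient $G := \A^{32}/H$ therefore carries a genuine metric, and the normality of $H$ in $\A^{64}$ together with the nesting propagates to a symmetric metrisable local group structure on $G$ with continuous operations. Total boundedness of small $d$-balls is immediate because each $S_n$ is covered by finitely many left-translates of $S_{n+1}$ by the approximate-group property. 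Completeness is where countable saturation of the ultraproduct (Appendix \ref{nsa-app}) enters: any $d$-Cauchy sequence in $G$ pulls back to a system of countably many internal membership conditions on $\A^{32}$ that is finitely satisfiable, hence realised. Together these yield local compactness.

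\textbf{Step 4 and main obstacle.} The map $\pi: \A^{32} \to G$ is a local group homomorphism. Thick image holds by taking $U_0$ to be the $d$-ball of some radius $2^{-n_0}$ small enough that $S_{n_0} \subseteq \A$; compactness of $\pi(\A^4)$ follows from total boundedness and completeness; and the approximation-by-internal-sets axiom is handled by covering a given compact $F \subseteq U \subseteq U_0$ by finitely many $d$-balls $B([x_i],r)$ whose doubles lie inside $U$, and setting $\A' := \bigcup_i x_i S_n$ for $n$ large, which is internal and sits between $\pi^{-1}(F)$ and $\pi^{-1}(U)$. The delicate point of the whole argument is the simultaneous coordination of two competing requirements on the $S_n$: enough approximate normality for the intersection $H$ to be a genuine normal subgroup (so that the quotient is a local group at all), and enough ``internal'' structure for countable saturation to deliver completeness of the metric. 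The normal Sanders--Croot--Sisask theorem (Theorem \ref{scs-normal}) supplies the former at each finite stage, while the latter rests on each $S_n$ individually being an ultraproduct of standard finite sets, even though the limiting kernel $H$ is only an external subgroup.
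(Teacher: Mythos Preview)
Your proposal is correct and follows essentially the same route as the paper. Both arguments (i) iterate the normal Sanders--Croot--Sisask theorem to produce a nested chain of large, approximately normal ultra approximate subgroups; (ii) turn this chain into a left-invariant pseudometric on $\A^{32}$; (iii) obtain local compactness from total boundedness (covering by finitely many translates) plus completeness via countable saturation; and (iv) quotient by the kernel of the pseudometric and verify the three good-model axioms, with the internal-set approximation handled exactly as you describe.

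The one point where the paper is more careful is the pseudometric itself. Your formula $d(x,y):=\inf\{2^{-n}:xy^{-1}\in S_n\}$ gives only the quasi-triangle inequality $d(x,z)\le 2\max(d(x,y),d(y,z))$ from $S_n^2\subseteq S_{n-1}$; to upgrade this to a genuine pseudometric one needs the standard Birkhoff--Kakutani/Frink chain construction, which the paper carries out explicitly in Lemma~\ref{birkhoff-kakutani} by building a bump function $\psi$ adapted to the dyadic chain and setting $d(g,h)=\|\partial_{h^{-1}g}\psi\|_{\ell^\infty}$. Your phrase ``after an irrelevant rescaling'' is hiding exactly this step. It is a routine fix and does not affect the structure of the argument, but you should at least invoke the lemma rather than claim the triangle inequality directly.
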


\begin{proposition}[From locally compact models to Lie models]\label{lie-model-prop}
Let $\A$ be an ultra approximate group and suppose that $\A^4$ admits a model $\pi : \A^{32} \rightarrow G$ into a locally compact local group $G$. Then there is a large ultra approximate group $\tilde \A$ of $\A$ \textup{(}thus $\tilde \A^4 \subset \A^4$\textup{)} which admits a model $\tilde\pi : \tilde \A^8 \rightarrow L$ into a connected, simply-connected Lie group $L$.
\end{proposition}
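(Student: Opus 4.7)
The plan is to extract a Lie model from the locally compact one by invoking the solution of the local version of Hilbert's fifth problem (Goldbring \cite{goldbring-local}), quotienting by a compact normal subgroup, and then passing to a connected, simply-connected global Lie group via Lie's third theorem (Theorem \ref{lie-third}). The large ultra approximate subgroup $\tilde\A$ of $\A$ will be obtained by pulling back through $\pi$ a $K$-invariant neighbourhood of the compact normal subgroup $K$ produced by this structure theorem.

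For the first step, after possibly shrinking the thick-image neighbourhood $U_0 \subseteq G$ from Definition \ref{good-model-def}(i), I would apply the local Gleason--Yamabe theorem to produce a symmetric open precompact neighbourhood $V \subseteq U_0$ of identity in $G$ (with sufficiently many powers well-defined in $U_0$), together with a compact subgroup $K \subseteq V$ normalised by $V$, such that the local quotient $V/K$ is isomorphic as a local group to an open neighbourhood of identity in a local Lie group. By Theorem \ref{lie-third} this local Lie group embeds into a connected, simply-connected Lie group $L$; let $q : V \to L$ denote the resulting continuous local group homomorphism, whose kernel is $K$.

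Next I would fix symmetric $K$-invariant neighbourhoods $K \subseteq W_0 \subseteq F \subseteq W \subseteq V$ with $W_0, W$ open, $F$ compact, $W_0 K = W_0$, $W K = W$, and $W^{10} \subseteq V$. The approximation-by-internal-sets axiom (Definition \ref{good-model-def}(iii)), combined with Remark \ref{ult}, produces an ultra approximate group $\tilde\A$ with $\pi^{-1}(F) \subseteq \tilde\A \subseteq \pi^{-1}(W)$, and in particular $\tilde\A \subseteq \A^4$, so $\tilde\A^4 \subseteq \A^4$ is automatic. To verify that $\tilde\A$ is large in $\A$ in the sense of Definition \ref{large}, I would use that $\pi(\A)$ is precompact in $G$ (the compact-image axiom) and hence covered by finitely many left-translates $g_1 W_0, \dots, g_n W_0$ with $g_i \in \pi(\A^4)$; lifting each $g_i$ to $\tilde g_i \in \A^4$ and using that $\pi$ is a local group homomorphism, one obtains $\A \subseteq \bigcup_i \tilde g_i \pi^{-1}(W_0) \subseteq \bigcup_i \tilde g_i \tilde\A$.

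Finally I would define $\tilde\pi := q \circ (\pi|_{\tilde\A^8}) : \tilde\A^8 \to L$. Since $\tilde\A \subseteq \pi^{-1}(W)$ and $\pi$ is a homomorphism with $W^8 \subseteq V$, the map lands in the domain of $q$ and $\tilde\pi$ is itself a local group homomorphism. Setting $U_0' := q(W_0) \subseteq L$ (open because $W_0$ is open and $K$-invariant), one verifies $\tilde\pi^{-1}(U_0') = \pi^{-1}(W_0) \subseteq \tilde\A$ and $U_0' \subseteq q(F) \subseteq \tilde\pi(\tilde\A)$, yielding the thick image axiom; the compact image axiom is immediate from $\tilde\pi(\tilde\A) \subseteq q(\overline{W})$; and the approximation-by-internal-sets axiom for $\tilde\pi$ follows from the same axiom for $\pi$ by pulling compact/open pairs in $L$ back through $q$ to compact/open pairs inside $V$. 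The only genuinely difficult step here is the first one: the local Gleason--Yamabe theorem, resting on the solution of Hilbert's fifth problem for locally compact local groups due to Goldbring \cite{goldbring-local}. Everything that follows amounts to a careful but routine manipulation of neighbourhood inclusions governed by the good-model axioms.
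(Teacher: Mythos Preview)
Your approach is essentially the same as the paper's: invoke Goldbring's local Gleason--Yamabe theorem to obtain a compact normal subgroup $K$ inside a small neighbourhood, compose $\pi$ with the quotient map, pull back a nested pair $F \subseteq W$ (the paper uses $U_2^2 \subseteq U_2^3$) via the approximation-by-internal-sets axiom to define $\tilde\A$, and verify the good-model axioms and largeness directly. Your explicit invocation of Lie's third theorem to pass from the local Lie quotient to a connected, simply connected global Lie group is exactly what the statement requires; the paper's written proof actually stops at a local Lie group and defers this step to a remark.

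There is one slip: you write ``$\tilde\A \subseteq \A^4$, so $\tilde\A^4 \subseteq \A^4$ is automatic.'' That implication is false in general; $\tilde\A \subseteq \A^4$ only gives $\tilde\A^4 \subseteq \A^{16}$. The correct argument is the one the paper uses: since $\tilde\A \subseteq \pi^{-1}(W)$ and $\pi$ is a homomorphism on $\A^{32} \supseteq \tilde\A^4$, one has $\tilde\A^4 \subseteq \pi^{-1}(W^4) \subseteq \pi^{-1}(U_0) \subseteq \A^4$, using $W^4 \subseteq V \subseteq U_0$. This is an easy fix, and the rest of your outline is sound.
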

\vspace{11pt}

It is clear that the above two propositions together imply Theorem \ref{lie-model}. \vspace{11pt}

We will give a self-contained proof of Proposition \ref{locally-compact-model}, using the multiplicative combinatorics results of the previous section, together with the countable saturation property of ultraproducts.
In contrast, the proof of Proposition \ref{lie-model-prop} requires deep material related to (the local version of) Hilbert's fifth problem, for which we provide suitable references.\vspace{11pt}

\textsc{Building metrics on local groups.} We now begin the proof of Proposition \ref{locally-compact-model}.
Suppose that we have a pseudometric $d : G \times G \rightarrow [0,\infty)$ on some local group $G$, that is to say $d$ satisfies the axioms of a metric, except that we may have $d(x,y) = 0$ when $x \neq y$. Then we may of course define the balls $B(\id,\eps) := \{ x \in G : d(x, \id) < \eps\}$, and these will be nested in the sense that $B(\id,\eps) \subseteq B(\id,\eps')$ if $\eps < \eps'$. We now examine ways to reverse this construction, beginning with a quite general way to construct pseudometrics on symmetric local groups; this will be needed to prove Proposition \ref{locally-compact-model}.

Let $G$ be a symmetric local group.  For any function $\psi: G \to \R$ and $g \in G$, we define the shift $T_g \psi: G \to \R$ by setting
$$ T_g \psi(x) := \psi(g^{-1} x)$$
if $g^{-1} x$ is well-defined in $G$, and $T_g \psi(x) = 0$ otherwise.  We then define the ``derivative'' operator
$$ \partial_g \psi := \psi - T_g \psi.$$

The expression
$$ \| \partial_g \psi \|_{\ell^\infty(G)} := \sup_{x \in G} |\partial_g \psi(x)| $$
can be viewed heuristically as a ``norm'' of $g$ relative to $\psi$, and this makes it natural to consider the function
\begin{equation}\label{d-def}
d(g,h) := \Vert T_g \psi - T_h \psi \Vert_{\ell^{\infty}(G)} = \Vert \partial_{h^{-1} g}\psi \Vert_{\ell^{\infty}(G)}.
\end{equation}
One can view $d$ as the pullback of the metric on $\ell^\infty(G)$ to $G$ using the translation action $g \mapsto T_g \psi$ of $G$ on $\psi$.

\begin{lemma}[Using functions to build (pseudo-)metrics]\label{supp}  Let $G$ be a local group, and let $A$ be a symmetric neighbourhood of the identity such that $A^{128}$ is well-defined in $G$.  Let $\psi: G \to \R$ be non-negative and supported on $A$.
\begin{enumerate}
\item[(i)]  We have $\| \partial_g \psi \|_{\ell^\infty(G)} \leq \| \psi \|_{\ell^\infty(G)}$ for all $g \in A^{128}$, with equality holding when $g \not \in A^2$.
\item[(ii)] Whenever $g, h \in A^{128}$, one has
\begin{equation}\label{ghpsi}
 \| \partial_{gh} \psi\|_{\ell^\infty(G)} \leq \| \partial_g \psi \|_{\ell^\infty(G)} + \| \partial_h \psi \|_{\ell^\infty(G)}.
\end{equation}
\item[(iii)]  For any $g \in A^{128}$, we have
\begin{equation}\label{ginv}
\| \partial_{g^{-1}} \psi \|_{\ell^\infty(G)} = \| \partial_g \psi \|_{\ell^\infty(G)}.
\end{equation}
\item[(iv)]  The function $d: A^{64} \times A^{64} \to \R^+$ defined by the formula \eqref{d-def} is a left-invariant pseudo-metric on $A^{64}$.\end{enumerate}
\end{lemma}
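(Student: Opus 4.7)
The plan is to establish the four claims largely independently, noting that (iv) reduces essentially to (ii) and (iii) together with the trivial identity $(gy)^{-1}(gx) = y^{-1}x$ built into the definition of $d$. The essential work is to understand the shift operators $T_g$ and the discrete derivatives $\partial_g$ at the pointwise level, then take $\ell^\infty$ norms. The only subtlety throughout is local-group bookkeeping: one must check that relevant products stay inside the region where the local group law is defined. The hypothesis that $A^{128}$ is well-defined gives ample room, and the choice of domain $A^{64}$ in (iv) is what ensures that differences $h^{-1}g$ of elements of $A^{64}$ land in $A^{128}$, where (ii) and (iii) apply.

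For (i), I would observe that $0 \leq T_g\psi(x), \psi(x) \leq \|\psi\|_{\ell^\infty(G)}$ for every $x$, so $|\partial_g\psi(x)| \leq \|\psi\|_{\ell^\infty(G)}$ trivially. For equality when $g \notin A^2$, I would pick $x_0 \in A$ achieving $\psi(x_0) = \|\psi\|_{\ell^\infty(G)}$. The key observation is that $g^{-1}x_0$ cannot lie in the support $A$ of $\psi$: otherwise $g = x_0 \cdot (g^{-1}x_0)^{-1} \in A \cdot A^{-1} = A^2$ (using symmetry of $A$), contradicting our hypothesis. Hence $T_g\psi(x_0) = 0$, and so $\partial_g\psi(x_0) = \|\psi\|_{\ell^\infty(G)}$.

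For (ii), the main engine is the telescoping identity
\[ \partial_{gh}\psi = \partial_g\psi + T_g(\partial_h\psi), \]
which follows from $T_{gh} = T_g T_h$ wherever both sides are defined; at the remaining points one verifies the corresponding pointwise inequality directly by cases, using that the support of $\psi$ is confined to $A$ while the shifts live in $A^{128}$. Taking $\ell^\infty$ norms and using the evident contraction bound $\|T_g f\|_{\ell^\infty(G)} \leq \|f\|_{\ell^\infty(G)}$ then yields \eqref{ghpsi}. I expect this to be the main obstacle: the local-group setting means one cannot symbolically manipulate $T_g T_h = T_{gh}$ without justifying that each relevant product is defined, so some case analysis on whether $x$, $h^{-1}x$, and $g^{-1}h^{-1}x$ lie in $A^{128}$ is unavoidable.

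For (iii), I would use the substitution $y = gx$ to rewrite
\[ \partial_{g^{-1}}\psi(x) \;=\; \psi(x) - \psi(gx) \;=\; -\,\partial_g\psi(gx) \]
wherever defined; the corresponding inverse substitution $y \mapsto g^{-1}y$ yields the reverse inequality, and together they give $\|\partial_{g^{-1}}\psi\|_{\ell^\infty(G)} = \|\partial_g\psi\|_{\ell^\infty(G)}$. As in (ii), the only care needed is that the support of $\psi$ forces the non-zero contributions to lie in a region where the substitution is legal in the local group. Finally, (iv) is a direct consequence of the first three parts: non-negativity and $d(g,g) = 0$ are immediate from the definition, symmetry follows from (iii) applied to $h^{-1}g$, the triangle inequality $d(g,k) \leq d(g,h) + d(h,k)$ follows from writing $k^{-1}g = (k^{-1}h)(h^{-1}g)$ and applying (ii), and left-invariance is immediate from $(gy)^{-1}(gx) = y^{-1}x$, valid whenever $g,x,y \in A^{64}$ so that all intermediate products remain in $A^{128}$.
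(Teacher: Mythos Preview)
Your proposal is correct and follows essentially the same approach as the paper: the cocycle identity $\partial_{gh}\psi = \partial_g\psi + T_g\partial_h\psi$ for (ii), the inverse identity $\partial_{g^{-1}}\psi = -T_{g^{-1}}\partial_g\psi$ for (iii), and then (iv) as a formal consequence. If anything, you are more careful than the paper, which simply declares (i) ``clear from construction'' and glosses over the local-group bookkeeping you flag; your explicit treatment of the equality case in (i) and the case analysis remarks for (ii) are exactly the details one would need to fill in.
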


\begin{remark}  To spell out what we mean in (iv), we are asserting that $d(g,g) = 0$, that $d(g,h) = d(h,g)$, and that $d(g,k) \leq d(g,h) + d(h,k)$ for all $g,h,k \in A^{64}$.  Furthermore it has the left-invariance property $d(gh,gk) = d(h,k)$ whenever $h,k \in A^{64}$, $g \in A^{128}$, and $gh,gk \in A^{64}$.
Later on, when proving Gleason's lemmas, we shall require some slightly more exotic properties of these cocycle ``norms'', related to commutation and a certain ``Taylor expansion''.
\end{remark}

\begin{proof}  The property (i) is clear from construction.  For $g,h \in A^{128}$ we have the representation property $T_g T_h \psi = T_{gh} \psi$ and hence the \emph{cocycle identity}
$$ \partial_{gh} \psi = \partial_g \psi + T_g \partial_h \psi$$
which gives \eqref{ghpsi}.

Similarly, for $g \in A^{128}$ we have the inverse identity
$$ \partial_{g^{-1}} \psi = - T_{g^{-1}} \partial_g \psi$$
which gives \eqref{ginv}.

The claims in (iv) follow easily from (ii) and (iii).
\end{proof}

In the next lemma we give a variant of the Birkhoff-Kakutani construction \cite[\S 1.22]{montgomery-zippin-book}, in which a function $\psi$ is constructed so that the pseudometric $d(g,h) = \Vert \partial_{h^{-1} g} \psi \Vert_{\ell^{\infty}(G)}$ is adapted to a given nested sequence of symmetric sets which are supposed to resemble ``balls'' in this pseudometric.

\begin{lemma}[Birkhoff-Kakutani construction]\label{birkhoff-kakutani}
Suppose that $G$ is a local group and that we have a sequence of symmetric neighbourhoods $A_0, A_1,\dots$ of the identity in $G$ with the nesting property that $A_{i+1}^2 \subseteq A_i$ for $i = 0,1,2,\dots$, and with $A_0^{200}$ well-defined.
Then there is a pseudometric
\[ d : A_0^{64} \times A_0^{64} \rightarrow [0,1]\]
such that we have the inclusions
\begin{equation}\label{key-inclusions} \{ g \in A_0^{64} : d(g,\id) < 2^{-k} \}\subseteq A_k \subseteq \{ g \in A_0^{64} : d(g,\id) \leq 2 \cdot 2^{-k} \}\end{equation} for all nonnegative integers $k$. In particular $x_n \rightarrow x$ in the pseudometric $d$ if and only if, for each $k \in \N$, we have $x^{-1} x_n \in A_k$ for all sufficiently large $n$.
\end{lemma}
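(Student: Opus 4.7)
The plan is to adapt the classical Birkhoff--Kakutani construction to the local group setting, using the machinery of Lemma \ref{supp} together with a chain-based infimum to enforce the triangle inequality.

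First I would introduce a ``norm-like'' function $N : A_0^{128} \to [0,1]$ defined by
\[
N(g) := \inf\bigl(\{1\} \cup \{2^{-k} : k \geq 0,\, g \in A_k\}\bigr).
\]
The sequence $(A_k)$ is nested since $A_{k+1} \subseteq A_{k+1}^2 \subseteq A_k$, so $N(g) \leq 2^{-k}$ if and only if $g \in A_k$. By symmetry of each $A_k$, $N(g^{-1}) = N(g)$, and $N(\id) = 0$. Next, I would define the candidate pseudometric $d : A_0^{64} \times A_0^{64} \to [0,1]$ by the chain infimum
\[
d(g,h) := c \cdot \inf\left\{\sum_{i=1}^{n} N(g_{i-1}^{-1} g_i) : n \geq 1,\; g_0 = g,\; g_n = h,\; g_i \in A_0^{64}\right\},
\]
for a universal constant $c$ to be fixed at the end. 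Telescoping guarantees $g_{i-1}^{-1} g_i \in A_0^{128}$, so $N$ is always defined on the relevant arguments. Left-invariance, symmetry and the triangle inequality of $d$ follow immediately from the corresponding properties of $N$ and the concatenation of chains.

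The heart of the argument is the \emph{Birkhoff--Kakutani estimate}: for some absolute constant $C$,
\[
N(t_1 \cdots t_n) \leq C \sum_{i=1}^{n} N(t_i)
\]
whenever $t_1, \ldots, t_n \in A_0$ with product well-defined in $A_0^{128}$. I would prove this by strong induction on $n$. Given $S := \sum N(t_i) \leq 2^{-k}$, split the chain greedily into three pieces: a left block $L = t_1 \cdots t_m$ and right block $R = t_{m+2} \cdots t_n$, chosen so that both $\sum_{i \leq m} N(t_i)$ and $\sum_{i > m+1} N(t_i)$ are at most $2^{-k-1}$, and a middle element $M = t_{m+1}$ with $N(M) \leq 2^{-k-1}$. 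By the inductive hypothesis $L, R \in A_{k-O(1)}$, and $M \in A_{k+1}$. A bounded number of applications of the nesting $A_{i+1}^2 \subseteq A_i$ then confines the product $LMR$ to some $A_{k-c_0}$ for an absolute constant $c_0$, giving $N(LMR) \leq 2^{c_0} \cdot S$, i.e.\ a constant $C = 2^{c_0}$.

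With this estimate in hand, both inclusions in \eqref{key-inclusions} follow by calibrating $c$ against $C$. The right inclusion is the easy direction: $g \in A_k$ gives $N(g) \leq 2^{-k}$, and the one-step chain yields $d(g,\id) \leq c \cdot N(g) \leq 2 \cdot 2^{-k}$ once $c \leq 2$. For the left inclusion, $d(g,\id) < 2^{-k}$ produces a chain with $\sum N(t_i) < 2^{-k}/c$, so the Birkhoff--Kakutani estimate forces $N(g) < (C/c) \cdot 2^{-k}$; choosing $c$ so that $C/c < 2$, and exploiting that $N$ takes values only in $\{0\} \cup \{2^{-j} : j \geq 0\} \cup \{1\}$, we conclude $N(g) \leq 2^{-k}$, i.e.\ $g \in A_k$. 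The final convergence claim is then a direct restatement of the inclusions: $x_n \to x$ in $d$ iff for each $k$ we have $x^{-1}x_n \in A_k$ eventually.

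The main obstacle is controlling the precise constants. Under the bare assumption $A_{i+1}^2 \subseteq A_i$, the naive three-piece induction only confines $LMR$ to $A_{k+1}^3 \subseteq A_{k-1}$, yielding $C$ slightly worse than the sharp BK constant of $2$. I would either absorb this loss into the rescaling factor $c$ (at the cost of minor index shifts), or pass to the reindexed sequence $V_i := A_{2i}$, which satisfies the stronger nesting $V_{i+1}^3 \subseteq V_i$ needed for the clean induction. Either adjustment preserves the stated form of \eqref{key-inclusions}.
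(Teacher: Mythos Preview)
Your approach is a genuinely different route from the paper's, and it contains a real gap. The paper does \emph{not} use the chain-infimum pseudometric. Instead it builds, for each dyadic rational $q = 2^{-i_1} + \cdots + 2^{-i_k}$ with $i_1 < \cdots < i_k$, the product set $B_q := A_{i_k} A_{i_{k-1}} \cdots A_{i_1}$, shows $B_q \subseteq B_{q'}$ for $q < q'$ using binary carries and the single relation $A_{i+1}^2 \subseteq A_i$, and then sets $\psi(x) := \sup\{1-q : x \in B_q\}$ and $d(g,h) := \|\partial_{h^{-1}g}\psi\|_\infty$ via Lemma~\ref{supp}. The key inclusions \eqref{key-inclusions} then fall out directly from $B_{2^{-k}} = A_k$. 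This is precisely the construction designed to work under the hypothesis $A_{i+1}^2 \subseteq A_i$ with no loss. (Incidentally, although your opening line invokes Lemma~\ref{supp}, your actual construction never uses it; the chain-infimum pseudometric is an entirely separate mechanism.)

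The gap in your argument is the claim that the three-piece induction yields ``$C$ slightly worse than the sharp BK constant of $2$''. In fact, under only $A_{i+1}^2 \subseteq A_i$, the induction does not close for \emph{any} finite $C$. At the inductive step one obtains $L, M, R \in A_{K+1}$ and hence $LMR \in A_{K+1}^3 \subseteq A_{K-1}$; but the inductive hypothesis, fed with sums bounded by $S/2$, only places $L, R$ one level deeper than the current target, so each pass through the induction loses a net index and the bound spirals. Your proposed fixes do not rescue the lemma as stated: rescaling by $c$ cannot help if no finite $C$ exists, and reindexing to $V_i := A_{2i}$ (which does give $V_{i+1}^3 \subseteq V_i$ and a clean induction) only yields $\{d < 2^{-k}\} \subseteq A_{2k} \subseteq \{d \leq 2 \cdot 2^{-k}\}$, which is not \eqref{key-inclusions} for odd $k$, and no monotone rescaling of $d$ converts these thresholds into the required ones for all $k$. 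The chain approach is correct under the stronger hypothesis $A_{i+1}^3 \subseteq A_i$, and the weakened inclusions it produces would suffice for the downstream applications in the paper, but it does not prove the lemma as written.
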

\begin{proof}
Suppose that $q = 2^{-i_1} + \dots + 2^{-i_k}$, $0 < q < 1$, is a dyadic rational, and define
\[ B_q := A_{i_k} A_{i_{k-1}} \dots A_{i_1}.\]
Even though the definition uses a potentially large number $k$ of multiplications, the nesting property of the $A_i$ means that these sets $B_q$ are well-defined in the local group $G$.

We claim that $B_{q} \subseteq B_{q + 2^{-k}}$ whenever $q$ is a dyadic rational with denominator dividing $2^k$; this easily implies that
\begin{equation}\label{nesting-relation} B_q \subseteq B_{q'} \qquad \mbox{whenever $0 < q < q' < 1$}.\end{equation}
The claim follows by repeated use of the nesting $A_{i+1}^2 \subseteq A_i$ (the number of times it will be required is the number of carries when $2^{-k}$ is added to $q$ in binary).

In particular, $B_q \subseteq A_{i_1 - 1} \subset A_0$.

Define $\psi : A_0^{64} \rightarrow [0,1]$ by
\[ \psi(x) := \sup \{ 1 - q: 0 < q < 1; x \in B_q\} \cup \{0\},\]
and consider the pseudometric $d(g,h) := \Vert \partial_{h^{-1} g} \psi \Vert_{\ell^{\infty}(G)}$ as discussed in Lemma \ref{supp}.  Note that for $g, h \in A_0^{64}$, $\partial_{h^{-1} g} \psi$ is supported in $A_0^{192}$, and so one can replace $\ell^\infty(G)$ here with $A_0^{192}$ if desired.

If $d(g,\id) < 2^{-k}$ then $|\partial_g \psi(\id)| <  2^k$, which implies that $\psi(g) > 1 - 2^{-k}$ and therefore $g \in B_{2^{-k}}$ and hence $g \in A_k$.

Conversely, suppose that $g \in A_k$: we are to show that $d(g,\id) \leq 2 \cdot 2^{- k}$. To show this we must confirm that $|\partial_g \psi(h)| < 2^{1 - k}$ for all $h \in G$.   As discussed before, we may assume that $h \in A_0^{192}$.  Suppose that $h \in B_q$, where $0 < q < 1 - 2^{-k}$ is an integer multiple of $2^{-k}$, but that $h \notin B_{q - 2^{-k}}$. Then $\psi(h) \leq 1 - q + 2^{-k}$. On the other hand, $g^{-1} h \in A_k B_q \subseteq B_{q + 2^{-k}}$, by the claim established above, and therefore $\psi(g^{-1} h) \geq 1 - q - 2^{-k}$. It follows that $\partial_g \psi (h) = \psi(g^{-1} h) - \psi(h) \geq - 2\cdot 2^{-k}$. Similarly, $\partial_g \psi(h) \leq 2 \cdot 2^{-k}$. Since $h$ was arbitrary it follows that $d(g,\id) = \Vert \partial_g\psi \Vert_{\ell^{\infty}(G)} \leq 2 \cdot 2^{-k}$, and the claim follows.\end{proof}

If the sets $A_i$ satisfy a certain normality condition, the group operations are continuous with respect to the pseudometric $d$:

\begin{lemma}[Normal Birkhoff-Kakutani construction]\label{normal-birkhoff-kakutani}
Suppose that $G$ is a local group and that we have a sequence of symmetric sets $A_0, A_1,\dots$ in $G$ with $A_0^{200}$ well-defined and with the nesting property that $(A_{i+1}^2)^{A_0^{100}} \subseteq A_i$ for $i = 0,1,\dots$ \textup{(}and so, in particular, we certainly have the weaker nesting property $A_{i+1}^2 \subseteq A_i$ required by the preceding lemma\textup{)}. Consider the pseudometric $d : A_0^{64} \times A_0^{64}  \rightarrow [0,1]$ defined in the preceding lemma. Then the product map $\cdot A_0^{32} \times A_0^{32} \rightarrow A_0^{64}$ and the inversion map ${}^{-1} : A_0^{32} \rightarrow A_0^{32}$ are both continuous with respect to $d$.
\end{lemma}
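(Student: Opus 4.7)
The plan is to exploit the normality-type hypothesis $(A_{i+1}^2)^{A_0^{100}} \subseteq A_i$ to show that conjugation by any $g \in A_0^{100}$ distorts the pseudometric $d$ by only a bounded multiplicative constant; combined with the left-invariance from Lemma~\ref{supp}(iv), this will directly yield Lipschitz (and hence continuous) bounds on both inversion and multiplication. In short, the normality condition encodes an approximate invariance of the filtration $(A_i)$ under conjugation, which via \eqref{key-inclusions} transfers to an approximate conjugation-invariance of $d$.

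The first step will be to establish the conjugation estimate: for all $g \in A_0^{100}$ and all $x \in A_0^{64}$,
\begin{equation*}
 d(gxg^{-1},\id) \leq 4\, d(x,\id).
\end{equation*}
Since each $A_{i+1}$ contains the identity one has $A_{i+1} \subseteq A_{i+1}^2$, so the hypothesis specialises to $A_{i+1}^{A_0^{100}} \subseteq A_i$. If $d(x,\id) < 2^{-k}$ then the left inclusion of \eqref{key-inclusions} gives $x \in A_k$, whence $gxg^{-1} \in A_{k-1} \subseteq A_0 \subseteq A_0^{64}$, and the right inclusion then yields $d(gxg^{-1},\id) \leq 2\cdot 2^{-(k-1)} = 4\cdot 2^{-k}$. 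All products appearing are well-defined by virtue of the standing assumption that $A_0^{200}$ is well-defined.

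With this in hand, continuity of inversion is immediate: for $g,g' \in A_0^{32}$, set $u := g^{-1}g' \in A_0^{64}$, so that left-invariance gives $d(u,\id) = d(g,g')$, and the identity $g'g^{-1} = gug^{-1}$ combined with \eqref{ginv} rewrites
\begin{equation*}
 d(g^{-1},(g')^{-1}) = \|\partial_{g'g^{-1}}\psi\|_{\ell^{\infty}(G)} = \|\partial_{gug^{-1}}\psi\|_{\ell^{\infty}(G)} = d(gug^{-1},\id) \leq 4\, d(g,g').
\end{equation*}
For multiplication, given $g,g',h,h' \in A_0^{32}$, set $u := g^{-1}g'$ and $v := h^{-1}h'$ and compute $(g'h')^{-1}(gh) = v^{-1}(h^{-1}u^{-1}h)$. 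The triangle inequality from Lemma~\ref{supp}(ii) together with \eqref{ginv} then gives
\begin{equation*}
 d(gh,g'h') \leq d(v^{-1},\id) + d(h^{-1}u^{-1}h,\id) = d(h,h') + d(h^{-1}u^{-1}h,\id),
\end{equation*}
and the conjugation estimate (applied with conjugating element $h^{-1} \in A_0^{32} \subseteq A_0^{100}$ and $x := u^{-1} \in A_0^{64}$) bounds the second term by $4\,d(g,g')$. The only real obstacle throughout is the notational bookkeeping around the various containments $A_0^{64}, A_0^{100}, A_0^{128}$, but the hypothesis that $A_0^{200}$ is well-defined leaves ample room for all the products to make sense.
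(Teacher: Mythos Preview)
Your argument is essentially correct and uses the same core idea as the paper: the conjugation-invariance hypothesis $(A_{i+1}^2)^{A_0^{100}} \subseteq A_i$ controls how far conjugates can drift in the filtration, and this transfers via \eqref{key-inclusions} to control of $d$. The paper proves sequential continuity directly, verifying that $(gh)^{-1}(g_n h_n) \in A_k$ eventually, whereas you package the same computation as a Lipschitz estimate; the decompositions $(gh)^{-1}(g_n h_n)=(h^{-1}h_n)(h_n^{-1}g^{-1}g_n h_n)$ and $(g'h')^{-1}(gh)=v^{-1}(h^{-1}u^{-1}h)$ are the same manoeuvre. Your version has the mild advantage of yielding explicit Lipschitz constants rather than bare continuity.

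Two small points of bookkeeping: first, the inequality $d(gxg^{-1},\id)\leq 4\,d(x,\id)$ does not quite follow from ``$d(x,\id)<2^{-k}\Rightarrow d(gxg^{-1},\id)\leq 4\cdot 2^{-k}$'' as stated, since $2^{-k}$ need not be close to $d(x,\id)$; the honest constant is $8$, which of course makes no difference for continuity. Second, stating the conjugation estimate for $g\in A_0^{100}$ and $x\in A_0^{64}$ runs into the issue that $gxg^{-1}$ then requires $A_0^{264}$ to be well-defined, which exceeds the standing hypothesis; fortunately your applications only use $g\in A_0^{32}$ (so $gxg^{-1}\in A_0^{128}$), and the estimate is only needed once $d(x,\id)<1/2$ forces $x\in A_1\subseteq A_0$, so no genuine difficulty arises.
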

\begin{proof}
Suppose that $g_n \rightarrow g$ and that $h_n \rightarrow h$. We wish to show that $g_n h_n \rightarrow gh$, to which end it suffices to establish that $(gh)^{-1} g_n h_n \in A_k$ for all sufficiently large $n$.  However, for $n$ sufficiently large in terms of $k$ we have $g^{-1} g_n \in A_{k+2}$, and hence
\[ h_n^{-1} g^{-1} g_n h_n \in A_{k+2}^{h_n} \subseteq A_{k+2}^{A_0^{100}} \subseteq A_{k+1}.\]
Furthermore, $h^{-1} h_n \in A_{k+1}$ for $n$ sufficiently large, and so
\[ (gh)^{-1} g_n h_n = (h^{-1} h_n) (h_n^{-1} g^{-1} g_n h_n) \in A_{k+1}^2 \in A_k,\] as required.
The statement about the inverse map is easier. Suppose that $g_n \rightarrow g$. Then $g^{-1} g_n \in A_{k+1}$ for $n$ sufficiently large, and so
\[ g_n g^{-1} = g (g^{-1} g_n) g^{-1} \in A_{k+1}^{g^{-1}} \subseteq A_{k+1}^{A_0^{100}} \in A_k.\]
But this means that $g_n^{-1} \rightarrow g$ as $n \rightarrow \infty$.
\end{proof}

The previous lemma showed how to get a local topological group given a sequence of balls satisfying a suitable normalisation condition. The normal variant of the Croot-Sisask-Sanders lemma, Theorem \ref{scs-normal}, allows us to find precisely such a sequence of balls given any $K$-approximate group $A$.  Of course, these balls are just finite sets and, for sufficiently large $i$, $A_i$ may well consist only of the identity element $e$. This will be the case, for example, when $A = [-N, N]$. However when transferred to the setting of an ultra approximate group $\A = \prod_{n \rightarrow \alpha} A_n$, these balls have ``finite index'' in $\A$, and this ultimately leads to the important conclusion that the metric $d$ gives $\A$ the structure of a locally compact local group.

\begin{lemma}\label{ultraproduct-balls}
Let $\A$ be an ultra approximate group.  Then there is a sequence of ultra approximate groups $\A_0, \A_1, \dots$ such that $\A_0 = \A^4$,
we have the nesting property that $(\A_{i+1}^2)^{\A_0^{100}} \subseteq \A_i$ for $i = 0,1,\dots$, and each $\A_i$ is \emph{large} in the sense that $\A$ can be covered by finitely many left-translates of $\A_i$.
\end{lemma}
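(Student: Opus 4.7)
\emph{Plan.} The plan is to build the sequence $\A_0, \A_1, \ldots$ by iterating the normal Sanders--Croot--Sisask theorem (Theorem \ref{scs-normal}). Because Theorem \ref{scs-normal} is a first-order statement with standard implicit constants, it transfers immediately to the ultra setting by {\L}os's theorem: if $\mathcal B$ is an ultra $K$-approximate group, $\mathcal S$ an ultra $K'$-approximate subgroup of $\mathcal B^4$ with $|\mathcal S| \gg |\mathcal B|$, and $m$ a standard positive integer, then there exists an $O(1)$-ultra approximate group $\tilde{\mathcal S}$ with $|\tilde{\mathcal S}| \gg |\mathcal B|$ and $(\tilde{\mathcal S}^m)^{\mathcal B^4} \subseteq \mathcal S^4$.

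\emph{Construction.} Set $\A_0 := \A^4$, and introduce auxiliary ultra approximate groups $\mathcal T_i$, with $\A_i := \mathcal T_i^4$; begin with $\mathcal T_0 := \A$. Suppose inductively that $\mathcal T_i$ is an ultra $K_i$-approximate group with $\mathcal T_i \subseteq \A^{M_i}$ for some standard integer $M_i$, and $|\mathcal T_i| \gg |\A|$. Let $M'_i := \max(100, M_i)$ and take the outer group to be $\mathcal B := \A^{M'_i}$, so that $\mathcal T_i \subseteq \mathcal B^4$ and $\A_0^{100} = \A^{400} \subseteq \mathcal B^4$. Apply the ultra form of Theorem \ref{scs-normal} to $\mathcal B$, $\mathcal S := \mathcal T_i$, and $m := 8$; this produces $\mathcal T_{i+1} := \tilde{\mathcal S}$ satisfying
\[ (\mathcal T_{i+1}^8)^{\A^{4M'_i}} \subseteq \mathcal T_i^4. \]
Inspection of the proof of Theorem \ref{scs-normal} gives $\mathcal T_{i+1} \subseteq \mathcal S^{16} \subseteq \A^{16 M_i}$, so the inductive hypothesis is preserved with $M_{i+1} := 16 M_i$. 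Finally, set $\A_{i+1} := \mathcal T_{i+1}^4$; then $\A_{i+1}^2 = \mathcal T_{i+1}^8$, and the displayed inclusion yields
\[ (\A_{i+1}^2)^{\A_0^{100}} \subseteq (\mathcal T_{i+1}^8)^{\A^{4M'_i}} \subseteq \mathcal T_i^4 = \A_i, \]
which is the required nesting property.

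\emph{Largeness.} Each $\A_i = \mathcal T_i^4$ is an ultra approximate group since fixed powers of ultra approximate groups are again ultra approximate groups (with a controlled blow-up of the parameter). For the covering property, apply the second form of Ruzsa's covering lemma (Lemma \ref{loc}) with ``small set'' $\mathcal T_i$ and ``large set'' $\A$: the product $\mathcal T_i \cdot \A$ lies in $\A^{M_i + 1}$, whose cardinality is $\asymp_{M_i,K} |\A|$ by the standard approximate-group product estimates, while $|\mathcal T_i| \gg |\A|$; hence $|\A \cdot \mathcal T_i|/|\mathcal T_i|$ is bounded by a standard finite number, and the lemma supplies a standard finite $Y \subseteq \A$ with $\A \subseteq Y \cdot \mathcal T_i^2 \subseteq Y \cdot \A_i$.

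\emph{Main obstacle.} No real difficulty arises; the argument is essentially a bookkeeping exercise. The standard parameters $M_i$ and $K_i$ grow with $i$ (geometrically, in the case of $M_i$), but this is harmless because only standard-finiteness at each fixed step is needed. The one substantive point is that it would not suffice to invoke the mere Theorem \ref{scs}: the nesting property demands conjugation-stability by the large set $\A_0^{100}$, and it is exactly this which the normal variant Theorem \ref{scs-normal} is designed to deliver.
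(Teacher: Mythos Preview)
Your proof is correct and follows essentially the same route as the paper: both iterate Theorem~\ref{scs-normal} to build the nested sequence $\A_i = \mathcal T_i^4$ (the paper calls these $S_{\n,i}^4$ and works termwise at the finite level before taking the ultraproduct, whereas you invoke {\L}os to work directly in the ultra setting), and both use the Ruzsa covering lemma to verify largeness. One simplification you could make: since the nesting $(\A_{i+1}^2)^{\A_0^{100}} \subseteq \A_i$ already forces $\mathcal T_{i+1} \subseteq \A_{i+1} \subseteq \A_i \subseteq \cdots \subseteq \A^4$, the parameters $M_i$ never need to grow past $4$, and you may take $\mathcal B = \A^{100}$ at every step rather than tracking the geometric blow-up $M_{i+1}=16M_i$.
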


\begin{proof}
By definition, one has $\A = \prod_{\n \rightarrow \alpha} A_\n$ for some $K$-approximate groups $A_\n$ and some fixed $K$.
Applying Theorem \ref{scs-normal} repeatedly we see that there are, for each $\n$, $O_{K,i}(1)$-approximate groups $S_{\n,i}$, $i = 1,2,3\dots$, such that $S_{\n,0} := A_\n$ and $(S_{\n, i+1}^8)^{S_{\n,0}^{400}} \subseteq S_{\n,i}^4$ for each $i$. Furthermore we have $S_{\n,i} \subseteq A_\n^4$ and $|S_{\n,i}| \gg_{K,i} |A_\n|$ for each $i$.
Setting
\[ \A_i := \prod_{\n \rightarrow \alpha} S_{\n,i}^4,\] all of the properties except the assertion about covering are immediate. To check that each $\A_i$ is large, we need only check that $S_{\n,0}$ is covered by $O_{K,i}(1)$ left-translates of $S_{n,i}^4$, for each $i$. This, however, is an immediate consequence of Lemma \ref{loc} and the lower bound on $|S_{\n,i}|$.
\end{proof}

\begin{lemma}\label{local-compact}
Let $\A$ be an ultra approximate group.  Consider a sequence of ultra approximate groups $\A_0, \A_1,\dots$ as found in the preceding lemma, and let $d : \A^{32} \times \A^{32} \rightarrow [0,1]$ be the pseudometric associated to these sets as in Lemma \ref{birkhoff-kakutani}. Then $\A^{32}$ is locally compact with respect to the topology generated by $d$.
\end{lemma}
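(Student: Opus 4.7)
The plan is to exhibit a compact neighbourhood of $\id$ in $\A^{32}$; by the left-invariance of $d$ established in Lemma \ref{supp}(iv), left-multiplication by any $g\in\A^{32}$ is an isometry from a neighbourhood of $\id$ onto a neighbourhood of $g$, so this suffices to conclude local compactness at every point. The natural candidate is the closure $\overline{\A_1}$ of $\A_1$ in $\A^{32}$, which by \eqref{key-inclusions} contains the open ball $B(\id,1/2)$. We will prove that $\overline{\A_1}$ is compact by the standard combination of total boundedness and completeness, each of the two properties drawing on a different feature of the sequence $\A_0,\A_1,\dots$ constructed in Lemma \ref{ultraproduct-balls}.

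Total boundedness of $\A_1$ is essentially a restatement of the ``large'' property. Given $\eps>0$, pick $k$ with $2\cdot 2^{-k}<\eps$, so that $\A_k\subseteq B(\id,\eps)$ by \eqref{key-inclusions}. Iterating the $K$-approximate group property for $\A$ (and invoking \L{}os's theorem) gives that $\A^4$ is covered by $O_K(1)$ left-translates of $\A$, and Lemma \ref{ultraproduct-balls} provides finitely many left-translates of $\A_k$ covering $\A$. From the nesting property applied at $\id$ we have $\A_1\subseteq\A_1^2\subseteq\A_0=\A^4$, so $\A_1$ is covered by finitely many translates $g_i\A_k\subseteq B(g_i,\eps)$ by left-invariance of $d$.

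Completeness of $\A^{32}$ is the step where the ultraproduct structure is truly essential and is the main obstacle in the argument; the analogue for a single finite $K$-approximate group fails badly, since there the sets $S_{\n,i}^4$ may eventually collapse to $\{\id\}$, so that the corresponding pseudometric is just discrete. Given a Cauchy sequence in $\A^{32}$, after passing to a subsequence we may arrange $d(x_n,x_m)<2^{-n}$ for all $m\geq n$, whence $x_n^{-1}x_m\in\A_n$ by \eqref{key-inclusions}. Consider the internal sets $F_n:=\A^{32}\cap x_n\A_n$. For any indices $n_1<\cdots<n_\ell$ the element $x_{n_\ell}$ lies in every $F_{n_i}$, so this countable family has the finite intersection property, and countable saturation of the ultraproduct yields some $x\in\bigcap_n F_n$. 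Such an $x$ lies in $\A^{32}$ and satisfies $x_n^{-1}x\in\A_n\subseteq B(\id,2\cdot 2^{-n})$ for every $n$, hence $x_n\to x$. Combining total boundedness with completeness in the standard way---approximate a sequence in $\overline{\A_1}$ by one in $\A_1$, extract a Cauchy subsequence, and take a limit in $\A^{32}$ which must lie in the closed set $\overline{\A_1}$---yields sequential and hence topological compactness of $\overline{\A_1}$, completing the proof.
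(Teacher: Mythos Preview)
Your argument is correct and follows the paper's two-step strategy: total boundedness from the ``large'' property of the $\A_k$, and completeness from countable saturation (which the paper spells out explicitly rather than invoking as a known principle). The only difference is organizational: the paper shows all of $\A^{32}$ is compact directly---your covering argument for $\A_1$ applies verbatim to $\A^{32}$, since it too is covered by finitely many left-translates of each $\A_k$---and this sidesteps your opening translation step; that step is a little delicate in a local group (for $g$ near the ``edge'' of $\A^{32}$, left-multiplication by $g$ need not carry a neighbourhood of $\id$ back into $\A^{32}$), though it can be salvaged by noting that $h\mapsto g^{-1}h$ isometrically embeds $B(g,r)$ into $B(\id,r)\subseteq\overline{\A_1}$ for $r$ small.
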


\begin{proof}
By the Heine-Borel theorem (which is usually stated for metrics, but which extends without difficulty\footnote{Indeed, one can deduce the pseudometric case from the metric case by quotienting out by the equivalence relation $x \sim y$ defined by the equation $d(x,y)=0$.} to pseudometrics) it suffices to show that $\A^{32}$ is complete and totally bounded. We deal with the latter task first. From the inclusion $\A_k \subseteq \{x : d(x,\id) \leq 2 \cdot 2^{-k}\}$ and the left-invariance of $d$, this follows from the fact that $\A^{32}$ is covered by finitely many left-translates of $\A_k$.

We turn now to completeness. Suppose that $(x_n)_{n \in \N}$ is a Cauchy sequence. By refining the sequence if necessary we may assume that it is rapidly Cauchy in the sense that $d(x_n, x_m) \leq 2^{-n - 1}$.

We claim that the sets $x_n \A_n$ are nested in the sense that $x_m \A_m \subseteq x_n \A_n$ whenever $m > n$. To see this note that by left-invariance we have $d(\id, x_n^{-1} x_m) \leq 2^{-n - 1}$ and hence, by the inclusions of Lemma \ref{birkhoff-kakutani}, $x_n^{-1} x_m \in \A_{n+1}$. Since $\A_{n+1}\A_m \subseteq \A_{n+1}^2 \subseteq \A_n$, it follows that $x_n^{-1} x_m \A_m \subseteq \A_n$, thereby confirming the claim.

Now each set $x_m \A_m$ is an ultraproduct $\prod_{n \rightarrow \alpha} S_{m,n}$, by construction. The nesting property just established of course implies that, for any positive integer $M$, $\bigcap_{m \leq M} x_m \A_m \neq \emptyset$. Let $y_M$ be an element of this intersection; this means that there is a set $\Sigma_M \in \alpha$ such that $(y_M)_n \in \bigcap_{m \leq M} S_{m,n}$ for all $n \in \Sigma_M$. By replacing $\Sigma_2$ with $\Sigma_1 \cap \Sigma_2$ if necessary, and so on, and using the basic properties of ultrafilters, we may assume that $\Sigma_1 \supseteq \Sigma_2 \supseteq \Sigma_3 \supseteq \dots$. By removing $2$ from $\Sigma_2$, $3$ from $\Sigma_3$ and so on, if necessary, we may also assume that no integer lies in infinitely many $\Sigma_M$.

Now define a sequence $x$ by setting $x_n = (y_M)_n$, where $M$ is the largest integer for which $n \in \Sigma_M$. Then, by construction, $x_n \in \bigcap_{m \leq M}S_{m,n}$ for all $n \in \Sigma_M$, that is to say for a set of $n$ tending to $\alpha$. This means that $x \in \bigcap_{m \leq M} x_m \A_m$ for every $M$, and hence $x \in \bigcap x_m \A_m$. In particular we have $x_m^{-1} x \in \A_m$ for every $m$ and hence $d(x,x_m) \leq 2 \cdot 2^{-m}$. It follows that $x_m \rightarrow x$, thereby confirming that $\A$ is complete with the metric $d$.\end{proof}

\begin{remark} The last part of this argument, in which an element is found in the infinite intersection $\bigcap_m x_m \A_m$ given that each finite intersection $\bigcap_{m \leq M} x_m \A_m$ is nonempty, is an instance of the \emph{countable saturation} property of the ultraproduct construction.  The completeness that is afforded by the countable saturation property is one of the main reasons why we work in the ultraproduct setting.  Note that a similar completeness also appears in the ultralimit $(X,d)/\sim$ of bounded metric spaces $(X_\n, d_\n)$, where $X := \prod_{\n \to\alpha} X_n$, $d := \st \lim_{\n \to \alpha} d_\n$, and $\sim$ is the equivalence relation defined by setting $x \sim y$ whenever $d(x,y) = 0$.  Indeed, it is not difficult to use countable saturation to verify that such ultralimits are automatically complete, even if the original spaces $X_\n$ are not.
\end{remark}

\begin{proof}[Proof of Proposition \ref{locally-compact-model}] We have shown that $\A^{32}$ has the structure of a locally compact local group with respect to the metric $d$. To complete the proof of Proposition, we need only quotient by the equivalence relation $\sim$ on $\A^{32}$, defined by $x \sim y$ if and only if $d(x,y) = 0$. The quotient $L := \A^{32} / \sim$ is then a metrisable, locally compact, local group and there is a natural map $\pi : \A^{32} \rightarrow L$. We must check that $L$ is a good model for $\A^4$ in the sense of Definition \ref{good-model-def}.

Property (i) requires us to show that there is some open neighbourhood $U_0$ of the identity in $L$ such that $\pi^{-1}(U_0) \subseteq \A^4$ and $U_0 \subseteq \pi(\A^4)$, or in other words some ball $\{x \in \A^{32} : d(x,\id) < \eps\}$ lies in $\A^4$. This again follows from \eqref{key-inclusions} and the fact that each of the sets $\A_k$ constructed in Lemma \ref{ultraproduct-balls} lies in $\A^4$.

Finally, property (ii) in the definition of good model requires us to show that $\pi(\A^4)$ is compact. This is immediate.

To prove property (iii), we first establish the following weaker property:

(iii)' : for any open neighbourhood $U$ of the identity in $L$ there is some $U' \subseteq U$ and some ultra finite set $\A' = \prod_{n \rightarrow \alpha} A'_n$ with $\pi^{-1}(U') \subseteq \A' \subseteq \pi^{-1}(U)$.

This is quite easily established: suppose that $U$ contains the ball $B(\id, 2^{-k})$. Then it follows immediately from the inclusions of Lemma \ref{birkhoff-kakutani} that we may take $\A' := A_{k+1}$ and then $U' := B(\id, 2^{-k-1})$.

We now upgrade this to property (iii) in the definition of good model. Suppose that $F \subseteq U \subseteq U_0$ with $F$ compact and $U$ open. Then there is some open neighbourhood of the identity $U'$ such that $F U' \subseteq U$. Applying (iii)', we may locate a further open set $U'' \subseteq U'$ and an ultra finite set $\A'$ such that $\pi^{-1}(U'') \subseteq \A' \subseteq \pi^{-1}(U')$. By compactness there are elements $x_1,\dots,x_M$ such that $F \subseteq \bigcup_{m = 1}^M x_m U''$; we may assume that these elements lie in $F (U'')^{-1} = F U'' \subseteq U \subseteq U_0$, and hence each is of the form $x_i = \pi(a_i)$ with $a_i \in A$. To conclude the proof of property (iii) simply take $\A'' := \bigcup_{m = 1}^M a_m \A'$. This completes the proof of Proposition \ref{locally-compact-model}.
\end{proof}

To complete the proof of Theorem \ref{lie-model}, we invoke results about Hilbert's fifth problem, and specifically the structural theorem of Goldbring \cite{goldbring-thesis} describing locally compact local groups, which we state as Theorem \ref{goldbring-thm} in the appendix.

\begin{proof}[Proof of Proposition \ref{lie-model-prop}]  Suppose that we have a model $\pi : \A^{32} \rightarrow G$ from $\A^{32}$ to a locally compact local group $G$, and let $U_0$ be the open neighbourhood of the identity featuring in the definition of good model (Definition \ref{good-model-def}), thus $\pi^{-1}(U_0) \subset \A^4$ and $U_0 \subset \pi(\A^4)$. By Theorem \ref{goldbring-thm}, there are symmetric neighbourhoods $U_2 \subseteq U_1 \subseteq U_0 \subseteq G$ with $U_2^{24} \subseteq U_1$ (say) and a compact normal subgroup $H$ of $U_2$ such that $U_1/H$ is isomorphic to a local Lie group $L$. Let $\phi : U_1 \rightarrow U_1/H$ be the projection map.

By property (iii) of Definition \ref{good-model-def} (applied to $\pi : \A \rightarrow G$) there is a symmetric ultra finite set $\tilde\A \subseteq \A^4$ with $\pi^{-1}(U^2_2) \subseteq \tilde\A\subseteq \pi^{-1}(U^3_2)$. Certainly, the map $\tilde \pi := \phi \circ \pi$ is well-defined and gives a homomorphism from $\tilde\A^8$ to $L$; since $\pi^{-1}(U^3_2)^4 \subset \pi^{-1}(U^{12}_2) \subset \A^4$, we have $\tilde \A^4 \subseteq \A^4$, and by Remark \ref{ult}, $\tilde \A$ is an ultra approximate group. We verify that this is a good model by checking (i), (ii) and (iii) of Definition \ref{good-model-def} in turn. For (i), first note that $\tilde\pi(\tilde\A)$ contains $\tilde U_0 := \phi(U_2) = U_2 H/H \subseteq L$, which is an open neighbourhood of the identity in $L$ since $U_2 H \subseteq G$ is open.

Furthermore we have
\[ \tilde \pi^{-1}(\tilde U_0)  = \pi^{-1} \phi^{-1} \phi (U_2) \subseteq \pi^{-1} (U_2 H) \subseteq \pi^{-1} (U_2^2) \subseteq \tilde \A.\]

Turning to (ii), $\tilde \pi(\tilde \A)$ is contained in the compact set $
\phi(\overline{U_2^2})$.

Finally, we check the ``approximation by internal sets'' property, which is (iii) in Definition \ref{good-model-def}. Suppose that $\tilde F \subseteq \tilde U \subseteq \tilde U_0$, with $\tilde F$ compact and $\tilde U$ open. Then $\phi^{-1} (\tilde F) = \tilde F H$ is compact, whilst $\phi^{-1}(\tilde U) = \tilde U H$ is open. The approximation by internal sets property then follows from that fact that $\pi :\A \rightarrow G$ is a good model.

Finally, we check that $\tilde \A$ is a large ultra approximate group. To see this note that $\tilde\pi (\tilde \A^2)$ is contained in a compact subset of $L$; therefore there are finitely many elements $x_1,\dots, x_k$ such that $\tilde \pi(\tilde \A^2) \subseteq \bigcup_{i=1}^k \tilde\pi(x_k) \tilde U_0$. It follows that
\[ \tilde \A^2 \subseteq \bigcup_{i=1}^k x_k \tilde\pi^{-1} U_0 \subseteq \bigcup_{i=1}^k x_k \tilde \A,\] thereby confirming that $\tilde \A$ is an ultra approximate group.
By essentially the same argument, $\A$ may be covered by finitely many translates of $\tilde \A$; thus $\tilde \A$ is indeed large.
\end{proof}

We now record some analogues of the above results in the setting of \emph{global} ultra approximate groups (i.e. ultraproducts of global $K$-approximate groups for some fixed $K$), which are closer to the results of Hrushovski \cite{hrush}.  Define a \emph{global model} $\pi: \langle \A \rangle \to G$ to be the same notion as a good model $\pi: \A^8 \to G$ from Definition \ref{good-model-def}, except that $\A^8$ is replaced by the whole group $\langle \A \rangle$ generated by $\A$, and $G$ is now required to be a global group rather than a local group.

\begin{proposition}[Global locally compact model]\label{locally-compact-model-global}
Let $\A$ be a global ultra approximate group. Then $\A^4$ admits a global model $\pi: \langle \A \rangle \rightarrow G$ by a metrisable locally compact global group $G$.
\end{proposition}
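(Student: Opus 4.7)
The plan is to promote the local model of Proposition \ref{locally-compact-model} to a global one. I would first apply that proposition to obtain a good local model $\pi_0 : \A^{32} \to L$, where $L$ is a metrisable locally compact local group. Inspecting the proof of Proposition \ref{locally-compact-model}, $L$ arises as the quotient $\A^{32}/\sim$ by the pseudometric-zero equivalence relation $d(x,y)=0$, where $d$ is the left-invariant pseudometric produced by the Birkhoff--Kakutani construction (Lemma \ref{birkhoff-kakutani}) from a nested sequence $\A^4 = \A_0 \supseteq \A_1 \supseteq \cdots$ of ultra approximate groups satisfying $(\A_{k+1}^2)^{\A_0^{100}} \subseteq \A_k$. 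The essential task for the global version is to extend this construction so that the pseudometric is defined on all of $\langle \A \rangle$ and compatible with the global group structure.

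The first step is to strengthen Lemma \ref{ultraproduct-balls} to produce a nested sequence of ultra approximate groups inside a fixed power of $\A$ satisfying the \emph{enhanced} normality condition $(\A_{k+1}^2)^{\A^{n_k}} \subseteq \A_k$ for some sequence of integers $n_k \to \infty$, rather than merely $(\A_{k+1}^2)^{\A^{100}} \subseteq \A_k$. I would obtain this by iteratively applying Theorem \ref{scs-normal}, at stage $k$ using the ambient approximate group $\A^{n_k}$ (a $K^{O(n_k)}$-approximate group in the global setting) together with the previous $S = \A_k$, and the parameter $m$ chosen to grow appropriately. A diagonalisation argument facilitated by the countable saturation of the ultraproduct then produces a single nested sequence with all the desired invariance properties simultaneously. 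With this enhanced sequence in hand, the Birkhoff--Kakutani formula $d(g,h) = \|\partial_{h^{-1}g}\psi\|_{\ell^{\infty}(\langle \A \rangle)}$ (with $\psi$ built from the enhanced sequence as before) gives a pseudometric defined on all of $\langle \A \rangle$. The analogue of Lemma \ref{normal-birkhoff-kakutani} then implies that, because $\langle \A \rangle = \bigcup_m \A^m$, the multiplication and inversion maps are continuous with respect to $d$ at every point of $\langle \A \rangle$, not merely near the identity.

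The normal subgroup $N := \{g \in \langle \A \rangle : d(g,\id)=0\}$ therefore produces a metrisable global topological group $G := \langle \A \rangle / N$, with $\pi : \langle \A \rangle \to G$ the natural projection, which is a genuine group homomorphism by globality of $\langle\A\rangle$. The verification that $G$ is locally compact near the identity, together with the three good-model axioms of Definition \ref{good-model-def} for $\A^4$, proceeds exactly as in the proof of Proposition \ref{locally-compact-model}; in particular, completeness is obtained from countable saturation and total boundedness of $\A^4 N/N$ follows from covering $\A^4$ by finitely many left translates of each $\A_k$.

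The main obstacle is the strengthening of Lemma \ref{ultraproduct-balls} described above: one must simultaneously ensure that each $\A_k$ remains large in $\A$ (so that the model has thick image) while also obtaining invariance under conjugation by the ever-larger groups $\A^{n_k}$. A naive application of Theorem \ref{scs-normal} with $A = \A^{n_k}$ yields a set of size comparable to $|\A^{n_k}|$ rather than $|\A|$, and contained in a large power of $\A$ rather than in $\A^4$. Overcoming this requires a careful choice of parameters at each stage together with the flexibility afforded by the ultraproduct framework; the rest of the argument then runs parallel to the local case.
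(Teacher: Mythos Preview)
Your approach is essentially identical to the paper's, which simply says: modify the proof of Proposition~\ref{locally-compact-model} by strengthening the nesting condition in Lemma~\ref{ultraproduct-balls} to $(\A_{i+1}^2)^{\A_0^{100(i+1)}} \subseteq \A_i$. You have correctly identified this as the key change.

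However, the ``obstacle'' you flag at the end is illusory. If you apply Theorem~\ref{scs-normal} at stage $k$ with ambient group $A = \A^{n_k}$ and $S = \A_k$, the conclusion $(\tilde S^m)^{A^4} \subseteq S^4$ already forces $\tilde S \subseteq S^4 = \A_k^4 \subseteq \A_{k-1}^2 \subseteq \A_{k-1}$, so the new set stays nested inside the previous one and hence inside $\A^4$ --- it does \emph{not} escape into a large power of $\A$. As for size, the conclusion $|\tilde S| \gg |\A^{n_k}|$ is stronger than $|\tilde S| \gg |\A|$, not weaker, since $|\A^{n_k}| \geq |\A|$; and since $\A$ is a global $K$-approximate group, $|\A^{n_k}| \leq K^{n_k-1}|\A|$, so all the implied constants remain finite (depending on $K$ and $k$, which is acceptable). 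Thus a straightforward inductive application of Theorem~\ref{scs-normal} suffices; no diagonalisation or additional appeal to countable saturation is needed to construct the sequence.
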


\begin{proof}  This is obtained by a modification of the proof of Proposition \ref{locally-compact-model}.  The one main change is that the nesting condition $(\A_{i+1}^2)^{\A_0^{100}} \subseteq \A_i$ appearing in Lemma \ref{ultraproduct-balls} needs to be strengthened to $(\A_{i+1}^2)^{\A_0^{100(i+1)}} \subseteq \A_i$, but this is easily accomplished.
\end{proof}

\begin{proposition}[From locally compact models to Lie models]\label{lie-model-prop-global}
Let $\A$ be a global ultra approximate group and suppose that $\A^4$ admits a global model $\pi : \langle \A \rangle \rightarrow G$ into a locally compact global group $G$. Then there is a large ultra approximate group $\tilde \A$ of $\A$ which admits a global model $\tilde\pi : \langle \A\rangle \rightarrow L$ into a connected Lie group $L$.
\end{proposition}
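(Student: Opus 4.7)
The plan is to mirror the proof of Proposition \ref{lie-model-prop} in the local setting, with the global Gleason--Yamabe structure theorem for locally compact groups taking the place of Goldbring's local Hilbert's fifth problem (Theorem \ref{goldbring-thm}). Concretely, applied to the locally compact global group $G$, Gleason--Yamabe supplies an open subgroup $G_0 \leq G$ with the property that for every neighbourhood of the identity in $G_0$ there is a compact normal subgroup $H$ of $G_0$ contained in that neighbourhood with $G_0/H$ a Lie group. Since the identity component $L^\circ$ of the Lie group $G_0/H$ is open, its preimage $G_1 := \phi^{-1}(L^\circ)$ under the quotient map $\phi : G_0 \to G_0/H$ is an open subgroup of $G_0$ (hence of $G$) which still contains $H$ as a compact normal subgroup, and $L := G_1/H = L^\circ$ is now a \emph{connected} Lie group. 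This is the target group for $\tilde \pi$.

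Next I will pick a precompact symmetric open neighbourhood $V_0$ of the identity in $G_0$ with $V_0^{10} \subseteq U_0$, where $U_0$ is the thick neighbourhood coming from the hypothesis that $\pi$ is a good model of $\A^4$, and use Gleason--Yamabe to arrange $H \subseteq V_0$. Setting $V := V_0 H = H V_0$ (the two are equal because $H \trianglelefteq G_0$), $V$ is an open, symmetric, $H$-invariant, precompact neighbourhood of $H$ in $G_1$ satisfying $V^k = V_0^k H \subseteq V_0^{k+1}$ for all $k \geq 1$, so in particular $V^8 \subseteq U_0$ and $\phi^{-1}(\phi(V)) = V$. Applying the ``approximation by internal sets'' property of the global model $\pi$ (with $F := \overline V \subseteq U_0$ compact and symmetric, $U := V^2$ open and symmetric) together with Remark \ref{ult}, I obtain a symmetric ultra approximate group
\[ \tilde \A \text{ with } \pi^{-1}(\overline V) \subseteq \tilde \A \subseteq \pi^{-1}(V^2) \subseteq \pi^{-1}(U_0) \subseteq \A^4. \]
Because $\pi(\tilde \A) \subseteq V^2 \subseteq G_1$ and $G_1$ is a genuine subgroup, $\pi$ maps all of $\langle \tilde \A \rangle$ into $G_1$, so $\tilde \pi := \phi \circ \pi : \langle \tilde \A \rangle \to L$ is a well-defined genuine homomorphism into the connected Lie group $L$.

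Finally, I will verify that $\tilde \pi$ is a global model of $\tilde \A^4$ and that $\tilde \A$ is a large ultra approximate subgroup of $\A$, following the template of the end of the proof of Proposition \ref{lie-model-prop}. For the thick-image property I take $\tilde U_0 := \phi(V)$, an open neighbourhood of the identity in $L$ with $\phi^{-1}(\tilde U_0) = V$ by $H$-invariance, so that $\tilde \pi^{-1}(\tilde U_0) \subseteq \pi^{-1}(V) \subseteq \tilde \A$, and $\tilde U_0 \subseteq \tilde \pi(\tilde \A)$ follows from $\overline V \subseteq \pi(\A)$. Compactness of $\tilde \pi(\tilde \A)$ is inherited from that of $\phi(\overline{V^2})$, and the internal-sets property for $\tilde \pi$ reduces, via the $H$-invariant correspondence $\tilde F \leftrightarrow H \tilde F$, $\tilde U \leftrightarrow H \tilde U$, to the same property for $\pi$. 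Largeness of $\tilde \A$ in $\A$ follows from $\tilde \A^4 \subseteq \pi^{-1}(V^8) \subseteq \pi^{-1}(U_0) \subseteq \A^4$ together with a compactness argument: since $\pi(\A)$ is compact and $V$ is open in $G$, finitely many translates $\pi(a_1) V, \dots, \pi(a_k) V$ cover $\pi(\A)$, and hence $a_1 \tilde \A, \dots, a_k \tilde \A$ cover $\A$. The main obstacle is the careful juggling of the neighbourhood $V$ so that it simultaneously (i) contains $H$, (ii) is $H$-invariant, (iii) lies inside $U_0$ together with several of its powers, and (iv) is usable as a pair $(F,U)$ in the approximation property; the device $V = V_0 H$ above is what makes all four constraints compatible, and once $V$ is in hand the rest of the verifications are essentially the same as in the local case.
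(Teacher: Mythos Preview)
Your proof is correct and follows essentially the same approach as the paper, which merely says to modify the proof of Proposition~\ref{lie-model-prop} by replacing Goldbring's theorem (Theorem~\ref{goldbring-thm}) with the Gleason--Yamabe theorem (Theorem~\ref{gleason-yamabe}). One small remark: the paper's formulation of Gleason--Yamabe already asserts that $G'/H$ is a \emph{connected} Lie group, so your additional step of passing to the identity component and the preimage $G_1 = \phi^{-1}(L^\circ)$ is unnecessary (though harmless); with the paper's version you may take $G_1 = G_0$ directly.
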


\begin{proof} This is obtained by a modification of the proof of Proposition \ref{lie-model-prop}.  The one main change is that one needs to replace Theorem \ref{goldbring-thm} with Theorem \ref{gleason-yamabe}.
\end{proof}

Note that in contrast to Proposition \ref{lie-model-prop} that we do \emph{not} assert that the global Lie group $L$ is simply connected (as this is not provided by the global Gleason-Yamabe theorem (Theorem \ref{gleason-yamabe}), which only promises connectedness).  And indeed, in general we do not have simple connectedness of the model.  For instance, if $\A = \{-N,\ldots,N\} \subset \Z/100N\Z$ for some unbounded nonstandard natural number $N$, then the obvious global model here is the map $\pi: \Z/100N\Z \to \R/\Z$ defined by $\pi(x) = \st(\frac{x}{100N}) \mod 1$, and of course the unit circle $\R/\Z$ is not simply connected.  On the other hand, $\A^{100} = \Z/100N\Z$ is globally modeled by the trivial group; and so one can still recover simple connectedness by passing from $\A$ to a suitably large power.  See \cite[Remark 4.11]{hrush} for some further discussion of this point, as well as Theorem \ref{strong-global} below.

Combining Proposition \ref{locally-compact-model-global} and Proposition \ref{lie-model-prop-global} we obtain the following result, originally due to Hrushovski \cite{hrush}).

\begin{proposition}[Weak global Lie model theorem]\label{weak-global}
Suppose that $\A$ is a global ultra approximate group.  Then there is a large ultra approximate group $\tilde \A$ of $\A$ which admits a global model $\tilde\pi : \langle \A\rangle \rightarrow L$ into a connected Lie group $L$.
\end{proposition}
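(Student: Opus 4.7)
The plan is a direct chaining of the two preceding propositions; essentially no new content needs to be produced.

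First, I would invoke Proposition \ref{locally-compact-model-global} applied to the given global ultra approximate group $\A$. This furnishes a global model $\pi : \langle \A \rangle \to G$, where $G$ is a metrisable, locally compact, global group and $\pi$ is good for $\A^4$. The substantive work behind that proposition (which I assume) is to iterate the normal Sanders-Croot-Sisask lemma (Theorem \ref{scs-normal}) to produce a nested sequence of large ultra approximate groups $\A_0 \supset \A_1 \supset \cdots$ with $\A_0 = \A^4$ and the strong normality $(\A_{i+1}^2)^{\A_0^{100(i+1)}} \subset \A_i$; then run the Birkhoff-Kakutani construction of Lemmas \ref{birkhoff-kakutani}--\ref{normal-birkhoff-kakutani} to build a left-invariant pseudometric on $\langle \A \rangle$; and finally quotient by the equivalence relation of distance zero, with countable saturation of the ultraproduct supplying the completeness needed for local compactness.

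Next, I would feed the resulting data $(\pi, G)$ into Proposition \ref{lie-model-prop-global}. That proposition applies the global Gleason-Yamabe theorem (Theorem \ref{gleason-yamabe}) to $G$ to produce an open subgroup $U_1 \leq G$ together with a compact normal subgroup $H \triangleleft U_1$ such that $L := U_1/H$ is a connected Lie group. The ``approximation by internal sets'' clause of the good model axioms (Definition \ref{good-model-def}(iii)) then permits one to select a symmetric ultra approximate group $\tilde \A \subset \A^4$ whose image under $\pi$ is trapped between two precompact symmetric neighbourhoods inside $U_1$. Composing $\pi$ with the quotient map $U_1 \to L$ yields the sought-after global Lie model $\tilde \pi : \langle \A \rangle \to L$.

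The only step that requires a brief sanity check is that $\tilde \A$ is \emph{large} in $\A$, i.e. that $\A$ is covered by finitely many left-translates of $\tilde \A$. This follows from compactness of $\pi(\A)$ in $G$ combined with openness of $\pi(\tilde \A)$ near the identity: the former is covered by finitely many translates of the latter, and pulling this covering back through $\pi$ by means of property (i) of the good model gives the required finite covering. There is no genuine obstacle here, as the combinatorial heavy lifting sits inside Proposition \ref{locally-compact-model-global} and the Hilbert's-fifth input sits inside Proposition \ref{lie-model-prop-global}. As remarked in the text, we neither expect nor obtain simple connectedness of $L$, since the global Gleason-Yamabe theorem only guarantees connectedness of the Lie quotient.
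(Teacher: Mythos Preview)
Your proposal is correct and matches the paper's approach exactly: the paper simply says the result follows by combining Proposition~\ref{locally-compact-model-global} with Proposition~\ref{lie-model-prop-global}, which is precisely the chaining you describe. Your additional unpacking of what happens inside those propositions (Sanders--Croot--Sisask iteration, Birkhoff--Kakutani pseudometric, countable saturation, Gleason--Yamabe) is accurate and more detailed than the paper's own one-line deduction.
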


We will strengthen this proposition in Theorem \ref{strong-global} below.

\begin{remark}\label{haar}  Let $\pi: \A^8 \to L$ be a good model for an ultra approximate group $\A = \prod_{\n \to \alpha} A_\n$ by a locally compact local group $L$, and let $U_0$ be the neighbourhood in Definition \ref{good-model-def}.  Let $U_1$ be a symmetric neighbourhood of the identity such that $U_1^{100} \subset U_0$.  For any continuous function $f: L \to \R$ with compact support in $U_1$, we can define a functional $I(f)$ by the formula
$$ I(f) = \inf \st \frac{\sum_{a \in \A} F^+(a)}{|\A|}$$
where $F^+ = \lim_{\n \to \alpha} F^+_\n$ is the ultralimit of functions $F^+_\n: A_\n \to \R$, with the nonstandard real $\sum_{a \in \A} F^+$ and nonstandard natural number $|\A|$ defined in the usual fashion as
$$ \sum_{a \in \A} F^+(a) := \lim_{\n \to \alpha} \sum_{a_\n \in A_\n} F^+_\n(a_\n)$$
and
$$ |\A| := \lim_{\n \to \alpha} |A_\n|,$$
and the infimum is over all $F^+$ for which $F^+(a) \geq f(\pi(a))$ for all $a \in \A$.  Using Definition \ref{good-model-def}(iii) it is not difficult to also obtain the equivalent formula
$$ I(f) = \sup \st \frac{\sum_{a \in \A} F^-(a)}{\sum_{a \in \A} 1}$$
where the supremum is over all $F^-$ for which $F^-(a) \leq f(\pi(a))$ for all $a \in \A$.  From these two definitions we see that $I(f)$ is both super-linear and sub-linear, and is thus a continuous linear functional on the space $C_c(U_1)$ of continuous compactly supported functions in $U_1$.  By the Riesz representation theorem, there thus exists a Radon measure $\mu$ on $U_1$ such that $I(f) = \int_{U_1} f\ d\mu$ for all $f \in C_c(U_1)$.  From the translation invariant properties of $I(f)$, we see that $\mu(gE)=\mu(E)$ for any measurable subset $E$ of $U_1$, and any $g \in L$ such that $gE$ are defined in $U_1$, and similarly for $gE$ replaced by $Eg$.  Thus $\mu$ is a bi-invariant Haar measure on $U_1$; since $\A$ can be covered by finitely many left-translates of $\pi^{-1}(F)$ for any compact neighbourhood $F$ of the identity, we see that $\mu$ is non-trivial (which implies in particular by bi-invariance that the locally compact local group $L$ is unimodular).  This Haar measure can then be used to estimate the (nonstandard) cardinality of various nonstandard finite sets that are ``close'' to $\A$ in some sense.  Indeed, from the definitions (and the regular nature of Radon measures) we see that
$$ \mu(F) |\A| \leq |\A'| \leq \mu(U) |\A|$$
whenever $F \subseteq U \subseteq U_1$, $F$ is compact, $U$ is open, and $\A'$ is a nonstandard set with
$$ \pi^{-1}(F) \subset \A' \subseteq \pi^{-1}(U).$$
We will not use this measure $\mu$ in this paper, but see \cite{hrush} for some further discussion of this measure and its relationship to \emph{Kiesler measures} from model theory.  One can also use $\mu$ to relate the volume growth of $\A^m$ to the volume growth of the model group $L$, giving some rigorous substance to some of the volume growth heuristics invoked in the examples in Section \ref{corresp}, but we will not formalise this relationship here.
\end{remark}

\begin{remark} As remarked in \cite{hrush}, the Lie Model theorem is not only valid in the context of nonstandard finite ultra approximate groups, i.e. the ultraproduct of finite $K$-approximate groups for a fixed $K$, but also for ``continuous'' ultra approximate groups, that is to say the ultraproduct of precompact open subsets of a locally compact local group that obey all of the approximate group axioms other than finiteness. See \cite{tao-noncommutative} for the basic theory of such continuous approximate groups.  Indeed, one can check that the machinery in Section \ref{sanders-croot-sisask-sec} can be adapted to this setting by replacing the cardinality of finite sets with the Haar measure of various precompact open subsets of a locally compact local group, as in \cite{tao-noncommutative}.  Some other components of this paper, such as the construction of strong approximate groups and Gleason metrics, can also be extended to this setting after some minor notational changes.  However, there will be a key place in the argument\footnote{Another, much more minor, place where ultra finiteness is used in Remark \ref{haar} above, as we implicitly used the trivial fact that counting measure is bi-invariant.  In general, one can only conclude that the measure associated to a good model is bi-invariant if each of the individual approximate groups in the ultraproduct is also equipped with a finite bi-invariant measure.} in Section \ref{endgame} in which the (nonstandard) finiteness of the ultra approximate groups is used in an absolutely crucial way, namely to locate an element in such a group element of minimal non-zero ``escape norm''.  As such, the main result of this paper, Theorem \ref{main-theorem}, does not immediately extend to the continuous setting. Indeed, the basic example of a small ball in a Lie group shows that continuous approximate groups need not resemble coset nilprogressions at all.  We will not pursue this matter further here.
\end{remark}

\noindent \textsc{Some finitary consequences of the Lie Model Theorem.}  To illustrate the power of the Lie Model Theorem in the analysis of approximate groups, we offer two fairly quick applications. The reader interested in the proof of our main results may skip ahead to the next section.

The first application is a special case of our main theorem (Theorem \ref{main-theorem}), following Hrushovski \cite[Corollary 4.18]{hrush}.

\begin{theorem}[Hrushovski]\label{babyhrush}
Suppose that $G$ be a group of exponent $m$ and suppose that $A \subseteq G$ is a $K$-approximate group. Then $A^4$ contains a genuine subgroup $H$ of $G$ with $|H| \gg_{K,m} |A|$. In particular, by Lemma \ref{loc}, $A$ is covered by $O_{K,m}(1)$ left-translates of $H$.
\end{theorem}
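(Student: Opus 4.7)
The plan is to argue by contradiction using ultraproducts, combining the global Hrushovski Lie model theorem (Proposition \ref{weak-global}) with the basic Lie-theoretic fact that small neighbourhoods of the identity in a Lie group contain no non-trivial torsion. Suppose the theorem fails for some fixed $K$ and $m$: then for each $\n \in \N$ one can find a $K$-approximate group $A_\n$ inside a global group $G_\n$ of exponent $m$ such that $A_\n^4$ contains no subgroup of cardinality at least $|A_\n|/\n$. Form the global ultra approximate group $\A := \prod_{\n \to \alpha} A_\n$ inside $\G := \prod_{\n \to \alpha} G_\n$; by {\L}os's theorem, $\G$ has exponent $m$, and no internal subgroup of $\A^4$ has cardinality bounded below by any standard positive multiple of $|\A|$. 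The remainder of the proof manufactures exactly such an internal subgroup.

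Apply Proposition \ref{weak-global} to $\A$ to obtain a large ultra approximate subgroup $\tilde \A$ of $\A$ equipped with a global model $\tilde\pi : \langle \A \rangle \to L$ into a connected Lie group $L$. Since $\tilde\pi$ is a genuine group homomorphism and $\G$ has exponent $m$, every element of $\tilde\pi(\G)$ has order dividing $m$. The geometric heart of the argument is the observation that $L$ admits a symmetric open neighbourhood $V$ of $1$ whose square $V^2$ contains no non-trivial element of order dividing $m$: in exponential coordinates, $\exp(X)^m = \exp(mX) = 1$ forces $X = 0$ for $X$ in a small enough chart.

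It follows that $\tilde\pi(\tilde \A)$ is finite. Indeed, if $\tilde\pi(a), \tilde\pi(b) \in gV$ for some $g \in L$ and $a, b \in \tilde \A$, then $\tilde\pi(a^{-1} b) \in V^2$ and $(a^{-1} b)^m = 1$ in $\G$, so $\tilde\pi(a^{-1}b)^m = 1$; the defining property of $V$ then forces $\tilde\pi(a^{-1}b) = 1$, i.e.\ $\tilde\pi(a) = \tilde\pi(b)$. Since $\tilde\pi(\tilde \A)$ lies in a compact subset of $L$ that is covered by finitely many left-translates of $V$, we obtain $|\tilde\pi(\tilde \A)| \leq N$ for some finite $N$. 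Now set $H := \ker(\tilde\pi) \leq \G$. The thick image axiom provides an open neighbourhood $U_0 \ni 1$ in $L$ with $\tilde\pi^{-1}(U_0) \subseteq \tilde \A$, and since $1 \in U_0$ this gives $H \subseteq \tilde \A \subseteq \tilde \A^4 \subseteq \A^4$. Counting fibres of $\tilde\pi$ (each a coset of $H$ of cardinality $|H|$), we get
\[ |\tilde \A| = \sum_{g \in \tilde\pi(\tilde \A)} |\tilde\pi^{-1}(g) \cap \tilde \A| \leq N |H|; \]
combined with largeness ($|\A| \leq M |\tilde \A|$ for some standard $M$), we conclude $|H| \gg |\A|$ nonstandardly.

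The main obstacle that remains is to promote $H$ to an \emph{internal} subgroup, so that {\L}os's theorem can transfer it back to honest finite subgroups in the individual $A_\n$. Applying the approximation by internal sets axiom of the global model with $F := \{1\}$ and $U := V \cap U_0$ yields an internal set $\mathbf{H}'$ with $\tilde\pi^{-1}(\{1\}) \subseteq \mathbf{H}' \subseteq \tilde\pi^{-1}(V)$. For any $a \in \mathbf{H}'$ one has $a \in \tilde \A \subseteq \G$, so $\tilde\pi(a)^m = \tilde\pi(a^m) = 1$; combined with $\tilde\pi(a) \in V \subseteq V^2$, this forces $\tilde\pi(a) = 1$, and hence $a \in H$. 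Thus $\mathbf{H}' = H$, making $H$ internal. Writing $H = \prod_{\n \to \alpha} H_\n$, {\L}os delivers subgroups $H_\n \leq G_\n$ with $H_\n \subseteq A_\n^4$ and $|H_\n| \geq |A_\n|/(MN)$ for $\alpha$-many $\n$; any such $\n$ with $\n \geq MN$ contradicts the initial assumption.
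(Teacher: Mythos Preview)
Your proof is correct and follows essentially the same route as the paper's: argue by contradiction, pass to an ultraproduct, invoke a Lie model, use that a small neighbourhood of the identity in a Lie group has no non-trivial $m$-torsion to identify $\ker\pi$ with the preimage of a small open set, and then pull this back via internality and {\L}os. The only notable difference is that you invoke the \emph{global} Lie model (Proposition~\ref{weak-global}) rather than the local one (Theorem~\ref{lie-model}); since the hypothesis places $A$ inside a genuine group, this is available and in fact slightly streamlines the argument (you don't need to arrange $U_1^m \subseteq U_0$ to make $\pi(a)^m = \pi(a^m)$ well-defined, as $\tilde\pi$ is already a global homomorphism).
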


\begin{remark}
When $m = 2$ the group $G$ must be abelian, and in this case the theorem is due to Imre Ruzsa \cite{ruzsa-finite-field}.
\end{remark}

\begin{proof} Suppose for sake of contradiction that the claim failed.  Then we may find fixed $K, m$ and a sequence of $K$-approximate groups $A_\n \subseteq G_\n$ in groups $G_\n$ of exponent $m$, such that for each $\n$, $A_\n^4$ does \emph{not} contain a genuine subgroup $H_\n$ of cardinality $|H_\n| \geq |A_\n|/\n$.   As usual we form the ultra approximate group $\A := \prod_{\n \to \alpha} A_\n$.  The ultraproduct group $\G := \prod_{\n \to \alpha} G_\n$ also has exponent $m$, and by Hrushovski's Lie model theorem we can find a large approximate group $\A'\subseteq \A^4$ with a local Lie model $\pi : (\A')^8 \rightarrow L$.   By Definition \ref{good-model-def}(i), we may find a neighbourhood $U_0$ of the identity in $L$ such that $\pi^{-1}(U_0) \subseteq \A'$ and $U_0 \subseteq \pi(\A')$.  Using the fact that the exponential map is a homeomorphism near the identity of $L$, we may then find a neighbourhood $U_1$ of the identity with $U_1^m \subseteq U_0$ such that $U_1$ contains no elements of order $m$ other than the identity.  If $a \in \pi^{-1}(U_1)$, then we conclude that $a^m$ is well-defined in $\A'$ with $\pi(a)^m = \pi(a^m) = \id$, and so $\pi(a)$ is trivial, which means that $\pi^{-1}(U_1) = \ker(\pi)$.  As $\pi(\A')$ is precompact, we conclude that $\A'$ is covered by a finite number of translates of $\ker(\pi)$; as $\A'$ is large, $\A$ is also covered by $M$ such translates for some (standard) finite $M$.

From Definition \ref{good-model-def} (iii), we see that the set $\pi^{-1}(U_1) = \ker(\pi)$ is a nonstandard finite set, and so $\ker(\pi) = \prod_{\n \to \alpha} H_\n$ for some finite subsets $H_\n$ of $G_\n$.  Since $\ker(\pi) \subseteq \A^4$ is a group and $\A$ is covered by $M$ translates of $\ker(\pi)$, we see from {\L}os's theorem (Theorem \ref{los-param}) that for all $\n$ sufficiently close to $\alpha$, $H_\n \subseteq A_\n^4$ is a group and $A_\n$ is covered by $M$ translates of $H_\n$. However if one takes $\n$ larger than $M$ then this contradicts the construction of $A_\n$, and the claim follows.
\end{proof}

\begin{remark} The astute reader will notice that the only properties of the local Lie group $L$ that were really used in the above argument were that $L$ was locally compact and had the NSS (no small subgroups property).  Thus, one could prove Theorem \ref{babyhrush} using a weaker form of the Gleason-Yamabe theorem (Theorem \ref{gleason-yamabe}), in which the model group is merely locally compact NSS rather than Lie. (The machinery of Hilbert's fifth problem implies that these two concepts coincide, but this is considerably deeper.)  However, we do not know of a proof of Theorem \ref{babyhrush} that avoids the machinery of Hilbert's fifth problem completely, and in particular some variant of the Gleason lemmas is required.
\end{remark}

Next, we prove (a slight variant of) the main theorem from Hrushovski's paper \cite[Theorem 1.1]{hrush}, which uses the Lie structure (via the Baker-Campbell-Hausdorff formula) more thoroughly than the preceding application.

\begin{theorem}[Hrushovski's structure theorem]\label{hst}  Let $A$ be a $K$-approximate group, and let $F: \N \times \N \to \N$ be a function.  Then there exist natural numbers $L, M, N$ with $N \geq F(L,M)$ and $L, M \ll_{K,F} 1$, and nested sets
$$ \{\id\} \subset A_N \subseteq \ldots \subseteq A_1 \subseteq A^4$$
with the following properties:
\begin{itemize}
\item[(i)] For each $1 \leq n \leq N$, $A_n$ is symmetric;
\item[(ii)] For each $1 \leq n < N$, $A_{n+1}^2 \subseteq A_n$;
\item[(iii)]  For each $1 \leq n \leq N$, $A_n$ is contained in $M$ left-translates of $A_{n+1}$;
\item[(iv)] For $1 \leq n,m,k \leq N$ with $k < n+m$, the set $[A_n,A_m] := \{ [g,h]: g \in A_n, h \in A_m \}$ is contained in $A_k$;
\item[(v)] $A$ can be covered by $L$ left-translates of $A_1$.
\end{itemize}
\end{theorem}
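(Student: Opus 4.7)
The plan is to argue by contradiction using Hrushovski's Lie Model Theorem (Theorem \ref{lie-model}) together with the Baker--Campbell--Hausdorff formula. Suppose for some fixed $K$ and $F$ the theorem fails; then we may choose a sequence $A_\n$ of $K$-approximate groups such that, for every $\n$, no choice of $L, M \leq \n$ and $N \geq F(L,M)$ admits a nested chain in $A_\n^4$ satisfying (i)--(v). Form the ultra approximate group $\A := \prod_{\n \to \alpha} A_\n$ and apply Theorem \ref{lie-model} to obtain a large ultra approximate subgroup $\A' \subseteq \A^4$ with a good model $\pi : (\A')^8 \to L$ into a connected, simply connected local Lie group $L$ of dimension $d = O_K(1)$, with thickness neighbourhood $U_0 \subseteq L$.

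Next, I would construct a nested family of balls in $L$ using exponential coordinates and BCH. Fix a Euclidean norm $\|\cdot\|$ on the Lie algebra $\g$, and for a sufficiently small $\eps > 0$ (chosen, depending on the BCH structure constants and $U_0$, so that $B_1^{1000} \subseteq U_0$) set
\[ B_n := \exp\bigl( \{ X \in \g : \|X\| < \eps \, 3^{-n} \} \bigr), \qquad \bar B_n := \exp\bigl( \{ X \in \g : \|X\| \leq \eps \, 3^{-n} \} \bigr). \]
A standard BCH computation yields, for small $\eps$: the $B_n$ are open symmetric identity neighbourhoods; $\bar B_{n+1}^2 \subseteq B_n$; each $B_n$ is covered by $M = O(3^d)$ translates of $\bar B_{n+2}^2$, by a volume argument in exponential coordinates; and for $k < n+m$, $[B_n, B_m] \subseteq \bar B_{k+1}^2$, since $\|\log[g,h]\| \ll \|\log g\|\,\|\log h\|$. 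Since $\A'$ is large and $\pi(\A')$ is precompact, one obtains $L \ll_K 1$. Both $L$ and $M$ depend only on $K$, so we may finally pick any $N \geq F(L,M)$ (the construction imposes no upper bound on $N$).

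Now I pull back through $\pi$. For each $1 \leq n \leq N$, Definition \ref{good-model-def}(iii) (and Remark \ref{ult}) applied to the pair $(\bar B_{n+1}^2, B_n)$ yields a symmetric nonstandard finite set $\A_n$ with
\[ \pi^{-1}(\bar B_{n+1}^2) \subseteq \A_n \subseteq \pi^{-1}(B_n), \qquad 1 \leq n \leq N. \]
Since $\A_n \subseteq \pi^{-1}(U_0) \subseteq \A'$, all relevant products and commutators stay inside the domain $(\A')^8$ of $\pi$. Property (i) is symmetry; property (ii) follows because for $g,h \in \A_{n+1}$ the homomorphism property gives $\pi(gh) \in B_{n+1}^2 \subseteq \bar B_{n+1}^2$, hence $gh \in \A_n$; property (iv) follows similarly from $\pi([g,h]) = [\pi(g), \pi(h)] \in \bar B_{k+1}^2 \subseteq \pi(\A_k)$; for (iii), cover $B_n$ by $M$ translates $y_i \bar B_{n+2}^2$, lift $y_i = \pi(x_i)$ via thickness of the model, and note $x_i^{-1}\A_n \subseteq \pi^{-1}(\bar B_{n+2}^2) \subseteq \A_{n+1}$; and (v) follows by combining the covering of $\A$ by $O_K(1)$ translates of $\A'$ with the covering of $\pi(\A')$ by $O_K(1)$ translates of $\bar B_2^2 \subseteq \pi(\A_1)$.

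Finally, {\L}o\'s's theorem (Theorem \ref{los-param}) transfers the existence of such a chain from $\A$ down to $A_\n$ for all $\n$ in an $\alpha$-large set, since the conjunction of (i)--(v) with the specific standard parameters $L, M, N$ is a first-order statement about finite sets. Choosing any such $\n$ with $\n > \max(L,M)$ contradicts the construction of $A_\n$. The main technical obstacle will be the bookkeeping needed to calibrate $\eps$ and the various radii so that all products, squares, and commutators invoked above remain inside the domain $(\A')^8$ of $\pi$, and so that the lifting elements $x_i$ used in verifying (iii) lie in $\A'$; this is a quantitative exercise resolved by fixing $\eps$ sufficiently small from the outset, and is not conceptually difficult.
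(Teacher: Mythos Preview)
Your proposal is correct and follows essentially the same route as the paper: contradiction, ultraproduct, Lie Model Theorem, construct a geometric sequence of balls in the Lie algebra via BCH, pull back through the model using Definition~\ref{good-model-def}(iii), and transfer via {\L}o\'s. The paper uses balls $\exp(10^{-n}\eps B)$ sandwiched between $\pi^{-1}(\exp(10^{-n}\eps B))$ and $\pi^{-1}(\exp(2\cdot 10^{-n}\eps B))$, whereas you use base $3$ and a slightly different inner set $\bar B_{n+1}^2$; these are cosmetic. One small over-claim: you assert $d = O_K(1)$ for the dimension of the Lie model, but Theorem~\ref{lie-model} does not supply this bound directly, and you do not need it---in the contradiction framework, $M$ is simply some fixed standard integer arising from the single Lie group produced, which automatically satisfies $M \ll_{K,F} 1$ \emph{a posteriori}.
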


\begin{proof}  Suppose this is not the case.  Carefully negating all the quantifiers, we conclude that there exist $K, F$ and a sequence $A^{(\n)}$ of $K$-approximate groups, such that for each $\n$ and each $L, M \leq \n$, there does not exist $N \geq F(L,M)$ and $A^{(\n)}_1,\ldots,A^{(\n)}_N$ obeying the conclusions of the theorem.

As usual, we form the ultraproduct $\A := \prod_{\n \to \alpha} A^{(\n)}$, which is an ultra approximate group.  By Theorem \ref{lie-model}, we may find a large ultra approximate subgroup $\tilde \A = \prod_{\n \to \alpha} \tilde A^{(\n)}$ which has a good model $\phi: \tilde \A^8 \to L$ by a local Lie group.

Let ${\mathfrak l}$ be the Lie algebra of $L$, and fix an open bounded convex symmetric body $B$ in ${\mathfrak L}$.  Let $\eps > 0$ be a sufficiently small (standard) real number depending on $B$, $L$ to be chosen later; in particular we may assume that the exponential map is a homeomorphism from $\eps B$ to $\exp(\eps B)$, and that $\exp(\eps B)$ is contained in the neighbourhood $U_0$ appearing in Definition \ref{good-model-def}.  For each standard natural number $n \geq 1$, we apply Definition \ref{good-model-def} and Remark \ref{ult} to find an ultra approximate group $\A_n$ with
$$ \pi^{-1}(\exp(10^{-n} \eps B)) \subseteq \A_n \subseteq \pi^{-1}(\exp(2 \times 10^{-n} \eps B));$$
In particular we have the nesting
$$ \ldots \subseteq \A_2 \subseteq \A_1 \subseteq \A^4.$$
From the Baker-Campbell-Hausdorff formula, we have
$$ \exp(2 \times 10^{-n-1} \eps B)^2 \subseteq \exp(10^{-n} \eps B)$$
if $\eps$ is small enough, and thus $\A_{n+1}^2 \subseteq \A_n$.  In a similar spirit, we can find an $M$ depending only on the dimension of $L$ or ${\mathfrak l}$ such that each ball $10^{-n} \eps B$ is covered by at most $M$ translates of $4 \times 10^{-n-1} \eps B$, which by the Baker-Campbell-Hausdorff formula again implies, for small enough $\eps$, that each $\A_n$ is covered by at most $M$ left-translates of $\A_{n+1}$.  Finally, another application of the Baker-Campbell-Hausdorff formula reveals that
$$ [\exp(2\times 10^{-n} \eps B), \exp(2 \times 10^{-m} \eps B)] \subseteq \exp(10^{-k} \eps B) $$
whenever $k < n+m$, and hence $[\A_n,\A_m] \subseteq \A_k$.

Finally, since one can cover $\pi(\A)$ by a finite number of translates of $\exp(\eps B)$, we see that $\A$ can be covered by at most $L$ left-translates of $\A_1$ for some standard $L \in \N$.

Now set $\A_n = \prod_{\n \to \infty} A^{(\n)}_n$ for some finite sets $A^{(\n)}$, and set $N := F(L,M)$.  Applying {\L}os's theorem (Theorem \ref{los-param}) repeatedly (but only finitely many times), we see that for $\n$ sufficiently close to $\alpha$ the sets $A^{(\n)}_1,\ldots,A^{(\n)}_N, A^{(\n)}$ obey all the properties in the conclusion of the theorem. This contradicts the construction of the $A^{(\n)}$ for $\n$ larger than $L,M$, and the claim follows.
\end{proof}

\begin{remark} One can also use the Lie Model Theorem to establish a stronger statement than Theorem \ref{hst}, which roughly speaking asserts that given a (finite) $K$-approximate group $A$, one can find a large sub-approximate group $A'$ which has an approximate homomorphism $\pi: (A')^8 \to L$ into a local Lie group $L$ with bounded range that obeys an approximate version of Property (i) in Definition \ref{good-model-def}, where the accuracy of these approximations exceeds the ``complexity'' of the model\footnote{The complexity, which we do not define here, would be some quantity taking account of the dimension and structure constants on the Lie algebra ${\mathfrak l}$ of $L$, the diameter of the range of $\pi$ and the inradius of the neighbourhood $U_0$ appearing in Definition \ref{good-model-def}(i).} by any given function $F$.  The precise formulation of this statement, which is in fact a logically equivalent ``finitisation'' of Theorem \ref{lie-model}, is somewhat complicated. We will not need it elsewhere in the paper, and so we leave it as an exercise to the reader.
\end{remark}

\section{Strong approximate groups}\label{finite-deductions-sec}

We now give a combinatorial consequence of the Lie Model Theorem (Theorem \ref{lie-model}) which will be important later, involving a concept which we will call a \emph{strong approximate group}.

\begin{definition}[Strong Approximate Group]\label{sag-def}
Let $A$ be a $K$-approximate group for some $K \geq 1$. We say that $A$ is a \emph{strong $K$-approximate group} if it admits a symmetric subset $S$ such that
\begin{equation}\label{san-croot-sis-cond} (S^{A^4})^{1000 K^3} \subseteq A\end{equation}
and for which the following two \emph{trapping conditions} are satisfied:
\begin{enumerate}
\item (First trapping condition) If $g, g^2, g^3,\dots, g^{1000} \in A^{100}$ then $g \in A$;
\item (Second trapping condition) If $g, g^2, \dots, g^{10^6 K^3} \in A$ then $g \in S$.
\end{enumerate}

An \emph{ultra strong approximate group} is an ultraproduct $\A = \prod_{\n \to \infty} A_\n$ of strong $K$-approximate groups $A_\n$, for some $K \geq 1$ independent of $\n$.
\end{definition}

At present this definition will seem somewhat unmotivated, although it can be demystified to some extent by remarking that these properties suggest that $S$ and $A$ are behaving like very small neighbourhoods of the identity in a Lie group $L$, with $S$ much smaller than $A$. This point should become clearer shortly.  The reader should not pay too much attention to exponents such as $1000K^3$ or $10^6 K^3$ in the definition; they are chosen for the sake of concreteness.

The main reason for introducing this concept is that we will be able to show, in the next section, that the escape norm $\Vert g \Vert_{e,\A}$ (defined in Definition \ref{escaped}) for an ultra strong approximate group $\A$ has the pleasant properties outlined in \S \ref{outline-sec}. There is scope for varying the parameters in the definition of strong approximate group, but the ones we have given here are strong enough to prove the desired properties of the escape norm.

It is easy to give examples of strong approximate groups. For instance, if $A = \{-N,\dots,N\}$ (and $K=3$) then we may take $S = \{-N',\dots, N'\}$ with $N' \sim N/1000 K^3$. If $A$ is a subgroup, then we may simply take $S = A$.  On the other hand, if one randomly removes a small number (e.g. $N^{0.01}$) of elements symmetrically from $\{-N,\ldots,N\}$, the resulting set is likely to remain a $O(1)$-approximate subgroup, but not a strong $O(1)$-approximate subgroup.

The main result of this section implies the following.

\begin{proposition}[Finding a ultra strong approximate group]\label{ultra-sag-exist}
Let $\A$ be an ultra approximate group. Then there is a large ultra approximate subgroup $\tilde \A$ of $\A$ which is a strong ultra approximate group.
\end{proposition}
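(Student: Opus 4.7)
The plan is to use the Hrushovski Lie Model Theorem (Theorem \ref{lie-model}) together with the local Lie structure of the model, exploiting the heuristic flagged earlier that ``small balls in a Lie group are automatically strong approximate groups''. Both $\tilde\A$ and the auxiliary set $S$ witnessing the strong property will be constructed as preimages under the model map of carefully chosen small symmetric neighbourhoods of the identity in $L$. The two trapping conditions of Definition \ref{sag-def} will then reflect the no-small-subgroups property of a Lie group, while the conjugation-deep condition $(S^{\tilde\A^4})^{1000K^3}\subseteq \tilde\A$ will reflect the boundedness of the adjoint representation near the identity.

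More concretely, first I would apply Theorem \ref{lie-model} to pass to a large ultra approximate subgroup $\A'$ of $\A$ admitting a good local Lie model $\pi:(\A')^8\to L$; by transitivity of the ``large'' relation, it suffices to construct $\tilde\A$ as a large ultra approximate subgroup of $\A'$. Fixing coordinates via the exponential map $\exp:\mathfrak{l}\to L$ and writing $B_r\subset \mathfrak{l}$ for the open ball of radius $r$ in a fixed norm, I would pick parameters $r>s>s'>0$ small and apply Definition \ref{good-model-def}(iii) twice to produce symmetric ultra approximate groups $\tilde\A,S$ with
\begin{equation*}
\pi^{-1}(\exp(B_{r/4}))\subseteq \tilde\A\subseteq \pi^{-1}(\exp(B_r)),\qquad \pi^{-1}(\exp(B_{s'}))\subseteq S\subseteq \pi^{-1}(\exp(B_s)).
\end{equation*}
For $r$ small, the covering parameter $K$ of $\tilde\A$ is bounded by a constant depending only on $\dim L$, since the doubling of $\exp(B_r)$ in $L$ is comparable with the Euclidean doubling of $B_r$ in $\mathfrak{l}$.

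With this setup, the first trapping condition follows because if $g,g^2,\ldots,g^{1000}\in \tilde\A^{100}$ then $\pi(g)^i\in \exp(B_{200r})$ for $1\leq i\leq 1000$, and the identity $\exp(Y)^i=\exp(iY)$ combined with the injectivity of $\exp$ on a large enough ball forces $\pi(g)\in\exp(B_{r/5})\subseteq \exp(B_{r/4})$, whence $g\in\tilde\A$. The second trapping is analogous, with $10^6K^3$ powers forcing $\pi(g)$ into $\exp(B_{r/(10^6K^3)})\subseteq \exp(B_{s'})$ provided $s'\geq r/(10^6K^3)$. For the conjugation-deep condition, writing any element of $S^{\tilde\A^4}$ as $g^{-1}sg$ with $g\in\tilde\A^4$, $s\in S$, and using that the operator norm of the adjoint representation $\Ad$ is bounded by some $C=C(L)$ on the compact set $\exp(B_{5r})$, the Baker--Campbell--Hausdorff formula yields $\pi(g^{-1}sg)\in \exp(B_{2Cs})$; a $1000K^3$-fold product of such terms then lands inside $\exp(B_{r/4})$ provided $s\leq r/(10^4 K^3 C)$. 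Finally $\tilde\A$ is large in $\A'$ (hence in $\A$) because $\pi(\A')$ is precompact and covered by finitely many left translates of $\exp(B_{r/4})$.

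The main obstacle is technical: $\pi$ is defined only on $(\A')^8$, whereas the verification above (especially of the conjugation-deep property) asks us to apply $\pi$ to products involving $O(K^3)$ factors. I would resolve this by first using Theorem \ref{scs} to shrink $\A'$ to a smaller ultra approximate group $\A''\subseteq (\A')^4$ with $(\A'')^{m}\subseteq (\A')^4$ for a sufficiently large $m=O(K^3)$, and carrying out the entire construction inside $\A''$ instead; the restriction of $\pi$ gives a good model for $\A''$ after an obvious shrinking of the thick-image neighbourhood. Since both the covering parameter $K$ of $\tilde\A$ and the adjoint constant $C$ are bounded in terms of $\dim L$ alone, the parameters $r,s,s',m$ can be chosen self-consistently to close the argument.
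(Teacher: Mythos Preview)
Your approach is essentially the paper's: apply Theorem~\ref{lie-model}, take $\tilde\A$ and the auxiliary $S$ as sandwiched preimages (via Definition~\ref{good-model-def}(iii)) of nested exponential balls, and verify the axioms of Definition~\ref{sag-def} by Baker--Campbell--Hausdorff computations in $L$. The paper packages this as Proposition~\ref{strong-ultra} and then observes that Proposition~\ref{ultra-sag-exist} is immediate. One step you omit is the final transfer from the ultraproduct back to the factors $A_\n$ via {\L}os's theorem, needed because an ``ultra strong approximate group'' is \emph{by definition} an ultraproduct of strong $K$-approximate groups; this is routine.

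Your last paragraph, however, misdiagnoses the domain issue and proposes a fix that does not work as stated. The Sanders set $\A''$ from Theorem~\ref{scs} is combinatorially defined and need not contain $\ker\pi$ (or indeed any $\pi^{-1}(V)$ with $V$ open), so the claim that ``the restriction of $\pi$ gives a good model for $\A''$ after an obvious shrinking of the thick-image neighbourhood'' is unjustified: condition~(i) of Definition~\ref{good-model-def} has no reason to hold for such an $\A''$. The paper avoids the issue more simply. Choose $r$ so small that $\exp(2rB)^{100}\subset U_0$; since $\pi^{-1}(U_0)\subset\A'$, a one-line induction shows that every partial product under consideration already lies in $\A'$ (if a product of $m$ factors lies in $\A'$, then the product of $m{+}1$ factors lies in $(\A')^2\subset(\A')^8$, where $\pi$ applies and the BCH estimate keeps the image inside $U_0$). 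This also covers the $O(K^3)$-fold products in~\eqref{san-croot-sis-cond}, since each factor in $S^{\tilde\A^4}$ has $\pi$-image in a ball of radius $O(K^{-3}r)$. No detour through Theorem~\ref{scs} is needed.
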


For use in \S \ref{endgame} we will require the following somewhat more precise result.

\begin{proposition}[Balls are ultra strong approximate groups]\label{strong-ultra}  Let $\A$ be an ultra approximate group with a good model $\pi: \A \to L$ to a local Lie group $L$.  Let $B$ be an open bounded convex symmetric subset of the Lie algebra ${\mathfrak l}$ of $L$.  Then there exists a standard radius $r_0>0$ such that for all $0 < r  < r_0$, any symmetric nonstandard finite set $\tilde \A$ with
\begin{equation}\label{a-squash}
 \pi^{-1}(\exp(rB)) \subseteq \tilde \A \subseteq \pi^{-1}(\exp(2rB))
\end{equation}
is a large strong ultra approximate subgroup of $\A$.
\end{proposition}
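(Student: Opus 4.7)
The plan is to work in exponential coordinates near the identity of $L$ and use the Baker-Campbell-Hausdorff formula. Fix $r_0>0$ small enough, depending only on $L$ and $B$, that $\exp\colon\mathfrak{l}\to L$ is a diffeomorphism on a bounded region containing all the scaled copies of $B$ that will appear below, and so that the resulting image sits inside the neighbourhood $U_0$ of Definition \ref{good-model-def}(i); also small enough that the BCH and adjoint expansions give
\[
\exp(sB)\exp(tB)\subseteq\exp\bigl((s+t)(1+\tfrac{1}{1000})B\bigr),\qquad h\,\exp(sB)\,h^{-1}\subseteq\exp\bigl(s(1+\tfrac{1}{1000})B\bigr)
\]
on the whole region. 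With this choice, $\tilde{\A}\subseteq\pi^{-1}(U_0)\subseteq\A$, and inductively $\tilde{\A}^N\subseteq\pi^{-1}(\exp(O(Nr)B))\subseteq\pi^{-1}(U_0)$ for all standard $N$ up to $10^6 K^3$ (which ensures $\pi$ is defined on the products appearing below). Since $\tilde{\A}^2\subseteq\pi^{-1}(\exp(5rB))$ and the compact set $\exp(5rB)$ can be covered by some number $K$ of translates of $\exp(\tfrac{r}{2}B)$, with $K$ depending only on $\dim L$ and $B$, pulling back through $\pi$ and using $\pi^{-1}(\exp(\tfrac{r}{2}B))\subseteq\tilde{\A}$ shows $\tilde{\A}$ is a $K$-approximate group. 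The same covering argument applied to the precompact set $\pi(\A)$ shows that $\A$ can be covered by finitely many left-translates of $\tilde{\A}$, so $\tilde{\A}$ is a large ultra approximate subgroup of $\A$.

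Next I would construct the auxiliary set $S$ by setting $\rho:=r/(10^5 K^3)$ and invoking Definition \ref{good-model-def}(iii) (as in Remark \ref{ult}) to find a symmetric nonstandard finite set with $\pi^{-1}(\exp(\rho B))\subseteq S\subseteq\pi^{-1}(\exp(2\rho B))$. The adjoint estimate gives $S^{\tilde{\A}^4}\subseteq\pi^{-1}(\exp(3\rho B))$, and then iterated BCH gives $(S^{\tilde{\A}^4})^{1000K^3}\subseteq\pi^{-1}(\exp(4000K^3\rho B))\subseteq\pi^{-1}(\exp(rB))\subseteq\tilde{\A}$, since $4000K^3\rho=4\cdot 10^{-2}\,r<r$. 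For the first trapping condition, suppose $g,g^2,\ldots,g^{1000}\in\tilde{\A}^{100}$; then $\pi(\tilde{\A}^{100})\subseteq\exp(300\,rB)$, and writing $\pi(g)=\exp(X)$, the exact identity $\pi(g)^n=\exp(nX)$ (valid while intermediate powers stay in the chart) gives $1000X\in 300\,rB$, so $X\in\tfrac{3}{10}rB\subseteq rB$ by convexity of $B$ around the origin; thus $\pi(g)\in\exp(rB)$ and $g\in\tilde{\A}$. For the second trapping condition, if $g,g^2,\ldots,g^{10^6K^3}\in\tilde{\A}$, then $\pi(g)^{10^6K^3}\in\exp(2rB)$ yields $X\in\bigl(2r/(10^6K^3)\bigr)B\subseteq\rho B$ (since $2/10^6<1/10^5$), so $\pi(g)\in\exp(\rho B)\subseteq\pi(S)$ and hence $g\in S$.

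The main obstacle is purely bookkeeping: one must choose $\rho$ so that the ``second trapping'' conclusion $X\in\rho B$ and the ``conjugation-power'' bound $(S^{\tilde{\A}^4})^{1000K^3}\subseteq\tilde{\A}$ are simultaneously compatible, i.e.\ $2r/(10^6K^3)\le\rho\le r/(4000K^3(1+\tfrac{1}{1000}))$, and simultaneously that $r_0$ is small enough for the BCH/Ad expansions used above (with multiplicative error $1+\tfrac{1}{1000}$) to remain valid across all the relevant scales, up to the largest exponent $10^6K^3$. The choice $\rho=r/(10^5K^3)$ gives ample slack in the first inequality, and shrinking $r_0$ depending on $K$, $L$, $B$ handles the second; since $K$ itself depends only on $\dim L$ and $B$, all of this is legitimate and completes the verification that $\tilde{\A}$ is a large strong ultra approximate subgroup of $\A$.
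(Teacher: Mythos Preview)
Your argument is essentially the paper's own, carried out with more explicit constants: choose $r_0$ so that the BCH and adjoint estimates hold on the relevant scales, verify that $\tilde{\A}$ is a large $K$-approximate subgroup of $\A$ via a covering argument, construct $S$ from Definition~\ref{good-model-def}(iii) at a scale $\rho\sim r/K^3$, and check the two trapping conditions and the conjugation-power inclusion $(S^{\tilde{\A}^4})^{1000K^3}\subseteq\tilde{\A}$ by working in exponential coordinates.  The paper is terser about the numerics (it simply invokes ``the Baker--Campbell--Hausdorff formula'' and ``working in exponential coordinates'') but the structure is identical, and your observation that $K$ depends only on $\dim L$ and $B$ is exactly the point the paper records in a footnote.

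There is, however, one genuine step you have left out which the paper makes explicit.  By Definition~\ref{sag-def}, an \emph{ultra strong approximate group} is an ultraproduct $\prod_{\n\to\alpha}\tilde{A}_\n$ where each $\tilde{A}_\n$ is itself a (standard) strong $K$-approximate group.  What you have verified is that the nonstandard objects $\tilde{\A}$ and $S$ satisfy the strong approximate group axioms \emph{at the ultraproduct level}.  To conclude, you must write $\tilde{\A}=\prod_{\n\to\alpha}\tilde{A}_\n$ and $S=\prod_{\n\to\alpha}S_\n$ and invoke {\L}os's theorem (Theorem~\ref{los-param}) to transfer each of the conditions (symmetry, the covering property, $(S_\n^{\tilde{A}_\n^4})^{1000K^3}\subseteq\tilde{A}_\n$, and the two trapping conditions) to the individual $\tilde{A}_\n$ for $\n$ sufficiently close to $\alpha$; only then, after redefining $\tilde{A}_\n$ on the remaining indices, is $\tilde{\A}$ an ultra strong approximate group.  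This is routine since all of the conditions are first-order in $\tilde{A}_\n$, $S_\n$, and the standard parameter $K$, but it is part of the proof and should be stated.
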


It is clear that Proposition \ref{ultra-sag-exist} follows from Proposition \ref{strong-ultra}, Theorem \ref{lie-model}, and Definition \ref{good-model-def} (iii); we will, however, only need Proposition \ref{strong-ultra} in the sequel.

We now prove Proposition \ref{strong-ultra}.  Let $r_0>0$ be a sufficiently small quantity depending on $\A, \pi, L, B$ to be chosen later; in particular, we take $r_0$ so small so that the exponential map is a homeomorphism from $2r_0 B$ to $\exp(2r_0 B)$, and $\exp(2r_0 B)^{100}$ is contained inside the open neighbourhood $U_0$ of $L_0$ from Definition \ref{good-model-def}.

Let $\tilde \A$ be as in the proposition.  In particular $\tilde \A^{100} \subset \A$.  By Remark \ref{ult} we may take $\tilde \A$ to be a ultra $K$-approximate group for some\footnote{Indeed, by using the Baker-Campbell-Hausdorff formula one can take $K$ to depend only on the dimension of $L$, if $r_0$ is small enough, but we will not need this fact here.} $K$, and therefore an ultra approximate subgroup of $\A$.  Since $\pi(\A)$ is precompact, it may be covered by finitely many left-translates of $\exp(rB)$, and so $\A$ can be covered by finitely many left-translates of $\tilde \A$.  Thus $\tilde \A$ is a large ultra approximate subgroup of $\A$.

It remains to establish that $\tilde \A$ is a strong ultra approximate subgroup.  Suppose that $g \in \tilde \A^{100}$ is such that $g,\ldots,g^{1000} \in \tilde A^{100}$.  Applying $\pi$, we see that $$\pi(g),\ldots,\pi(g)^{1000} \in \exp( 2r B)^{100}.$$
Working in exponential coordinates and using the Baker-Campbell-Hausdorff formula we conclude, if $r_0$ is small enough, that $\pi(g) \in \exp(r B)$ and thus $g \in \tilde \A$.  We have thus shown the first trapping condition for $\tilde \A$.

Next, we use Definition \ref{good-model-def} to find a symmetric nonstandard finite set ${\bf S}$ with
$$ \exp( 10^{-5} K^{-3} rB ) \subseteq {\bf S} \subseteq \exp( 10^{-4} K^{-3} rB ).$$
From the Baker-Campbell-Hausdorff formula we see that
$$ (\exp(10^{-4} K^{-3} rB)^{\exp(2rB)^4})^{1000K^3} \subseteq \exp(rB)$$
and thus
\begin{equation}\label{sak}
 (({\bf S})^{\tilde \A^4})^{1000K^3} \subseteq \tilde \A.
\end{equation}
Finally, suppose that $g \in \tilde \A$ is such that $g,\ldots,g^{10^6 K^3} \in \tilde \A$.  Applying $\pi$, we conclude that
$$ \pi(g),\ldots,\pi(g)^{10^6 K^3} \in \exp(2rB).$$
Working in exponential coordinates, we conclude that $\pi(g) \in \exp(10^{-5} K^{-3} rB)$ and hence $g \in {\bf S}$.  Thus we have verified the second trapping condition for $\tilde \A$.

Finally, we need to push the trapping conditions from the ultraproduct $\tilde \A$ back to the finitary setting.  Write $\A = \prod_{\n \to \alpha} A_\n$, $\tilde \A = \prod_{\n \to \alpha} \tilde A_\n$ and ${\bf S} = \prod_{\n \to \alpha} S_\n$ for some finite sets $A_\n, \tilde A_\n, S_\n$.  By {\L}os's theorem (Theorem \ref{los-param}), we see that for $\n$ sufficiently close to $\alpha$, $\tilde A_\n$ is symmetric and contains the identity with $\tilde A_\n^4 \subset A_\n^4$, with $\tilde A_\n^2$ covered by $K$ left-translates of $\tilde A_\n$, that
$$
 ((S_\n)^{\tilde A_\n^4})^{1000K^3} \subseteq \tilde A_\n,$$
 and that the first and second trapping properties hold for $\tilde A_\n$ and $S_\n$.  Thus we see that $\tilde A_\n$ is a strong $K$-approximate group for $\n$ sufficiently close to $\alpha$.  After redefining $\tilde A_\n$ suitably for all other values of $\n$, we conclude that $\tilde \A$ is an ultra strong approximate group as required.  This concludes the proof of Proposition \ref{strong-ultra} and hence Proposition \ref{ultra-sag-exist}.

\begin{remark} This proposition represents by far the most serious use of Hrushovski's Lie Model Theorem in our paper. Although we use that theorem elsewhere in the paper, it is only for this proposition that we do not currently have a plausible alternative approach.
\end{remark}

\section{The escape norm and a Gleason type theorem}\label{gleason-sec}

In this section we prove a variant of ``Gleason's lemmas'' in the setting of approximate groups. These show that if $A$ is a strong approximate group then the escape norm has pleasant properties with respect to product, conjugation and commutation. The role of these lemmas was briefly discussed in \S \ref{outline-sec}.

Here is a precise statement.

\begin{theorem}[Gleason-type theorem]\label{gleason-thm}
Suppose that $A$ is a strong $K$-approximate group. Consider the escape norm
\[ \Vert g \Vert_{e,A} := \inf \left\{ \frac{1}{n+1}: n \in \N; g^i \in A \hbox{ for all } 0 \leq i \leq n \right\},\]
with the convention that $\Vert g \Vert_{e,A} = 1$ when $g$ is undefined.
This has the following properties:
\begin{enumerate}
\item \textup{(Conjugation)} If $g, h \in A^{10}$ then $\Vert g^h \Vert_{e,A} \leq 1000 \Vert g \Vert_{e,A}$;
\item \textup{(Product)} We have $\Vert g_1 \dots g_n \Vert_{e,A} \leq K^{O(1)} (\Vert g_1 \Vert_{e,A} + \dots + \Vert g_n \Vert_{e,A})$ if $g_1,\ldots,g_n \in A^{10}$;
\item \textup{(Commutators)} If $g, h \in A^{10}$ then we have $\Vert [g, h] \Vert_{e,A} \leq K^{O(1)} \Vert g \Vert_{e,A} \Vert h \Vert_{e,A}$.
\end{enumerate}
\end{theorem}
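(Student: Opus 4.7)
The proof adapts Gleason's classical construction from his solution of Hilbert's fifth problem to our combinatorial setting. The strategy is to build a left-invariant Gleason pseudometric $d$ on a neighbourhood of the identity in $A^{O(1)}$ that is comparable, up to $K^{O(1)}$ factors, to the escape norm $\|\cdot\|_{e,A}$; the product estimate (ii) and the commutator estimate (iii) then reduce to properties of $d$. The conjugation estimate (i) can be dispatched directly from the definitions, bypassing the pseudometric.

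\textbf{Direct proof of (i).} Suppose $\|g\|_{e,A} = 1/(n+1)$, so $g^i \in A$ for $0 \leq i \leq n$. For $h \in A^{10}$, we have $(g^h)^i = h^{-1} g^i h \in A^{10} \cdot A \cdot A^{10} \subseteq A^{100}$ for $0 \leq i \leq n$. Apply the first trapping condition to each $(g^h)^j$ with $j \leq \lfloor n/1000 \rfloor$: the required hypothesis $(g^h)^{ji} \in A^{100}$ for $i \leq 1000$ holds since $ji \leq n$. We conclude that $(g^h)^j \in A$ for all such $j$, whence $\|g^h\|_{e,A} \leq 1000\|g\|_{e,A}$.

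\textbf{The Gleason pseudometric.} Iterating Theorem \ref{scs-normal}, construct a decreasing chain of symmetric $O_K(1)$-approximate groups $A_0 \supseteq A_1 \supseteq \cdots$ contained in $S$ satisfying the normality $(A_{k+1}^2)^{A^4} \subseteq A_k$ and a nontrivial size bound $|A_k| \gg_{K,k}|A|$. Feeding these into the normal Birkhoff--Kakutani construction (Lemma \ref{normal-birkhoff-kakutani}) yields a continuous left-invariant pseudometric $d$ with nesting $\{d(\cdot,\id)<2^{-k}\}\subseteq A_k \subseteq \{d(\cdot,\id)\leq 2^{1-k}\}$. To show that $d(g,\id)$ and $\|g\|_{e,A}$ are comparable up to $K^{O(1)}$ factors on $A^{10}$, observe: since each $A_k$ is a $O_K(1)$-approximate group, $A_k^m \subseteq A_0 = A$ for $m \ll_K 2^k$, so every $g\in A_k$ has escape norm $\ll_K 2^{-k}$; conversely, iterating the first trapping condition (as in the proof of (i) above) forces any $g$ of small escape norm into the deep approximate subgroups $A_k$. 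The product estimate (ii) is then obtained from the triangle inequality $d(g_1\cdots g_n,\id)\leq \sum_i d(g_i,\id)$ combined with this norm equivalence; note that the uniform constant $K^{O(1)}$ does not accumulate with $n$, which is the chief advantage of passing through $d$.

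\textbf{Commutator estimate --- the main obstacle.} Writing $d(g,\id)=\|\partial_g\psi\|_{\ell^\infty}$ for the bump function $\psi$ underlying the Birkhoff--Kakutani pseudometric, one establishes a second-order cocycle identity schematically of the form
\[
\partial_{[g,h]}\psi \;=\; T_{\ast}\bigl(\partial_g\partial_h\psi - \partial_h\partial_g\psi\bigr)
\]
for a suitable translation $T_\ast$, and the heart of the matter is the bilinear bound $\|\partial_g\partial_h\psi\|_{\ell^\infty}\ll_K \|g\|_d\|h\|_d$. The raw Birkhoff--Kakutani $\psi$ does not suffice for this bilinear estimate: one must take $\psi$ to be a smoothed, ``convolution-style'' average of the indicator functions of the $A_k$ at two distinct scales --- a discrete simulation of Gleason's original Haar-integration smoothing. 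The principal difficulty of the whole argument is carrying out this smoothing while keeping the support inside $A$, and this is precisely where the second trapping condition and the inclusion $(S^{A^4})^{1000K^3}\subseteq A$ in the definition of a strong approximate group earn their keep: they furnish the ``room'' between $S$ and $A$ in which the smoothing can take place. Combining the commutator bound in the pseudometric with the norm equivalence of the previous paragraph then yields (iii) with constant $K^{O(1)}$.
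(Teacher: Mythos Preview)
Your proof of (i) is correct and matches the paper. The approach to (ii), however, has a genuine gap: the claimed converse implication ``$\|g\|_{e,A}$ small $\Rightarrow g$ lies in a deep $A_k$'' does not follow. The $A_k$ are produced by iterating Sanders--Croot--Sisask and are abstract large approximate subgroups with no a priori relation to the escape norm of $A$. The trapping conditions in Definition~\ref{sag-def} relate $A$, $A^{100}$, and $S$ only; they say nothing about the $A_k$, and there is no mechanism --- certainly not ``iterating the first trapping condition'' --- by which $g,\dots,g^n \in A$ forces $g$ into any particular $A_k$. (The second trapping condition gets you from $A$ into $S$, but no deeper.) Without this converse, $d(g,\id)$ and $\|g\|_{e,A}$ are not comparable and your deduction of (ii) collapses. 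A secondary issue: each $A_k$ is only an $O_{K,k}(1)$-approximate group (not $O_K(1)$), so constants extracted from the chain depend on the depth $k$, not just on $K$.

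The paper's route is quite different and bypasses Birkhoff--Kakutani entirely for this theorem. It defines $d^{(\eps)}(g) := \inf\{\sum_i \|g_i\|_e^{(\eps)} : g = g_1\cdots g_n\}$ directly from the (regularised) escape norm, so subadditivity is built in and the entire task reduces to the single inequality $\|g\|_e \leq K^{O(1)} d^{(\eps)}(g)$. This is achieved via an auxiliary $\Psi = \phi^{(\eps)} * \psi$ --- with $\phi^{(\eps)}$ Lipschitz for $d^{(\eps)}$, and $\psi$ a step function built from the tower $(S^{A^2})^{N} \subseteq A$ --- together with a \emph{bootstrap}: assuming $\|\partial_g\Psi\|_\infty \leq X\, d^{(\eps)}(g)$, the Taylor identity $\partial_{g^n}\Psi = n\,\partial_g\Psi + \sum_{i<n}\partial_{g^i}\partial_g\Psi$ and the second trapping condition (giving $g^i \in S$ for $i \ll n/K^3$) yield the same estimate with $X$ replaced by $CK^6 + X/10$; iteration drives $X$ down to $K^{O(1)}$. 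Thus the second trapping condition and the $S$-inclusion are already essential for (ii), not only for (iii) as your sketch suggests.
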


Note that, as a consequence of (i) and (ii), the set of $g \in A$ with $\Vert g \Vert_{e,A} = 0$ is a subgroup normalised by $A^{10}$.

\begin{remark}\label{abelian-trivial} Note that this lemma is trivial when the ambient local group is abelian. For that reason, this section can be ignored by those readers interested in seeing our alternative proof of the (abelian) Freiman's theorem.
\end{remark}

\noindent\textsc{Motivation and heuristic discussion.}  We will shortly give a self-contained proof of Theorem \ref{gleason-thm}, but as motivation we first offer some comments and discussion of the context in which these ideas were first invented: the solution of Hilbert's fifth problem by Gleason, Montgomery-Zippin and Yamabe \cite{gleason,gleason2,montgomery,montgomery-zippin-book,yamabe,yamabe2} (see also \cite{goldbring-thesis} for the local group analogue of these lemmas).

In that context, the Gleason lemmas show the existence, in an arbitrary locally compact group $G$, of arbitrarily small compact neighborhoods $A$ of the identity whose associated escape norm satisfies properties (i) to (iii) as above. The Gleason lemmas lie at the heart of Hilbert's fifth problem and are used at several places in its proof, both in the reduction step from general locally compact groups to NSS (No Small Subgroups) groups, and in order to deal with NSS groups.

For example, if $G$ is NSS, the Gleason lemmas are needed in order to establish that the set of one-parameter subgroups of $G$ forms a vector space. If $X(t)$ and $Y(t)$ are two one-parameter subgroups, then a natural candidate for $X+Y$ is $\lim_{n \rightarrow +\infty} (X(t/n)Y(t/n))^{n}$. In order to show that such a limit does exist, the bound (ii) on the escape norm of a product is precisely what is needed. For the full story, the reader may wish to consult the classical references \cite{kaplansky, montgomery-zippin-book}, the more recent non-standard treatments of the Gleason lemmas by Hirschfeld \cite{hirschfeld} and by Goldbring and van den Dries \cite{vdd-goldbring}, or the blog posts of the third author\footnote{{\tt http://terrytao.wordpress.com/tag/hilberts-fifth-problem/}}.

To give a flavour of how the Gleason lemmas are proven, let us discuss a simple case of the product estimate, namely
\begin{equation}\label{product-est}\Vert uv \Vert_{e,A} \leq C(\Vert u \Vert_{e,A} + \Vert v \Vert_{e,A}).\end{equation}
Here, $A$ is a ball $B(\id,1)$ about the identity in a locally compact group $G$ with the NSS property, where the ball is with respect to some left-invariant distance $d$, and $C$ is some finite quantity depending on $A$. In the discussion below we will make use of the following points concerning this situation:

\begin{enumerate}
\item We may construct a distance $d$ with the additional property that $d(\id, x^g) \leq C d(\id, x)$ for $g,x \in B(\id, 2)$ (for example by the Birkhoff-Kakutani construction \cite[\S 1.22]{montgomery-zippin-book}).
\item The balls in $G$ enjoy an escape property quite similar to that in the definition of a strong approximate group. More precisely, given $\eps > 0$ there is an $M_{\eps} \in \N$ such that if $g,g^2,\dots, g^{M_{\eps}} \in B(\id, 1)$ then $g \in B(\id,\eps)$. The proof of this is by contradiction -- taking a limit of putative ``bad'' $g$s, one can contradict the NSS property.
\end{enumerate}

The key idea behind the proof of the product estimate \eqref{product-est} is to relate the escape norm $\Vert g \Vert_{e,A}$ to the auxillary quantity $\Vert \partial_g \Psi \Vert_{\infty}$, where $\partial_g \Psi(x) = \Psi(g^{-1}x) - \Psi(x)$ and $\Psi$ is a non-negative ``bump'' function supported on $B(\id, 1)$, let us say with $\Vert \Psi \Vert_{\infty} = \Psi(\id) = 1$. As noted in Lemma \ref{supp}, such a ``norm'' automatically satisfies the product inequality (with $C = 1$), and so we need only show that $\Vert g \Vert_{e,A} \sim \Vert \partial_g \Psi \Vert_{\infty}$ in a suitable sense, and for a suitable $\Psi$.

In one direction, it is easy to link the two quantities. Indeed if $\Vert \partial_g \Psi \Vert_{\infty} \leq \delta$ for some $\delta>0$, then a simple telescoping sum argument confirms that $\Psi(g^i) > 0$, and hence $g^i \in A$ whenever $i < 1/\delta$. Thererefore

\begin{equation}\label{easy-direction} \Vert g \Vert_{e,A} \leq \Vert \partial_g \Psi \Vert_{\infty}.\end{equation}

Suppose, conversely, that we know that $g, g^2,\dots, g^n \in A = B(\id, 1)$. Then certainly, by the escape property, we have $g, g^2,\dots, g^{n'} \in B(\id,\eps)$ for some $n' \gg_{\eps} n$. Now if $G$ were a Lie group, and if $\Psi$ were smooth with bounded derivatives, we would have

\begin{equation}\label{wish} \partial_{g^{n'}} \Psi \approx n' \partial_g \Psi,\end{equation}

the approximation being better as $\eps$ gets smaller. This immediately gives the bound $\Vert \partial_g \Psi\Vert_{\infty} \lessapprox_{\eps} 1/n$, and thus we have linked the escape norm and the auxiliary norm $\Vert \partial_g \Psi \Vert_{\infty}$ in both directions.

Now unfortunately \eqref{wish} is only an approximate identity and, more seriously, $G$ is not known to be a Lie group. In fact, as noted above, these Gleason lemmas are required to prove statements of that form. On a more positive note, observe that we only need to bound $\Vert \partial_g \Psi \Vert_{\infty}$ above in terms of $\Vert g \Vert_{e,A}$ when $g = u$ or $g = v$, and not for all $g$. We are at liberty to design the auxillary function $\Psi$ with this in mind.

Now the exact version of \eqref{wish} is basically Taylor's formula, and it reads
\begin{equation} \label{taylorexp-old} \partial_{g^n} \Psi = n\partial_g \Psi + \sum_{i = 0}^{n-1} \partial_g \partial_{g^i} \Psi.\end{equation} (We replace $n'$ by $n$ for ease of notation.) This makes it desirable to bound the second derivatives $\partial_g \partial_{g^i} \Psi$. At this point another key idea enters: it is possible to get good control on these second derivatives when $\Psi = \phi \ast \psi$ is the convolution of two ``Lipschitz'' functions, that is
\[ \Psi(x) = \phi \ast \psi (x) = \int \phi(x z^{-1}) \psi(z) \, dz,\] the integral being with respect to Haar measure on $G$. This is because of the formula

\begin{equation}\label{secondd}
\partial_{g}\partial_{h}( \phi * \psi) = \int \partial_g \phi(z) \partial_{h^{z}} \psi(z^{-1}x)dz.
\end{equation}

To make this useful, $\phi$ is chosen to be somewhat Lipschitz with respect to shifts by $g = u$ and $g = v$, and $\psi$ is chosen to be Lipschitz with respect to the distance $d$. We omit the details.\vspace{11pt}

\noindent\textsc{Rigorous argument.}  We turn now to the details of such a strategy in the discrete setting, that is to say a rigorous proof of Theorem \ref{gleason-thm}.

\begin{proof}[Proof of Theorem \ref{gleason-thm}]  To simplify the notation, we will abbreviate $\Vert \Vert_{e,A}$ in this proof as $\Vert \Vert_e$.

We start with (i), which is a relatively easy consequence of the first trapping property in the definition of strong approximate group (Definition \ref{sag-def}). Indeed suppose that $g, g^2,\dots, g^n \in A$ for some $n$; then certainly $g^h, (g^h)^2,\dots, (g^h)^n \in A^h \subseteq A^{12}$. By the first trapping property this implies that $g^h, (g^h)^2,\dots, (g^h)^{n'} \in A$ for any $n' \leq n/1000$, and this confirms (i).

The proof of (ii) is significantly trickier and is based on the construction of Gleason that was briefly discussed earlier. In order to facilitate a certain technical ``bootstrap argument'', it will be convenient to temporarily replace the escape norm $\Vert g \Vert_e$ by the regularised version $\Vert g \Vert^{(\eps)}_e := \Vert g \Vert_e + \eps$, where $\eps > 0$ is a small quantity. We shall obtain estimates uniform in $\eps$, and then let $\eps \rightarrow 0$.

It is natural to introduce the norm-like quantity
\[ d^{(\eps)}(g) :=  \inf \left\{ \sum_{i=1}^n \|g_i\|^{(\eps)}_{e} : g =g_1\ldots g_n, n \geq 1\right\}.\]
It is clear that
\begin{equation}\label{link-1} d^{(\eps)}(g) \leq \Vert g \Vert^{(\eps)}_e.  \end{equation}
We shall prove an estimate in the opposite direction, namely
\begin{equation}\label{product-to-prove} \Vert g \Vert_e \leq K^{O(1)} d^{(\eps)}(g).\end{equation}
The exponent $O(1)$ will be independent of $\eps$. This implies that, for each positive integer $n$ and all $g_1,\dots, g_n$,
\[ \Vert g_1 \dots g_n \Vert_e \leq K^{O(1)} (\Vert g_1 \Vert_e^{(\eps)} + \dots + \Vert g_n \Vert^{(\eps)}_e).\]
Letting $\eps \rightarrow 0$, we recover the product estimate (ii).

In order to establish this we shall, as in Gleason's argument, relate $\Vert g \Vert_e$ and $d^{(\eps)}(g)$ to an auxillary quantity $\Vert \partial_g \Psi \Vert_{\infty}$, where $\Psi : A^{4} \rightarrow [0,\infty)$ is a certain ``smooth'' function supported on $A^4$.
We will specify $\Psi$ shortly; as in Gleason's argument it will be constructed as a convolution of two functions $\phi$ and $\psi$. The former is taken to be a kind of smoothed version of $1_A$ defined using the metric $d^{(\eps)}$ and Lipschitz for this metric, and the latter constructed using the set $S$ appearing in the definition of strong approximate group (Definition \ref{sag-def}) and Lipschitz with respect to the word metric on $S$.

One link between these quantities is relatively easy to establish for any function $\Psi$ with $\Psi(\id) \geq 1$.  Indeed suppose that $\Vert \partial_g \Psi \Vert_{\infty} = \delta$ for some $g \in A^{100}$. Then certainly $|\Psi(g^i) - \Psi(g^{i+1})| \leq \delta$ for all $i$ with $g^i \in A^{100}$, which implies by an easy telescoping sum argument that $\Psi(g^i) \geq 1 - \delta i$ for all $i$. In particular $g^i$ lies in the support of $\Psi$, and hence in $A^4$, for $i < 1/\delta$; note that the hypothesis $g^i \in A^{100}$ can be removed by induction. By the first trapping condition in Definition \ref{sag-def} this implies that $g^i \in A$ for $i < 1/1000\delta$, and hence $\Vert g \Vert_e \leq 1000\delta$. Thus
\begin{equation}\label{link-2}\Vert g \Vert_e \leq 1000 \Vert \partial_g \Psi \Vert_{\infty} \end{equation}
whenever $g \in A^{100}$.

To establish \eqref{product-to-prove} and hence the product estimate (ii) it therefore suffices to prove a bound
\begin{equation}\label{link-3} \Vert \partial_g \Psi \Vert_{\infty} \ll K^{O(1)} d^{(\eps)}(g)\end{equation} in the opposite direction for all $g \in A^{100}$ (the claim for $g \not \in A^{100}$ being an easy consequence). This argument will depend crucially on the specific form of $\Psi$. The following two lemmas describe the construction of the functions $\phi$ and $\psi$.

\begin{lemma}[Properties of $\phi$]\label{phi-properties}
There is a function $\phi^{(\eps)} : A^{1000} \rightarrow [0,1]$ such that
\begin{enumerate}
\item $\phi^{(\eps)}(x) = 1$ for $x \in A$;
\item $\phi^{(\eps)}(x)=0$ if $x \notin A^2$;
\item \textup{(Lipschitz bound)} For all $g \in A^{1000}$, one has
$$\Vert \partial_g \phi^{(\eps)} \Vert_{\infty} \leq \frac{d^{(\eps)}(g)}{d^{(\eps)}(\id,A^c)}.$$
\end{enumerate}
Here $d^{(\eps)}(y, B) := \inf \{ d^{(\eps)}(b^{-1}y) : b \in B\}$, and $A^c$ is the complement of $A$ in $G$.
\end{lemma}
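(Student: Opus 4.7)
The plan is to build $\phi^{(\eps)}$ as a tent function of the pseudo-distance from $x$ to $A$ derived from $d^{(\eps)}$. Set $r:=d^{(\eps)}(\id, A^c)$, which is strictly positive since every summand in an admissible decomposition for $d^{(\eps)}$ contributes at least $\eps$. It will be convenient to work with the right-invariant form $\tilde d^{(\eps)}(y, A):=\inf_{b\in A}d^{(\eps)}(yb^{-1})$; by the symmetries $d^{(\eps)}(g)=d^{(\eps)}(g^{-1})$ and $A=A^{-1}$ this equals $d^{(\eps)}(y^{-1}, A)$, so one can equivalently regard $\tilde d^{(\eps)}(\cdot, A)$ as the left-invariant distance-to-$A$ evaluated at $x^{-1}$. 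A convenient candidate is
\[
\phi^{(\eps)}(x)\;:=\;\min\!\Bigl(1,\;\max\!\bigl(0,\;\tfrac{r-\tilde d^{(\eps)}(x, A)}{r-\eps}\bigr)\Bigr),
\]
with the $(r-\eps)^{-1}$ slope (in place of $r^{-1}$) chosen so that $\phi^{(\eps)}\equiv 1$ holds on $A$ exactly.

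Properties (i) and (ii) follow directly. For (i), if $x\in A$ then $x^{-1}\in A$, so choosing $b=x^{-1}$ in the infimum defining $\tilde d^{(\eps)}(x, A)=d^{(\eps)}(x^{-1}, A)$ gives $b^{-1}x^{-1}=\id$ and hence $\tilde d^{(\eps)}(x, A)\leq d^{(\eps)}(\id)=\eps$ via the trivial one-term decomposition, forcing $\phi^{(\eps)}(x)=1$. For (ii), if $x\notin A^2$ then $x^{-1}\notin A^2$ (as $A^2$ is symmetric), so every $b\in A$ satisfies $b^{-1}x^{-1}\notin A^{-1}=A$; the symmetry of $d^{(\eps)}$ then forces $d^{(\eps)}(b^{-1}x^{-1})\geq r$, so $\tilde d^{(\eps)}(x, A)\geq r$ and $\phi^{(\eps)}(x)=0$.

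The main content is the Lipschitz bound (iii). Since $\phi^{(\eps)}$ is $(r-\eps)^{-1}$-Lipschitz in its scalar argument --- which differs from $r^{-1}$ only by the factor $r/(r-\eps)\to 1$ as $\eps\to 0$, absorbable by taking $\eps$ sufficiently small relative to $r$ --- it suffices to show
\[
\bigl|\,\tilde d^{(\eps)}(g^{-1}x, A)-\tilde d^{(\eps)}(x, A)\,\bigr|\;\leq\;d^{(\eps)}(g).
\]
This will follow cleanly from the subadditivity of $d^{(\eps)}$: for each $b\in A$,
\[
d^{(\eps)}(g^{-1}x\cdot b^{-1})\;\leq\;d^{(\eps)}(g^{-1})+d^{(\eps)}(xb^{-1}),
\]
and taking the infimum over $b$ gives $\tilde d^{(\eps)}(g^{-1}x, A)\leq d^{(\eps)}(g)+\tilde d^{(\eps)}(x, A)$; the symmetric inequality (obtained by running the same argument with $g$ replaced by $g^{-1}$ and $x$ by $g^{-1}x$) produces the two-sided bound with no further constants.

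The principal subtlety is that the naive choice $\phi^{(\eps)}(x)=F(d^{(\eps)}(x, A))$ using the left-invariant distance directly fails to produce the clean constant: applying subadditivity to $d^{(\eps)}(b^{-1}g^{-1}x)$ via the identity $b^{-1}g^{-1}x=(g^{-1})^{b}\cdot b^{-1}x$ introduces a conjugation term $d^{(\eps)}((g^{-1})^{b})$, the control of which requires the conjugation bound (i) of Theorem~\ref{gleason-thm} and costs a spurious multiplicative factor of $1000$. The switch to the right-invariant form $\tilde d^{(\eps)}$ (equivalently, evaluation at $x^{-1}$) is exactly what sidesteps this conjugation, thanks to the involutive symmetries of both $d^{(\eps)}$ and $A$; verifying the correctness of this reformulation, and in particular keeping track of orientations to ensure that every intermediate product is well-defined within the multiplicative structure of $A$, is the main obstacle in writing the argument out in full.
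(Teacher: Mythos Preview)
Your approach is essentially the paper's: define $\phi^{(\eps)}$ as a tent function of the $d^{(\eps)}$-distance from $x$ to $A$. Your observation that the \emph{left}-invariant distance $d^{(\eps)}(x,A)=\inf_{b\in A}d^{(\eps)}(b^{-1}x)$ produces a conjugation term is correct and is something the paper's one-line ``easily established'' glosses over: with the left-invariant form one only gets $|d^{(\eps)}(x,A)-d^{(\eps)}(g^{-1}x,A)|\le d^{(\eps)}(g^{b})$ for some $b\in A$, not $d^{(\eps)}(g)$. Your switch to the right-invariant $\tilde d^{(\eps)}(x,A)=\inf_{b\in A}d^{(\eps)}(xb^{-1})$ (equivalently $d^{(\eps)}(x^{-1},A)$) cleanly removes this, and your verification of (i), (ii), and the subadditivity step for (iii) is correct.

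There is, however, a genuine problem with your choice of slope $(r-\eps)^{-1}$ in place of $r^{-1}$. You claim the extra factor $r/(r-\eps)$ is ``absorbable by taking $\eps$ sufficiently small relative to $r$'', but $r=d^{(\eps)}(\id,A^c)$ itself depends on $\eps$: since every admissible decomposition of $c\in A^c$ has either one term (cost $\ge 1+\eps$, as $\|c\|_e=1$) or at least two terms (cost $\ge 2\eps$), one only knows $r\ge 2\eps$, so $r/(r-\eps)$ need not be close to $1$. More seriously, in the bootstrap step \eqref{phi-lip} the argument uses the improved lower bound $r\ge 1/(1000X)$ with $X$ initially as large as $K^3/\eps$; then $r-\eps$ could be negative, and your Lipschitz constant $(r-\eps)^{-1}$ becomes meaningless. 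So as written, your formula does not prove (iii) and would break the downstream bootstrap.

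The fix is trivial: either keep slope $r^{-1}$ exactly as in the paper's formula $\phi^{(\eps)}(x)=(1-\tilde d^{(\eps)}(x,A)/r)_+$ and accept that (i) weakens to $\phi^{(\eps)}|_A\equiv 1-\eps/r\ge 1/2$ (harmless, since only $\Psi(\id)\gg 1$ is needed later), or---cleanest---allow $n=0$ in the definition of $d^{(\eps)}$ so that $d^{(\eps)}(\id)=0$, $\tilde d^{(\eps)}(x,A)=0$ for $x\in A$, and both (i) and (iii) hold exactly with slope $r^{-1}$.
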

\begin{proof}
Define
\[ \phi^{(\eps)}(x) := \left(1 - \frac{d^{(\eps)}(x,A)}{d^{(\eps)}(\id, A^c)}\right)_+.\]
Note that this is well-defined since $d^{(\eps)}(\id,A^c) \neq 0$; this would be an issue without the fudge factor of $\eps$ that we have introduced.

Obviously $\phi^{(\eps)}(x) = 1$ for $x \in A$. If $\phi^{(\eps)}(x) \neq 0$ then $d^{(\eps)}(\id, x^{-1} A) = d^{(\eps)}(x,A) <  d^{(\eps)}(\id, A^c)$, and so $x^{-1} A$ contains a point outside of $A^c$. This implies that $x \in A^2$.

The Lipschitz bound is easily established.
\end{proof}

\begin{lemma}[Properties of $\psi$]\label{psi-properties}
There is a function $\psi : A^{1000} \rightarrow [0,1]$ such that
\begin{enumerate}
\item $\psi(x) = 1$ for $x \in A$;
\item $\psi(x)=0$ if $x \notin A^2$;
\item \textup{(Lipschitz bound)} $\Vert \partial_{h^y} \psi \Vert_{\infty} \leq 1/10^4K^3$ for $h \in S$ and $y \in A^2$.
\end{enumerate}
\end{lemma}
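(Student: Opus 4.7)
The plan is to construct $\psi$ as a discretised ``distance to $A$'' function, where distance is measured by the word metric generated by $T := S^{A^4} \cup \{\id\}$. Since $S$ is symmetric, so is $T$, and by \eqref{san-croot-sis-cond} one has $T^n \subseteq A$ for every $n$ with $0 \leq n \leq 1000 K^3$; this is the essential input.

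Concretely, for $x$ in the ambient local group, I would set
\[ k(x) := \min\{ k \geq 0 : x \in T^k \cdot A\} \in \N \cup \{\infty\}, \]
and then define
\[ \psi(x) := \max\!\left(0,\; 1 - \frac{k(x)}{N}\right) \]
for a suitable positive integer $N$ of order $K^3$. Property~(1) is then immediate, since $k(x) = 0$ whenever $x \in A$. For~(2), $\psi(x) > 0$ forces $k(x) \leq N - 1$, so $x \in T^{N-1} A$; provided $N - 1 \leq 1000 K^3$, the hypothesis \eqref{san-croot-sis-cond} gives $T^{N-1} \subseteq A$ and hence $x \in A \cdot A = A^2$. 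For~(3), fix $h \in S$ and $y \in A^2 \subseteq A^4$; then $h^y \in S^{A^4} \subseteq T$, and since $T$ is symmetric also $(h^y)^{-1} \in T$, so left-multiplication by $(h^y)^{-1}$ shifts $k(x)$ by at most one unit. This yields
\[ |\partial_{h^y}\psi(x)| = |\psi((h^y)^{-1} x) - \psi(x)| \leq \frac{1}{N}. \]

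The main delicacy lies in matching the two opposing constraints on $N$: the support condition~(2) bounds $N$ from above in terms of the exponent in \eqref{san-croot-sis-cond}, whereas the Lipschitz bound~(3) wants $N$ as large as possible. The exponents in Definition~\ref{sag-def} are calibrated precisely so that a single $N$ can serve both purposes; should any extra slack be required to reach the precise stated bound $1/(10^4 K^3)$, one either absorbs it into the constants of Definition~\ref{sag-def} (and this is harmless because Proposition~\ref{strong-ultra} produces strong approximate groups with essentially any prescribed exponent), or refines $\psi$ to a convolution of two such discrete bumps to gain an extra factor of $1/N$. Beyond this bookkeeping, the whole content of the argument is the one observation $h^y \in T$ for $h \in S$ and $y \in A^2$, which is exactly what conjugation by $A^4$ in \eqref{san-croot-sis-cond} was designed to encode.
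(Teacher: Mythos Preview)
Your construction is essentially identical to the paper's: both build $\psi$ as a truncated discrete distance-to-$A$ function using the ``unit step'' set $S^{A^2}$ (the paper's $Q$) or $S^{A^4}$ (your $T$), defining $\psi(x) = 1 - i/N$ on the successive shells $Q^{i+1}A \setminus Q^i A$. The only difference is notational; the key mechanism---that $h^y$ lies in the step set for $h \in S$, $y \in A^2$, so left-translation by $(h^y)^{\pm 1}$ shifts the shell index by at most one---is exactly the same.

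You are also right to flag the constants mismatch: with \eqref{san-croot-sis-cond} supplying only the exponent $1000K^3$, one cannot simultaneously have $N-1 \leq 1000K^3$ (for the support bound) and $N \geq 10^4 K^3$ (for the stated Lipschitz constant). The paper's own proof has the same discrepancy---it takes $N = 10^4 K^3$ and asserts $Q^N \subseteq A$, which is not literally what Definition~\ref{sag-def} provides. Your proposed fix (absorb the factor of $10$ into the constants of the definition, which costs nothing in Proposition~\ref{strong-ultra}) is the right resolution; the alternative of convolving two bumps would work but is unnecessary here.
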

\begin{proof}
Let $Q := S^{A^2}$; recall from the definition of strong approximate group that $Q^{N} \subseteq A$, where $N := 10^4 K^3$. Define $\psi(g) = 0$ if $g \notin Q^N A$, $\psi(g) = 1$ if $g \in A$ and $\psi(g) = 1 - i/N$ if $g \in Q^{i+1} A\setminus Q^i A$ for $i = 0,1,\dots , N-1$. The claimed properties of $\psi$ are easily checked. \end{proof}

We now define $\Psi$ to be the convolution
$$ \Psi(x) := \frac{1}{|A|} \sum_{y \in A^2} \phi^{(\eps)}(y) \psi(y^{-1} x)$$
for all $x \in A^{100}$, with the convention that $\Psi(x)=0$ for $x$ outside $A^{100}$.

We note that
\[ \Psi(\id) = \frac{1}{|A|}\sum_x \phi^{(\eps)}(x) \psi(x^{-1}) \geq \frac{1}{|A|} \sum_{x \in A} \phi^{(\eps)}(x) \psi(x^{-1}) = 1,\] a property required in the proof of \eqref{link-2}. Note also that since $\phi$ and $\psi$ are both at most $1$ pointwise and are supported on $A^2$ we have, for all $x$ such that $\Psi(x) \neq 0$,
\[ \Psi(x) = \frac{1}{|A|}\sum_{y } \phi^{(\eps)}(y) \psi(y^{-1} x) = \frac{1}{|A|} \sum_{y \in A^4} \phi^{(\eps)}(y) \psi(y^{-1} x) \leq \frac{|A^4|}{|A|} \leq K^3,\] that is to say
\begin{equation}\label{Psi-inf} \Vert \Psi \Vert_{\infty} \leq K^3.\end{equation}

Let $g \in A^{100}$.  Now since $\id \in A$ we have the crude bound $d^{(\eps)}(\id, A^c) \geq \eps$.   It follows from Lemma \ref{phi-properties} that $\Vert \partial_g \phi^{(\eps)} \Vert_{\infty} \leq d^{(\eps)}(g)/\eps$.  From the identity
$$ \partial_g \Psi(x) = \frac{1}{|A|} \sum_{y \in A^{200}} \partial_g \phi^{(\eps)}(y) \psi(y^{-1} x),$$
we have that
\[ |\partial_g \Psi (x)| \leq \Vert \partial_g \phi \Vert_{\infty} \frac{1}{|A|}\sum_{y \in A^4} \psi(y^{-1} x) \leq \frac{K^3}{\eps} d^{(\eps)}(g).\] This immediately yields the crude bound
\begin{equation}\label{apriori} \Vert \partial_g \Psi \Vert_{\infty} \leq \frac{K^3}{\eps} d^{(\eps)}(g)\end{equation}
in the direction of \eqref{link-3}, the statement we are trying to prove.

Denote by $P(X)$ the bound
\begin{equation}\label{PX-bd} \Vert \partial_g \Psi \Vert_{\infty} \leq X d(g)\end{equation} for all $g \in A^{100}$. We have just demonstrated $P(K^3/\eps)$, and we wish to prove $P(K^{O(1)})$, which is \eqref{link-3}. To this end we will implement a bootstrapping argument, showing that $P(X)$ implies a stronger version of itself, namely $P(X')$ with some $X' < X$, under appropriate conditions.

The hypothesis $P(X)$ (cf. \eqref{PX-bd}) implies an improved Lipschitz bound on $\phi$.  To see this note that if $d^{(\eps)}(g) < 1/1000X$ then from assumption $P(X)$ we have $\Vert \partial_g \Psi \Vert_{\infty} < 1/1000$ and hence, from \eqref{link-2}, that $\Vert g \Vert_e < 1$. By definition of the escape norm this implies that $g \in A$. Phrased in the contrapositive, it follows that $d^{(\eps)}(\id, A^c) \geq 1/1000 X$, and therefore the Lipschitz bound in Lemma \ref{phi-properties} implies that
\begin{equation}\label{phi-lip} \Vert \partial_g \phi \Vert_{\infty} \leq 1000X d^{(\eps)}(g).\end{equation}

The bootstrapping argument hinges on the Taylor expansion identity
\[ \partial_{g^n} \Psi = n \partial_g \Psi + \sum_{i=0}^{n-1} \partial_{g^i} \partial_g \Psi,\]
valid whenever $g,\ldots,g^n \in A^{200}$ (say).  This identity implies, using the triangle inequality and \eqref{Psi-inf}, that

\begin{equation}\label{taylorg}\| \partial_g \Psi \|_\infty \leq \frac{1}{n} \| \partial_{g^n} \Psi \|_\infty + \frac{1}{n} \sum_{i=0}^{n-1} \| \partial_{g^i} \partial_g \Psi \|_\infty \leq \frac{2K^3}{n} + \frac{1}{n} \sum_{i=0}^{n-1} \| \partial_{g^i} \partial_g \Psi \|_\infty.
\end{equation}

To use this, we need to focus attention on the first and second derivatives of $\Psi$.
To bound the first derivative we use the identity
\[ \partial_h \Psi(x) = \frac{1}{|A|}\sum_y \phi(y) \partial_{h^y} \psi(y^{-1} x),\]
valid for $h \in A^{100}$.  Since $\phi \leq 1$, this and the Lipschitz bound on $\psi$ given in Lemma \ref{psi-properties}  imply that
\begin{equation}\label{1st-der}
\Vert \partial_h \Psi \Vert_{\infty} \leq \frac{1}{|A|}\sum_{y \in A^2} \Vert \partial_{h^y} \psi \Vert_{\infty} \leq 1/10^4K^2
\end{equation}
if $h \in S$.

We turn to the second derivative $\partial_h \partial_g \Psi$ for $g \in A$ and $h \in S$. Here we use the identity
\[ \partial_h \partial_g \Psi(x) = \frac{1}{|A|} \sum_y (\partial_g \phi)(y) \partial_{h^y} \psi(y^{-1} x).\]
Recalling that $\phi, \psi$ are supported on $A^2$ and using the Lipschitz bound \eqref{phi-lip} on $\phi$ together with the Lipschitz bound on $\psi$ given in Lemma \ref{psi-properties}, we obtain the bound
\begin{equation}\label{2nd-der} \Vert \partial_h \partial_g \Psi \Vert_{\infty} \leq \frac{1}{|A|}\sum_{y \in A^4} \Vert \partial_g \phi \Vert_{\infty} \Vert \partial_{h^y} \psi \Vert_{\infty} \leq \frac{1}{10} X d^{(\eps)}(g)\end{equation} if $g \in A$ and $h \in S$.

These bounds are useful in \eqref{taylorg} provided that $n$ is such that $g , g^2,\dots, g^n \in S$. However, the second trapping property in the definition of strong approximate group ensures that this is so for a reasonably large value of $n$, indeed for $n$ as large as $\frac{1}{10^6 K^3 \Vert g \Vert_e}$. Taking $n$ this large and substituting into \eqref{taylorg} yields

\[ \Vert \partial_g \Psi \Vert_{\infty} \leq  10^7 K^{6} \Vert g \Vert_e + \frac{1}{10}X d^{(\eps)}(g) \leq X' \Vert g \Vert^{(\eps)}_e,\] where $X' = 10^7 K^6 + \frac{1}{10} X$ and $g \in S$.  The claim also trivially holds when $g \not \in S$.

It is easy to improve this to the stronger statement $P(X')$ using the triangle inequality $\Vert \partial_{gh} \Psi \Vert_{\infty} \leq \Vert \partial_g \Psi \Vert_{\infty} + \Vert \partial_h \Psi \Vert_{\infty}$, already observed in \eqref{ghpsi}. Indeed for every $\eta > 0$ there are, by the definition of $d$, $g_1,\dots g_n$ such that $g = g_1 \ldots g_n$ and
\[ d^{(\eps)}(g) > \Vert g_1 \Vert^{(\eps)}_e + \dots + \Vert g_n \Vert^{(\eps)}_e - \eta.\]
Therefore

\[
\Vert \partial_g \Psi \Vert_{\infty}  \leq \Vert \partial_{g_1} \Psi \Vert_{\infty} + \dots + \Vert \partial_{g_n} \Psi \Vert_{\infty}  \leq X' ( \Vert g_1 \Vert^{(\eps)}_e + \dots + \Vert g_n \Vert^{(\eps)}_e) \leq X'(\eta + d^{(\eps)}(g)).\]

Since $\eta$ was arbitrary, we do indeed obtain the bound $\Vert \partial_g \Psi \Vert_{\infty} \leq X' d(g)$, which is the statement $P(X')$.

By repeating this deduction of $P(X')$ from $P(X)$ many times, we see that the crude bound $P(K^3/\eps)$, established in \eqref{apriori}, eventually implies $P(10^9 K^6)$, and hence \eqref{link-3}. By earlier remarks, this concludes the proof of (ii), the inequality for products.

Finally, we turn to the commutator bound (iii).  Now that we have the product inequality (ii), we may define a function $\phi$ obeying the properties in Lemma \ref{phi-properties} but using $\Vert g \Vert_e$ instead of the fudged quantity $\Vert g \Vert^{(\eps)}_e = \Vert g \Vert_e + \eps$, that is to say with
\[ d(g) :=  \inf \{ \sum_{i=1}^n \|g_i\|_{e} ; g =g_1\ldots g_n, n \geq 1\}.\]
This is because (ii) implies the lower bound $d(\id, A^c) \geq K^{-O(1)}$, and in particular $d(\id, A^c) \neq 0$. Moreover we have the Lipschitz bound

\begin{equation}\label{phi-lip-new} \Vert \partial_g\phi \Vert_{\infty} \leq K^{O(1)} d(g).\end{equation}

We will use this function $\phi$ in establishing (iii), the bound for commutators. Once again we consider an auxillary function $\Phi$, defined now to be the convolution
\[ \Phi(x) := \frac{1}{|A|} \sum_{y \in A^2} \phi(y) \phi(y^{-1} x)\]
again with the convention that $\Phi$ vanishes outside of $A^{100}$. We observe the identity
\[ \partial_g \partial_h \Phi - \partial_h \partial_g \Phi = -T_{hg} \partial_{[g,h]}\Phi,\] for $g,h \in A^{10}$, where $T_g$ denotes the shift defined by $T_gf(x) := f(g^{-1} x)$ if $g^{-1} x$ is well-defined, and $0$ otherwise. It follows that
\[ \Vert \partial_{[g,h]} \Phi \Vert_{\infty} \leq \Vert \partial_h \partial_g \Phi \Vert_{\infty} + \Vert \partial_g \partial_h \Phi \Vert_{\infty}.\] By the first bound in \eqref{2nd-der} (which holds equally well for this $\Phi$) we have
\[ \Vert \partial_{[g,h]} \Phi \Vert_{\infty} \leq \frac{2}{|A|} \sum_{y \in A^4} \Vert \partial_g \phi \Vert_{\infty} \Vert \partial_{h^y} \phi \Vert_{\infty}.\]
From \eqref{phi-lip-new}  we obtain
\[ \Vert \partial_{[g,h]} \Phi \Vert_{\infty} \leq K^{O(1)} d(g) \sup_{y \in A^4} d(h^y) \leq K^{O(1)} \Vert g \Vert_e \sup_{y \in A^4} \Vert h^y \Vert_e.\]
By part (i) , this implies
\[ \Vert \partial_{[g,h]} \Phi \Vert_{\infty} \leq K^{O(1)} \Vert g \Vert_e \Vert h \Vert_e.\]
To conclude, we note that \eqref{link-2} holds for this new auxillary function $\Phi$ as well, since the only fact we used in establishing that other than trapping properties of $A$ was the lower bound $\Phi(\id) \geq 1$.

This, at last, concludes the proof of Theorem \ref{gleason-thm}.
\end{proof}

To conclude this section we assemble the main results of it and the previous section in a portable form. The following is the only result we shall need from \S \ref{finite-deductions-sec} and the present section going forward to the next (and final) part of the paper.

\begin{proposition}\label{sec-7-conclusion}
Suppose that $\A$ is an ultra approximate group and that $\pi : \A^8 \rightarrow L$ is a good model for $\A$ into a connected Lie group $L$ with Lie algebra $\mathfrak{l}$. Let $B$ be an arbitrary compact convex neighbourhood of $0$ in $\mathfrak{l}$. Then, for sufficiently small $r,r'$ with $2r> r'>r > 0$, we may find a large strong ultra approximate subgroup $\A'$ of $\A$ such that
\begin{enumerate}
\item $\pi^{-1}(\exp(rB)) \subset \A' \subset \pi^{-1}(\exp(r'B))$;
\item $(\A')^{10^4}$ is well defined;
\item The escape norm $\Vert g \Vert_{e, \A'}$ satisfies
\begin{enumerate}
\item \textup{(Conjugation)} If $g, h \in (\A')^{10}$ then $\Vert h^{-1} g h \Vert_e = O(\Vert g \Vert_e)$;
\item \textup{(Product)} If $n$ is a nonstandard natural number and $g_1,\ldots,g_n \in (\A')^{10}$ is a nonstandard finite sequence of elements of $(\A')^{10}$ \textup{(}i.e. an ultraproduct of standard finite sequences, see Section \ref{nsa-app}\textup{)} then $\Vert g_1 \ldots g_n\Vert_e = O(\sum_{i=1}^n \Vert g_i \Vert_e )$;
\item \textup{(Commutators)} If $g, h \in (\A')^{10}$ then we have $\Vert [g, h] \Vert_e =O(\Vert g \Vert_e  \Vert h \Vert_e)$.
\end{enumerate}
\item The set $H:=\{g \in \A'; \Vert g \Vert_e =0\}$ is a global internal subgroup, that is to say it is of the form $H=\prod_{\n \rightarrow \alpha} H_\n$, where $H_\n \subset A_\n$ contains $\id$ and is stable under multiplication and inverse, which is contained in $\A'$ and is normalised by $\A'$.
\end{enumerate}
\end{proposition}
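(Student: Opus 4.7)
The strategy is to combine Proposition \ref{strong-ultra} (which produces large strong ultra approximate subgroups sandwiched by preimages of exponential balls) with the Gleason-type estimates of Theorem \ref{gleason-thm}, transferring the latter from the finitary to the ultraproduct setting via {\L}os's theorem.

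First I would fix $r_0>0$ small enough so that: (a) $r_0$ is less than the radius provided by Proposition \ref{strong-ultra} for $\A, \pi, B$; (b) $\exp(2r_0 B)$ is precompact and contained in the neighbourhood $U_0$ of Definition \ref{good-model-def}(i); (c) by the Baker-Campbell-Hausdorff formula, $\exp(2r_0B)^{10^4}$ is contained in $U_0$. For any $0<r<r_0$ and any $r'\in(r,2r)$, apply Definition \ref{good-model-def}(iii) (together with Remark \ref{ult}) to the compact set $\exp(rB)$ and the open set $\exp(r'B)$ to produce a symmetric nonstandard finite set $\A'=\prod_{\n\to\alpha} A'_\n$ satisfying
\[ \pi^{-1}(\exp(rB))\subseteq \A' \subseteq \pi^{-1}(\exp(r'B)). \]
Proposition \ref{strong-ultra} then immediately gives that $\A'$ is a large strong ultra approximate subgroup of $\A$, establishing (1) and the ``strong'' assertion. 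For (2), the inclusion $\A'\subseteq\pi^{-1}(\exp(r'B))$ together with the choice of $r_0$ ensures $\pi(\A')^{10^4}\subseteq U_0\subseteq\pi(\A)$, so by Definition \ref{good-model-def}(i) every $10^4$-fold product of elements of $\A'$ is well-defined as an element of $\A$.

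For (3), by the construction in Proposition \ref{strong-ultra} and {\L}os's theorem (Theorem \ref{los-param}), for $\n$ sufficiently close to $\alpha$ each $A'_\n$ is a strong $K$-approximate group for some standard $K$ independent of $\n$. Theorem \ref{gleason-thm} therefore applies uniformly to each $A'_\n$ and yields the conjugation, product, and commutator bounds on $\|\cdot\|_{e,A'_\n}$ with a uniform implied constant $K^{O(1)}$. Taking ultralimits, and using the fact that $\|g\|_{e,\A'}=\lim_{\n\to\alpha}\|g_\n\|_{e,A'_\n}$, transfers each of (a), (b), (c) to $\|\cdot\|_{e,\A'}$. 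For the nonstandard-length product property (3)(b), the key point is that the bound $\|g_1\cdots g_n\|_e\leq C(\|g_1\|_e+\cdots+\|g_n\|_e)$ with standard constant $C=K^{O(1)}$ is a first-order statement in the parameters $n, g_1,\dots,g_n$ and thus, by {\L}os's theorem, extends verbatim to nonstandard $n$ and nonstandard finite sequences $(g_i)_{i=1}^n$ in $(\A')^{10}$.

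For (4), define $H_\n:=\{g\in A'_\n:\|g\|_{e,A'_\n}=0\}$ and set $H:=\prod_{\n\to\alpha}H_\n$; by {\L}os, $H=\{g\in\A':\|g\|_{e,\A'}=0\}$, so $H$ is internal and $H\subseteq\A'$. I need to check that each $H_\n$ is stable under product and inverse. Since $A'_\n$ is finite and symmetric, $g\in H_\n$ forces $\langle g\rangle\subseteq A'_\n$, so $g$ has finite order and $g^{-1}\in H_\n$. Closure under multiplication follows from the finitary product estimate (Theorem \ref{gleason-thm}(ii)) applied with $n=2$: if $\|g_1\|_{e,A'_\n}=\|g_2\|_{e,A'_\n}=0$ then $\|g_1g_2\|_{e,A'_\n}=0$, so $g_1g_2\in H_\n$. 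The finitary conjugation estimate (Theorem \ref{gleason-thm}(i)), together with $\A'\subseteq(\A')^{10}$, then shows that $H$ is normalised by $\A'$.

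The main technical delicacy is item (2): the powers $(\A')^{10^4}$ are not automatically well-defined from the multiplicative-set axiom ($200$-fold products in $\A$), and one must genuinely use the localisation provided by the model, choosing $r,r'$ small enough that $\exp(r'B)^{10^4}$ stays inside $U_0$ so that Definition \ref{good-model-def}(i) forces $\pi^{-1}(\exp(r'B)^{10^4})\subseteq\A$. All other steps are either direct appeals to the previous two sections or routine ultraproduct transfer; the core content of the proposition is simply that the Lie-model gives us, via Proposition \ref{strong-ultra}, access to the combinatorial strong-approximate-group structure on which Theorem \ref{gleason-thm} is built.
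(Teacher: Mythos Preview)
Your proposal is correct and follows essentially the same approach as the paper: obtain $\A'$ from Definition \ref{good-model-def}(iii), invoke Proposition \ref{strong-ultra} to make it a strong ultra approximate group, then transfer the finitary Gleason estimates (Theorem \ref{gleason-thm}) to the ultraproduct via {\L}os's theorem, with (iv) falling out of the conjugation and product bounds. Your write-up is simply more detailed than the paper's three-sentence proof, in particular in spelling out the well-definedness of $(\A')^{10^4}$ and the verification that each $H_\n$ is a genuine subgroup.
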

\begin{proof}
The existence of $\A'$ satisfying (i) and (ii) follows from part (iii) of Definition \ref{good-model-def}. If $r,r'$ are small enough then Proposition \ref{strong-ultra} ensures that $\A'$ is a ultra strong approximate group in the sense of Definition \ref{sag-def}. Properties (iii)(a), (b) and (c) then follow immediately from Theorem \ref{gleason-thm} and taking ultraproducts, and (iv) then follows from (iii).\end{proof}

\begin{remark} Observe that if $\A$ is a strong ultra approximate group, that is to say an ultraproduct of $K$-strong finite approximate groups, and if $L$ is a locally compact model of $\A$ as given for example by Proposition \ref{locally-compact-model}, then from the strong approximate group hypothesis made on $\A$ we see that the standard part of the escape norm $\st(\Vert g \Vert_{e,\A})$ and the escape norm of $\pi(g) \in L$ with respect to the neighborhood of the identity $\pi(\A)$ of $L$ are comparable. Namely $ \Vert \pi(g) \Vert_{e,\pi(\A)} \ll \st(\Vert g \Vert_{e,\A}) \leq \Vert \pi(g) \Vert_{e,\pi(\A)}$. As a consequence, if we take the standard parts of the escape norm in properties (i) to (iii), then what we obtain is precisely the analogous properties for the escape norm in $L$ with respect to $\pi(\A)$.  In that case, the three properties are essentially equivalent to the original Gleason lemmas in the literature on Hilbert's fifth problem, applied to the locally compact (local) group $L$.
In the sequel however, it will be very important that the three bounds (i) to (iii) obtained in Proposition \ref{sec-7-conclusion} hold at the ultra level in $\ultra \R$ and not only at the level of standard parts.
\end{remark}

\section{Proof of the main theorem}\label{endgame}

In this section, we complete the proof of our main theorem, Theorem \ref{hl-conj-nonst}. We will do so by first reducing to the case when $\A$ has no global internal subgroup. For convenience, we introduce the following definition.

\begin{definition}[No small subgroups]  An ultra approximate group $\A$ has the NSS \emph{property} if $\A$ does not contain any non-trivial global internal subgroup.
\end{definition}

By a \emph{global internal subgroup} of $\A=\prod_{\n \rightarrow \alpha}A_{\n}$, we mean a subset of the form $\prod_{\n \rightarrow \alpha} H_\n$, where $H_\n \subseteq A_\n$ is a genuine subgroup. Note that $\A$ is NSS if and only if, for any $g \in \A \backslash \id$, the escape norm $\|g\|_{e,\A}$ is non-zero (though it may be infinitesimal).  We remark that an analogous NSS condition for locally compact groups plays a key role in the theory of Hilbert's fifth problem.

\begin{example}  Let $N\in \ultra \N$ be an unbounded (nonstandard) integer.  Then the interval $\A := [-N,N]$ (in the nonstandard integers $\ultra \Z$) is NSS.  Note that while $\A$ contains global subgroups such as $\Z$ or $\{ x \in \ultra \Z: x = o(N) \}$, such subgroups are not internal (they are not the ultralimits of standard sets).
\end{example}

Clearly, any ultra approximate subgroup of an NSS ultra approximate group is also an NSS ultra approximate group.  Using the Gleason lemmas from Section \ref{gleason-sec} we can reduce the proof of our main theorem to consideration of the NSS case.

\begin{proposition}[NSS reduction]\label{nss-reduct}  Let $\A$ be an ultra approximate group.  Then there exists a large ultra approximate subgroup $\A'$ of $\A$, with $(\A')^{1000}$ well-defined and contained in $\A^4$, and a global internal subgroup $H$ contained in $\A'$ and normalised by $(\A')^{100}$, such that $\A'/H$ is an \textup{NSS} ultra approximate subgroup, which admits a connected Lie group as a good model.
\end{proposition}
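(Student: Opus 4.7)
My plan is to combine Hrushovski's Lie Model Theorem with the Gleason machinery of Section \ref{gleason-sec}. The idea is to first locate, inside $\A$, a strong ultra approximate subgroup $\A'$ on which the escape norm enjoys the product, conjugation and commutator bounds, and then quotient out the ``infinitesimal'' subgroup $H := \{ g \in \A' : \Vert g\Vert_{e,\A'} = 0\}$ of zero-escape-norm elements. By construction, the quotient $\A'/H$ will have no element of zero escape norm and hence be NSS, while the Lie model on $\A'$ will descend to a Lie model on the quotient because $H$ will turn out to be precisely the kernel of the model map.

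Concretely, I would first apply Proposition \ref{lie-model-prop} to $\A$, obtaining a large ultra approximate subgroup $\tilde\A \subseteq \A^4$ together with a good model $\tilde\pi : \tilde\A^8 \to L$ into a connected, simply connected Lie group $L$ with Lie algebra $\mathfrak{l}$. Fixing a bounded convex symmetric neighborhood $B$ of $0 \in \mathfrak{l}$, I would then invoke Proposition \ref{sec-7-conclusion} with radii $0 < r < r' < 2r$ chosen sufficiently small, yielding a large strong ultra approximate subgroup $\A'$ of $\tilde\A$ with
\[ \tilde\pi^{-1}(\exp(rB)) \subseteq \A' \subseteq \tilde\pi^{-1}(\exp(r'B)), \]
on which the escape norm $\Vert \cdot \Vert_{e,\A'}$ obeys the Gleason-type bounds. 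Since the exponents $10$, $100$, $1000K^3$ in Definition \ref{sag-def} are ``somewhat arbitrary'' (as remarked in Section \ref{finite-deductions-sec}), I would strengthen them by a fixed multiplicative constant so that the Gleason bounds hold for all elements of $(\A')^{100}$ and so that $(\A')^{1000}$ is well-defined and contained inside $\tilde\A \subseteq \A^4$; the latter follows because, for $r'$ small, the Baker-Campbell-Hausdorff formula places $\tilde\pi((\A')^{1000})$ inside a small neighborhood of $\id$ in $L$ that is in turn contained in the thick-image neighborhood of $\tilde\pi$.

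Now set $H := \{ g \in \A' : \Vert g \Vert_{e,\A'} = 0 \}$. The product and conjugation estimates from Proposition \ref{sec-7-conclusion}(iii) immediately imply that $H$ is closed under multiplication and normalized by $(\A')^{100}$; combined with the symmetry of $\A'$, this makes $H$ a subgroup. For each standard $n$, the condition ``$g, g^2, \ldots, g^n \in \A'$'' is internal, cutting out an internal subset $H_n$ of $\A'$; countable saturation of the ultraproduct then shows that $H = \bigcap_n H_n$ is itself a global internal subgroup, as in Proposition \ref{sec-7-conclusion}(iv). Next I would identify $H$ with the kernel of $\tilde\pi$ restricted to $\A'$. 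If $g \in H$ then all standard powers $g^n$ lie in $\A' \subseteq \tilde\pi^{-1}(\exp(r'B))$, so $\tilde\pi(g)^n \in \exp(r'B)$ for every $n$; since the Lie group $L$ is NSS near the identity (for $r'$ small), this forces $\tilde\pi(g) = \id$. Conversely, if $\tilde\pi(g) = \id$ for $g \in \A'$, an easy induction (using $\tilde\pi(g^n) = \id \in \exp(rB)$ and the thick-image containment $\tilde\pi^{-1}(\exp(rB)) \subseteq \A'$) gives $g^n \in \A'$ for all standard $n$, so $g \in H$.

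Finally, I would form the local quotient $\A'/H$ via Lemma \ref{quotient}; this is an ultra approximate group since $(\A')^{1000}$ is well-defined and $H$ is normalized by $(\A')^{100}$. The induced map $\bar\pi : (\A'/H)^8 \to L$ is well-defined (since $H \subseteq \ker\tilde\pi$) and inherits the three axioms of Definition \ref{good-model-def}: the thick-image neighborhood is $\exp(rB)$, the image is contained in the compact set $\exp(r'B)$, and the internal-approximation property descends because $H$ is itself internal. For the NSS property: any element $[g] \in \A'/H$ with $\Vert [g]\Vert_{e,\A'/H} = 0$ lifts to $g \in \A'$ whose standard powers all lie in $\A' \cdot H \subseteq (\A')^2$, forcing $\tilde\pi(g)^n \in \exp(2r'B)$ for every $n$, hence $\tilde\pi(g) = \id$ and $[g]$ trivial. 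The main obstacle I anticipate is the careful bookkeeping and coordination of the various ``buffer'' exponents --- the powers of $\A'$ on which normalization, the Gleason bounds, and well-definedness of products each hold, together with the containment $(\A')^{1000} \subseteq \A^4$ --- which forces one to choose $r$, $r'$ and the strong-approximate-group constants in a coordinated fashion; the deep analytic content (the Gleason bounds themselves) is already supplied by Proposition \ref{sec-7-conclusion}.
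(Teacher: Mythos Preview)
Your overall strategy is exactly the paper's: pass to a strong ultra approximate subgroup $\A'$ via Proposition~\ref{sec-7-conclusion}, set $H=\{g\in\A':\Vert g\Vert_{e,\A'}=0\}$, and quotient. Two technical points in your write-up need repair, however.

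First, the sentence ``countable saturation of the ultraproduct then shows that $H=\bigcap_n H_n$ is itself a global internal subgroup'' is not correct: countable saturation only says that a decreasing sequence of nonempty internal sets has nonempty intersection, not that the intersection is internal (the infinitesimals in $\ultra\R$ are a countable intersection of internal intervals but are certainly not internal). The reason $H$ is internal is different: the escape norm $\Vert g\Vert_{e,\A'}$ is by definition the \emph{nonstandard} real $\lim_{\n\to\alpha}\Vert g_\n\Vert_{e,A'_\n}$, so $\Vert g\Vert_{e,\A'}=0$ holds iff $\Vert g_\n\Vert_{e,A'_\n}=0$ for $\alpha$-large $\n$, whence $H=\prod_{\n\to\alpha}\{g_\n:\Vert g_\n\Vert_{e,A'_\n}=0\}$ directly. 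Your citation of Proposition~\ref{sec-7-conclusion}(iv) is the right reference; the countable-saturation gloss should be dropped.

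Second, and more seriously, the identity $H=\ker(\tilde\pi|_{\A'})$ is \emph{false} in general, and your NSS argument relies on it. Take $\A'=\{-N,\dots,N\}\subset\ultra\Z$ with $\tilde\pi(x)=\st(x/N)$: then $H=\{0\}$ (every nonzero $g$ eventually escapes), while $\ker\tilde\pi\cap\A'=\{x:x=o(N)\}$ is much larger. Your ``easy induction'' only yields $g^n\in\A'$ for all \emph{standard} $n$, which gives $\Vert g\Vert_{e,\A'}$ infinitesimal, not zero. Consequently the step ``$\tilde\pi(g)=\id$ hence $[g]$ trivial'' does not follow. The fix is to bypass the kernel entirely and use the trapping property of the strong approximate group, as the paper does: if $\Vert[g]\Vert_{e,\A'/H}=0$ then $\langle g\rangle\subseteq(\A')^2$ in the \emph{internal} sense (all nonstandard powers), and the first trapping condition of Definition~\ref{sag-def} applied to each power $g^m$ then forces $\langle g\rangle\subseteq\A'$, i.e.\ $g\in H$.
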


We refer the reader to Definition \ref{good-model-def} for the definition of a good model. Here $\A'/H$ denotes the quotient local group as defined in Lemma \ref{quotient}.

\begin{proof} By Proposition \ref{ultra-sag-exist} there is a ultra (strong) approximate group $\A' \subseteq \A^4$ which is large relative to $\A$, for which $(\A')^{10^4}$ is well-defined and  contained in $\A^4$, and a good model $\pi: (\A')^8 \to L$, where $L$ is a connected Lie group. Let $B$ be an open bounded convex symmetric neighbourhood of the identity in the Lie algebra of $L$. Then for sufficiently small $r>0$, $\exp(rB)$ contains no non-trivial subgroups of $L$.

Let $H$ denote the global internal subgroup $H=\{g \in \A'; \Vert g \Vert_e =0\}$ given by Proposition \ref{sec-7-conclusion}. Since $H$ is normalised by $\A'$, it is also normalised by $(\A')^{1000}$. We may then apply Lemma \ref{quotient} and consider the quotient local group $(\A')^{100}/H$. Then $(\A')^8/H=(\A'/H)^8$ is well-defined.  Since $\A', H$ are nonstandard finite symmetric sets, $\A'/H$ is also; since $(\A')^2$ can be covered by finitely many left-translates of $\A$, $(\A'/H)^2$ can be covered by finitely many left-translates of $\A'/H$.  We conclude that $\A'/H$ is an ultra approximate group.

Since $\exp(rB)$ contains no non-trivial subgroups, the image of $H$ under $\pi$ has to be trivial, thus the homomorphism $\pi$ descends to a homomorphism of $\A'/H$ to $L$, which satisfies the conditions for a good model (see Definition \ref{good-model-def}).
By construction, every element $g \in\A'$ that is not in $H$ has positive (but nonstandard) escape norm $\Vert g \Vert_{e,\A'}$. If $g \in \A'$ and $\langle [g] \rangle \subseteq \A'/H$, where $[g]$ is the class of $g$ in $\A'/H$, then $\langle g \rangle \subseteq \A'^2$. On the other hand  $\A'$ is a strong ultra approximate group, and thus $\Vert g \Vert_{e,\A'}$ is non-zero if and only if $\Vert g \Vert_{e,\A'^2}$ is non-zero. This implies that every non-identity element $[g]$ in $\A'/H$ also has positive escape norm $\Vert [g] \Vert_{e,\A'/H}$. Thus $\A'/H$ is NSS and the claim follows.
\end{proof}

Let us now state Theorem \ref{hl-conj-nonst} in the special case of NSS groups, and show how the general case of Theorem \ref{hl-conj-nonst} follows from it.

\begin{theorem}[NSS approximate groups contain large nilprogressions]\label{hl-conj-nonst-nil} Let $\A$ be an \textup{NSS} ultra approximate group which admits a connected Lie group $L$ as a good model.  Then $\A^4$ contains a nondegenerate ultra nilprogression $P$ in normal form, which is large relative to $\A$.  Furthermore, the rank and step of $P$ are no greater than the dimension of $L$.
\end{theorem}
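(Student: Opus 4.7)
I propose a proof by induction on $d = \dim L$. The base case $d=0$ is immediate: the thick-image property of the good model forces $\ker\pi = \A^8 \subseteq \A$, so $\A$ is closed under the local multiplication and is therefore a global internal subgroup, which must be trivial by NSS; the empty nilprogression of rank and step $0$ then sits in $\A^4 = \{\id\}$.

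For the inductive step, first apply Proposition \ref{sec-7-conclusion} (with an arbitrary convex compact symmetric neighbourhood $B$ of $0$ in $\mathfrak{l}$) to replace $\A$ by a large strong ultra approximate subgroup $\A' \subset \A^4$ admitting $L$ as a good model and enjoying the Gleason-type product, conjugation, and commutator estimates on the escape norm. The NSS hypothesis passes to $\A'$, so every non-identity element has positive (possibly infinitesimal) escape norm. Using the nonstandard finiteness of $\A'$, pick $g \in (\A')^4 \setminus \{\id\}$ of minimum escape norm $\rho := \|g\|_{e,\A'}$. The commutator estimate $\|[g,h]\|_e \ll \rho\,\|h\|_e$ together with the minimality of $\rho$ forces $[g,h]=\id$ for every $h \in \A'$ with $\|h\|_e$ smaller than an appropriate standard constant, so $g$ centralises a large sub-ball $\A'' \subset \A'$, which is again a strong ultra approximate subgroup of $\A$ modeled by $L$ via Proposition \ref{strong-ultra}.

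The central and hardest step is to quotient by $g$ and produce a good model of strictly smaller dimension. Setting $N := \lfloor c/\rho\rfloor$ for a small standard $c>0$ and $G := \{g^n : |n| \leq N\}$, the centrality of $g$ in $\A''$ lets me form the local quotient $\bar\A := \A''/\langle G\rangle$ via Lemma \ref{quotient}, which inherits the NSS property. I would then construct a good model $\bar\pi : \bar\A^8 \to \bar L$ by a connected Lie group $\bar L$ of dimension at most $d-1$: when $\pi(g) \neq \id$, take $\bar L := L/\exp(\R X)$, where $X \in \mathfrak{l}$ generates the one-parameter subgroup through $\pi(g)$; when $\pi(g) = \id$ (which occurs when $\rho$ is infinitesimal, as in the Heisenberg box where $\pi([u_1,u_2]) = \id$), first rescale the escape norm by $1/\rho$ and reapply the Hrushovski Lie Model Theorem to a sub-ball of $\A'$ at scale $\rho$, obtaining a new Lie model in which the image of $g$ is nontrivial, and then quotient by its one-parameter subgroup. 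In both cases the minimality of $\rho$ combined with the Gleason lemmas forces the new dimension to drop by at least one.

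Applying the induction hypothesis to $\bar\A$ yields a nondegenerate ultra nilprogression $\bar P = P(\bar u_1,\ldots,\bar u_{r-1};N_1,\ldots,N_{r-1}) \subset \bar\A^4$ in normal form, of rank $r-1$ and step at most $d-1$, with $|\bar P| \gg |\bar\A|$. Lifting each $\bar u_j$ to some $u_j \in \A''$ and setting $u_r := g$ with side length $N_r := N$, I assemble $P := P(u_1,\ldots,u_r;N_1,\ldots,N_r) \subset \A^4$. The normal form axioms follow from those of $\bar P$ combined with the centrality of $u_r$ in $\A''$: the commutator condition involving $u_r$ is trivial, while for $i<j<r$ the commutator $[u_i,u_j]$ lies in the lift of the corresponding commutator in $\bar P$ modulo a correction in $\langle u_r\rangle$, giving membership in $\langle u_{j+1},\ldots,u_r\rangle$ as required; local properness and the volume lower bound $|P| \gg N_r |\bar P| \gg |\A|$ combine from the corresponding properties of $\bar P$ and of the geometric progression $G$. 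The resulting $P$ has rank $r \leq d$ and step at most $d$. The principal obstacle is the construction of the good Lie model $\bar L$ of strictly smaller dimension for $\bar\A$, particularly in the delicate case $\pi(g) = \id$, which requires a careful rescaling argument to extract a hidden lower-dimensional structure from the infinitesimal layer of $\A'$.
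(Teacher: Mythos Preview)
Your inductive architecture matches the paper's, but the step you flag as the principal obstacle contains a genuine gap. The case distinction $\pi(g)=\id$ versus $\pi(g)\neq\id$ is illusory: since $\dim L\geq 1$, elements of $\A'$ with arbitrarily small positive escape norm exist, so the minimal $\rho$ is always infinitesimal and hence $\pi(g^n)=\id$ for every $n=o(1/\rho)$; in particular $\pi(g)=\id$ always. Your proposed remedy---rescale to an infinitesimal ball and reinvoke the Lie Model Theorem---is both unnecessary and unjustified (there is no mechanism tying the dimension of the new model to $\dim L$). The paper's resolution is direct: although $\pi(g)$ is trivial, the map $\phi(t):=\pi(g^{\lfloor t/\rho\rfloor})$ is a continuous local one-parameter subgroup of $L$, and it is \emph{nontrivial} precisely because $g^{\lceil 1/\rho\rceil}$ has escaped $\A'$, so $\phi(1)\notin\exp(B)$. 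Centrality of $g$ makes $\phi$ central, and the good model for the quotient is simply the composition of $\pi$ with $L\to L/\exp(\R X)$, giving $\dim\bar L=\dim L-1$ with no further appeal to Hrushovski's theorem.

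There is a second gap in the lifting. Arbitrary lifts $u_j$ of $\bar u_j$ need not have controlled escape norm in $\A''$, so you cannot conclude that $P(u_1,\ldots,u_r;N_1,\ldots,N_r)$ remains inside $\A^4$, nor that the $\langle u_r\rangle$-correction in $[u_i,u_j]$ has the size $O(N_r/(N_iN_j))$ demanded by normal form. The paper handles both issues (and the NSS of the quotient, which does not come for free) via a \emph{Lifting Lemma}: every $\bar g\in\A''/P$ admits a lift $\tilde g$ with $\|\tilde g\|_{e,\A''}\ll\|\bar g\|_{e,\A''/P}$. Its proof is not routine---it again uses the one-parameter subgroup $\phi$ together with the extremality of $\rho$ to adjust a given lift by a suitable power of $g$. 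Finally, be sure to quotient by the progression $\{g^n:|n|\leq 1/\rho\}$ as a local object (via Lemma~\ref{quotient}), not by the global cyclic group it generates; the latter could introduce torsion and destroy NSS, which is exactly why the whole paper is set up in the category of local groups.
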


\begin{proof}[Proof that Theorem \ref{hl-conj-nonst-nil} implies Theorem \ref{hl-conj-nonst}] Let $\A$ be an ultra approximate group. We may find a large ultra approximate subgroup $\A'$ of $\A$ which satisfies the conclusions of Proposition \ref{nss-reduct}. We may then apply Theorem \ref{hl-conj-nonst-nil} to $\A'/H$ and find in $(\A')^4/H$ a nondegenerate ultra nilprogression $P_0$ in normal form with $|P_0| \gg |\A/H|$. We can write $P_0=P_s(\overline{u_1},\ldots,\overline{u_r};N_1,\ldots,N_r)$, where $N_i \in \ultra \N$ are unbounded and $\overline{u_i}\in \A'/H$. We may then pick arbitrary lifts $u_i\in \A'$ and set $P=P(u_1,\ldots,u_r;N_1,\ldots,N_r)$. Then $HP$ is a nondegenerate ultra coset progression in normal form contained in $(\A')^5 \subseteq \A^4$, and $|HP|\geq |H||P_0| \gg |\A|$ as desired.
\end{proof}
We turn now to the proof of Theorem \ref{hl-conj-nonst-nil}, which will occupy the remainder of this section. We begin with a brief sketch, fleshing out a little more the overview given in \S \ref{outline-sec}. The proof will proceed by induction on the dimension of the connected Lie group $L$. The base case of the induction, when $\dim L=0$, is trivial as in this case the NSS ultra approximate group $\A$ is also trivial. To treat the induction step we will consider an element $u$ of $\A$ with smallest possible escape norm. The existence of such an element is guaranteed by our standing hypothesis that approximate groups are finite objects, i.e. that each $A_\n$ in $\A=\prod_{\n  \rightarrow \alpha}A_\n$ is finite. Then we will mod out $\A$ by the geometric $P:=\{u^n, |n|\leq 1/\Vert u\Vert_{e,\A}\}$, where $u$ is an element of $\A$ which the smallest possible escape norm $\Vert u \Vert_{e,\A}$. The quotient local group $\A/P$ (in the sense of Lemma \ref{quotient}) will be shown to be both NSS and to admit a Lie group with dimension at most $\dim L -1 $ as a good model.  It is at this step that we crucially rely on the fact that we are only quotienting out by a \emph{local} group, the progression $P$, rather than a \emph{global} one such as the group $\langle u \rangle$ generated by $u$. We do this in order to avoid accidentally creating torsion with an excessively large quotient.  Indeed, it is because of this component of the induction that it was necessary to cast the entire argument in the setting of local groups rather than global groups, even if one had been willing to restrict the main results of the paper to the global group case. Finally, making key use of the properties of the escape norm given by the Gleason lemmas, we will lift the nilprogression from $\A/P$ to $\A$.

Let us turn to the details.\vspace{11pt}

\noindent \emph{Proof of Theorem \ref{hl-conj-nonst-nil}.} Let $\A$ be an NSS ultra approximate group which admits a connected Lie group $L$ as a good model $\pi:\A^8 \rightarrow L$. We proceed by induction on $\dim L$ and first dispose of the trivial case when $L$ has dimension zero.  As $L$ is connected, it must thus be trivial.  Applying Definition \ref{good-model-def}(iii), we conclude that $\A^8$ is a large global internal subgroup of $\A$.  Since $\A$ is NSS, this kernel must therefore be trivial. Therefore $\A$ is trivial.

Now suppose that $\dim L \geq 1$, and that the claim has already been proven for connected Lie groups of smaller dimension. To complete the proof of Theorem \ref{hl-conj-nonst-nil} it suffices to establish the following lemma.

\begin{lemma}[Induction step]\label{induction} Suppose that $\A$ is an ultra approximate group admitting a connected Lie group $L$ of positive dimension as a good model. Then $\A$ contains large ultra approximate subgroups $\A''' \subseteq \A'' \subseteq \A' \subseteq \A$ with the following properties. Let $u \in \A'$ be such that $\Vert u \Vert_{e,\A'}$ is minimal and non zero, and set $P:=\{u^n : |n|< 1/\Vert u \Vert_{e,\A'}\}$. Then $P$ commutes with $(\A'')^{10}$ and obeys the following properties:
 \begin{enumerate}
 \item the quotient $\A'''/P$ is an ultra approximate group which admits a connected Lie group of dimension $\dim L -1$ as a good model, whose Lie algebra is formed from the Lie algebra of $L$ by quotienting out by a one-dimensional central subalgebra;
 \item if $\A$ is \textup{NSS}, so is $\A'''/P$;
 \item to any large ultra nilprogression $\overline{Q}$ in $\A''/P$ in normal form, one can associate a large ultra nilprogression $Q$ in $\A''$ in normal form, whose rank exceeds the rank of $\overline{Q}$ by at most one, and similarly for the step; and
 \item $(\A''')^4 \subseteq \A''$.
 \end{enumerate}
\end{lemma}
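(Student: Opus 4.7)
The plan is to construct the nested ultra approximate groups $\A' \supseteq \A'' \supseteq \A'''$ by three applications of Proposition~\ref{sec-7-conclusion}, using decreasing radii $r_1 > r_2 > r_3 > 0$ in the Lie algebra $\mathfrak{l}$ of $L$. Each $\A^{(k)}$ will be a strong ultra approximate group trapped between $\pi^{-1}(\exp(r_kB))$ and $\pi^{-1}(\exp(2r_kB))$, inheriting the Gleason-type conjugation, product, and commutator estimates. The radii are chosen so that (iv) $(\A''')^{10^4} \subseteq \A''$ holds, and so that every $g \in (\A'')^{10}$ satisfies $\|g\|_{e,\A'} < 1/C$, where $C$ is the constant in the commutator estimate of Proposition~\ref{sec-7-conclusion}(iii)(c). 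Because each $A'_\n$ is finite and (since $\dim L > 0$ and $\A'$ is NSS, there exists a nontrivial element in $\A'$) has at least one element of positive escape norm, \L os's theorem produces a minimiser $u \in \A'$ of $\|\cdot\|_{e,\A'}$ over $\A' \setminus \{\id\}$. For $g \in (\A'')^{10}$, the commutator bound then gives $\|[u,g]\|_{e,\A'} \leq C\|u\|_{e,\A'}\|g\|_{e,\A'} < \|u\|_{e,\A'}$, and by minimality together with NSS this forces $[u,g] = \id$. Hence $u$, and consequently $P := \{u^n : |n| < 1/\|u\|_{e,\A'}\}$, centralises $(\A'')^{10}$.

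For (i), write $\pi(u) = \exp(Y)$ with $Y \in \mathfrak{l}$ infinitesimal, let $N := \lfloor 1/\|u\|_{e,\A'}\rfloor$, and set $\overline{X} := \st(NY) \in \mathfrak{l}$, a bounded vector whose nontriviality will follow from the escape-norm/first-trapping balance. Centrality of $u$ in $(\A'')^{10}$ and the fact that $\pi((\A'')^{10})$ covers a neighbourhood of the identity in the connected group $L$ force $\exp(\R\overline{X})$ to be central in $L$, and so $\overline{X}$ lies in the centre of $\mathfrak{l}$. Let $L' := L/\exp(\R\overline{X})$ be the resulting local Lie group of dimension $\dim L - 1$, with quotient map $q : L \to L'$. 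Apply Lemma~\ref{quotient} to form the local group $\A'''/P$ and define $\overline{\pi} : \A'''/P \to L'$ as the descent of $q \circ \pi$; this is well-defined since $\pi(u^n)$ is infinitesimally close to $\exp((n/N)\overline{X})$ for $|n| < N$, so $\pi(P)$ lies within an infinitesimal of $\exp([-1,1]\overline{X})$. The good-model axioms of Definition~\ref{good-model-def} transfer from $\pi$ to $\overline{\pi}$ using this approximation and the corresponding axioms for $\pi$.

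For (ii), suppose $[g] \in \A'''/P \setminus \{[\id]\}$ has vanishing escape norm. Then its image $\overline{\pi}([g])$ has vanishing escape norm in the Lie group $L'$, which is NSS, so $\overline{\pi}([g]) = \id$, i.e. $\pi(g) \in \exp(\R\overline{X})$. Pick $m$ with $|m| < N$ so that $\pi(u^m)$ is infinitesimally close to $\pi(g)$; then $gu^{-m} \in \ker\pi$, and NSS of $\A$ applied to the generated internal subgroup inside $\ker\pi \cap \A$ yields $gu^{-m} = \id$, so $g \in P$, contradicting $[g] \neq [\id]$. For (iii), given a nondegenerate ultra nilprogression $\overline{Q} = P(\overline{u_1},\dots,\overline{u_r}; N_1,\dots,N_r)$ in $C$-normal form in $\A''/P$, choose lifts $u_i \in \A''$ and form
\[
Q := P(u_1,\dots,u_r,u; N_1,\dots,N_r, N) \subset \A'',
\]
with $u$ placed in the final slot. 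Since $u$ is central in $(\A'')^{10}$, the commutators $[u_i,u]$ are trivial, so the upper-triangular condition for $Q$ follows from that of $\overline{Q}$ together with the fact that a lift of a commutator $[\overline{u_i},\overline{u_j}]$ picks up only a $P$-component, which is absorbed into the cyclic factor generated by $u$. Local properness and the volume bound for $Q$ follow from those of $\overline{Q}$ together with the fact, established in (ii), that $P$ meets $\A''' \setminus \{\id\}$ only along powers of $u$ (so the new generator $u$ is independent of the lifted generators in the local-properness sense). The rank and step of $Q$ each exceed those of $\overline{Q}$ by at most one.

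The main obstacle in carrying this out is the tension between working in local groups and preserving the delicate normal-form axioms of Definition~\ref{normal-def}. Three places demand particular care: (a) descending $\pi$ cleanly through Lemma~\ref{quotient} while verifying that $\pi(P)$ is \emph{exactly} concentrated (up to infinitesimals) along $\exp(\R\overline{X})$, which is ultimately where the choice of $N$ and the first trapping condition interact; (b) the contradiction in (ii), which must avoid circularity by using only NSS of the original $\A$ rather than of $\A'''/P$; and (c) the local properness and volume axioms for the lifted $Q$, which require that lifts of distinct expressions in $\overline{Q}$ remain distinct in $\A''$ modulo an $O(1)$-factor — a fact that traces back, via the Gleason product estimate, to minimality of $\|u\|_{e,\A'}$ among non-identity elements.
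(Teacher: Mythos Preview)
Your outline for the set-up, centrality of $u$, and part (i) is essentially the paper's approach. Your argument for (ii) is somewhat different (the paper derives NSS from a Lifting Lemma; you instead argue directly that $\ker\pi$ is trivial under NSS of $\A$), and with care this route can be made to work.

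The genuine gap is in (iii). You write ``choose lifts $u_i \in \A''$ and form $Q := P(u_1,\dots,u_r,u;N_1,\dots,N_r,N) \subset \A''$'' — but for \emph{arbitrary} lifts this containment is false. Knowing $\overline{u_i}^{n_i} \in \A''/P$ only tells you $u_i^{n_i} \in \A''P \subseteq \A'' \cdot \A'$, not $u_i^{n_i} \in \A''$; so long words in the lifted generators can escape $\A''$ entirely. The paper's remedy is a \emph{Lifting Lemma}: for each $g \in \A''/P$ there is a lift $\tilde g \in \A''$ with $\|\tilde g\|_{e,\A''} = O(\|g\|_{e,\A''/P})$. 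Applying this to the generators gives $\|u_i\|_{e,\A''} \ll 1/N_i$, and then the Gleason \emph{product} estimate forces every word in $Q$ (with lengths shrunk to $\eps N_i$) to have small $\A''$-escape norm, hence to lie in $\A''$. The proof of the Lifting Lemma is not automatic: one takes a lift minimising $\|\cdot\|_{e,\A''}$, assumes for contradiction that its escape norm is too large, and then uses a Euclidean-algorithm adjustment $\tilde g \mapsto \tilde g u^{-m}$ along the central one-parameter direction $\phi$ to produce a strictly better lift — precisely exploiting the minimality of $\|u\|_{e,\A'}$ you allude to, but in a way your sketch does not capture.

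The same escape-norm control is also what drives the quantitative upper-triangular bounds you need for $C$-normal form. Writing $[u_i,u_j] = w \cdot u^n$ with $w$ a word in $u_{j+1},\dots,u_r$, you must show $|n| \ll N_{r+1}/(N_iN_j)$. The paper gets this from $\|[u_i,u_j]\|_{e,\A''} \ll 1/(N_iN_j)$ (Gleason commutator estimate, using $\|u_i\|_{e,\A''} \ll 1/N_i$), which then bounds $\|u^n\|_{e,\A''}$ and hence $|n|$. With arbitrary lifts you have no bound on $\|u_i\|_{e,\A''}$, so none of this chain goes through. Your remark that the issue ``traces back to minimality of $\|u\|_{e,\A'}$'' is pointing at the right phenomenon, but the actual mechanism — choosing lifts via the Lifting Lemma — is the missing step.
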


\emph{Proof of Theorem \ref{hl-conj-nonst-nil}.} Indeed  apply the induction hypothesis to $\A'''/P$, which we can do by (i) and (ii).  We may then conclude, using (iv), that $\A''/P$ contains a large ultra nilprogression. Finally, apply (iii) to conclude.\endproof

\begin{proof}[Proof of Lemma \ref{induction}]
Take $B$ to be some small convex neighbourhood of $0$ in the Lie algebra $\mathfrak{l}$ of $L$. We shall take $\A', \A'', \A'''$ to be such that
\begin{equation}\label{nesting-0}\pi^{-1}(\exp(B)) \subseteq \A' \subseteq \pi^{-1}(\exp(1.001B))\end{equation} and
\begin{equation}\label{nesting-1}\pi^{-1}(\exp(\delta B)) \subseteq \A'' \subseteq \pi^{-1}(\exp(1.001\delta B)) \end{equation} and
\begin{equation}\label{nesting-2} \pi^{-1}(\exp(\frac{\delta}{10} B)) \subseteq \A''' \subseteq \pi^{-1}(\exp (1.001 \frac{\delta}{10} B)) .\end{equation}
where $\delta > 0$ is a small (standard) real number to be specified later.

It follows from Proposition \ref{sec-7-conclusion} that large ultra approximate subgroups of $\A$ exist with these properties, and furthermore that, if $B$ is small enough, the escape norm $\Vert \cdot \Vert_{e,\A'}$ satisfies the conjugation, product and commutator inequalities laid out in (iii) of that proposition. Note also that $\A', \A''$ admit $L$ as a good model and have the NSS property.

Property (iv) of the lemma is essentially immediate; we turn to the more substantial (i), (ii) and (iii).

We begin with the proof of (i). Recall that $u \in \A'$ is chosen so that $\Vert u \Vert_{e,\A'}$ is minimal and nonzero.  Observing that $\Vert x \Vert_{e, \A'} \leq 100 \delta$ for $x \in (\A'')^{10}$, it follows from the commutator estimate of Proposition \ref{sec-7-conclusion} (iii)(b) that for such $x$ we have
\[ \Vert [x,u] \Vert_{e,\A'} = O(\Vert x \Vert_{e, \A'} \Vert u \Vert_{e,\A'}) < \Vert u \Vert_{e,\A'}\] provided that $\delta$ is chosen sufficiently small in terms of the implied constant $O( \cdot)$.

Note that $[x,u]$ lies in $\A'$, rather than merely $(\A')^4$, since its escape norm is less than $1$. From the extremal property of $u$, it follows that $[x,u] = \id$, that is to say $x$ commutes with $u$, whenever $x \in (\A'')^{10}$.

Recall that we are taking $P := \{ u^n : n \leq 1/\Vert u \Vert_{e,\A'}\}$.

Since $(\A'')^6$ is well defined, we may apply Lemma \ref{quotient} and form the quotient local group $\A''/P\simeq\prod_{\n \rightarrow \alpha} A''_\n/P_\n$, which is clearly also an ultra approximate group. We now show that $\A''/P$ admits a proper quotient of $L$ as a good model. To do this, we first verify that $\pi(P)$ is a non-trivial central one-parameter local subgroup of $L$.

Since $\dim L \geq 1$, the groups $\A', \A''$ are non trivial, and this implies that $\Vert u \Vert_{e,\A'}$ is infinitesimal, i.e. that $M'_0:=1/\Vert u \Vert_{e,\A'}$ is unbounded. Let $n \in \ultra \N$ be such that $n=o(M'_0)$. We must have $u^n \in \ker \pi$, because $u^{kn} \in \A'$ for all $k \in \N$. Define a map $\phi: [-100,100] \to L$ by setting
\begin{equation}\label{phito}
\phi( t ) := \pi( u^{\lfloor t M'_0 \rfloor} ),
\end{equation}
 where $\lfloor \cdot  \rfloor$ is the (nonstandard) greatest integer function. Then $\pi(u^n) = \phi( \st( n / M'_0 ) )$
for all $n \in [-100 M'_0, 100 M'_0]$, and $\phi$ is a local homomorphism in the sense that $\phi(t) \phi(s) = \phi(t+s)$ whenever $t, s, t+s \in [-100,100]$. Also $ \pi(P) = \phi([-1,1])$. Finally, we verify that $\phi$ is continuous. Because of the local homomorphism property, it is enough to check this at $0$. If $t$ is small, then $(\phi(t))^k=\phi(tk) \in \exp(1.001B)$ for every integer $k \in [0,\frac{1}{t}]$, hence $\phi(t) \in \exp(1.001B/k)$ is close to the identity in $L$, which gives the desired continuity.

As $\phi$ is a continuous homomorphism from $[-1,1]$ to the Lie group $L$, there exists an element $X$ of the Lie algebra $\mathfrak{l}$ such that $\phi(t) = \exp(tX)$ for all $t \in [-1,1]$.  Moreover $X \in 1.001 B$. On the other hand by the definition of the escape norm we have $u^{M'_0} \notin \A'$, and hence $\phi(1)=\exp(X) \notin \exp(B)$ and thus $X \notin B$. In particular $X$ is non-zero, and it follows that $\phi([-1,1])$ is a non-trivial local one-parameter subgroup of $L$. Finally it is central in $L$, because $L$ is connected and $\phi(t)=\pi(u^{\lfloor t M'_0 \rfloor})$ commutes with the neighbourhood of identity $\pi(\A'')$ as shown above. Thus $X$ lies in the centre of the Lie algebra $\mathfrak{l}$.

If we choose a neighbourhood $U$ of the identity in $L$ small enough,  then by Lemma \ref{quotient} we may form the quotient space $U/\phi([-1,1])$, which one easily verifies to be a local Lie group of dimension $\dim L - 1$, whose Lie algebra is obtained from the Lie algebra of $L$ by quotienting out by a one-dimensional central subalgebra.  By Lie's third theorem every local Lie group is locally identifiable with an open neighbourhood of a global connected Lie group $L'$, which in our case still has dimension $\dim L - 1$.  Thus, by shrinking $U$ if necessary, we may find a local homomorphism $\eta: U \to L'$ whose kernel lies in $\phi([-1,1])$.  The local homomorphism $\eta \circ \pi: (\A'')^8 \to L'$ then pushes down to a local homomorphism $\psi: (\A'')^8/P \to L'$. Choosing $\delta$ smaller if necessary, we may assume that $\psi$ is defined on all of $(\A''/P)^8$, thus making $L'$ a good model for $\A''/P$. Note we may also ensure that $\pi(\A'')$ contains no non-trivial subgroup of $L'$, a property that will be needed in Lemma \ref{lift} below. This completes the proof of (i).

We turn now to (ii), which asserted that $\A'''/P$ is NSS. In fact we shall prove the same statement for $\A''/P$, from which the statement for $\A'''/P$ follows (or note that an identical proof works). Key to this endeavour is the following lifting lemma, which we will require again in the proof of (iii).

\begin{lemma}[Lifting lemma]\label{lift}  Let $g \in \A''/P$, and let $\kappa: (\A'')^8 \to (\A'')^8/P$ be the projection map.  Then there exists $\tilde g \in \A''$ such that $\kappa(\tilde g) = g$ and $\| \tilde g \|_{e,\A''} =O(\| g \|_{e,\A''/P})$.
\end{lemma}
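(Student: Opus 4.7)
The plan is to begin with any lift $\tilde g_0 \in \A''$ of $g$ and then correct it by a suitable power of $u$, exploiting the centrality of $u$ in $(\A'')^{10}$ established earlier in the proof of Lemma \ref{induction}. Set $N := \lfloor 1/\Vert g\Vert_{e,\A''/P}\rfloor$ and $M_0 := \lfloor 1/\Vert u\Vert_{e,\A'}\rfloor$. For each $1 \leq n \leq N$, the hypothesis $g^n \in \A''/P$ yields a decomposition $\tilde g_0^n = a_n u^{k_n}$ with $a_n \in \A''$ and $|k_n| < M_0$. Centrality gives $(\tilde g_0 u^j)^n = \tilde g_0^n u^{jn} = a_n u^{k_n + jn}$ for any integer $j$, so the task reduces to selecting $j$ so that $k_n + jn$ stays ``small'' uniformly in $1 \le n \le N$, in the appropriate discrete-escape sense.

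First I would pass through the good model $\pi : (\A'')^8 \to L$ and the continuous one-parameter subgroup $\phi(t) = \pi(u^{\lfloor tM_0\rfloor}) = \exp(tsX)$ already constructed in the proof of part (i) of Lemma \ref{induction}. Writing $\pi(\tilde g_0) = \exp(Y_0 + t_0 X)$ with $X$ spanning the central one-dimensional subalgebra and $Y_0$ in a fixed transverse complement, choose $j$ to be the nearest integer to $-t_0 M_0/s$ and set $\tilde g := \tilde g_0 u^j$. Then $\pi(\tilde g) = \exp(Y_0 + O(1/M_0)X)$, and its image under $\eta$ in the quotient Lie group $L'$ coincides with $\pi'(g)$ up to an infinitesimal error. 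In particular the continuous escape norm $\Vert\pi(\tilde g)\Vert_{e, \pi(\A'')}$ is comparable to $\Vert\pi'(g)\Vert_{e, \pi'(\A''/P)}$, which in turn is comparable to $\Vert g\Vert_{e,\A''/P}$ (here one uses that the kernel of $\eta \circ \pi$ on $\A''$ is trapped inside $P$, as was arranged at the end of the proof of (i)).

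The remaining and main obstacle is to bridge the discrete and continuous escape norms: we need $\Vert\tilde g\Vert_{e,\A''} = O(\Vert\pi(\tilde g)\Vert_{e, \pi(\A'')})$. A priori only the easy inequality in the opposite direction is automatic. For the reverse inequality I would invoke the strong approximate group structure of $\A''$ provided by Proposition \ref{sec-7-conclusion}: combining the two trapping conditions of Definition \ref{sag-def} with the Baker--Campbell--Hausdorff formula applied to the powers $\pi(\tilde g)^i = \exp(i(Y_0 + O(1/M_0)X))$ inside $\exp(rB) \subset L$ shows that $\tilde g^i$ remains inside $\A''$ for all $i$ up to a constant multiple of $1/\Vert\pi(\tilde g)\Vert_{e,\pi(\A'')}$. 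Chaining this with the two earlier comparisons produces $\Vert\tilde g\Vert_{e,\A''} = O(\Vert g\Vert_{e,\A''/P})$, which is precisely the content of the lemma. It is exactly at this final quantitative comparison that the NSS hypothesis and the Gleason estimates from Proposition \ref{sec-7-conclusion} are used in an essential way.
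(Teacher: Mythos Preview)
There is a genuine gap, and it lies in the passage through the Lie model. The map $\pi$ only records \emph{standard-part} information. In particular, after your correction by $u^j$ you write $\pi(\tilde g)=\exp\bigl(Y_0+O(1/M_0)X\bigr)$, but $M_0$ is unbounded, so $O(1/M_0)$ is infinitesimal and hence $\pi(\tilde g)$, being a standard element of $L$, equals $\exp(Y_0)$ exactly. Likewise the comparison ``$\|\pi'(g)\|_{e,\pi'(\A''/P)}$ is comparable to $\|g\|_{e,\A''/P}$'' conflates a standard real with a (typically infinitesimal) nonstandard real; when $\|g\|_{e,\A''/P}$ is infinitesimal one has $\|\pi'(g)\|=0$ and the chain yields no nonstandard control at all. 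Your step~5 then effectively claims that $\tilde g^i\in\A''$ for all $i$ once $\pi(\tilde g)=\id$, but this fails: the induction ``$\pi(\tilde g^i)=\pi(\tilde g)^i=\id\Rightarrow\tilde g^i\in\A''$'' is external and only gives the conclusion for standard $i$, not for unbounded $i$.

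A concrete instance: take (in spirit) $\A'=[-N,N]^2\subset\ultra\Z^2$ with $N$ unbounded, $u=(1,0)$, $\A''\approx[-\delta N,\delta N]^2$, and $g$ the class of $(0,1)$ in $\A''/P$. Then $\|g\|_{e,\A''/P}\asymp 1/(\delta N)$. If the initial lift happens to be $\tilde g_0=(\lfloor\sqrt{N}\rfloor,1)$, one has $\pi(\tilde g_0)=0$, so your correction is $j=0$ and $\tilde g=\tilde g_0$; but $\|\tilde g\|_{e,\A''}\asymp 1/(\delta\sqrt{N})$, which is \emph{not} $O(\|g\|_{e,\A''/P})$. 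The invisible-to-$\pi$ nonstandard $X$-component $\lfloor\sqrt{N}\rfloor$ is exactly what needs to be killed, and the Lie model cannot see it.

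The paper's proof avoids this by staying entirely at the nonstandard level. It chooses $\tilde g$ to \emph{minimise} $\|\tilde g\|_{e,\A''}$ among all lifts (this uses internal finiteness), sets $M''_1:=1/\|\tilde g\|_{e,\A''}$, and assumes for contradiction that $\|g\|_{e,\A''/P}=o(\|\tilde g\|_{e,\A''})$. Then $g^{kM''_1}\in\A''/P$ for every standard $k$, forcing $\pi(\tilde g^{M''_1})\in\phi([-1,1])$; a Euclidean-algorithm choice of $m\in\ultra\N$ then makes $h:=\tilde g u^{-m}$ satisfy $\|h\|_{e,\A''}<\|\tilde g\|_{e,\A''}$, contradicting minimality. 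The point is that this correction $m$ is selected by nonstandard arithmetic comparing $1/M''_1$ and $1/M''_0$, not by reading off a standard coordinate in $L$.
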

Let us first remark on why the NSS property of $\A''/P$ follows quickly from this. Indeed suppose that $g \in \A''/P$ is not the identity. Then the element $\tilde g$ generated by the above lemma is not the identity either, and hence has positive escape norm since $\A''$ is NSS.
By the lemma, $g$ also has positive escape norm. Since $g \neq \id$ was arbitrary, this establishes the NSS property for $\A''/P$.

\begin{proof}[Proof of Lemma \ref{lift}] Fix $g \in \A''/P$.
Let $\tilde g$ be a lift of $g$ in $\A''$ which minimizes the escape norm $\Vert \tilde g \Vert_{e,\A''}$ among all possible lifts of $g$. If $\tilde g$ is trivial, then so is $g$ and there is nothing to prove. Therefore we may assume that $\tilde g$ is not the identity and hence, since $\A''$ is NSS, that it has positive escape norm. Suppose, by way of contradiction, that $\Vert g \Vert_{e,\A''/P} = o(\Vert \tilde g \Vert_{e,\A''})$. Our goal will be to reach a contradiction by finding another lift of $g$ with strictly smaller escape norm than $\tilde g$.

Set $M''_1:=1/\Vert \tilde g \Vert_{e,\A''} \in \ultra \N$.

We now make an important deduction from our hypothesis. For every $n \in \ultra \N$ such that $n=O(M''_1)$, we have $g^n \in \A''/P$. In particular, for every (standard) integer $k \in \N$, $g^{kM''_1} \in \A''/P$. This implies that the group generated by $g^{M''_1}$ lies in $\A''/P$. However, in projection to the Lie model, $\A''/P$ gets mapped into a neighbourhood of the identity in $L'$, which we chose small enough so as not to contain any non-trivial subgroup. We thus conclude that $g^{M''_1}$ maps to the identity in $L'$, and therefore $\tilde g^{M''_1}$ maps into the local one-parameter subgroup $\phi([-1,1])$.

Now there is another element which maps to $\phi([-1,1])$, namely $u$, the element for which $\Vert u \Vert_{e,\A'}$ is minimal.

In order to motivate the rest of the argument, let us temporarily work in a heuristic setting (using informal notation such as $\approx$), returning to tighten the argument rigorously later. Since
\begin{equation}\label{approx-inclusion}\A'' \approx \pi^{-1}(\exp(\delta B)),\end{equation} and since $M''_1$ is the least $n$ for which $\tilde g^n$ escapes $\A''$, we have

\begin{equation}\label{M1-rough} \pi (\tilde g^{M''_1}) \approx \phi (\delta) = \exp(\delta X).\end{equation}
Similarly
\begin{equation}\label{M0-rough} \pi( u^{M''_0}) \approx \phi(\delta).\end{equation}

Now $u^n$ takes at least as long as $\tilde g^n$ to escape from $\A' \approx \pi^{-1}(\exp B)$. Hence (roughly) it takes as least as long to escape from $\A'' \approx \pi^{-1}(\exp \delta B)$ as well, which means that
\begin{equation}\label{mimo} M''_1 \lessapprox M''_0. \end{equation}

We are trying to find a lift of $g$ with smaller escape norm than that of $\tilde g$. To do this it is sensible to look for elements of the form $h := \tilde g u^{-m}$, $m \in \ultra \N$. Provided that $m$ is chosen judiciously, $h$ will also be a lift of $g$ since (by definition) $u$ lies in $P$. Since, measured in $\phi([-1,1])$ by applying $\pi$, the element $u$ is ``shorter'' than $\tilde g$, it seems reasonable that by an appropriate choice of $m$ we can make $h$ shorter than $\tilde g$ as well.

Being a little more precise, suppose that $m,n \in \ultra \N$. Since $u$ is central in $\A''$ we have $h^n = \tilde g^n u^{-mn}$ whenever these expressions are well-defined, and hence $\pi( h^n) = \pi(\tilde g^n) \pi(u^{-mn})$. From \eqref{M1-rough} and \eqref{M0-rough} we have

\begin{equation}\label{comps} \pi(\tilde g^n) \approx \phi(\st(\delta n/M''_1)) \qquad \mbox{and} \qquad \pi(\tilde u^{-mn}) \approx \phi(\st (-\delta mn/M''_0)).\end{equation}
These expressions will be legitimate if $m,n$ are chosen so that the arguments of the $\phi$'s always lie in $[-50,50]$ (say).
It follows that \begin{equation}\label{computation} \pi(h^n) \approx \phi (\st (\frac{1}{M''_1} - \frac{m}{M''_0} ) \delta n).\end{equation}
However (by the Euclidean algorithm) there is a choice of $m \in \ultra \N$ such that $|1/M''_1 - m/M''_0| \leq 1/2M''_0$.  Comparing with \eqref{computation} we see that for $n = 1,\dots, 2M''_0$ we have $\pi(h^n) = \phi(\delta')$ with $\delta' \leq \delta$. Since $\pi^{-1}(\phi([0,\delta])) \subseteq \A''$, we must raise $h$ to at least the power $2M''_0$ before it escapes $\A''$. Since $2M''_0 > M''_0 \gtrapprox M''_1$, this $h$ is a  lift of $g$ with smaller escape norm than $\tilde g$. Note that the computations \eqref{computation} are legitimate for this choice of $m$ and for $n \leq 2M''_0$.

We now perform the above argument rigorously. Instead of the heuristic statement \eqref{approx-inclusion}, we must work with the inclusions
\begin{equation}\label{nesting} \pi^{-1} (\exp(\delta B)) \subseteq \A'' \subseteq \pi^{-1} (\exp (1.001 \delta B)).\end{equation}

To get a precise form of \eqref{M1-rough}, note that by definition of the escape norm we have $\tilde g^{M''_1 - 1} \in \A''$, whilst $\tilde g^{M''_1} \notin \A''$. In particular, as a consequence of \eqref{nesting}, $\pi (\tilde g^{M''_1 - 1}) \in \exp(1.001 \delta B)$, whilst $\pi (\tilde g^{M''_1}) \notin \exp(\delta B)$. Since $M''_1$ is unbounded, the first of these actually implies that $\pi (\tilde g^{M''_1}) \in \exp(1.001 \delta B)$.

Similarly $\pi (u^{M''_0-1}) \in \exp(1.01 \delta B)$, whilst $\pi (u^{M''_0}) \notin \exp(\delta B)$.  Once again, the first of these implies that $\pi (u^{M''_0}) \in \exp(1.001 \delta B)$.

Since $B$ is convex, comparison of these facts shows that $\pi(\tilde g^{M''_1}) = \phi(t)$ and $\pi(u^{M''_0}) = \phi(t')$ with
\begin{equation}\label{t-r} t,t' \in [0.9 \delta, 1.1 \delta].\end{equation}
Suppose that $M'_1$ and $M'_0$ are the escape times of $\tilde g$ and $u$ from $\A'$, respectively. Since $u \in \A'$ was assumed to have minimal escape norm, $M'_1 \leq M'_0$. On the other hand \eqref{nesting} implies that $M''_0/M'_0, M''_1/M'_1 \in [0.99\delta, 1.01\delta]$, and so
\begin{equation}\label{mimo-precise} M''_1 \leq 1.1 M''_0.\end{equation}
As in the heuristic discussion above, take $h := \tilde g u^{-m}$, for some $m \in \ultra N$. Let $n \in \ultra \N$. Then we have
\[ \pi (\tilde g^n) = \phi (\st (tn/M''_1)) \qquad \mbox{and} \qquad \pi(u^{-mn}) = \phi(-\st (t' mn/M''_0))\]
provided that the arguments of the $\phi$'s are in $[-50,50]$, which will always be the case later on in the argument.
Since $u$ is central we have
\begin{equation}\label{h-n} \pi(h^n) = \phi \left(\st \left(\frac{t}{\delta M''_1} - \frac{mt'}{\delta  M''_0}\right) \delta n\right).\end{equation}
Roughly as before, we use the Euclidean algorithm to find $m \in \ultra \N$ such that $|1/M''_1 - mt'/\delta M''_0| \leq t'/2\delta M''_0$. By \eqref{t-r} and \eqref{mimo-precise} it follows that
\[ \left|\frac{t}{\delta M''_1} - \frac{mt'}{\delta M''_0}\right| \leq \frac{t'}{2\delta M''_0} + (1 - \frac{t}{\delta}) \frac{1}{M''_1} < \frac{0.9}{M''_1}. \]
It follows from this and \eqref{h-n} that $\pi(h^n) \in \phi([0,\delta])$ for $n \leq M''_1$, and hence $h^n$ lies in $\A''$ for these same values of $n$. As a consequence, $h$ has smaller $\Vert \cdot \Vert_{e,\A''}$ escape norm than $\tilde g$, contrary to assumption.
\end{proof}

Finally we prove item (iii) of Lemma \ref{induction}. Suppose then that $\overline{Q}$ is a nondegenerate large ultra nilprogression in $\A''/P$ in normal form; we wish to lift this to a large ultra nilprogression $Q$ in $\A''$ of at most one higher rank and step, while preserving the nondegeneracy and normal form properties.  The main difficulty is that if one lifts the generators of $\overline{Q}$ arbitrarily then there is no guarantee that the progression they generate, or even a significant part of it, will be contained in $\A''$. The key to ensuring that we do achieve this lies in making judicious use of the lifting lemma (Lemma \ref{lift}) and the product and commutator properties of the escape norm (Proposition \ref{sec-7-conclusion} (iii) (b) and (c)). At this point we advise the reader to quickly review Definition \ref{normal-def} and Appendix \ref{nilprogression-sec}, where nilprogressions in $C$-normal form are discussed.

We may write the non-degenerate ultra nilprogression in normal form as
\[ \overline{Q}=P(\overline{u}_1,\ldots,\overline{u}_r;N_1,\ldots,N_r),\]
where the $\overline{u}_i$ are in $\A''/P$, the $N_i \in \ultra \N$ are unbounded, and $r$ is the rank of $\overline{Q}$, and some standard step $s$.
From the normal form hypothesis (and taking ultraproducts), we have the following properties:
\begin{enumerate}
\item (Upper-triangular form) For every $1 \leq i < j \leq r$ and $\epsilon_i, \epsilon_j \in \{-1,+1\}$, one has
\begin{equation}\label{comm-2}
[u_i^{\epsilon_i},u_j^{\epsilon_j}] \in P\left( u_{j+1}, \ldots, u_r; O(\frac{N_{j+1}}{N_i N_j}), \ldots, O(\frac{N_r}{N_i N_j}) \right).
\end{equation}
\item (Local properness) The expressions $u_1^{n_1} \ldots u_r^{n_r}$ for nonstandard integers $n_1,\ldots,n_r$ with $|n_i| \leq \frac{1}{C} N_i$ for all $1 \leq i \leq r$ are all well-defined and distinct, if $C$ is a sufficiently large standard real.
\item (Volume bound)  One has
$$ N_1 \ldots N_r \ll |Q| \ll N_1 \ldots N_r.$$
(Note that as the $N_i$ are unbounded, $2\lfloor N_i\rfloor+1$ and $N_i$ are comparable.)
\end{enumerate}
Also, since $u_i^{n_i} \in \overline{Q} \subseteq \A''/P$ for all $1 \leq i \leq r$ and $|n_i| \leq N_i$, we have
$$ \|u_i\|_{e,\A''/P} \leq \frac{1}{N_i}.$$

By Lemma \ref{lift}, we may find lifts $u_i \in \A''$ which project to $\overline{u}_i$ in the quotient local group $\A''/P$, and are such that
\[ \Vert u_i\Vert_{e,\A''} = O( \Vert\overline{u}_i\Vert_{e,\A''/P} )\]
and thus
\begin{equation}\label{uup}
 \Vert u_i\Vert_{e,\A''} \ll \frac{1}{N_i}.
\end{equation}

In order to include $P$ in the lifted progression, we set $u_{r+1}:=u$, the generator of $P$, and $N_{r+1}:=1/\Vert u\Vert_{e,\A''}$.   From \eqref{phito} we see that
\begin{equation}\label{momo}
 M'_0 \ll N_{r+1} \ll M'_0.
\end{equation}
We then define
$$ Q := P( u_1,\ldots,u_r, u_{r+1}; \eps N_1,\ldots,\eps N_{r+1} )$$
for some sufficiently small standard $\eps>0$.  We claim that $Q$ is well-defined in $\A''$ as a nondegenerate ultra nilprogression in normal form, of rank $(r+1)$ and step at most $s+1$.

We begin with the claim that $Q$ is well-defined in $\A''$.  From \eqref{uup} and Proposition \ref{sec-7-conclusion} one has
$$ \Vert g \Vert_{e,\A''} \ll \eps$$
for all $g \in Q$, and in particular every product in $Q$ lies in $\A''$ as required.

It is clear that $Q$ is a nondegenerate ultra noncommutative progression of rank $(r+1)$.  To show that it is a nilprogression of step at most $s+1$, it suffices to show that any iterated commutator $g$ of length $s+2$ in the generators $u_1^{\pm 1},\ldots,u_{r+1}^{\pm 1}$ is trivial.  Using commutator identities such as the Hall-Witt identity
\begin{equation}\label{hall-witt}
[z, [x,y]] = [[y^{-1},z^{-1}],x]^{zy} [[z,x^{-1}],y^{-1}]^{xy}
\end{equation}
where $x^y := y^{-1} xy$ (using the unbounded nature of the $N_i$ to justify all operations) we may restrict attention to iterated commutators $g$ of the form $g = [h,u_i^{\pm 1}]$ where $h$ is an iterated commutator of length $s+1$ and $1 \leq i \leq r+1$.  But by projecting down to $\A''/P$, we know that the image of $h$ vanishes and thus $h \in P$.  Since $P$ is central in $\A''$, the claim follows.

Finally, we need to show that $Q$ is in normal form.  We begin by establishing the upper triangular form \eqref{comm}, i.e. that
$$
[u_i^{\epsilon_i},u_j^{\epsilon_j}] \in P\left( u_{j+1}, \ldots, u_r; O(\frac{N_{j+1}}{N_i N_j}), \ldots, O(\frac{N_r}{N_i N_j}) \right)$$
whenever $1\leq i < j \leq r+1$ and $\epsilon_i, \epsilon_j \in \{-1,+1\}$.

If $j=r+1$, then $u_j = u$ commutes with every element of $\A''$, and in particular with $u_i$, so the claim follows in this case.  Now suppose that $j \leq r$.  From \eqref{comm-2} we then have
$$
[\overline{u_i}^{\epsilon_i},\overline{u_j}^{\epsilon_j}] \in P\left( \overline{u_{j+1}}, \ldots, \overline{u_r}; O(\frac{N_{j+1}}{N_i N_j}), \ldots, O(\frac{N_r}{N_i N_j}) \right) $$
which lifts to
$$
[u_i^{\epsilon_i},u_j^{\epsilon_j}] \in P\left( u_{j+1}, \ldots, u_r; O(\frac{N_{j+1}}{N_i N_j}), \ldots, O(\frac{N_r}{N_i N_j}) \right) \cdot P.$$
Thus we may write $[u_i^{\epsilon_i},u_j^{\epsilon_j}] = g u^n$, where
$$ g \in P\left( u_{j+1}, \ldots, u_r; O(\frac{N_{j+1}}{N_i N_j}), \ldots, O(\frac{N_r}{N_i N_j}) \right) $$
and $n \leq 1 / \|u\|_{e,\A'} = M'_0$.  From \eqref{uup} and Proposition \ref{sec-7-conclusion} one has
$$ \| [u_i^{\epsilon_i},u_j^{\epsilon_j}]\|_{e,\A''} \ll \frac{1}{N_i N_j},$$
and therefore
$$ \| g \|_{e,\A''} \ll \frac{1}{N_i N_j}$$
and hence
\begin{equation}\label{une}
 \| u^n \|_{e,\A''} \ll \frac{1}{N_i N_j}.
 \end{equation}
In particular, $\| u^n \|_{e,\A''}$ is infinitesimal, which implies that $\pi(u^n) = \id$ and hence $n = o(M'_0)$ by \eqref{phito}.  Since $\|u\|_{e,\A''} = 1/N_{r+1}$, we conclude that
$$ |n| \ll \frac{N_{r+1}}{N_i N_j}$$
and thus
$$
[u_i^{\epsilon_i},u_j^{\epsilon_j}] \in P\left( u_{j+1}, \ldots, u_{r+1}; O(\frac{N_{j+1}}{N_i N_j}), \ldots, O(\frac{N_{r+1}}{N_i N_j}) \right).$$
Noting that $\eps>0$ is standard and thus can be absorbed into the $O()$ notation, this gives the desired upper triangular property.

Next, we establish the local properness.  Suppose that
$$ u_1^{n_1} \ldots u_{r+1}^{n_{r+1}} = u_1^{n'_1} \ldots u_{r+1}^{n'_{r+1}}$$
for some $|n_i|, |n'_i| \leq \eps N_i$.  Quotienting by $P$, we conclude that
$$ \overline{u_1}^{n_1} \ldots \overline{u_r}^{n_r} = \overline{u_1}^{n'_1} \ldots \overline{u_r}^{n'_r}.$$
This quotienting can be justified because all products here lie in $Q$ and hence in $\A''$.  By the local properness of $\overline{Q}$, we conclude if $\eps$ is small enough that $n_i = n'_i$ for all $1 \leq i \leq r$; we may then cancel and conclude that
$$ u^{n_{r+1}-n'_{r+1}} = \id.$$
Since $\|u\|_{e,\A''} = 1/N_{r+1}$ and $|n_{r+1}-n'_{r+1}| < N_{r+1}$, this implies that $n_{r+1}=n'_{r+1}$, giving the desired local properness.

From local properness one immediately has the lower bound
$$ |Q| \gg N_1 \ldots N_r N_{r+1}.$$
Now we establish the matching upper bound
$$ |Q| \ll N_1 \ldots N_r N_{r+1}.$$
We first recall from the normal form of $\overline{Q}$ that
$$ |\overline{Q}| \ll N_1 \ldots N_r.$$
From construction it is also clear that the image of $Q$ under projection by $P$ lies in $\overline{Q}$.  It therefore suffices to show that the preimage of any element in $\overline{Q}$ contains at most $O(N_r)$ elements of $Q$.  By construction of the quotient map, we see that the preimage is contained in a translate of $P$, and thus has cardinality $O(M'_0)$; the  claim then follows from \eqref{momo}.
This concludes the proof of Lemma \ref{induction} and thus Theorem \ref{hl-conj-nonst-nil}.
\end{proof}
We can now conclude the proof of Theorem \ref{main-theorem}, the most basic form of our main theorem.

\begin{proof}[Proof of the first part of Theorem \ref{main-theorem}] We argue by contradiction. Negating the quantifiers, we see that there exists some $K \geq 1$ and an infinite sequence of local groups $G_\n$ and finite $K$-approximate groups $A_\n \subseteq G_\n$, $\n \in \N$, for which the conclusion of the theorem fails, namely for which $A_\n^4$ does not contain any coset nilprogression of rank and step at most $\n$ in $\n$-normal form and of cardinality at least $\frac{1}{\n} |A_\n|$.

Now form the ultraproduct $\A=\prod_{\n \rightarrow \alpha} A_\n$ inside $\G=\prod_{\n \rightarrow \alpha} G_\n$. By {\L}os's Theorem (Theorem \ref{los-param}), $\G$ is a local group and $\A$ an ultra approximate subgroup. We can now apply Theorem \ref{hl-conj-nonst}, whose proof we just completed, to conclude that $\A^4$ contains an ultra coset nilprogression $P$ in normal form with $|P|\gg |A|$. Using {\L}os's theorem again we conclude that $P=\prod_{\n \rightarrow \alpha} P_\n$, where for an $\alpha$-large set of $\n$, $P_\n$ is a $1/c$-proper coset nilprogression contained $A_\n^4$ of rank and step at most $1/c$ and of size at least $c|A_\n|$ for some standard positive number $c>0$.   But this contradicts the construction of the $A_\n$, thereby yielding the claim.
\end{proof}

To conclude this section we record another useful conclusion from the above analysis: Hrushovski's Lie model is nilpotent.

\begin{proposition}[Nonstandard finite approximate groups have nilpotent Lie models]\label{nilmodel} Suppose that $\A$ is an ultra approximate group and that $\pi : \A^8 \rightarrow L$ is a good model for $\A$ into a connected Lie group $L$ with Lie algebra $\mathfrak{l}$. Then ${\mathfrak l}$ and $L$ are nilpotent.
\end{proposition}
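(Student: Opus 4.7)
The plan is to establish nilpotency of $\mathfrak{l}$ (and hence of the connected Lie group $L$) by induction on $\dim L$, exploiting the inductive structure already developed for the proof of Theorem~\ref{hl-conj-nonst-nil}. The base case $\dim L = 0$ is vacuous since $\mathfrak{l} = \{0\}$. For the inductive step, I will not re-do the long construction; instead I will simply invoke Lemma~\ref{induction}, whose hypothesis is exactly that $\A$ is an ultra approximate group admitting a connected positive-dimensional Lie group as a good model.

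Applying Lemma~\ref{induction} to $\A$ produces a large ultra approximate subgroup $\A'''$ and a progression $P$ so that $\A'''/P$ is an ultra approximate group admitting a connected Lie group $L'$ as a good model, with $\dim L' = \dim L - 1$. The crucial structural information for my purposes is the statement that the Lie algebra $\mathfrak{l}'$ of $L'$ is obtained from $\mathfrak{l}$ by quotienting by a one-dimensional \emph{central} subalgebra $\mathbb{R}X$. (This is exactly where the argument of Lemma~\ref{induction} uses connectedness of $L$ together with the fact that the one-parameter subgroup generated by the image of the minimal-escape-norm element $u$ commutes with $\pi((\A'')^{10})$, a neighbourhood of the identity.) Applying the induction hypothesis to the pair $(\A'''/P, L')$ yields that $\mathfrak{l}'$ is nilpotent.

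It then remains to upgrade nilpotency of $\mathfrak{l}'$ to nilpotency of $\mathfrak{l}$. This is a standard fact about central extensions of Lie algebras: if $\mathfrak{l}' = \mathfrak{l}/\mathbb{R}X$ is $s$-step nilpotent, then the lower central series of $\mathfrak{l}$ satisfies $\mathfrak{l}^{(s+1)} \subseteq \mathbb{R}X$, whence $\mathfrak{l}^{(s+2)} = [\mathfrak{l}, \mathfrak{l}^{(s+1)}] \subseteq [\mathfrak{l}, \mathbb{R}X] = 0$ by centrality of $X$; thus $\mathfrak{l}$ is $(s+1)$-step nilpotent. Finally, a connected Lie group with nilpotent Lie algebra is nilpotent, so $L$ is nilpotent as well.

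The main obstacle has already been overcome in the proof of Lemma~\ref{induction}, namely the verification that the one-parameter subgroup $\phi([-1,1]) \subset L$ arising from the element $u$ of minimal escape norm is central in $L$ (which in turn rested on the commutator Gleason estimate from Proposition~\ref{sec-7-conclusion} forcing $u$ to commute with $(\A'')^{10}$, together with connectedness of $L$). Given that ingredient, the present proposition is essentially a bookkeeping consequence of the same induction: each step of the induction peels off a central direction from $\mathfrak{l}$, and so $\mathfrak{l}$ possesses a central series of length at most $\dim L$ terminating at $0$.
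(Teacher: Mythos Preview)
Your approach is essentially the same as the paper's: both exploit the inductive structure of Lemma~\ref{induction} to exhibit $\mathfrak{l}$ as a finite tower of central extensions of the trivial algebra. The only difference is that the paper first invokes Proposition~\ref{sec-7-conclusion} and Proposition~\ref{nss-reduct} to pass to an NSS ultra approximate subgroup $\A'/H$ (which still has $L$ as its good model) before unfolding the induction; this is because the proof of Lemma~\ref{induction} as written uses the extremal property of $u$ to conclude $[x,u]=\id$ rather than merely $\|[x,u]\|_{e,\A'}=0$, and the former genuinely needs NSS. Since quotienting by $H$ does not alter the Lie model $L$, inserting this preliminary reduction patches your argument without any change to the substance.
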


\begin{proof}  By Proposition \ref{sec-7-conclusion} we may find a large strong ultra approximate subgroup $\A'$ of $\A$ obeying the conclusion of that proposition.  By quotienting out the elements $H$ of $\A'$ of zero escape norm as in the proof of Proposition \ref{nss-reduct}, we obtain an NSS ultra approximate group $\A'/H$.  Now one runs the argument in Theorem \ref{hl-conj-nonst-nil}.  An inspection of this argument shows that if one unfolds the induction from Lemma \ref{induction}, the Lie algebra ${\mathfrak l}$ of $L$ is repeatedly quotiented out by central algebras until it becomes trivial.  Thus, ${\mathfrak l}$ can be obtained from the trivial Lie algebra by a finite tower of central extensions and is therefore nilpotent as required. The nilpotence of $L$ is an immediate consequence of this and basic Lie theory.
\end{proof}

\section{A dimension bound}\label{rank-reduction}

In this section we prove Theorem \ref{dimension-bound}, in which it is shown that the rank of the nilprogression $P$ in the main theorem may be taken to be $O(K^{O(1)})$. We will also show that so long as we work in a global group $G$, and replace $A^4$ with $A^{O_K(1)}$, it may be taken to be $O(\log K)$.  By the usual ultraproduct argument, it will suffice to establish the following nonstandard analysis formulation of the theorem.

\begin{theorem}\label{hl-conj-nonst-last}
Suppose that $\A$ is an ultra global $K$-approximate group, thus $\A = \prod_{\n \to \alpha} A_n$ for some finite $K$-approximate groups, each contained in a global group $G_n$. Then $\A^{12}$ contains an ultra coset nilprogression $P$ in normal form with $|P|\gg |A|$ and rank at most $O(K^2 \log K)$. Moreover, there exists a standard natural number $m$ such that $\A^m$ contains an ultra coset nilprogression $P$ in normal form with $|P|\gg |\A|$ and rank at most $6 \log_2 K$.
\end{theorem}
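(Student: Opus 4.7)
The proof refines the inductive construction underlying Theorem \ref{hl-conj-nonst-nil}, which produces an ultra coset nilprogression whose rank equals the dimension $d$ of a Lie model $L$ of a large strong sub-approximate group. By Remark \ref{step}, bounding the rank also bounds the step, so the task reduces to two things: bounding $d$ quantitatively in terms of $K$ under each regime, and verifying that the lifting in Lemma \ref{induction} inflates the rank by at most a small additive correction as the nilprogression is pulled back from sub-approximate groups into $\A^{12}$ or $\A^m$.

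The main quantitative tool is a volume growth estimate. By Remark \ref{haar}, a good model $\pi : \tilde{\A}^8 \to L$ induces a bi-invariant Haar measure $\mu$ on a neighbourhood of the identity in $L$ for which $|\pi^{-1}(F)| \asymp \mu(F)\cdot |\tilde{\A}|$ for reasonable $F$. Since $\mu(\exp(rB)) \asymp r^{d}$ for a convex symmetric body $B \subset \mathfrak{l}$, and since $\pi^{-1}(\exp(rB)) \subseteq \tilde{\A}^{O(r)} \subseteq \A^{O(r)}$ by Definition \ref{good-model-def}(iii) combined with the Baker--Campbell--Hausdorff formula, the Plünnecke--Ruzsa inequality $|\A^j| \leq K^{j-1}|\A|$ gives an inequality of the shape
\[
d \leq \frac{m\,r \log_2 K + \log_2(|\A|/|\tilde{\A}|)}{\log_2 r} + O(1),
\]
where $m$ is the smallest standard integer with $\tilde{\A} \subseteq \A^m$.

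For the first bound $d \leq O(K^2 \log K)$ with $P \subseteq \A^{12}$, one takes $\tilde{\A} \subseteq \A^4$ from Proposition \ref{sec-7-conclusion}; the quantitative Sanders--Croot--Sisask estimate (Theorem \ref{scs}) controls $|\A|/|\tilde{\A}|$ by $\exp(O(\operatorname{poly}(K)))$. Feeding this into the displayed inequality with $r=3$ (so that $\tilde{\A}^r \subseteq \A^{12}$) yields $d \leq O(K^2 \log K)$ after optimisation of the constants. For the second bound $d \leq 6\log_2 K$ with $P \subseteq \A^{O_K(1)}$, one exploits the freedom to pick $r$ to be a large (but standard) integer: as $r$ grows, the term $\log_2(|\A|/|\tilde{\A}|)/\log_2 r$ becomes negligible, and a careful choice of the pair $(r,m)$ with $m$ small and $r$ moderately large yields a prefactor strictly less than $6$ on $\log_2 K$. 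The constant $6$ is generous enough to absorb both the residual $o(\log K)$ error and the additive rank increase incurred at each step of the inductive lifting in Lemma \ref{induction} (where one additional generator is inserted per central quotient taken).

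The principal obstacle will be the quantitative bookkeeping: one must make the heuristic ``$|\pi^{-1}(\exp(rB))| \asymp r^d |\tilde{\A}|$'' rigorous at the level of nonstandard finite cardinalities, using Definition \ref{good-model-def}(iii) to sandwich $\pi^{-1}(\exp(rB))$ between internal sets whose Haar measure matches $\mu(\exp(rB))$ to within a multiplicative $(1+\varepsilon)$ factor. One must also revisit the induction in Lemma \ref{induction} to confirm that, at each step, the new generator $u_{r+1}$ contributes exactly one to the rank (rather than more, which could occur if the lifted commutator relations in normal form forced additional generators). Once these two technical points are settled, the numerical bounds $12$ and $6$ emerge directly from the optimisation of the volume-growth inequality above.
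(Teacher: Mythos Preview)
Your proposal has a genuine gap that prevents it from yielding any explicit bound on $d$. The quantity $|\A|/|\tilde{\A}|$ is \emph{not} controlled by the Sanders--Croot--Sisask theory. Theorem~\ref{scs} gives the locally compact model (Proposition~\ref{locally-compact-model}), but the passage from a locally compact model to a Lie model (Proposition~\ref{lie-model-prop}) invokes the Gleason--Yamabe/Goldbring structure theorem, which quotients by a compact normal subgroup chosen non-effectively. The ratio $|\A|/|\tilde{\A}|$ therefore carries an ineffective $O_K(1)$ dependence, and your displayed inequality collapses to $d = O_K(1)$ with no explicit form. Moreover, your optimisation for the second bound cannot work even formally: with $m$ fixed, the term $mr\log_2 K/\log_2 r$ tends to infinity as $r$ grows, so letting $r$ be ``moderately large'' only hurts. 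The paper itself flags exactly this obstruction at the start of Section~\ref{rank-reduction}: ``the covering parameter $K'$ of this subgroup $\A'$ may be much worse than the covering parameter $K$''.

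The paper's proof bypasses this via two ideas you are missing. First, the \emph{slicing lemma} (Lemma~\ref{slice}): one takes a \emph{global} locally compact model of $\A^4$, applies Gleason--Yamabe to obtain an open subgroup $G'$, and sets $\A' := \A^4 \cap \pi^{-1}(G')$. Crucially, intersecting with a genuine subgroup keeps the doubling constant polynomial in $K$ (here $K^6$), regardless of how non-effectively $G'$ was chosen. One then applies Lemma~\ref{dub}, which says that in a simply connected nilpotent Lie group $\mu(E^2) \geq 2^{d}\mu(E)$; applied to the image of $\A'$ in $L/N$ (where $N$ is the maximal compact torus of the nilpotent model $L$), this gives $2^{\dim(L/N)} \leq K^6$ directly, whence the $6\log_2 K$. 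Second, your volume growth heuristic $\mu(\exp(rB)) \asymp r^d$ fails for a nilpotent Lie group with a nontrivial torus; the $O(K^2\log K)$ bound for the $\A^{12}$ statement comes from a separate Fourier-analytic argument (Lemma~\ref{subtorus}, a Bogolyubov--Chang lemma) that locates a subtorus of controlled codimension inside $4\cdot\phi(\pi(\A'))$, after which one quotients by it and applies Theorem~\ref{hl-conj-nonst-nil} to the resulting $O(K^2\log K)$-dimensional model.
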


Recall that the step of a nilprogression in normal form is always less of equal to its rank. The derivation of Theorem \ref{dimension-bound} from Theorem \ref{hl-conj-nonst-last} proceeds analogously to the derivation of Theorem \ref{main-theorem} from Theorem \ref{hl-conj-nonst} and is omitted.

It remains to establish Theorem \ref{hl-conj-nonst-last}.  The arguments here are inspired by some remarks of Hrushovski in \cite[\S 4]{hrush}.  In particular, a key tool will be the following lemma from \cite[Lemma 4.9]{hrush}.

\begin{lemma}[Doubling in a simply connected nilpotent Lie group]\label{dub}  Let $G$ be a connected, simply connected nilpotent Lie group of dimension $d$, and let $A$ be a measurable subset of $G$.  Let $\mu$ be a Haar measure on $G$ \textup{(}note that nilpotent groups are automatically unimodular, and so there is no distinction between left and right Haar measure\textup{)}.  Then $\mu(A^2) \geq 2^d \mu(A)$.
\end{lemma}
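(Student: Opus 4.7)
\emph{Proof plan.} The idea is that in a connected simply connected nilpotent Lie group, the squaring map $\sigma : G \to G$, $\sigma(g) := g \cdot g$, is a diffeomorphism which dilates Haar measure by a factor of exactly $2^d$; since $\sigma(A) \subseteq A \cdot A$, the claimed inequality will follow immediately from the identity $\mu(\sigma(A)) = 2^d \mu(A)$.

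To make this precise, I would first recall the standard nilpotent Lie theory. Under the hypotheses on $G$, the exponential map $\exp : \mathfrak{g} \to G$ is a global diffeomorphism. Moreover, in a basis of $\mathfrak{g}$ adapted to the lower central series, the Baker-Campbell-Hausdorff formula shows that left translation by $\exp(X)$, written in exponential coordinates, has differential of unipotent block-triangular form, hence Jacobian determinant equal to $1$ at every point. Consequently $\exp$ pushes Lebesgue measure on $\mathfrak{g} \cong \R^d$ to a left-invariant Radon measure on $G$; since $G$ is nilpotent and therefore unimodular, this is the Haar measure up to a single scalar, so after normalisation we may identify $\mu$ with $\exp_* \mathcal{L}$.

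With this identification in place, the squaring map becomes transparent: for each $X \in \mathfrak{g}$ the curve $t \mapsto \exp(tX)$ is a one-parameter subgroup, so $\exp(X) \cdot \exp(X) = \exp(2X)$. Thus $\sigma = \exp \circ \delta_2 \circ \log$, where $\delta_2 : \mathfrak{g} \to \mathfrak{g}$ is the linear dilation $X \mapsto 2X$. Since $\delta_2$ scales Lebesgue measure on $\R^d$ by the factor $2^d$, the push-forward identification yields $\mu(\sigma(B)) = 2^d \mu(B)$ for every measurable $B \subseteq G$. Applying this with $B = A$ and using $\sigma(A) = \{ g \cdot g : g \in A\} \subseteq A \cdot A$, one obtains $\mu(A^2) \geq \mu(\sigma(A)) = 2^d \mu(A)$, as required.

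The only nontrivial ingredient is the identification of Haar measure with Lebesgue measure on $\mathfrak{g}$ via $\exp$, but this is a standard computation; simple connectedness is what makes $\exp$ a genuine diffeomorphism rather than merely a local one (e.g.\ the example of the circle shows the result fails without this assumption, since there one can have $\mu(A^2) < 2 \mu(A)$). Beyond this input the argument is essentially the one-line identity $\exp(X)^2 = \exp(2X)$, and no serious obstacle is expected.
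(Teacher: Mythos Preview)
Your proof is correct and is essentially the same as the paper's: both use that $\exp$ is a diffeomorphism pushing Lebesgue measure on $\mathfrak g$ to Haar measure on $G$, together with the identity $\exp(X)^2=\exp(2X)$ to see that $A^2$ contains the image of $2\cdot\log A$ under $\exp$. The paper states this more tersely (attributing the argument to Gelander) while you spell out the Jacobian computation, but the substance is identical.
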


\begin{proof} We use an argument of Gelander from \cite[Lemma 4.9]{hrush}.  As is well known (e.g. see \cite{corwin}), in a simply connected nilpotent Lie group, the exponential map $\exp: {\mathfrak g} \to G$ is a diffeomorphism, which pushes forward the Lebesgue measure $\mu_{\mathfrak g}$ on the $d$-dimensional vector space ${\mathfrak g}$, the Lie algebra of $G$, to the Haar measure $\mu$ on $G$.  Thus it will suffice to show that $\mu_{{\mathfrak g}}(\log(A^2)) \geq 2^d \mu_{{\mathfrak g}}(\log A)$, where $\log$ is the inverse of $\exp$.  But as $A^2$ contains $\{a^2: a \in A\}$, $\log(A^2)$ contains the dilate $2 \cdot \log A$ of $\log A$, and the claim follows.
\end{proof}

One is tempted to combine this theorem with the Hrushovski Lie Model Theorem directly (i.e. Theorem \ref{lie-model}), to get some dimensional control on the Lie group $L$.  However, there is a technical obstruction; the Lie model is only available for an ultra approximate subgroup $\A'$ of $\A$, and the covering parameter $K'$ of this subgroup $\A'$ may be much worse than the covering parameter $K$ of the original ultra approximate subgroup $\A$.  To get around this problem, we need to choose the subgroup $\A'$ more carefully.  A clue as to how to proceed is provided by the following basic observation (cf. \cite[Lemma 7.3]{helfgott-sl3}).

\begin{lemma}[Slicing approximate groups by genuine subgroups]\label{slice}  Let $A$ be a \textup{(}possibly infinite\textup{)} $K$-approximate group in a global group $G$, and let $G'$ be a genuine subgroup of $G$.  Then $A' := A^2 \cap G'$ is a $K^3$-approximate subgroup and $A^4 \cap G'$ can be covered by at most $K^3$ left translates of $A'$.
\end{lemma}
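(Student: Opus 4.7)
The approach is the standard ``coset representative'' pigeonhole argument, in the spirit of \cite[Lemma 7.3]{helfgott-sl3} to which the statement alludes. Note first that $A' = A^2 \cap G'$ is automatically symmetric and contains the identity, since $A$ is symmetric and $G'$ is a genuine subgroup; so only the two covering claims require work, and I plan to derive both from the same construction.

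The starting point is to iterate the defining covering of the $K$-approximate group $A$: from $A \cdot A \subseteq X \cdot A$ with symmetric $X \subseteq A^3$, $|X| \leq K$, an easy induction yields $A^n \subseteq X^{n-1} \cdot A$ for each $n \geq 1$, hence in particular $A^4 \subseteq X^3 \cdot A$ with $|X^3| \leq K^3$. This presents $A^4$ as a union of at most $K^3$ pieces $xA$ with $x \in X^3$. For each such $x$, either $xA \cap G' = \emptyset$, or one selects a representative $g_x \in xA \cap G'$. In the latter case, writing $g_x = x a_x$ with $a_x \in A$, any $h \in xA \cap G'$ takes the form $h = xa$ with $a \in A$, so $g_x^{-1} h = a_x^{-1} a \in A^2$; and since $g_x, h \in G'$, also $g_x^{-1} h \in G'$. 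Thus $g_x^{-1} h \in A^2 \cap G' = A'$, i.e.\ $xA \cap G' \subseteq g_x A'$. Summing over the at most $K^3$ nonempty pieces, $A^4 \cap G'$ is covered by $K^3$ left-translates of $A'$, which is the second assertion of the lemma.

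For the $K^3$-approximate group claim, observe that $(A')^2 \subseteq A^4 \cap G'$, so the same covering produces a set $Y$ with $|Y| \leq K^3$ and $(A')^2 \subseteq Y \cdot A'$. Discarding any $y \in Y$ whose translate misses $(A')^2$ and, for each surviving $y$, picking $c \in yA' \cap (A')^2$ lets one write $y = c \cdot (c^{-1}y) \in (A')^2 \cdot A' = (A')^3$, so one may assume $Y \subseteq (A')^3$. Replacing $Y$ by $Y \cup Y^{-1} \subseteq (A')^3$ (still of size at most $K^3$, up to the harmless factor of $2$ which is routinely absorbed in the approximate-group constant) yields a symmetric covering set inside $(A')^3$, which is the definition of a $K^3$-approximate group.

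There is no real obstacle to this argument: its entire content is the pigeonhole observation that each piece $xA$, once intersected with the genuine subgroup $G'$, is confined to a single left-translate of $A'$, which falls straight out of the definition $A' = A^2 \cap G'$. The only minor bookkeeping concerns placing the covering centers in $(A')^3$ and arranging symmetry, and both are handled by the standard refinement above.
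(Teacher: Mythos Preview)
Your argument is essentially identical to the paper's: cover $A^4$ by $K^3$ left-translates of $A$ via $A^4 \subseteq X^3 A$, and observe that each translate $xA$ meets $G'$ in at most a single left-translate of $A' = A^2 \cap G'$. The paper is in fact terser than you and does not address the symmetry of the covering set or its location in $(A')^3$ at all (it simply asserts that covering $A^4 \cap G'$ by $K^3$ translates of $A'$ suffices), so your factor-of-$2$ caveat honestly flags a small imprecision that the original glosses over; for the application in Section~\ref{rank-reduction} the exact constant is immaterial.
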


\begin{proof}  Since $(A')^2 \subseteq A^4 \cap G'$, it suffices to show that $A^4 \cap G'$ can be covered by $K^3$ left-translates of $A'$.  But $A^4$, can be covered by $K^3$ left-translates of $A$ since $A$ is a $K$-approximate group.  Next, observe that if a left-translate $gA$ of $A$ intersects $A^4 \cap G'$ in at least one point $g'$, then
$$ gA \cap A^4 \cap G' \subseteq gA \cap G' \subseteq g' A'.$$
Thus $A^4 \cap G'$ can be covered by $K^3$ left-translates of $A'$, as required.
\end{proof}

Lemma \ref{slice} suggests that we should look for Lie models of approximate groups $\A'$ that are formed by slicing $\A^2$ with a genuine subgroup $G'$ of $G$.

We turn to the details.  Let $A_\n$ be a sequence of $K$-approximate groups in global groups $G_n$, and let $\A = \prod_{\n \to \alpha} A_n$ be their ultraproduct; thus $\A$ is a ultra $K$-approximate group that lies inside an ultra genuine group $\prod_{\n \to \alpha} G_n$.  By Proposition \ref{locally-compact-model-global}, we may find a model $\pi: \langle\A\rangle \to G$ of $\A^4$ by a locally compact group $G$.

Let $U_0$ be the neighbourhood in Definition \ref{good-model-def}. We have $\pi^{-1}(U_0) \subseteq \A^4$ and $U_0 \subseteq \pi(\A^4)$. By Theorem \ref{gleason-yamabe}, there is an open subgroup $G'$ of $G$ and a closed subgroup $H$ of $G$ contained in $U_0$ and normalized by $G'$ such that $L := G'/H$ is a connected Lie group. Let $U_1 \subseteq G'$ be an open subset such that $H \subseteq U_1 \subseteq U_1^2 \subseteq U_0$ and let $\phi: G' \to L$ denote the quotient map.

Now set $\A' := \A^4 \cap \pi^{-1}(G')$.  From Lemma \ref{slice} applied to $\A^2$, we see that $\A'$ is a $K^6$-approximate group.  We now also claim that $\A'$ is a nonstandard finite set, which would make $\A'$ an ultra $K^6$-approximate group.  To see this, observe first from Definition \ref{good-model-def}(ii) that $\pi(\A^4)$ is contained in some compact set $F$.  As $G'$ is an open subgroup of $G$, it is also closed and $F \cap G'$ is compact. We then see from Definition \ref{good-model-def}(iii) that we can find a nonstandard finite set $\A_*$ such that $\pi^{-1}(F \cap G') \subseteq \A_* \subseteq \pi^{-1}(G')$.  Thus $\A' = \A^4 \cap \A_*$, and so $\A$ is a nonstandard finite set as required.

Note that $\pi(\A')$ contains the open set $U_0 \cap G'$ and is itself contained in a compact subset of $G'$. Hence the set $E := \phi \circ \pi(\A')$ is precompact and contains a neighbourhood of the identity in $L$. Moreover $(\phi \circ \pi)^{-1}(\phi(U_1)) \subseteq \A'$, hence it follows that $\phi \circ \pi : \langle \A' \rangle \rightarrow L$ is a good model for $\A'$. From Lemma \ref{nilmodel}, we conclude that $L$ is nilpotent. Every connected nilpotent Lie group admits a unique maximal compact subgroup which, moreover, is central. Let $N$ be the maximal compact subgroup of $L$ and $\theta: L \to L/N$ be the quotient map.

We claim that $\dim(L/N) \leq 6\log_2 K$. To see this note that, as $\A'$ is a $K^6$-approximate group, we see that $E^2$ is covered by at most $K^6$ left-translates of $E$. Therefore $\theta(E)^2$ can be covered by at most $K^6$ left-translates of $\theta(E)$, and hence $\overline{\theta(E)}^2$ can be covered by at most $K^6$ translates of $\overline{\theta(E)}$, where $\overline{\theta(E)}$ is the topological closure of $\theta(E)$, a compact set with non-empty interior.  If we let $\mu$ be a Haar measure on $L/N$, it follows that
$$ \mu( \overline{\theta(E)}^2 ) \leq K^6 \mu( \overline{\theta(E)} ).$$
On the other hand, from Lemma \ref{dub} one has
$$ \mu( \overline{\theta(E)}^2 ) \geq 2^{\dim(L/N)} \mu( \overline{\theta(E)} ).$$
Since $\overline{\theta(E)}$ has non-empty interior, $\mu( \overline{\theta(E)} ) \neq 0$ and so comparison of these two inequalities implies that
\begin{equation}\label{L/N-bound}
 \dim(L/N) \leq 6 \log_2 K.
\end{equation}

We now explain how to derive the second part of Theorem \ref{hl-conj-nonst-last} from the above; we will turn to the first part later. We consider $\phi^{-1}(N)$, which is the kernel of the projection map from $G'$ to $L/N$ ($= (G'/H)/N$).  This is a compact subgroup of $G'$. Since $\pi(\A')$ contains an open neighbourhood of the identity, we conclude that there exists a standard natural number $m$ such that $\phi^{-1}(N) \subseteq \pi(\A'^{m-1})$, and this implies that $\A'^m$ contains the kernel of $\theta \circ \phi \circ \pi$, which implies that $\theta \circ \phi \circ \pi: \A'^{8m} \to L/N$ is a good model.  By Proposition \ref{nss-reduct} we conclude that $\A'^{4m}$ contains a large approximate subgroup $\A''$ with $(\A'')^{1000}$ well-defined and contained in $\A'^{4m}$, and a global internal subgroup $H'$ of $\A''$ such that $\A''/H'$ is an NSS approximate subgroup with a connected Lie group as a good model. An inspection of the proof of that proposition reveals that we may take $(\A'')^{1000}$ inside $\A'^{m}$ and that we may take the connected Lie group to be $L/N$, which we have shown to have dimension at most $6 \log_2 K$.  Applying Theorem \ref{hl-conj-nonst-nil}, we see that $(\A''/H')^4$ contains a large ultra nilprogression in normal form of rank at most $6 \log_2 K$, and thus $(\A'')^4$ contains a large ultra coset nilprogression in normal form of rank at most $6 \log_2 K$.  As $(\A'')^4$ is contained in $\A'^{m}$, which is in turn contained in $\A^{4m}$, the second part of Theorem \ref{hl-conj-nonst-last} follows (after redefining $m$).

We now turn to the first part of Theorem \ref{hl-conj-nonst-last}. As we see from the last paragraph, the difficulty here is that $\phi^{-1}(N)$ may not be contained in $\pi(\A')$. We will show that nevertheless $\pi(\A')$ still contains a subgroup $\phi^{-1}(N_0)$, where $N_0$ is a closed subgroup of $N$ with small codimension. For this the key is the following lemma, which is potentially of interest its own right. Here, and below, we write $\T^d := \R^d/\Z^d$ for the $d$-dimensional torus. By a \emph{subtorus} we mean a closed connected subgroup of $\T^d$.

\begin{lemma}\label{subtorus} Let $K,d \geq 1$ and $A$ be a closed $K$-approximate group in $\T^d$ containing a neighbourhood of $0$.  That is, $A$ is closed, contains a neighbourhood of $0$, is centrally symmetric, and there is a finite set $X \subseteq \T^d$, $|X| \leq K$, such that $A + A \subseteq A + X$. Then $4A:=A+A+A+A$ contains a subtorus $T \subseteq \T^d$ of codimension at most $O(K^2 \log K)$.
\end{lemma}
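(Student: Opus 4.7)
The plan is to use Fourier analysis on $\T^d$, combining Bogolyubov's convolution trick with Chang's theorem to exhibit an annihilator subtorus inside $4A$, and then to bound its codimension via the rank bound coming from Chang. Write $L := \mu(A)$ and consider the non-negative four-fold convolution $f := 1_A * 1_A * 1_A * 1_A$, which is supported in $4A$ and admits the Fourier expansion
$$ f(x) = \sum_{\chi \in \Z^d} \hat{1_A}(\chi)^4 e^{2\pi i \chi \cdot x}, $$
the coefficients being real thanks to the central symmetry of $A$.

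For a threshold $\eta>0$ (to be chosen), I would introduce the Chang spectrum $\Lambda_\eta := \{\chi \in \Z^d : |\hat{1_A}(\chi)| \geq \eta L\}$ and take $T\subseteq \T^d$ to be the connected component of the identity in the annihilator $\Lambda_\eta^\perp$. Then $T$ is a subtorus whose codimension equals the rank of the subgroup of $\Z^d$ generated by $\Lambda_\eta$. For any $x\in T$, the characters in $\Lambda_\eta$ satisfy $e^{2\pi i \chi\cdot x}=1$ and so contribute at least $\hat{1_A}(0)^4=L^4$ to $f(x)$, while by Parseval the remaining characters contribute
$$ \sum_{\chi \notin \Lambda_\eta} |\hat{1_A}(\chi)|^4 \leq (\eta L)^2 \sum_{\chi\in\Z^d} |\hat{1_A}(\chi)|^2 = \eta^2 L^3. $$
Hence $f(x)\geq L^4-\eta^2 L^3>0$ as soon as $\eta<\sqrt{L}$, which shows $T\subseteq 4A$. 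To bound the codimension, I would invoke Chang's theorem in its $\T^d$-form, asserting that for $A\subseteq\T^d$ with $\mu(A+A)\leq K\mu(A)$ the subgroup of $\Z^d$ generated by $\Lambda_\eta$ has rank at most $O(\eta^{-2}\log K)$; the standard proof via Rudin's inequality for dissociated sets transfers to the continuous setting without essential change. Choosing $\eta$ to be a small constant multiple of $1/K$ then yields the claimed codimension bound $O(K^2\log K)$.

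The hard part will be reconciling the two requirements $\eta\asymp 1/K$ and $\eta<\sqrt{L}$, which together demand $\mu(A)\gtrsim 1/K^2$. In the complementary regime $\mu(A)\ll 1/K^2$, I would argue separately that the dimension $d$ is itself $O(\log K)$, so that the trivial subtorus $T=\{0\}$ of codimension $d$ already suffices. The idea is to lift the connected component of the identity in $A$ faithfully to a subset $\tilde A\subseteq \R^d$ via the covering map $\R^d\to\T^d$ (well-defined since $A$ contains an open neighbourhood of $0$), apply Lemma \ref{dub} in the simply connected nilpotent group $\R^d$ to obtain $\mu(2\tilde A)\geq 2^d\mu(\tilde A)$, and compare with the approximate-group inequality $\mu(2A)\leq K\mu(A)$ to force $2^d\leq K^{O(1)}$. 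The main technical subtlety is the possibility that the lift is not faithful because $A$ wraps around the fundamental domain; I would handle this by first quotienting $\T^d$ by the maximal stabilising subtorus of $A$ (which is automatically contained in $4A$ and contributes to the desired subtorus), reducing to a stabiliser-free situation in which the lift behaves well and the measure-doubling argument applies cleanly.
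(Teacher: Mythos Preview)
Your dense case is essentially the paper's Lemma~\ref{large-alpha} and works, but you have misstated Chang's lemma: the Rudin-inequality argument gives rank $O(\eta^{-2}\log(1/\mu(A)))$, not $O(\eta^{-2}\log K)$. This happens not to matter in your dense regime, since $\mu(A)\gtrsim 1/K^2$ forces $\log(1/\mu(A))\ll \log K$ anyway---but it is important to realise that no ``$\log K$'' form of Chang is available, since if it were you would not need a case split at all.

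The real gap is in the sparse case. Quotienting by the maximal stabilising subtorus does \emph{not} guarantee that the connected component $A_0$ lifts injectively to $\R^d$, still less that $2A_0$ does. What is needed is that the image of $\pi_1(A_0)\to\Z^d$ be trivial, and this is not implied by $\{g:g+A=A\}$ having no subtorus: for instance a thin wiggling annulus around a homotopically nontrivial curve in $\T^2$ can be a $O(1)$-approximate group with trivial stabiliser, yet its connected lift to $\R^2$ is an infinite strip covering $A_0$ with infinite multiplicity. Without injectivity you cannot transfer the Brunn--Minkowski bound $\mu(2\tilde A)\geq 2^d\mu(\tilde A)$ back to the torus, and the argument collapses.

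The paper resolves this by never lifting $A$ itself. Instead it studies the ``almost-$1$'' spectrum $\Spec_{1-c/\log K}(A)$ and shows (Lemma~\ref{san-lemma}) that it has rank $O(\log K)$: one pushes $A$ forward under $m$ putatively independent such characters into $\T^m$, where the image lies in a genuinely small box (so \emph{there} Brunn--Minkowski applies without wrapping) and forces $m=O(\log K)$. A Bogolyubov argument with $k\sim K^C$ convolutions (Lemma~\ref{bog-lemma}) then places the annihilator subtorus $T_0$, of codimension $O(\log K)$, inside $2kA$; pigeonholing over the $k^{O(K)}$ translates of $A$ covering $2kA$ yields a coset of $T_0$ on which $A$ has density $e^{-O(K\log K)}$ (Lemma~\ref{reduction}), and one finishes with your dense-case Bogolyubov--Chang restricted to $T_0$. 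Your stabiliser-quotient instinct is pointing at the right phenomenon---there is a low-codimension subtorus controlling $A$---but extracting it rigorously requires this Fourier detour rather than a direct geometric lift.
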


Before proving Lemma \ref{subtorus}, we explain how to conclude the proof of Theorem \ref{hl-conj-nonst-last} with Lemma \ref{subtorus} in hand. First we observe that setting $\A'_1:=\A^2 \cap \pi^{-1}(G')$, $\pi(\A'_1)$ is a neighbourhood of $\id$ in $G$. Indeed $\A^4$ is and $\A^4 \subseteq X\A$ for some finite $X$, so that $\pi(\A)$ has non-empty interior, and hence $\pi(\A^2)$ is a neighborhood of $\id$. We now apply the lemma to $A=\phi \circ \pi (\A'_1)$, and conclude that $\phi^{-1}(T) \subseteq \pi({\A'_1}^4) \subseteq \pi(\A'^2)$. Writing $\theta': L \rightarrow L/T$ for the projection map, we see that $\A'^3$ contains the kernel of $\theta' \circ \phi \circ \pi$ and this implies that $\A'^3$ admits the connected nilpotent Lie group $L/T$ as a good model. Moreover $\dim L/T = \dim L/\T^d + \dim \T^d/T =O(K^2 \log K)$ by $(\ref{L/N-bound})$ and by Lemma \ref{subtorus}. The rest of the proof is then identical to the previous case: by Proposition \ref{nss-reduct} and its proof we conclude that $\A'^3$ contains a large approximate subgroup $\A''$ with $(\A'')^{1000}$ well-defined and contained in $\A'^3$, and a global internal subgroup $H'$ of $\A''$ such that $\A''/H'$ is an NSS ultra approximate subgroup admitting $L/N_0$ as a good model. By Theorem \ref{hl-conj-nonst-nil}, we see that $(\A''/H')^4$ contains a large ultra nilprogression in normal form of rank at most $O(K^2 \log K)$, and thus $(\A'')^4$ contains a large ultra coset nilprogression in normal form with rank at most $O(K^2 \log K)$.  As $(\A'')^4 \subseteq \A'^3 \subseteq \A^{12}$, the first part of Theorem \ref{hl-conj-nonst-last} follows.

We now turn to the proof of Lemma \ref{subtorus}. Let $\mu$ be the normalized Haar measure on $\T^d$. Note that the group $\hat{\T}^d$ of characters of $\T^d$ identifies with $\Z^d$. Our main tool is the notion of the ($\alpha$-)large spectrum of an additive set $A$, defined by
\[ \Spec_{\alpha}(A) := \{ \xi \in \Z^d \; : \; |\widehat{1_A}(\xi)| \geq \alpha \mu(A)\}.\]
See \cite[Definition 4.34]{tv-book} for this definition and a further discussion.

If $S \subseteq \Z^d$ is a set of characters, we write
\[ S^{\perp} := \bigcap_{\xi \in S} \ker \xi.\]
Note that $S^\perp$ is a closed subgroup of $\T^d$ with codimension the rank of the subgroup of $\Z^d$ generated by $S$.

To prove Lemma \ref{subtorus} we first reduce to the case in which $\mu(A)$ is somewhat large by establishing the following lemma.

\begin{lemma}\label{reduction} Suppose that $A \subseteq \T^d$ is a closed $K$-approximate group containing a neighbourhood of $0$.
Then there is a subtorus $T_0 \subseteq \T^d$ with $\dim(T_0) \geq d - O(\log K)$ and some $x_0 \in \T^d$ such that, writing $\mu_0$ for the Haar measure on $T_0$, we have $\mu_0((A + x_0) \cap T_0) \gg e^{-O(K \log K)}$.
\end{lemma}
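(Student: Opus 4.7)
The plan is to carry out a Fourier-analytic argument on $\T^d$, identifying $\widehat{\T^d} \cong \Z^d$ and writing $\widehat{1_A}(\xi) = \int_{\T^d} 1_A(x) e^{-2\pi i \xi \cdot x}\, dx$. First, I would apply a version of Chang's theorem valid for $K$-approximate groups in abelian groups: this produces a subgroup $V \leq \Z^d$ of rank $r \leq O(\log K)$ containing the large spectrum $\Spec_{1/2}(A) := \{\xi \in \Z^d : |\widehat{1_A}(\xi)| \geq \mu(A)/2\}$. The candidate subtorus is then $T_0$, defined to be the identity component of the annihilator $V^{\perp} \subseteq \T^d$; this is a subtorus with $\dim T_0 \geq d-r \geq d - O(\log K)$.

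Next I would analyze the fiber-density function
\[ g(x) := \mu_0((A+x) \cap T_0). \]
Expanding $1_A$ in its Fourier series and integrating over $T_0$ against $\mu_0$ kills every character $\eta \notin V$, yielding the expansion
\[ g(x) = \sum_{\xi \in V} \widehat{1_A}(\xi) e^{-2\pi i \xi \cdot x}. \]
Hence $g$ is $T_0$-periodic and descends to a nonnegative function on the quotient torus $S := \T^d/T_0$, with $\int_S g\, d\mu_S = \mu(A)$ and $\mathrm{supp}(g) \subseteq \pi(A)$, where $\pi : \T^d \to S$ is the projection. A pigeonhole on fibers then yields
\[ \sup_{x_0 \in \T^d} g(x_0) \;\geq\; \frac{\mu(A)}{\mu_S(\pi(A))}, \]
reducing the lemma to the bound $\mu_S(\pi(A)) \leq e^{O(K\log K)}\mu(A)$.

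The main obstacle is this last inequality. The plan here is to invoke Bogolyubov's method together with the Pl\"unnecke--Ruzsa inequality. Since $\Spec_{1/2}(A) \subseteq V$, the classical Bogolyubov inclusion gives
\[ T_0 \;\subseteq\; V^{\perp} \;\subseteq\; B(\Spec_{1/2}(A), 1/4) \;\subseteq\; A+A-A-A. \]
As $A$ contains a neighborhood of $0$, $A+A-A-A$ is open and in fact contains a tubular neighborhood $T_0 + U_{\eps}$ for some small $\eps > 0$, while $\mu(A+A-A-A) \leq K^4 \mu(A)$ by Pl\"unnecke--Ruzsa. Pushing forward to $S$ via $\pi$ and comparing volumes in the quotient torus of dimension $r = O(\log K)$ should yield the required bound. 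In the degenerate regime $d \leq O(\log K)$ the lemma is essentially automatic: one takes $T_0 = \{0\}$ and $x_0 = 0$, giving $\mu_0(A \cap \{0\}) = 1$ since $0 \in A$.

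Making the last step quantitative---producing the sharp $e^{O(K\log K)}$ loss rather than something weaker---is the technical heart. The intended mechanism is that the inclusion $T_0 + U_{\eps} \subseteq A+A-A-A$ controls the ``transversal width'' of $A$ relative to $T_0$, which in turn controls $\mu_S(\pi(A))$. If the single-step Bogolyubov bound is inadequate when $\mu(A)$ is extremely small, I expect one can iterate the argument on the projected $K$-approximate group $\pi(A) \subseteq S \cong \T^r$ of smaller dimension $r = O(\log K)$, since each iteration strictly drops dimension and the iteration must terminate within $O(\log K)$ steps with controlled accumulation of constants.
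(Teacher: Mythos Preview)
Your plan has a genuine gap at the very first step. You invoke ``a version of Chang's theorem valid for $K$-approximate groups'' to place $\Spec_{1/2}(A)$ inside a subgroup $V$ of rank $O(\log K)$, but the standard Chang bound is rank $O(\delta^{-2}\log(1/\mu(A)))$, which is useless here since $\mu(A)$ may be arbitrarily small --- producing a slice of controlled density is exactly what the lemma is for. The paper circumvents this by working with the \emph{very high} spectrum $\Spec_{1-c/\log K}(A)$ instead, and proving from scratch (Lemma~\ref{san-lemma}) that this smaller set has rank $O(\log K)$. The mechanism is not Chang-style: if $\xi_1,\dots,\xi_m$ are independent characters with $|\widehat{1_A}(\xi_i)|\ge(1-\eps)\mu(A)$ and $\eps=c/\log K$, then all but a $100m\eps$-fraction of $A$ satisfies $\|\eta_i(x)\|<1/10$ simultaneously; the image of this large piece under $(\eta_1,\dots,\eta_m):\T^d\to\T^m$ sits in a small box, and Brunn--Minkowski (Lemma~\ref{dub}) applied to the projected approximate group forces $m=O(\log K)$. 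The threshold $1-o(1)$ is essential here: at level $1/2$ each character only concentrates a constant fraction of $A$, and intersecting $m$ such constraints loses exponentially in $m$, so the argument cannot be run.

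Your Bogolyubov step also fails as written: Lemma~\ref{bog-lemma} with $k=2$ requires $\delta\le(\mu(A)/2\mu(2A))^{1/2}$, and the hypothesis $\mu(2A)\le K\mu(A)$ only guarantees this for $\delta\le 1/\sqrt{2K}$, not $\delta=1/2$. The paper's remedy is to observe $\mu(kA)\le k^K\mu(A)$ (from $kA\subseteq(k-1)X+A$ with $|X|\le K$) and take $k\le K^{O(1)}$ large enough that Bogolyubov at level $1-c/\log K$ applies; then $T_0:=(\Spec_{1-c/\log K}(A))^\perp\subseteq 2kA$. Since $2kA$ is covered by at most $(2k)^K=e^{O(K\log K)}$ translates of $A$, a direct pigeonhole on these translates immediately produces the required $x_0$ --- no fibre-density function, no bound on $\mu_S(\pi(A))$, and no iteration on the quotient torus are needed.
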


Then we handle the case in which $\mu(A)$ is somewhat large by proving the following, which is a straightforward continuous analogue of the so-called Bogolyubov-Chang lemma \cite{chang-freiman}.

\begin{lemma}\label{large-alpha} Suppose that $A \subseteq \T^d$ is measurable, that $\mu(A) \geq \alpha$, and that $\mu(2A) \leq K \mu(A)$. Then $2A - 2A$ contains a subtorus $T \subseteq \T^d$ of codimension at most $O(K \log(1/\alpha))$. \end{lemma}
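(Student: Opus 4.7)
The plan is to adapt the classical Bogolyubov-Chang argument to the torus setting, using the Fourier transform on $\T^d$ (with $\widehat{\T^d} = \Z^d$). Let $f := 1_A$ and consider the non-negative function $g := f * \tilde f * f * \tilde f$, where $\tilde f(x) := f(-x)$. Then $g$ is supported on $A + A - A - A = 2A - 2A$, and its Fourier transform is $\hat g(\xi) = |\hat f(\xi)|^4$, so by Fourier inversion
\[ g(x) = \sum_{\xi \in \Z^d} |\hat f(\xi)|^4 e(\xi \cdot x). \]
Set $\rho := c\sqrt{\mu(A)}$ for a sufficiently small absolute constant $c > 0$, and let $\Lambda_\rho := \Spec_\rho(A)$. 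The $\xi = 0$ contribution to $g(x)$ is $\mu(A)^4$; for $\xi \notin \Lambda_\rho$ we have $|\hat f(\xi)|^2 < \rho^2 \mu(A)^2$, so the tail contribution is bounded by $\rho^2 \mu(A)^2 \sum_\xi |\hat f(\xi)|^2 = \rho^2 \mu(A)^3 < \mu(A)^4/4$ by Parseval. A routine estimate then shows that $\Re\, g(x) > 0$ whenever $\|\xi \cdot x\|_\T < 1/12$ for every $\xi \in \Lambda_\rho$, so the Bohr set $B(\Lambda_\rho, 1/12)$ is contained in $2A - 2A$.

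The next step is to invoke Chang's theorem on the structure of the large spectrum: under the hypothesis $\mu(2A) \leq K\mu(A)$, the set $\Lambda_\rho$ is contained in the $\Z$-linear span (inside $\Z^d$) of a dissociated set $\Lambda' \subseteq \Z^d$ of cardinality $|\Lambda'| \leq O(K \log(1/\alpha))$. This bound follows from the standard Rudin inequality argument: any dissociated subset $\Lambda \subseteq \Lambda_\rho$ is tested against the character sum $h = \sum_{\xi \in \Lambda} \overline{\hat f(\xi)}|\hat f(\xi)|^{-1} e(\xi \cdot)$, whose $L^p$ norm is controlled by $\sqrt{p}\,|\Lambda|^{1/2}$; pairing with $1_A$, applying Hölder, and using the doubling hypothesis to control the relevant Littlewood-type norms, then optimizing in $p \sim \log(1/\alpha)$, yields the advertised bound.

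Now consider the closed subgroup $H := \bigcap_{\xi \in \Lambda'} \ker \xi \leq \T^d$, of codimension equal to the rank of $\Lambda'$, which is at most $|\Lambda'| = O(K \log(1/\alpha))$. Let $T$ denote its identity component, which is a subtorus of the same codimension. Since every $\xi \in \Lambda_\rho$ is a $\Z$-linear combination of elements of $\Lambda'$, the relations $\xi \cdot x = 0$ for $\xi \in \Lambda'$ imply $\xi \cdot x = 0$ for all $\xi \in \Lambda_\rho$. In particular every $x \in T$ satisfies $\|\xi \cdot x\|_\T = 0 < 1/12$ for $\xi \in \Lambda_\rho$, so $T \subseteq B(\Lambda_\rho, 1/12) \subseteq 2A - 2A$, completing the proof.

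I expect the main obstacle to be Step 2, namely pinning down the precise Chang-type bound on the dissociated dimension of $\Lambda_\rho$ with the correct joint dependence on $K$ and $\alpha$. The naive Chang bound $O(\rho^{-2} \log(1/\mu(A)))$, with $\rho \sim \sqrt{\alpha}$, gives only $O(\alpha^{-1} \log(1/\alpha))$; extracting the factor of $K$ in place of $\alpha^{-1}$ requires leveraging the doubling hypothesis inside the Rudin inequality step (e.g.\ by interpolating the $L^p$ norm of $f * \tilde f$ between $L^1$ and $L^\infty$ using $\mu(A-A) \leq K^2 \mu(A)$ from Plünnecke-Ruzsa), and this is the one delicate ingredient. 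The other steps — the Bogolyubov lower bound on $g(x)$, and the passage from Bohr set to subtorus via the identity component of the annihilator — are routine once the setup is in place.
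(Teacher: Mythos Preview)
Your overall architecture (Bogolyubov positivity plus Chang's spectral bound, then pass to the annihilator subtorus) is the same as the paper's, but you have placed the doubling hypothesis in the wrong step, and this creates a real gap.

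By choosing $\rho = c\sqrt{\mu(A)}$ you are running the Bogolyubov argument in its crude form, using only the $\xi=0$ term as the main contribution. That forces $\rho \sim \sqrt{\alpha}$, and then the standard Chang bound gives only rank $O(\rho^{-2}\log(1/\alpha)) = O(\alpha^{-1}\log(1/\alpha))$, as you yourself note. Your proposed rescue --- a ``doubling-enhanced'' Chang lemma giving dissociated dimension $O(K\log(1/\alpha))$ for $\Spec_{\sqrt{\alpha}}(A)$ --- is not substantiated. In the Rudin--Chang argument, pairing $h$ with $1_A$ uses only $\|1_A\|_{p'} = \mu(A)^{1/p'}$, which has no dependence on $K$; pairing instead with $1_A * 1_{-A}$ and invoking $\mu(A-A) \leq K^2\mu(A)$, as you suggest, does not obviously improve the exponent either (work it out: the interpolation between $\|F\|_1 = \mu(A)^2$ and $\|F\|_2 \leq \mu(A)^{3/2}$ still yields $|\Lambda| \ll \alpha^{-2}\log(1/\alpha)$). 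I do not know a clean Chang-type statement of the form you need, and your sketch does not supply one.

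The fix is much simpler and is what the paper does: put the doubling hypothesis into the Bogolyubov step, not the Chang step. Instead of bounding the main term below by the single $\xi=0$ contribution $\mu(A)^4$, bound the full sum $\sum_\xi |\hat{1_A}(\xi)|^4 = \|1_A * 1_A\|_2^2$ from below by Cauchy--Schwarz on the support $2A$:
\[
\sum_\xi |\hat{1_A}(\xi)|^4 \;\geq\; \frac{\bigl(\int 1_A * 1_A\bigr)^2}{\mu(2A)} \;=\; \frac{\mu(A)^4}{\mu(2A)} \;\geq\; \frac{\mu(A)^3}{K}.
\]
Comparing this with the tail bound $\sum_{\xi\notin\Spec_\delta(A)}|\hat{1_A}(\xi)|^4 \leq \delta^2\mu(A)^3$ shows that one may take $\delta \sim 1/\sqrt{K}$ (independent of $\alpha$) and still have $2A-2A \supseteq (\Spec_\delta(A))^\perp$. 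Now the \emph{standard} Chang lemma gives rank $O(\delta^{-2}\log(1/\alpha)) = O(K\log(1/\alpha))$ with no further work, and taking the identity component yields the subtorus.
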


To deduce Lemma \ref{subtorus} from Lemmas \ref{reduction} and \ref{large-alpha}, we proceed as follows. Locate an $x_0$ as in Lemma \ref{reduction}, and suppose furthermore that for this $x_0$ the measure of $(A + x_0) \cap T_0$ is close to maximal in the sense that
\begin{equation}\label{maximal-close} \mu_0 ((A + x_0) \cap T_0)  \geq \frac{1}{2} \mu_0 ((A + x) \cap T_0)
\end{equation}
for all $x \in \T^d$.  Set $A_1 := (A + x_0) \cap T_0$. Then , since $A + A \subseteq A + X$, we have
\[ A_1 + A_1 \subseteq (A + X + 2x_0) \cap T_0.\]
By \eqref{maximal-close} it follows that $\mu(2A_1) \leq 2K \mu(A_1)$. We are now in a position to apply Lemma \ref{large-alpha} to $A_1$, with $\alpha = e^{-O(K \log K)}$. We conclude that there is a further subtorus $T \subseteq T_0$ of codimension $O(K^2 \log K)$ inside $2A_1 - 2A_1$. Since $2A_1 - 2A_1 \subseteq 4A$, this concludes the proof of Lemma \ref{subtorus}.

For the proofs of both Lemmas \ref{reduction} and \ref{large-alpha} we will require the following lemma of Bogolyubov type.

\begin{lemma}[Bogolyubov-type lemma]\label{bog-lemma}
Let $A \subseteq \T^d$ have positive measure and let $k \geq 2$ be a natural number. Suppose that
\[ \delta \leq \big( \mu(A)/2\mu(kA)  \big)^{1/(2k-2)}.\]
Then $kA - kA$ contains $(\Spec_{\delta}(A))^{\perp}$.
\end{lemma}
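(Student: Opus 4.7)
The plan is to prove the lemma by a standard Fourier-analytic Bogolyubov--Chang argument, with one careful Cauchy--Schwarz step that accounts for the factor $1/\mu(kA)$ appearing in the hypothesis.

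First I would set $F := 1_A \ast 1_{-A}$, so that $\widehat{F}(\xi) = |\widehat{1_A}(\xi)|^2 \geq 0$ and $F$ is supported on $A - A$. The $k$-fold convolution $F^{\ast k}$ then has the pointwise non-negative Fourier transform $|\widehat{1_A}|^{2k}$, is supported on $k(A - A) = kA - kA$ (recall $A$ is centrally symmetric and contains $0$), and satisfies $\sum_{\xi} |\widehat{1_A}(\xi)|^{2k} \leq \sum_{\xi} |\widehat{1_A}(\xi)|^2 = \mu(A)$ by Parseval (using $|\widehat{1_A}| \leq \mu(A) \leq 1$). In particular Fourier inversion applies to give
\[ F^{\ast k}(x) = \sum_{\xi \in \Z^d} |\widehat{1_A}(\xi)|^{2k}\, e^{2\pi i \xi \cdot x}. \]
It suffices to show $F^{\ast k}(x) > 0$ for every $x \in (\Spec_\delta(A))^\perp$, since then $x \in \mathrm{supp}(F^{\ast k}) \subseteq kA - kA$.

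For such an $x$, the exponentials $e^{2\pi i \xi \cdot x}$ with $\xi \in \Spec_\delta(A)$ all equal $1$, so those terms contribute non-negatively. The contribution from the complement has absolute value at most
\[ E := \sum_{\xi \notin \Spec_\delta(A)} |\widehat{1_A}(\xi)|^{2k} \leq (\delta\mu(A))^{2k-2} \sum_{\xi} |\widehat{1_A}(\xi)|^2 = \delta^{2k-2}\mu(A)^{2k-1}, \]
by pulling out $|\widehat{1_A}|^{2k-2} < (\delta\mu(A))^{2k-2}$ on the complement and then invoking Parseval. Hence
\[ F^{\ast k}(x) \;\geq\; \sum_{\xi} |\widehat{1_A}(\xi)|^{2k} - 2E. \]

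The decisive step is to bound $\sum_{\xi} |\widehat{1_A}(\xi)|^{2k}$ from below by more than the trivial $\mu(A)^{2k}$ coming from $\xi = 0$: since $1_A^{\ast k}$ is supported on $kA$ with integral $\mu(A)^k$, Cauchy--Schwarz gives
\[ \mu(A)^k \;=\; \int 1_A^{\ast k} \cdot 1_{kA}\, d\mu \;\leq\; \|1_A^{\ast k}\|_{L^2}\, \mu(kA)^{1/2}, \]
so by Parseval $\sum_{\xi} |\widehat{1_A}(\xi)|^{2k} = \|1_A^{\ast k}\|_{L^2}^2 \geq \mu(A)^{2k}/\mu(kA)$. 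Combining,
\[ F^{\ast k}(x) \;\geq\; \frac{\mu(A)^{2k}}{\mu(kA)} - 2\delta^{2k-2}\mu(A)^{2k-1}, \]
which is non-negative precisely when $\delta^{2k-2} \leq \mu(A)/(2\mu(kA))$, i.e.\ exactly under the hypothesis. Strict positivity (and hence $x \in kA - kA$ rather than merely its closure) follows from the strict inequality defining $\Spec_\delta$; in the borderline case one may also use that $kA - kA$ is closed since $A$ is. The only non-obvious point---and the one responsible for the $\mu(kA)$ appearing in the hypothesis---is replacing the cheap bound $\sum_{\xi} |\widehat{1_A}|^{2k} \geq \mu(A)^{2k}$ by its Cauchy--Schwarz improvement; without it, one would only recover the weaker hypothesis $\delta^{2k-2} \leq \mu(A)/2$.
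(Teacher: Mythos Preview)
Your proof is correct and follows essentially the same approach as the paper's: both compute the Fourier expansion of $1_A^{\ast k} \ast 1_{-A}^{\ast k}$, split the sum over $\Spec_\delta(A)$ and its complement, bound the complement via Parseval, and lower-bound the full sum by the Cauchy--Schwarz step $\|1_A^{\ast k}\|_2^2 \geq \mu(A)^{2k}/\mu(kA)$. One small remark: the lemma does not assume $A$ is centrally symmetric or closed, and neither property is needed --- the identity $k(A-A)=kA-kA$ holds in any abelian group, and strict positivity already follows (as you note) from the strict inequality defining the complement of $\Spec_\delta(A)$, together with the fact that the $\xi=0$ term is omitted when bounding $\sum_{\xi\notin\Spec_\delta}|\widehat{1_A}|^2 < \mu(A)$.
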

\begin{proof}
It suffices (in fact, it is equivalent) to show that if $x \in (\Spec_{\delta}(A))^{\perp}$ then $f(x)>0$, where $f=1_A^{\ast k} \ast 1_{-A}^{\ast k} = 1_A \ast \dots \ast 1_A \ast 1_{-A} \ast \dots 1_{-A}$ is the convolution of $k$ copies of $1_A$ and $k$ copies of $1_{-A}$.  Now by the Fourier inversion formula we have
\begin{align}\nonumber
f(x)  =
\sum_{\xi \in \Z^d} |\widehat{1_A}(\xi)|^{2k} \overline{\xi(x)}  & \geq \sum_{\xi \in \Spec_{\delta}(A)} |\widehat{1_A}(\xi)|^{2k} -\sum_{\xi \notin \Spec_{\delta}(A)} |\widehat{1_A}(\xi)|^{2k} \\ & \geq \sum_{\xi \in \Z^d} |\widehat{1_A}(\xi)|^{2k} - 2\sum_{\xi \notin \Spec_{\delta}(A)} |\widehat{1_A}(\xi)|^{2k},\label{fourier}
\end{align}
where we have used the fact that $\xi(x)=1$ if $\xi \in \Spec_\delta(A)$ and $x \in (\Spec_{\delta}(A))^{\perp}$.
Now Parseval's identity and the Cauchy-Schwarz inequality imply that
\[ \sum_{\xi \in \Z^d} |\widehat{1_A}(\xi)|^{2k}=\int_{x \in \T^d} 1_A^{*k}(x)^2 d\mu(x) \geq \frac{1}{\mu(kA)}\big( \int_{x \in \T^d} 1_A^{*k}(x) d\mu(x) \big)^2= \frac{\mu(A)^{2k}}{\mu(kA)}.\]
On the other hand, by a second application of Parseval's identity, we have
\[ \sum_{\xi \notin \Spec_{\delta}(A)} |\widehat{1_A}(\xi)|^{2k} < \delta^{2k-2}\mu(A)^{2k-2} \sum_{\xi \in \Z^d} |\widehat{1_A}(\xi)|^2 = \delta^{2k-2} \mu(A)^{2k-1}.\]
Substituting these inequalities into \eqref{fourier} yields\[ f(x) \geq \frac{\mu(A)^{2k}}{\mu(kA)} - 2 \delta^{2k-2} \mu(A)^{2k-1}=\frac{\mu(A)^{2k-1}}{\mu(kA)} \big(\mu(A)-2 \delta^{2k-2}\mu(kA)).\]
The lemma follows immediately.\end{proof}

Lemma \ref{large-alpha} is an immediate consequence of the case $k = 2$ of this lemma and (the continuous variant of) ``Chang's lemma'' \cite{chang-freiman}, which is the following statement.  For a proof, see \cite[Lemma 4.36]{tv-book}.

\begin{lemma}[Chang's lemma]\label{chang-lemma}
Suppose that $\alpha < 1/2$ and that $A \subseteq \T^d$ is a measurable set with $\mu(A) \geq \alpha$. Then $\Spec_{\delta}(A)$ generates a subgroup of $\Z^d$ of rank at most $O(\delta^{-2} \log (1/\alpha))$.
\end{lemma}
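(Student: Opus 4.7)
The plan is to deduce Chang's lemma from Rudin's inequality for dissociated sets, following a now-standard approach. Recall that a finite subset $\Lambda \subseteq \Z^d$ is \emph{dissociated} if the expressions $\sum_{\lambda \in \Lambda} \epsilon_\lambda \lambda$ with $\epsilon_\lambda \in \{-1,0,1\}$ are all distinct; equivalently, the only such representation of $0$ is the trivial one. Rudin's inequality states that for any dissociated $\Lambda$, any complex coefficients $(c_\lambda)_{\lambda \in \Lambda}$, and any integer $k \geq 1$,
\[
\Bigl\|\sum_{\lambda \in \Lambda} c_\lambda \lambda(\cdot)\Bigr\|_{L^{2k}(\T^d)} \ll \sqrt{k}\,\Bigl(\sum_{\lambda \in \Lambda} |c_\lambda|^2\Bigr)^{1/2}.
\]

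First I would choose $\Lambda$ to be a maximal dissociated subset of $\Spec_\delta(A)$. Maximality forces every $\xi \in \Spec_\delta(A)$ to admit a representation $\xi = \sum_{\lambda \in \Lambda} \epsilon_\lambda \lambda$ with $\epsilon_\lambda \in \{-1,0,1\}$, as otherwise $\Lambda \cup \{\xi\}$ would itself be dissociated. Consequently $\Spec_\delta(A)$ lies in the $\Z$-span of $\Lambda$, so the subgroup of $\Z^d$ it generates has rank at most $|\Lambda|$, and it suffices to bound $|\Lambda|$ by $O(\delta^{-2}\log(1/\alpha))$.

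Next I would test against the function $f := \sum_{\lambda \in \Lambda} \widehat{1_A}(\lambda)\,\lambda$. Plancherel's identity together with the defining lower bound $|\widehat{1_A}(\lambda)| \geq \delta\mu(A)$ on $\Spec_\delta(A)$ gives
\[
\int_{\T^d} f\,\overline{1_A}\, d\mu \;=\; \sum_{\lambda \in \Lambda} |\widehat{1_A}(\lambda)|^2 \;\geq\; |\Lambda|\,\delta^2\,\mu(A)^2,
\]
while H\"older's inequality followed by Rudin's inequality yields
\[
\int_{\T^d} f\,\overline{1_A}\, d\mu \;\leq\; \|f\|_{L^{2k}(\T^d)}\,\mu(A)^{1 - 1/(2k)} \;\ll\; \sqrt{k}\,\Bigl(\sum_{\lambda \in \Lambda} |\widehat{1_A}(\lambda)|^2\Bigr)^{1/2}\,\mu(A)^{1 - 1/(2k)}.
\]
Combining these two estimates and dividing out gives $|\Lambda|\,\delta^2\,\mu(A)^2 \ll k\,\mu(A)^{2 - 1/k}$, i.e.\ $|\Lambda| \ll k\,\delta^{-2}\,\mu(A)^{-1/k}$. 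Choosing $k := \lceil \log(1/\alpha) \rceil$ absorbs the factor $\mu(A)^{-1/k}$ into an absolute constant and produces the claimed bound $|\Lambda| \ll \delta^{-2}\log(1/\alpha)$.

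There is no genuine obstacle here beyond citing Rudin's inequality (which is classical), since the extraction of the dissociated spanning set and the duality calculation are entirely routine. The only mild care needed is in the interpretation of ``rank'': the argument directly shows that $\Spec_\delta(A)$ sits inside the $\{-1,0,1\}$-span, hence the $\Z$-span, of $\Lambda$, and in the torsion-free group $\Z^d$ the rank of any finitely generated subgroup is bounded by the size of any generating set, so the conclusion follows.
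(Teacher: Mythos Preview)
Your proposal is correct and is exactly the standard proof of Chang's lemma via a maximal dissociated subset and Rudin's inequality. The paper itself does not prove this lemma but simply cites \cite[Lemma 4.36]{tv-book}, whose argument is essentially the one you have written out, so there is nothing to compare.
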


\begin{proof}[Proof of Lemma \ref{large-alpha}] Noting that $\mu(A)/\mu(2A) \leq 1/K$, the lemma follows from Lemma \ref{bog-lemma} with $k = 2$ and $\delta := 1/2\sqrt{K}$ followed by an application of Lemma \ref{chang-lemma}.\end{proof}

To prove Lemma \ref{reduction} we will apply Lemma \ref{bog-lemma} with a much larger value of $k$, as well as the following result.
\begin{lemma}\label{san-lemma} There is an absolute constant $c > 0$ with the following property.
Suppose that $A \subseteq \T^d$ is a closed $K$-approximate group containing a neighbourhood of $0$.  Then $\Spec_{1 - c/\log K}(A)$ generates a subgroup of $\Z^d$ of rank $O(\log K)$.
\end{lemma}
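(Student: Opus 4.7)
\textbf{Proof plan for Lemma \ref{san-lemma}.} The strategy is to argue by contradiction: suppose $\xi_1,\ldots,\xi_k \in \Spec_{1-\eta}(A)$ are $\Z$-linearly independent, where $\eta := c/\log K$; we aim to show $k = O(\log K)$, from which the rank bound follows by choosing a maximal independent subset of $\Spec_{1-\eta}(A)$.

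First I would translate the spectral condition into a geometric statement. For each $\xi \in \Spec_{1-\eta}(A)$, pick a unimodular phase $\xi_0 \in \C$ so that $\xi_0 \widehat{1_A}(\xi) \geq (1-\eta)\mu(A)$ is real and positive. Then
\[
\int_A |1 - \xi_0\xi(x)|^2\, d\mu(x) \;=\; 2\mu(A) - 2\,\Re\bigl(\xi_0\widehat{1_A}(\xi)\bigr) \;\leq\; 2\eta\,\mu(A),
\]
so $\xi$ is approximately constant on $A$ in $L^2$-norm. Applying this to each $\xi_i$ with phase $\xi_{0,i}$, summing in $i$, and using Markov's inequality, I obtain a measurable subset $A^* \subseteq A$ with $\mu(A^*) \geq \mu(A)/2$ such that for every $x \in A^*$,
\[
\sum_{i=1}^k |1-\xi_{0,i}\xi_i(x)|^2 \;\leq\; 8k\eta.
\]

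Next I form the surjective continuous homomorphism $\Xi : \T^d \to \T^k$ given by $\Xi(x) := (\xi_1(x),\ldots,\xi_k(x))$ (surjectivity comes from $\Z$-independence of the $\xi_i$) and translate by the constants to obtain $\Phi(x) := (\xi_{0,1}\xi_1(x),\ldots,\xi_{0,k}\xi_k(x))$. By the Markov estimate above, $\Phi(A^*)$ is contained in a Euclidean ball $B(1,r) \subseteq \T^k$ of radius $r = O(\sqrt{k\eta})$, while disintegrating Haar measure along the fibres of $\Xi$ gives $\mu_{\T^k}(\Phi(A^*)) \geq \mu_{\T^d}(A^*) \geq \mu(A)/2$. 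Combined with the crude Euclidean ball volume estimate $\mu_{\T^k}(B(1,r)) \leq C^k (k\eta)^{k/2}/\Gamma(k/2+1)$, this yields
\[
\mu(A) \;\leq\; 2\,C^k (k\eta)^{k/2}/\Gamma(k/2+1).
\]

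The main obstacle, as in similar Sanders-type arguments, is converting this into the desired rank bound: the naive estimate above is useless when $\mu(A)$ is tiny, so one needs to exploit the doubling hypothesis to get a commensurate lower bound on $\mu(A)$ on the image side. My plan is to replace $A$ by a suitable ultra approximate subgroup and apply Hrushovski's Lie model theorem (Theorem \ref{lie-model}) together with Lemma \ref{dub}: the Lie model of $\Xi(A)$ inside $\T^k$ is a connected nilpotent Lie group $L_k$ whose Haar doubling is at most $K$, so Gelander's volume doubling bound forces $\dim L_k \leq \log_2 K$; since $\Xi(A)$ contains an open neighbourhood of a point in $\T^k$, we also have $\dim L_k \geq k - O(1)$ after accounting for the compact part, giving $k \leq \log_2 K + O(1)$. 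Taking $c$ small enough so that the Markov/volume estimate is consistent with this dimensional bound and extracting the absolute rank bound from a maximal independent set in $\Spec_{1-\eta}(A)$ then completes the proof. The delicate step — and the one I expect to be the main technical challenge — is making the dimensional comparison rigorous in the standard (non-ultraproduct) setting of the lemma's hypothesis, which I would handle by the standard ultraproduct-and-transfer argument used repeatedly earlier in the paper.
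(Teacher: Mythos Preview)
Your proposal has a genuine gap in the final step. The problem is that your proposed use of the Lie model theorem on $\Xi(A)\subseteq\T^k$ cannot yield dimensional information: $\T^k$ is compact, so any good Lie model of (an ultraproduct of) $\Xi(A)$ will itself be compact, and after ``accounting for the compact part'' you are left with the trivial group, not something of dimension $\geq k-O(1)$. Lemma~\ref{dub} is stated only for \emph{simply connected} nilpotent Lie groups, precisely because on $\T^k$ the doubling inequality $\mu(2U)\geq 2^k\mu(U)$ is blatantly false. Relatedly, your volume estimate $\mu(A)\leq 2C^k(k\eta)^{k/2}/\Gamma(k/2+1)$ is in the wrong direction and, as you yourself note, gives nothing when $\mu(A)$ is tiny.

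The paper's proof avoids this entirely, and the point you are missing is that $\mu(A)$ never needs to enter. Once the image $\psi(A'')$ lies in a small box in $\T^m$ (diameter $<\tfrac12$), it is Freiman $2$-isomorphic to a subset of $\R^m$, so Brunn--Minkowski (the abelian case of Lemma~\ref{dub}) gives $\mu_m(2U)\geq 2^m\mu_m(U)$ for $U=\psi(2A'')$. On the other hand, the approximate group property of $A$ passes to $A''$ (via Ruzsa covering, using $\mu(5A'')\leq 2K^4\mu(A'')$) and then to $U$, giving $\mu_m(2U)\leq K^{O(1)}\mu_m(U)$. The volume $\mu_m(U)$ cancels, and one reads off $m=O(\log K)$ directly. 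Note also that the paper exploits the symmetry of $A$ to make $\widehat{1_A}(\xi)$ real, which allows a \emph{pointwise} bound per character (the sets $A'(\xi)$) rather than your $\ell^2$-summed Markov bound; this is what guarantees the image lies in a genuinely small box, uniformly in each coordinate, which is what one needs to lift to $\R^m$.
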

\begin{proof}
Let $\eps = c/\log K$, where $c > 0$ is to be chosen later. Suppose that $\xi \in \Spec_{1 - \eps}(A)$ and that $\xi \neq 0$. Let $\eta: \T^d \mapsto \R/\Z$ be such that $\xi(x)=e^{2i\pi \eta(x)}$. Then
\[ \int_{\T^d} 1_A(x) \xi(x) \, d\mu(x) \] is real, since $A$ is symmetric, and at least $(1 - \eps)\mu(A)$. Thus
\[ \int_{\T^d} 1_A(x) \cos ( 2 \pi \eta(x)) \, d\mu(x) \geq (1 - \eps)\mu(A),\] and hence the (symmetric) subset $A'(\xi) \subseteq A$, consisting of those $x$ for which $\cos (2\pi \eta(x)) \geq 99/100$, has measure  $\mu(A'(\xi)) \geq (1 - 100 \eps)\mu(A)$. In particular $\Vert \eta (x) \Vert < \frac{1}{10}$ whenever $x \in A'(\xi)$, where $\Vert \theta \Vert:=\inf_{z \in \Z}|\theta - z|$.

Suppose now that $\Spec_{1 - \eps}(A)$ contains elements $\xi_1,\dots,\xi_m$ which are linearly independent over $\Q$, and let $\eta_1,\ldots,\eta_m$ be the corresponding $\R/\Z$-valued characters. Consider the set $A'' := \bigcap_{i=1}^m A'(\xi_i)$; provided that $m < 1/100\eps$, this will have $\mu(A'') \geq \frac{1}{2}\mu(A)$. Note that $A''=-A''$.

Consider now the homomorphism $\psi : \T^d \rightarrow \T^m$ given by $x \mapsto (\eta_1(x), \ldots , \eta_m(x))$. The image of $A''$ under $\psi$ lies in a box of diameter $1/5$.

Now any subset $U$ of $\T^m=(\R/\Z)^m$ which lies in a box of diameter $<\frac{1}{2}$ is Freiman $2$-isomorphic to an open subset of $\R^m$, and thus by the abelian case of Gelander's Lemma \ref{dub} (which, in this case, is just a very simple case of the Brunn-Minkowski inequality), we have

\[ \mu_m(2U) \geq 2^m \mu_m(U),\]
where $\mu_m$ is the normalized Haar measure on $\T^m=(\R/\Z)^m$. However, we have $\mu(5A'') \leq \mu(5A) \leq K^4 \mu(A) \leq 2K^4 \mu(A'')$, and an application of the Ruzsa covering lemma (here our Lemma \ref{isgroup}) shows that $2A''$ is a $4K^4$-approximate group, and consequently so is $U:=\psi(2A'')$.

Therefore, noting that $\mu_m(U) \neq 0$ since $\mu(A'')\geq \frac{1}{2}\mu(A)>0$, we obtain
\[ 2^m \leq (4K^4)^2 = (2K)^8,\] a contradiction if $m > 8 \log_2 2K$. Such a choice of $m$ is acceptable if $\eps < c/\log K$ with $c$ sufficiently small, and so we are forced to conclude that $\xi_1,\dots,\xi_m$ cannot exist. The lemma follows.
\end{proof}

\begin{proof}[Proof of Lemma \ref{reduction}] Note that $kA\subseteq (k-1)X + A$, and that $|mX| \leq m^{|X|} = m^K$ for all natural numbers $m$. It follows that $\mu(kA) \leq k^K \mu(A)$, and so Lemma \ref{bog-lemma} is applicable with $\delta = 1 - c/\log K$ for some $k \leq K^C$. The conclusion is that $2kA$ contains the subtorus $T_0 := (\Spec_{1 - c/\log K}(A))^{\perp}$ which, by Lemma \ref{san-lemma}, has codimension $O(\log K)$. However $2k A$ is covered by at most $(2k)^K = e^{O(K \log K)}$ translates of $A$, and so one of these translates has $\mu_0(A + x_0) \gg e^{-O(K \log K)}$, which was precisely what we claimed.\end{proof}

To conclude this section we record the observation that the above arguments also yield the following more precise version of Proposition \ref{weak-global}, the weak global Lie model theorem. This builds upon a previous result in this direction by Hrushovski: see \cite[Theorem 4.2]{hrush} and the discussion before \cite[Lemma 4.9]{hrush}.

\begin{theorem}[Strong global Lie Model Theorem]\label{strong-global}
Suppose that $\A$ is a global ultra $K$-approximate group.  Then there is a large ultra approximate subgroup $\tilde \A$ of $\A^m$ for some standard $m \geq 1$ which admits a global model $\tilde\pi : \langle \tilde \A\rangle \rightarrow L$ into a connected, simply connected nilpotent Lie group $L$ of dimension at most $6 \log_2 K$.

Furthermore, there exists a large ultra approximate group $\A'$ of $\A$ which admits a global model $\pi': \langle \A' \rangle \to L'$, a connected nilpotent Lie group, whose maximal \textup{(}central\textup{)} compact subgroup $N$ verifies $L'/N \simeq L$.
\end{theorem}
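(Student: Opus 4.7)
\textbf{Proof plan for Theorem \ref{strong-global}.}

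The plan is to combine the global locally compact model construction (Proposition \ref{locally-compact-model-global}) with a Gleason--Yamabe decomposition of the model group, the slicing trick from Lemma \ref{slice}, and the nilpotency of Hrushovski Lie models from Proposition \ref{nilmodel}, taking care to set up everything in such a way that we end up with \emph{global} good models rather than merely local ones. The starting point will be Proposition \ref{locally-compact-model-global}, which gives a global good model $\pi \colon \langle \A \rangle \to G$ by a metrisable locally compact global group $G$, with associated open neighbourhood $U_0$ of $\id$ as in Definition \ref{good-model-def}. I then apply the global Gleason--Yamabe theorem (Theorem \ref{gleason-yamabe}) to $G$ to produce an open subgroup $G' \leq G$ and a compact subgroup $H \trianglelefteq G'$ with $H \subseteq U_0$, such that $L' := G'/H$ is a connected Lie group; let $\phi \colon G' \to L'$ be the quotient map. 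Slicing as in Lemma \ref{slice}, the set $\A_* := \A^4 \cap \pi^{-1}(G')$ is a nonstandard finite $K^6$-approximate group (internally, using Definition \ref{good-model-def}(iii) to realise $\pi^{-1}(G')$ as an ultraproduct), and $\phi \circ \pi$ makes $L'$ a good model for $\A_*$. Proposition \ref{nilmodel} then forces $L'$ to be nilpotent, so its maximal compact subgroup $N$ exists, is a central torus, and $L := L'/N$ is a connected, simply connected nilpotent Lie group diffeomorphic to some $\R^n$.

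For the second part of the theorem (finding $\A'$ large in $\A$ with a global model into $L'$), I would shrink $\A_*$ in the spirit of the proof of Proposition \ref{lie-model-prop}: pick a symmetric open neighbourhood $V$ of $\id$ in $G'$ with $V^{12} \subseteq U_0$, and use Definition \ref{good-model-def}(iii) to obtain a nonstandard finite symmetric set $\A'$ with $\pi^{-1}(V) \subseteq \A' \subseteq \pi^{-1}(V^3)$. Then $(\A')^4 \subseteq \pi^{-1}(V^{12}) \subseteq \pi^{-1}(U_0) \subseteq \A \subseteq \A^4$, and the precompactness of $\pi(\A)$ together with the left-invariance of $V$-coverings (lifting through $\pi$) shows that $\A$ is covered by finitely many left-translates of $\A'$; hence $\A'$ is a large ultra approximate subgroup of $\A$. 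Moreover $\pi(\langle \A' \rangle) \subseteq \langle \pi(\A') \rangle \subseteq G'$, so $\pi' := \phi \circ \pi$ is a well-defined group homomorphism $\langle \A' \rangle \to L'$, and the three good-model axioms for $\pi'$ (thick image via $\phi(V)$, compact image, and approximation by internal sets) are inherited directly from the corresponding properties for $\pi$.

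For the first part, I need to absorb the compact kernel $\phi^{-1}(N)$ into a bounded power of $\A'$ and then project through $\theta \colon L' \to L$. Since $\phi^{-1}(N) \subseteq G'$ is compact and $\pi(\A')$ contains an open neighbourhood of $\id$ in $G'$, there is a standard integer $m_0$ with $\phi^{-1}(N) \subseteq \pi(\A')^{m_0} = \pi((\A')^{m_0})$. Choose a symmetric open neighbourhood $W$ of $\phi^{-1}(N)$ inside $G'$ with $W \subseteq \phi^{-1}(\theta^{-1}(U_1))$ for some small symmetric open neighbourhood $U_1 \subseteq L$ of $\id$, and small enough that $W^{4} \subseteq \pi((\A')^{4m_0})$. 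Using Definition \ref{good-model-def}(iii) once more, produce a nonstandard finite symmetric $\tilde \A$ with $\pi^{-1}(W) \subseteq \tilde \A \subseteq \pi^{-1}(W^2)$; a suitable standard power $m$ of $\A$ then contains $\tilde \A^4$, and the covering argument from Part 2 (applied to translates of $W$ inside the compact $\pi(\A^m)$) shows that $\A^m$ is covered by finitely many left-translates of $\tilde \A$. Setting $\tilde \pi := \theta \circ \phi \circ \pi$, the kernel of $\tilde \pi$ on $\pi^{-1}(G')$ is exactly $\pi^{-1}(\phi^{-1}(N))$, which by construction sits inside $\tilde \A$, so the thick-image property holds; the compact-image and internal-approximation properties pass through $\theta$ routinely. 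The dimension bound $\dim L \leq 6 \log_2 K$ is obtained by applying Gelander's Lemma \ref{dub} to the image $\theta \circ \phi \circ \pi(\A_*)$ inside the simply connected nilpotent Lie group $L$, exactly as in the argument deriving \eqref{L/N-bound} in the proof of Theorem \ref{hl-conj-nonst-last}.

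The principal technical obstacle is Part 1: one must simultaneously arrange that $\tilde \A$ is a good \emph{global} model source for $L$ (so its preimage of a neighbourhood of $\id$ must capture the whole compact kernel $\phi^{-1}(N)$, not just a neighbourhood of $\id$ in $G$), that $\tilde \A^4$ stays inside a bounded power $\A^{4m}$, and that $\A^m$ is covered by finitely many translates of $\tilde \A$. This forces a careful two-stage choice of the open neighbourhood $W$ around $\phi^{-1}(N)$: it must be wide enough to have preimage contained in a controlled power of $\A'$, yet small enough that $W^4$ maps into a small neighbourhood of $N$ in $L'$ and $\theta(\phi(W))$ is still a genuine neighbourhood of $\id$ in $L$. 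Everything else reduces to the machinery already built up in Sections \ref{metric-sec} and \ref{rank-reduction}.
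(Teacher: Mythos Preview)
Your proposal is correct and follows essentially the same route as the paper: the theorem is recorded as a direct consequence of the arguments used to prove Theorem \ref{hl-conj-nonst-last}, namely applying Proposition \ref{locally-compact-model-global} and Gleason--Yamabe to obtain $L'=G'/H$, slicing via Lemma \ref{slice} to get the $K^6$-approximate group $\A^4\cap\pi^{-1}(G')$, invoking Proposition \ref{nilmodel} for nilpotency of $L'$, deriving the bound $\dim(L'/N)\le 6\log_2 K$ from Lemma \ref{dub}, and finally absorbing the compact kernel $\phi^{-1}(N)$ into a bounded power to pass to the simply connected quotient $L=L'/N$. Your extra care in shrinking to produce an $\A'$ that is genuinely large in $\A$ (rather than in $\A^4$) via the auxiliary neighbourhood $V$ is a sensible refinement of what the paper leaves implicit.
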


\section{Applications to growth in groups and geometry}\label{gromov-sec}

In this section we collect a variety of applications of our main results, in particular proving the various results stated in the introduction.

As an application of his method Hrushovski \cite{hrush} established the following strengthening of Gromov's theorem on groups with polynomial growth.

\begin{theorem}\label{hrush-grom} Let $G$ be a finitely generated group and let $K \geq 1$. Suppose $G = \bigcup_{n \geq 1} A_n$, where $A_n$ is an increasing union of finite subsets of $G$ such that $|A^2_n| \leq K|A_n|$ for all $n \geq 1$. Then $G$ is virtually nilpotent.
\end{theorem}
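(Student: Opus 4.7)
The plan is to apply the nonstandard form of the main theorem (Theorem \ref{hl-conj-nonst}) to an ultraproduct of the $A_n$, exploiting the fact that because $G = \bigcup_n A_n$, the diagonal embedding of $G$ into $\ultra G$ lies entirely inside the ultra approximate group $\A := \prod_{\n \to \alpha} A_\n$.

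First I would replace each $A_n$ by $A_n \cup A_n^{-1} \cup \{\id\}$ and invoke Remark \ref{dubex} to enlarge $A_n$ into a $K'$-approximate group for a fixed $K' = K^{O(1)}$, while preserving $A_n \subseteq A_{n+1}$ and $G = \bigcup_n A_n$. Fixing a non-principal ultrafilter $\alpha$ on $\N$, I form $\A := \prod_{\n \to \alpha} A_\n$ inside $\ultra G$. The diagonal copy of $G$ lies in $\A$, since for every $g \in G$ the condition $g \in A_n$ holds on a cofinite (hence $\alpha$-large) set of indices.

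Next, I apply Theorem \ref{hl-conj-nonst} to $\A$ to produce a nondegenerate ultra coset nilprogression $P = P_{\mathbf{H}}(u_1, \ldots, u_r; N_1, \ldots, N_r) \subseteq \A^4$ in normal form, with rank and step at most $s = O_K(1)$ and $|P| \gg |\A|$. Set $\boldsymbol{\Gamma} := \langle \mathbf{H}, u_1, \ldots, u_r \rangle \leq \ultra G$; then $\boldsymbol{\Gamma}/\mathbf{H}$ is internally nilpotent of step $\leq s$, and by Ruzsa's covering lemma (Lemma \ref{loc}), $\A$ is covered by $O_K(1)$ left translates of $\boldsymbol{\Gamma}$. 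Consequently $G \subseteq \A$ meets only $O_K(1)$ cosets of $\boldsymbol{\Gamma}$ in $\ultra G$, so $G_0 := G \cap \boldsymbol{\Gamma}$ satisfies $[G : G_0] \leq O_K(1)$. The nilpotence of $\boldsymbol{\Gamma}/\mathbf{H}$ implies that the lower central term $\gamma_{s+1}(G_0) \leq G \cap \mathbf{H}$; writing $\mathbf{H} = \prod_{\n \to \alpha} H_\n$ with each $H_\n$ a finite subgroup of $G$, intersecting $\alpha$-large sets shows that any finitely many elements of $G \cap \mathbf{H}$ lie in a single $H_{\n^*}$, so $G \cap \mathbf{H}$ is locally finite.

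Finally, since $G_0$ is finitely generated (as a finite-index subgroup of the finitely generated $G$) by some $g_1, \ldots, g_k$, the normal subgroup $\gamma_{s+1}(G_0)$ is the $G_0$-normal closure of the finitely many $(s+1)$-fold commutators of the $g_i$, which lie in a common finite $H_{\n^*}$. The main obstacle is to conclude that $\gamma_{s+1}(G_0)$ is actually finite, making $G_0$ finite-by-nilpotent and hence virtually nilpotent; local finiteness alone is insufficient in general, as the lamplighter group $\Z/2\Z \wr \Z$ demonstrates (though it fails the hypothesis of the theorem). This final step is handled by exploiting that the $G_0$-conjugation action preserves the internal finite subgroup $\mathbf{H}$ and is governed by the nilpotent structure of $\boldsymbol{\Gamma}$, together with the normal-form bounds on $P$, from which one concludes that every $G_0$-orbit of a base commutator remains inside a single internal finite $H_{\n^*}$; hence $\gamma_{s+1}(G_0)$ is itself finite, and combining with $[G : G_0] \leq O_K(1)$ gives the virtual nilpotence of $G$.
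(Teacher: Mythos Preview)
Your approach differs from the paper's, and the final step contains a genuine gap.

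The gap is exactly where you flag it: passing from ``$G_0 \cap \mathbf{H}$ is locally finite'' to ``$\gamma_{s+1}(G_0)$ is finite''. Your proposed resolution---that every $G_0$-orbit of a base commutator stays inside a single $H_{\n^*}$---is unjustified. Each conjugate $g c_j g^{-1}$ lies in $H_\n$ for an $\alpha$-large set of $\n$, but the orbit under the (standard, infinite) group $G_0$ is a priori infinite, and an infinite intersection of $\alpha$-large sets need not be $\alpha$-large or even nonempty. Nothing in the normal-form bounds on $P$ or the nilpotence of $\boldsymbol{\Gamma}/\mathbf{H}$ forces the internal conjugation action to be trivial on $\mathbf{H}$, and without that, the orbit can wander through $\mathbf{H}$ indefinitely. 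The structural fact you would need---that a finitely generated (locally finite)-by-nilpotent subgroup of an internally finite-by-nilpotent group is genuinely finite-by-nilpotent---is not available without further work, and I do not see a clean way to supply it inside the ultraproduct.

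The paper avoids this issue by never entering the ultraproduct for this particular statement. It proves the stronger finitary Corollary~\ref{main-cor}: there is $K'=O_K(1)$ such that if $|A^2|\le K|A|$ and $S^{K'}\subseteq A$, then $G$ is virtually nilpotent. Theorem~\ref{hrush-grom} follows immediately, since $G=\bigcup_n A_n$ forces $S^{K'}\subseteq A_n$ for some $n$. The proof of Corollary~\ref{main-cor} applies Corollary~\ref{hl-conj-dub} to the single set $A$, yielding a genuine finite normal subgroup $H\lhd G_0\le G$ with $G_0/H$ nilpotent and $A$ covered by $K'$ cosets of $G_0$; then an elementary pigeonhole on cosets (if $S^k$ meets at most $k$ cosets of $G_0$ then $[G:G_0]\le k$) gives $[G:G_0]\le K'$. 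Because $H$ is a bona fide finite subgroup of $G$ from the outset, no locally-finite-to-finite step is needed. If you want to salvage your ultraproduct route, the natural fix is to transfer back via {\L}os's theorem to a single large $\n$---but that is precisely the paper's argument.
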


This is indeed a strengthening of Gromov's theorem because if $G$ has polynomial growth with respect to some generating set $S$ then the $A_n$ may be taken to be some subsequence of the word metric balls relative to $S$.

Unsurprisingly, our main theorem also admits an application of this kind. The following is a corollary of Theorem \ref{main-theorem} and subsumes Theorem \ref{hrush-grom} above.

\begin{corollary}[Gromov-type theorem]\label{main-cor}Let $K \geq 1$. Then there is some $K'$, depending on $K$, such that the following holds. Assume $G$ is a group generated by a finite symmetric set $S$ containing the identity. Let $A$ be a finite subset of $G$ such that $|A^2|\leq K|A|$ and $S^{K'} \subseteq A$. Then there is a finite normal subgroup $N \lhd G$ and a subgroup $G_1 \leq G$ containing $N$ such that
\begin{enumerate}
\item $G_1$ has index $O_K(1)$ in $G$;
\item $G_1/N$ has step and rank $O_K(1)$.
\end{enumerate}
In particular $G$ is virtually nilpotent.
\end{corollary}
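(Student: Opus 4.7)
The plan is to apply Theorem \ref{hl-conj} to $A$ and then use the hypothesis $S^{K'}\subseteq A$ to propagate the resulting structure from an approximate-group subgroup out to all of $G$. This follows the same template as the short proof of Theorem \ref{petrunin-again} already given in the introduction, but with the finer finite-by-nilpotent conclusion (i)--(ii) replacing mere virtual nilpotence.

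First I would apply Theorem \ref{hl-conj} (promoting $A$ to an honest $O(K^{O(1)})$-approximate group via Remark \ref{dubex} if necessary) to obtain a subgroup $G_0\leq G$ and a finite normal subgroup $H\lhd G_0$ satisfying (a) $A$ is covered by $O_K(1)$ left-translates of $G_0$, (b) $G_0/H$ is nilpotent of rank and step $O_K(1)$, and (c) $A^4$ contains $H$ together with a generating set of $G_0$.

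Next I would bound the index $[G:G_0]$ using the hypothesis $S^{K'}\subseteq A$. From (a), $S^{K'}$ is covered by some $M=O_K(1)$ left-cosets of $G_0$, so $S^{K'}G_0$ is a union of at most $M$ such cosets. Taking $K'>M$, the pigeonhole principle yields an integer $0\leq m<K'$ with $S^{m+1}G_0=S^mG_0$; iterating left-multiplication by $S$ (which generates $G$) then gives $G=S^mG_0\subseteq S^{K'}G_0$, so $G$ is covered by at most $M$ cosets of $G_0$ and $[G:G_0]=O_K(1)$.

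Third I would set $G_1:=G_0$, verifying (i) via the previous step, and construct the finite $G$-normal subgroup $N$. Since $N_G(H)\supseteq G_0$ has bounded index, $H$ has only $O_K(1)$ conjugates in $G$, so the intersection $N_0:=\bigcap_{g\in G}gHg^{-1}$ is a finite subgroup of $H\subseteq G_1$ which is normal in $G$, and the quotient $G_1/N_0$ fits into a short exact sequence $1\to H/N_0\to G_1/N_0\to G_0/H\to 1$ whose outer term is nilpotent of rank and step $O_K(1)$ and whose kernel is finite. To upgrade this finite-by-nilpotent quotient to one which is genuinely nilpotent with rank and step controlled by $O_K(1)$, I would then refine $G_1$ and $N$: replacing $G_1$ by an appropriate bounded-index subgroup (built from the normal core of $G_0$ in $G$ together with the stabilizer of the conjugation action on $H/N_0$) and enlarging $N$ inside $G_1$ to a suitable finite $G$-normal subgroup under which the image of $H$ becomes central, so that the resulting extension is a central extension of a subgroup of $G_0/H$ by a finite abelian group, hence nilpotent with rank and step bounded in terms of those of $G_0/H$.

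The hard part, I expect, will be this last refinement: passing from ``finite-by-nilpotent of bounded complexity'' to ``nilpotent of bounded complexity'' without losing the uniform bound $[G:G_1]=O_K(1)$, since $|H|$ itself can be arbitrarily large in $K$ (cf.\ the remark following Theorem \ref{hl-conj}) and the naive centralizer of $H/N_0$ has index controlled by $|\operatorname{Aut}(H/N_0)|$, which is not $O_K(1)$. The final assertion that $G$ is virtually nilpotent then drops out immediately, since $G_1$ has finite index in $G$ and is an extension of a nilpotent group by the finite group $N$.
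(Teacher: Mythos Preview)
Your setup and index bound are essentially the same as the paper's. The gap you yourself flag in the third step is real, and the paper closes it with a different and much cleaner idea than the centralizer route you sketch.

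Rather than trying to make (a quotient of) $H$ central, the paper never works with $H$ directly after the first step. Instead it passes to the \emph{normal core} $G_1 := \bigcap_{g\in G} gG_0g^{-1}$, which is normal in $G$ and has index $O_K(1)$ since $[G:G_0]=O_K(1)$. Now use only the fact that $G_0/H$ is nilpotent of some step $s-1 = O_K(1)$: this forces $C^s(G_0)\subseteq H$, where $C^s$ denotes the $s$-th term of the lower central series. Set $N := C^s(G_1)$. Since $G_1\leq G_0$, one has $N\leq C^s(G_0)\leq H$, so $N$ is finite; since $N$ is characteristic in $G_1\lhd G$, it is normal in $G$; and $G_1/N$ is nilpotent of step at most $s-1$, with rank bounded via Schreier because $G_1H/H$ has index $O_K(1)$ in the rank-$O_K(1)$ group $G_0/H$. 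This sidesteps entirely the problem you identified: no bound on $|H/N_0|$ or on any automorphism group is ever needed, because the lower central series of $G_1$ already lands inside $H$ for purely step reasons.

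Your intersection $N_0=\bigcap_g gHg^{-1}$ and the attempt to centralize $H/N_0$ would indeed fail to give an $O_K(1)$ index bound, exactly for the reason you state. The missing idea is to replace ``intersect conjugates of $H$'' by ``take a term of the lower central series of the normal core of $G_0$''.
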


\begin{proof}[Proof of Corollary \ref{main-cor}] First we make the following simple observation. Suppose $G$ is a group generated by a finite symmetric set $S$ and let $G_0$ be a subgroup of index $n=[G:G_0]$. Then for every $k<n$ the ball $S^k$ meets at least $k+1$ different left cosets of $G_0$ in $G$. Indeed if not then by the pigeonhole principle we have $S^i G_0 = S^{i+1} G_0$ for some $i < k$, and so by multiplying on the left with $S$ it follows that $S^k G_0 = S^{k+1} G_0$. Multiplying on the left  by further copies of $S$ implies that $S^k G_0 = \langle S \rangle G_0 = G$, and so $G_0$ has index at most $k$ in $G$, contrary to assumption.

Now, we apply Corollary \ref{hl-conj-dub}.  Thus there exists a subgroup $G_0$ of $G$ and a normal subgroup $H$ of $G_0$ such that $A$  may be covered by $K'$ left-translates $G_0$ for some $K' = O_K(1)$ depending only on $K'$, and $G_0/H$ is nilpotent of step and rank $O_K(1)$.  In particular, $G_0$ is finite-by-nilpotent.

Using this value of $K'$, we see by assumption that $S^{K'}$ is contained in $A$ and thus $S^{K'}$ is covered by at most $K'$ cosets of $G_0$. From our initial observation we conclude that $[G:G_0]\leq K'$.

Note that for some $s=O_K(1)$ the $s$-th term of the central descending series $C^s(G_0)$ is contained in $H$. Moreover, $G_1:=\bigcap_{g \in G}  g G_0 g^{-1}$ is a normal subgroup of $G$ with index at most $O_K(1)$ contained in $G_0$. Hence $N:=C^s(G_1)$ is a normal subgroup of $G$ contained in $H$. On the other hand, $G_1/N$ is nilpotent of complexity bounded in terms of $K$ only and it has index $O_K(1)$ in $G/N$.

To conclude from this that $G$ is virtually nilpotent, it suffices to show that $G_1$ is. However $G_1$ is actually finite-by-nilpotent (the finite group being $N$) and any such group is virtually nilpotent. To see this note that the kernel of the action by conjugation on $N$ is a nilpotent subgroup of finite index. \end{proof}

\begin{remark}\label{morebounds} Recall that the condition $|A^2|\leq K|A|$ implies the existence of an approximate group $Z$ of size $O(K^{O(1)}|A|)$ and of $O(K^{O(1)})$ left translates of $Z$ which cover $A$ (see \cite[Theorem 4.6]{tao-noncommutative}). Using Remark \ref{bounds-nilpotent} and Theorem \ref{dimension-bound}, we then see that $G_1$ can be taken so that $G_1/N$  is $O(\log K)$-nilpotent in the sense that it admits a generating set $u_1,\ldots,u_\ell$ with $\ell=O(\log K)$ such that $[u_i,u_j] \in \langle u_{j+1}, \ldots, u_\ell \rangle$ for all $i<j$. In particular such a group admits a normal series with cyclic factors of length at most $O(\log K)$.
\end{remark}

\begin{remark}\label{containment} If one assumes that $A$ is a $K$-approximate group instead of the doubling condition $|A^2|\leq K|A|$ in Corollary \ref{main-cor}, then we may also conclude from Theorem \ref{hl-conj} that $N$ and a generating set of $G_1$ are contained in $A^4$. Using Theorem \ref{dimension-bound} we see that if  one additionally wishes to ensure the logarithmic bound as in the previous remark, then one can only guarantee that $N$ lies inside $A^{O_K(1)}$.
\end{remark}

The following corollary is reminiscent of Gromov's theorem but it involves a weaker type of polynomial growth condition in which the generating set may be arbitrarily large. Furthermore it only requires that at one scale.

\begin{corollary}\label{petrunin} Let $d > 0$. Then there is $R(d) > 0$ such that the following holds. Suppose that $G$ is generated by a finite symmetric set $S$ and that there is some scale $r > R(d)$ such that $|S^r| \leq r^d |S|$. Then there is a finite normal subgroup $N \lhd G$ and a subgroup $G_1 \leq G$ containing $N$ such that
\begin{enumerate}
\item $N \subseteq S^r$;
\item $G_1$ has index $O_d(1)$ in $G$;
\item $G_1/N$ is $O(d)$-nilpotent \textup{(}see Remark \ref{bounds-nilpotent} for a definition\textup{)}.
\end{enumerate}

\end{corollary}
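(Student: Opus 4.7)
The plan is to combine the pigeonhole scale-selection argument used for Theorem \ref{petrunin-again} in the introduction with the quantitative output of Corollary \ref{main-cor}, exploiting the sharper nilpotency bound from Remark \ref{morebounds} and the containment bound from Remark \ref{containment} to obtain simultaneously the $O(d)$-nilpotent structure on $G_1/N$ and the inclusion $N \subseteq S^r$.

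First I would choose $R(d)$ large enough to accommodate the following pigeonhole. Along a geometric sequence of scales $n_i := R_0(d) \cdot 100^i$ remaining below $r/C(d)$, telescoping $|S^{n_k}|/|S^{n_0}| = \prod_{i<k} |S^{n_{i+1}}|/|S^{n_i}|$ and comparing with the hypothesis $|S^r|/|S| \leq r^d$ forces at least one index $n'$ in the range $[R_0(d), r/C(d)]$ to satisfy $|S^{100 n'}| \leq 200^d |S^{n'}|$, provided $r \geq R(d)$ is sufficiently large in terms of $R_0(d), C(d)$, and $d$. Corollary \ref{isgroup} then shows that $A := S^{2n'}$ is a $K$-approximate group with $K = 200^d = e^{O(d)}$, and in particular $|A^2| \leq K|A|$.

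Since $S \subseteq A$, the auxiliary hypothesis $S^{K'(K)} \subseteq A$ of Corollary \ref{main-cor} is satisfied as soon as $R_0(d) \geq K'(e^{O(d)})/2$. Applying Corollary \ref{main-cor}, together with the sharper nilpotent rank bound from Remark \ref{morebounds} (using $\log K = O(d)$) and the containment statement from Remark \ref{containment}, would then produce a subgroup $G_1 \leq G$ of index $O_d(1)$ and a finite normal subgroup $N \lhd G$ contained in $G_1$ such that $G_1/N$ is $O(d)$-nilpotent in the precise sense of Remark \ref{bounds-nilpotent}, with $N \subseteq A^{O_K(1)} \subseteq S^{O_d(1) \cdot 2n'}$. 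Choosing $C(d)$ sufficiently large at the outset, in particular exceeding twice the $O_K(1)$ constant from Remark \ref{containment}, ensures $O_d(1) \cdot 2n' \leq r$ and hence the desired inclusion $N \subseteq S^r$.

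The main obstacle will be the bookkeeping of constants: one must simultaneously enforce $R_0(d) \geq K'(e^{O(d)})/2$ so that Corollary \ref{main-cor} applies to $A = S^{2n'}$, and $C(d)$ large enough to swallow the $O_K(1)$ blow-up from Remark \ref{containment}, all with $K = e^{O(d)}$. Since each such constant is a finite function of $d$ once $K$ is fixed, an appropriate choice of $R(d)$ makes the pigeonhole range nonempty and closes the argument.
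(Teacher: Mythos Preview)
Your proposal is correct and follows essentially the same approach as the paper: a geometric pigeonhole to locate a scale $n'$ with bounded doubling, then Corollary~\ref{isgroup} to produce an $e^{O(d)}$-approximate group, then Corollary~\ref{main-cor} together with Remarks~\ref{morebounds} and~\ref{containment} to extract $G_1$ and $N$ with the $O(d)$-nilpotent quotient and $N\subseteq A^{O_K(1)}\subseteq S^r$. The paper runs its pigeonhole over scales $5^i\sqrt{r}$ in $[\sqrt r,\,r/2C_K]$ with ratio $5$ and threshold $10^d$, whereas you use $R_0(d)\cdot 100^i$ in $[R_0(d),\,r/C(d)]$ with ratio $100$ and threshold $200^d$; both choices work and the remaining bookkeeping is the same.
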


\begin{proof} Our assumption is that $|S^r| \leq r^d |S|$. Let $K = 2\cdot10^d$ and $C_K$ be such that, in the last part of Remark \ref{containment}, $N$ lies in $A^{C_K}$. We claim that there is some $r_0$,  $\sqrt{r} \leq r_0 \leq r/2C_K$, such that $A := S^{r_0}$ has $|A^5| \leq 10^d |A|$. Note that $A^2$ is then a $K$-approximate group with $K = 2\cdot10^d$ (see Lemma \ref{isgroup}). Applying Corollary \ref{main-cor} and Remark \ref{containment} and ensuring that $R(d)$ is so large that $R(d) > (K')^2$ ($K'$ being the quantity in Corollary \ref{main-cor}), we obtain a finite normal subgroup $N \lhd G$ and a subgroup $G_1 \leq G$ containing $N$ such that $G_1$ has index $O_d(1)$ in $G$ and $G_1/N$ is $O(\log K) = O(d)$-nilpotent. Furthermore $N$ and a set of generators for $G_1$ are contained in $A^{2C_K} = S^{2r_0 C_K} \subseteq S^r$.

It remains to justify the claim. If it is false then $|S^{5^{i+1} \sqrt{r}}| > 10^d |S^{5^{i} \sqrt{r}}|$ whenever $5^i \sqrt{r} < r/10C_K$, and in particular $|S^r| \geq (10^d)^{\log_5 (\sqrt{r}/10C_K) - 1}|S^{\sqrt{r}}|$. If $r$ is greater than some absolute constant, this is greater than $r^d |S|$, contrary to assumption.\end{proof}

\begin{remark} Note that there is no bound on the size of $N$.  Indeed, if $G$ is a large finite simple group and $S = G$ then $N$ must equal $G$, which shows that $|N|$ can be arbitrarily large compared to $d,r$.
\end{remark}

In \cite{shalom-tao} Y. ~Shalom and the third author gave a quantitative refinement of Gromov's theorem inspired by Kleiner's recent new proof (see also \cite[Corollary 4.2]{lee-makarychev} for an earlier result in that direction). A consequence of their result is that a polynomial growth condition at one large scale is enough to guarantee virtual nilpotence. We take the opportunity to record that this follows easily from Corollary \ref{main-cor}.
\begin{corollary}\label{nilpotent} Let $d>0$. Then there is $R(d)>0$ such that the following holds. Suppose that $G$ is generated by a finite symmetric set $S$ containing the identity and that there is some scale $r > R(d)$ such that $|S^r| \leq r^d$. Then $G$ contains $G'$, where
\begin{enumerate}
\item $G'$ has index $O_{d}((r^d)!)$ in $G$;
\item $G'$ is nilpotent with step $O_{d}(1)$.
\end{enumerate}
\end{corollary}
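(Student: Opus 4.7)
The plan is to invoke Corollary \ref{petrunin} to reduce to a finite-by-nilpotent situation, and then to quotient out the finite obstruction by centralising it. Since $|S| \leq |S^r| \leq r^d$, the hypothesis $|S^r| \leq r^d \leq r^d|S|$ of Corollary \ref{petrunin} is satisfied. Taking $R(d)$ at least the threshold required there, that corollary will supply a finite normal subgroup $N \lhd G$ with $N \subseteq S^r$, and a subgroup $G_1 \leq G$ containing $N$, with $[G:G_1] = O_d(1)$ and $G_1/N$ being $O(d)$-nilpotent (in particular of nilpotency step at most $O(d)$). The crucial new input coming from the stronger hypothesis $|S^r|\leq r^d$ (as opposed to $|S^r| \leq r^d|S|$) is the explicit size bound $|N| \leq |S^r| \leq r^d$ on the finite obstruction.

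Next I would set $G' := C_{G_1}(N)$, the centraliser of $N$ inside $G_1$. The conjugation action of $G_1$ on $N$ yields a homomorphism $G_1 \to \operatorname{Aut}(N)$ with kernel $G'$, so
\[ [G_1 : G'] \leq |\operatorname{Aut}(N)| \leq |N|! \leq (r^d)!. \]
Combining with $[G:G_1] = O_d(1)$ gives $[G:G'] = O_d((r^d)!)$, which is (i).

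For (ii), observe that $G' \cap N$ is central in $G'$ by construction, while $G'/(G' \cap N)$ embeds into $G_1/N$ and is therefore nilpotent of step $O(d)$. A central extension of a step-$s$ nilpotent group is nilpotent of step at most $s+1$: if $\overline{G'} := G'/(G' \cap N)$ satisfies $\gamma_{s+1}\overline{G'} = 1$, then $\gamma_{s+1}G' \subseteq G' \cap N \subseteq Z(G')$, whence $\gamma_{s+2} G' = [\gamma_{s+1} G', G'] = 1$. Thus $G'$ is nilpotent of step $O(d)$, which is (ii). The entire argument is essentially a routine finite-group clean-up on top of Corollary \ref{petrunin}; the only conceptual point is to recognise that the qualitative virtual nilpotence produced there can be upgraded to outright nilpotence precisely because the hypothesis $|S^r| \leq r^d$ forces the finite normal subgroup $N$ to have size at most $r^d$, and hence to be controllable by passing to its centraliser at the cost of a factor of $(r^d)!$ in the index.
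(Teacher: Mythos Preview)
Your proof is correct and follows essentially the same route as the paper: apply Corollary \ref{petrunin}, use the hypothesis $|S^r|\leq r^d$ to bound $|N|\leq r^d$, take $G'$ to be the kernel of the conjugation action of $G_1$ on $N$ (i.e.\ $C_{G_1}(N)$), and observe that $G'$ is a central-by-nilpotent group of step $O_d(1)$. The only cosmetic differences are that the paper bounds $[G_1:G']$ by $|\operatorname{Sym}(N)|$ rather than $|\operatorname{Aut}(N)|$ and leaves the verification $|S^r|\leq r^d\leq r^d|S|$ implicit.
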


\begin{proof}  We apply Corollary \ref{petrunin} to obtain groups $N, G_1$ with the properties stated there.  As $N$ is contained in $S^r$, it has cardinality at most $r^d$.  The group $G_1$ acts on $N$ by conjugation; since the permutation group of $N$ has cardinality at most $(r^d)!$, we conclude that the stabiliser $G'$ of this action has index at most $(r^d)!$ in $G_1$.  As $G_1/N$ is nilpotent of step $O_d(1)$, we conclude that $G'$ is nilpotent of step $O_d(1)+1 = O_d(1)$, and the claim follows.
\end{proof}

\begin{remark} As observed in the last section of Gromov's original paper \cite{gromov}, Gromov's theorem on polynomial growth already easily implies a weaker result of this kind in which the hypothesis is that $|S^r| \leq r^d$ for \emph{all} $r = 1,2,\dots, R(d)$. Note that this result of Gromov (and, \emph{a fortiori}, Corollary \ref{nilpotent}) have content even when the group $G$ is finite.   Another weakening of the above result appears in \cite{dries-wilkie}, where $|S^r| \leq r^d$ is assumed for infinitely many $r$ rather than for all $r$.

Corollary \ref{nilpotent} is stronger than the results in \cite{lee-makarychev,shalom-tao} in the sense that the bounds do not depend on the cardinality $|S|$ of $S$.  On the other hand, the results in \cite{lee-makarychev,shalom-tao}, which follow a strategy close to that of Kleiner's work \cite{kleiner}, yield more effective quantitative control on the index and step of $G'$, especially in the case when $S$ is of bounded cardinality.
\end{remark}

Another consequence of our main theorem is that polynomial growth in the sense of Corollary \ref{petrunin} at one large scale implies polynomial growth at all subsequent scales.

\begin{corollary}\label{growth} Let $d>0$. Then there is $R'(d) > 0$ such that the following holds. Suppose that $G$ is generated by a finite symmetric set $S$ and that $|S^r|\leq r^d |S|$ for some $r \geq R'(d)$. Then $|S^{r'}|\leq (r')^{O_d(1)}|S|$ for all $r' \geq r$.
\end{corollary}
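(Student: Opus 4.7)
The plan is to apply Corollary~\ref{petrunin} to extract the virtually nilpotent structure of $G$ at scale $r$, and then to leverage Mal'cev coordinates in this structure to count elements of $S^{r'}$ for $r' \geq r$. First, choose $R'(d) \geq R(d)$, with $R(d)$ as in Corollary~\ref{petrunin}, and apply that corollary at scale $r$. This yields a finite normal subgroup $N \lhd G$ with $N \subseteq S^r$, a subgroup $G_1 \leq G$ of index $m = O_d(1)$ containing $N$, and an $O(d)$-nilpotent structure on $G_1/N$. Passing to the normal core of $G_1$ if necessary (of index at most $m! = O_d(1)$), we may assume $G_1 \lhd G$.

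Let $\bar{u}_1, \ldots, \bar{u}_\ell$ (with $\ell = O(d)$) be the generators of $G_1/N$ provided by the $O(d)$-nilpotent structure, so that $[\bar{u}_i, \bar{u}_j] \in \langle \bar{u}_{j+1}, \ldots, \bar{u}_\ell\rangle$ for all $i < j$. By Theorem~\ref{main-theorem} (which lies behind Corollary~\ref{petrunin}), these generators may be lifted to elements $u_i \in G_1 \cap S^{O_d(r)}$. Pick coset representatives $v_1, \ldots, v_m \in S^{m-1}$ of $G_1$ in $G$. Then every element $g \in G$ has a unique ``Mal'cev representation''
\[
 g = v_j \, u_1^{a_1} \cdots u_\ell^{a_\ell} \, n,
\]
with $j \in \{1,\ldots,m\}$, the $a_i$ ranging over appropriate (infinite or finite) cyclic groups, and $n \in N$.

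The key step is a Bass--Guivarc'h-type estimate: there exist weights $w_i \geq 1$ with $\sum_i w_i \leq O_d(1)$ such that whenever $g \in S^{r'}$ for some $r' \geq r$, the Mal'cev exponents satisfy $|a_i| \leq (r')^{O_d(1)}$ for each $i$, with the implied constant depending only on $d$. This is proved by induction on the nilpotency step, using the upper-triangular commutator relations of the $\bar{u}_i$'s, whose structure constants are controlled in terms of $d$ alone by the normal form of the nilprogression produced by Theorem~\ref{main-theorem}; the scaling factor $r$ appearing in $\|u_i\|_S = O_d(r)$ is absorbed into $r'$ via the hypothesis $r' \geq r$.

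Counting Mal'cev tuples $(j, a_1, \ldots, a_\ell, n)$ subject to these constraints then gives
\[
 |S^{r'}| \;\leq\; m \cdot |N| \cdot \prod_{i=1}^\ell \bigl(2(r')^{O_d(1)}+1\bigr) \;\leq\; O_d(1) \cdot |N| \cdot (r')^{O_d(1)},
\]
and plugging in $|N| \leq |S^r| \leq r^d|S| \leq (r')^d|S|$ yields $|S^{r'}| \leq (r')^{O_d(1)} |S|$, as required. The main obstacle is establishing the Bass--Guivarc'h estimate with a constant depending only on $d$; this is where the explicit control of the commutator structure constants from the normal-form nilprogression in Theorem~\ref{main-theorem} is essential, since the usual formulations of Bass--Guivarc'h implicitly allow constants depending on the ambient group. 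Once this estimate is in hand, however, the rest of the argument is a direct volume count.
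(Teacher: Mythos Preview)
Your central claim — that for $g \in S^{r'}$ the Mal'cev exponents satisfy $|a_i| \leq (r')^{O_d(1)}$ with implied constant depending only on $d$ — is false as stated, and this breaks the volume count. Take $G = \Z$, $S = \{-M,\ldots,M\}$ with $M$ large, and $d=1$. Then $|S^r| \leq r|S|$ for all $r$, Corollary~\ref{petrunin} returns $N = \{0\}$ and $G_1 = k\Z$ with $k = O(1)$, and the single Mal'cev generator is $u_1 = k$. For $g \in S^{r'}$ one has $|a_1| = |g|/k$, which can be as large as $r'M/k$; this is not bounded by any fixed power of $r'$ uniformly in $M$. Consequently your count $|S^{r'}| \leq m \cdot |N| \cdot \prod_i (2(r')^{O_d(1)}+1) = O_d(1)\cdot (r')^{O_d(1)}$ would force $2r'M+1 \leq O_d(1)(r')^{O_d(1)}$, which is absurd. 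The underlying issue is that your bookkeeping introduces the factor $|S|$ only through $|N|$, and when $N$ is small (as it must be in torsion-free groups) the bound becomes too strong to be true. The commutator structure constants from the normal form are irrelevant here: the example is abelian.

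The paper's proof avoids this by not trying to control the shape of $\pi(S)$ in the nilpotent quotient at all. Instead it finds a scale $r_0 \in [\sqrt{r}, r/6]$ with $|S^{5r_0}| \leq 100^d |S^{r_0}|$, so that $S^{2r_0}$ is a $K$-approximate group with $K = O_d(1)$, and obtains a covering set $X$ of size $O_d(1)$ with $S^{4r_0} \subseteq X S^{2r_0}$. Iterating gives $S^{2mr_0} \subseteq X^m S^{2r_0}$, and in the quotient $G/H$ one has $|\pi(X)^m| \leq m^{O_d(1)}$ because $\pi(X)$ is a \emph{bounded} set in a virtually nilpotent group of bounded complexity (hence a quotient of one fixed such group on $|\pi(X)|$ generators). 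The factor $|S|$ then enters cleanly through $|H|\,|\pi(S^{2r_0})| \leq |S^{6r_0}| \leq r_0^{2d}|S|$, independently of whether $H$ is large or trivial. If you want to salvage your approach, you would need to prove something like $|a_i| \leq (r'/r_0)^{O_d(1)} N_i$ with the $N_i$ the nilprogression lengths, and then separately control $\prod_i N_i \cdot |H|$ in terms of $|S|$; but at that point you are essentially redoing the paper's argument inside Mal'cev coordinates.
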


\begin{proof}
A simple modification of the proof of Corollary \ref{petrunin} shows that there is some $r_0$, $\sqrt{r} \leq r_0 \leq r/6$, such that $|S^{5r_0}| \leq K |S^{r_0}|$ where $K = 100^d$ (say). Applying Corollary \ref{main-cor} with $A := S^{r_0}$ (as before) we obtain a normal subgroup $H \subseteq S^{4r_0}$ such that $G/H$ is virtually nilpotent with the index, step and number of generators of the nilpotent subgroup $G_1/H$ all being $O_d(1)$.

Now by Corollary \ref{isgroup}, $A^2 = S^{2r_0}$ is a $2K$-approximate group. This means that there is some set $X$, $|X| \leq 2K$, such that $S^{4r_0} \subseteq X S^{2r_0}$. From this it follows that \begin{equation}\label{disp-1} S^{2mr_0} \subseteq X^m S^{2r_0}\end{equation} for every positive integer $m$.

Let $\pi:G \rightarrow G/H$ be the quotient homomorphism. We have \begin{equation}\label{disp-2} |S^{2mr_0}| \leq |H| |\pi(S^{2mr_0})|,\end{equation} since the cardinality of any fibre is at most $|H|$.

From \eqref{disp-1} and the fact that $\pi$ is a homomorphism we have
\begin{equation}\label{disp-3} \pi(S^{2mr_0}) \subseteq \pi(X)^m \pi(S^{2r_0}).\end{equation}

On the other hand, since $H \subseteq S^{4r_0}$, we have
\begin{equation}\label{disp-4}  |H| |\pi(S^{2r_0})| \leq |S^{6r_0}|. \end{equation}
Moreover, since $\sqrt{r} \leq r_0 \leq r/6$, we have

\begin{equation}\label{disp-5} |S^{6r_0}| \leq |S^r| \leq r^d |S| \leq r_0^{2d} |S|.\end{equation}

Putting \eqref{disp-2}, \eqref{disp-3}, \eqref{disp-4} and \eqref{disp-5} together gives
\begin{equation}\label{disp-6} |S^{2m r_0}| \leq |\pi(X)^m| r_0^{2d} |S|.\end{equation}
Now $\pi(X)$ is a set of size $O_d(1)$, contained in a virtually nilpotent group in which the index and step of the nilpotent subgroup are $O_d(1)$. Every such group is a quotient of one fixed virtually nilpotent group with number of generators, index and step of the nilpotent subgroup also $O_d(1)$ and whose generators are lifts of the elements in $\pi(X)$. Hence there is a bound of the form
\[ |\pi(X)^m| \leq m^{O_d(1)}\] for all $m > 1$.
Comparing this with \eqref{disp-6} confirms that
\[ |S^{r'}| \leq r^{\prime O_d(1)} |S|\] whenever $r'$ is a multiple $2mr_0$ with $m > 1$. It is not hard to see that the same estimate therefore holds for all $r'$, at the expense of increasing the exponent $O_d(1)$ if necessary.
\end{proof}

A consequence/reformulation of the preceding result is the following.

\begin{corollary}\label{finite-diameter} Let $\alpha>0$. Then there are $r_0 \in \N$ and $\beta>0$ with $\lim_{\alpha \rightarrow 0} \beta(\alpha) =0$ such that the following holds. Let $G$ be a finite group generated by a symmetric set $S$ and assume that the diameter of the associated Cayley graph satisfies $\operatorname{diam}_S(G) \leq (|G|/|S|)^\alpha$. Then $|S^r|\geq \min\{r^{1/\beta}|S|,|G|\}$ if $r\geq r_0(\alpha)$.\end{corollary}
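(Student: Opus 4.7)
This will be essentially the contrapositive of Corollary \ref{growth}. The plan is to argue by contradiction: suppose $|S^r| < r^{1/\beta}|S|$ and also $|S^r| < |G|$ for some $r \geq r_0(\alpha)$. Setting $d := 1/\beta$, the first inequality is exactly the hypothesis of Corollary \ref{growth}, and provided $r_0(\alpha) \geq R'(d)$, we conclude that $|S^{r'}| \leq (r')^{C_d} |S|$ for all $r' \geq r$, where $C_d = O_d(1)$ is the exponent produced by that corollary.

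Now I would apply this at the scale $r' := \operatorname{diam}_S(G)$, at which $S^{r'} = G$. The second inequality $|S^r| < |G|$ guarantees $r' > r$, so we may indeed use Corollary \ref{growth}. Combined with the diameter hypothesis $\operatorname{diam}_S(G) \leq (|G|/|S|)^\alpha$, this yields
\[
|G| \leq (\operatorname{diam}_S(G))^{C_d} |S| \leq (|G|/|S|)^{\alpha C_d} |S|,
\]
or equivalently $(|G|/|S|)^{1-\alpha C_d} \leq 1$. If we have arranged matters so that $\alpha C_d < 1$, this forces $|G| \leq |S|$, hence $S = G$, hence $\operatorname{diam}_S(G) = 1$ and $|S^r| = |G|$ for all $r \geq 1$, contradicting $|S^r| < |G|$.

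The whole scheme therefore hinges on choosing $\beta(\alpha)$ so that $\alpha C_{1/\beta(\alpha)} < 1$ (say $\leq 1/2$) and $\beta(\alpha) \to 0$ as $\alpha \to 0$. Since $C_d$ is a finite quantity depending only on $d$, the largest $d$ satisfying $\alpha C_d \leq 1/2$ tends to infinity as $\alpha \to 0$; taking $d(\alpha)$ to be this maximum and setting $\beta(\alpha) := 1/d(\alpha)$, $r_0(\alpha) := R'(d(\alpha))$ produces an admissible pair, with $\beta(\alpha) \to 0$ automatically.

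The only genuinely delicate point is a bookkeeping one, namely verifying that the exponent $O_d(1)$ in Corollary \ref{growth} is an actual function of $d$ alone (not depending implicitly on the group $G$ or the generating set $S$) so that the choice $d(\alpha)$ is well-defined; this is immediate from the proof of Corollary \ref{growth}, where $C_d$ is extracted from the rank and step bounds in Corollary \ref{main-cor} and Remark \ref{morebounds}, both of which depend only on $d$. No new structural input beyond Corollary \ref{growth} is needed.
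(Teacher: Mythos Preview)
Your proof is correct and follows essentially the same approach as the paper's: argue by contradiction, apply Corollary~\ref{growth} with $d=1/\beta$ to get $|S^n|\le n^{e(\beta)}|S|$ for all $n\ge r$, evaluate at $n=\operatorname{diam}_S(G)$, and derive a contradiction from the diameter hypothesis when $\alpha\, e(\beta)<1$. Your write-up is simply more explicit about the bookkeeping (the choice of $\beta(\alpha)$, the condition $r_0(\alpha)\ge R'(1/\beta)$, and the degenerate case $S=G$), but there is no substantive difference.
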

\begin{proof}
If this does not hold for some $r, r_0$ and $\beta$ then as soon as $r_0$ is large enough (in terms of $\beta$) Corollary \ref{growth} applies and yields $|S^n| \leq n^e|S|$ for all $n \geq r$ and some $e = e(\beta) > 0$. In particular, when $n$ reaches the diameter of $G$, we obtain $S^n = G$ so $|G| \leq (\diam_S(G))^e |S|$. This contradicts our hypothesis if $e < 1/\alpha$.
\end{proof}

We shall apply Corollary \ref{finite-diameter} later on to deduce an isoperimetric inequality; see Corollary \ref{isop}.

Finally we show that by repeatedly applying Corollary \ref{main-cor} we can obtain the following more precise result, which says something non trivial for finite groups as well. We say that a polycyclic group has length at most $L$ if it is obtained from the trivial group by at most $L$ successive extensions by a cyclic group.

\begin{corollary}\label{polycyclic} Let $G$ be a group which has a left-invariant metric $d : G \times G \rightarrow [0,\infty)$ satisfying the following conditions for some $K \geq 1$:
\begin{enumerate}
\item \textup{(Uniform doubling property)} We have $|B(2r)| \leq K |B(r)|$ for every $r > 0$;
\item \textup{(Finiteness condition)} There are at most $K$ different subgroups of the form $\langle B(r) \rangle$ as $r$ ranges over $(0,\infty)$.
\end{enumerate}
Then $G$ has a subgroup of index at most $O_K(1)$ which is polycyclic of length $O_K(1)$.
\end{corollary}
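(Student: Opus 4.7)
My plan is to proceed by induction on $L$, the length of the chain of distinct subgroups of the form $\langle B(r)\rangle$, which by condition (ii) satisfies $L \leq K$. The base case $L = 0$ forces $G = \{e\}$, which is trivially polycyclic of length $0$. For the inductive step, I first enumerate the chain
$$\{e\} = H_0 \subsetneq H_1 \subsetneq \cdots \subsetneq H_L = G,$$
and set $s_L := \inf\{r > 0 : \langle B(r)\rangle = G\}$. Letting $K' = K'(K)$ denote the constant from Corollary \ref{main-cor}, I take $A := B(K' s_L)$ and $S := B(s_L)$; the triangle inequality yields $S^{K'} \subseteq A$, condition (i) gives $|A^2| \leq K|A|$, and $\langle S\rangle = G$. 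Applying Corollary \ref{main-cor} together with Remark \ref{bounds-nilpotent} then produces a subgroup $G_1 \leq G$ of index $O_K(1)$ and a finite normal subgroup $N \lhd G$ contained in $G_1$, such that $G_1/N$ is nilpotent of rank and step $O_K(1)$ and hence polycyclic of length $O_K(1)$.

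To close the induction I need a polycyclic subgroup of $N$ of bounded index in $N$ and bounded polycyclic length, which can then be combined with the polycyclic structure of $G_1/N$ by concatenating subnormal series (possibly passing to a further finite-index subgroup of $G_1$, of index $O_K(1)$, in order to normalize the chosen polycyclic subgroup of $N$). For this I would apply the inductive hypothesis to $H_{L-1}$, which inherits conditions (i) and (ii) from $G$ with chain length $L-1$: for $r < s_L$ one has $B_G(r) \subseteq H_{L-1}$, so $B_{H_{L-1}}(r) = B_G(r)$ and uniform doubling transfers directly at these scales, while for $r \geq s_L$ the set $B_{H_{L-1}}(r) = B_G(r) \cap H_{L-1}$ still generates $H_{L-1}$ because it contains $B_G(s_{L-1})$. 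Since $N$ is finite and contained in $B(4K's_L)$, for infinite $G$ it is a proper subgroup contained in $H_{L-1}$, and the inductive hypothesis yields a polycyclic subgroup of $H_{L-1}$ of index and length $O_K(1)$; intersecting with $N$ gives the required polycyclic subgroup $N' \leq N$. For finite $G$, we have $K \geq |G|$, and the trivial subgroup—of index $|G| = O_K(1)$ and polycyclic length $0$—already suffices.

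The main obstacle is verifying that condition (i) is inherited by $H_{L-1}$ with a constant comparable to $K$ at all scales, not just for $r < s_L$. At scales $r \geq s_L$ one must compare $|B_{H_{L-1}}(2r)|$ with $|B_{H_{L-1}}(r)|$ purely within $H_{L-1}$, which requires a covering argument in the coset space $G/H_{L-1}$ combined with the doubling for $G$. I expect this argument to yield condition (i) for $H_{L-1}$ with constant $K^{O(1)}$, which is still $O_K(1)$ and therefore suffices for the inductive step; a secondary difficulty is that the normalization step combining the polycyclic series of $N'$ with the lifted series from $G_1/N$ may require a further finite-index passage bounded in terms of $|N/N'| = O_K(1)$. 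Assembling the polycyclic series for $N'$ (of length $O_K(1)$ from the induction) together with the polycyclic series lifted from $G_1/N$ (also $O_K(1)$) produces a subnormal series with cyclic quotients of total length $O_K(1)$, inside a subgroup of $G$ whose index is bounded by the product of the relevant finite indices, each of which is $O_K(1)$.
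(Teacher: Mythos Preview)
Your inductive skeleton matches the paper's, but there is a genuine gap in the step where you claim that the finite normal subgroup $N$ produced by Corollary~\ref{main-cor} lies inside $H_{L-1}$. You only know $N \subset B(4K's_L)$, and this ball has radius much larger than $s_L$, so it is certainly not contained in $H_{L-1}$; nor is there any reason a finite normal subgroup of $G$ should sit inside any particular term of the chain $\{e\} = H_0 \subsetneq \cdots \subsetneq H_L = G$. The sentence ``for infinite $G$ it is a proper subgroup contained in $H_{L-1}$'' is simply not justified: being proper is not the same as being contained in the penultimate term of a fixed chain. Without this containment, you cannot invoke the inductive hypothesis on $H_{L-1}$ to produce a polycyclic subgroup of $N$ of bounded index.

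The paper's proof sidesteps both this issue and your ``main obstacle'' about inheritance of uniform doubling by $H_{L-1}$, by setting up the induction differently. Rather than passing to the subgroup $H_{L-1}$ with its own balls, it keeps the ambient balls $B_G(r)$ and inducts on the claim: if there are at most $d$ distinct groups $\langle B(r)\rangle$ for $r \le R$, then $\langle B(R)\rangle$ has a polycyclic subgroup of index $O_{K,d}(1)$. The inductive hypothesis is applied at radius $R_0/2$ (where $R_0$ plays the role of your $s_L$), yielding a polycyclic subgroup $P \le \langle B(R_0/2)\rangle$. The link to $N$ is then made by a covering argument: uniform doubling in $G$ covers $B(4K'R_0)$ by $O_K(1)$ translates of $B(R_0/2)$, hence $N$ is covered by $O_{K,d}(1)$ translates of $P$, giving $[N:N\cap P]=O_{K,d}(1)$. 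From there one passes to a characteristic polycyclic subgroup $N_0 \le N$ and combines with the nilpotent quotient $G_1/N$. This avoids any need to check that $H_{L-1}$ itself satisfies condition~(i), and it replaces your unjustified containment $N \subset H_{L-1}$ with a bounded-index intersection coming from the doubling hypothesis directly.
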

\begin{proof} Given $d \in \N$ and $R\geq 0$ we claim that if there are at most $d$ groups of the form $\langle B(r) \rangle$ for $r\leq R$, then $\langle B(R) \rangle$ contains a polycyclic subgroup of index $O_{K,d}(1)$. This is clearly enough to establish the corollary. To prove the claim, we proceed by induction on $d$. It is clear for $d=1$, since $\langle B(R) \rangle$ is then the trivial group.

Let $R_0$ be the upper bound of those $R'\geq 0$ such that there are at most $d-1$ groups of the form $\langle B(r) \rangle$ for $r\leq R'$. Without loss of generality\footnote{Recall that $B(R)$ is the \emph{closed} ball $\{g \in G; d(1,g) \leq R\}$.} $0<R_0 \leq R$. Then $\langle B(r) \rangle= \langle B(R) \rangle$ whenever $R_0 \leq r \leq R$. By the induction hypothesis, $\langle B(R_0/2) \rangle$ contains a polycyclic subgroup $P$ of index $O_{K,d}(1)$ and length $O_{K,d}(1)$.

Let $K' = O_{K}(1)$ be the constant obtained in Corollary \ref{main-cor}. Setting $S=B(R_0)$ and $A=B(K'R_0)$, we may apply Corollary \ref{main-cor} and conclude that $G = \langle B(R_0) \rangle$ contains a subgroup $G_1$ of index $O_K(1)$ such that $G_1$ has a normal subgroup $N \subset B(4K'R_0)$ with $G_1/N$ nilpotent with step and number of generators $O_K(1)$. It is enough to show that $G_1$ has a polycyclic subgroup of index $O_{K,d}(1)$, because then so will $G = \langle B(R) \rangle$.

By the uniform doubling assumption and a covering argument, $B(4K'R_0)$ can be covered by $O_K(1)$ translates of $B(R_0/2)$. It follows that $N$ can be covered by $O_{K,d}(1)$ translates of $P$, and in particular $[N:N \cap P]=O_{K,d}(1)$. Now $N \cap P$ is a subgroup of $P$ and hence is also polycyclic of length $O_{K,d}(1)$; in particular, it is generated by $O_{K,d}(1)$ elements. Therefore so is $N$, and hence $N_0$, the intersection of all subgroups of $N$ of index at most $[N : N \cap P]$, has index $O_{K,d}(1)$ in $N$. (To see this recall Schreier's theorem that if $S$ is a symmetric generating set for a group $\Gamma$, and if $\Gamma' \leq \Gamma$ has index $k$, then $S^{2k-1}$ contains a set of generators for $\Gamma'$.)

The group $N_0$, being a subgroup of $N \cap P$, is polycyclic. It is also characteristic in $N$ and hence, since $N$ is normal in $G_1$, $N_0$ is also normal in $G_1$.

However $G_1$ acts by conjugation $N/N_0$, and the kernel of this action is a subgroup $G'_1$ of $G_1$ with index $O_{K,d}(1)$. Now $(N \cap G'_1)/N_0$ is central in $G'_1/N_0$ and of size $O_{K,d}(1)$. We thus have $N_0 \leq N \cap G'_1 \leq G'_1$, where each successive quotient is polycyclic of length $O_{K,d}(1)$. It follows that $G'_1$ is polycyclic of length $O_{K,d}(1)$, which is what we wanted to establish.
\end{proof}

\begin{remark} There are examples of groups which satisfy the assumptions of Corollary \ref{polycyclic} yet have no nilpotent subgroup of index $O_K(1)$. For instance, let $p$ be a large prime and set $G:= (\Z/p\Z)^2 \rtimes\Z$, where the action is by an element of $\operatorname{SL}_2(\Z/p\Z)$ which is a diagonal matrix $\gamma$ of the form $\gamma:=\operatorname{diag}(x,x^{-1})$, where $x \in \F_p^*$ is a generator of the multiplicative group of $\F_p$. Then no subgroup of $G$ of index less than $p-1$ is nilpotent (note that such a subgroup must contain $(\Z/p\Z)^2$ and be the preimage of the subgroup of $\Z$ with that index). However we can endow $G$ with a uniformly doubling weighted word metric (with $3$ generators) by letting the two standard generators of $(\Z/p\Z)^2$ each have weight $\frac{1}{p}$ and $\gamma$ have weight $1$.\end{remark}

We turn now to some geometric applications of the above results. \vspace{11pt}

\textsc{Manifolds with a lower bound on Ricci Curvature.} A.~Petrunin suggested to us some years ago\footnote{See also {\tt http://mathoverflow.net/questions/11091}.} that a result such as Corollary \ref{petrunin} would give a purely group-theoretical proof of a theorem of Fukaya and Yamaguchi \cite{fukaya-yamaguchi} according to which fundamental groups of almost non-negatively curved manifolds are virtually nilpotent. Recall that a closed manifold $M$ is said to be almost non-negatively curved if one can find a sequence of Riemannian metrics on it for which $\operatorname{diam}(M) \leq 1$ while $K_M \geq -1/n$ where $K_M$ is the sectional curvature. Indeed, a simple application of the Bishop-Gromov inequalities combined with Corollary \ref{petrunin} yields the following improvement assuming only a condition on the Ricci curvature.

\begin{corollary}[Ricci gap]\label{ricci} Given $d \in \N$, there is $\eps(d)>0$ such that the following holds. Let $M$ be an $d$-dimensional compact Riemannian manifold with Ricci curvature bounded below by $-\eps$ and diameter at most $1$. Then $\pi_1(M)$ has normal subgroup of index $O_d(1)$, which is finite-by-(O(d)-nilpotent). In particular $\pi_1(M)$ is virtually nilpotent.
\end{corollary}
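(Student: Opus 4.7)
The plan is to derive this from Corollary \ref{petrunin} by applying the Bishop--Gromov volume comparison theorem on the Riemannian universal cover. First I would pass to the universal cover $\tilde M$ equipped with the pulled-back Riemannian metric, so that $\Gamma := \pi_1(M)$ acts by isometries, freely and properly discontinuously, and the Ricci lower bound $\operatorname{Ric} \geq -\eps$ survives. Fixing a base point $\tilde p \in \tilde M$ and a Dirichlet fundamental domain $F \ni \tilde p$ of diameter at most $\diam(M) \leq 1$, I would consider the finite symmetric set
\[ S := \{ \gamma \in \Gamma : d(\tilde p, \gamma \tilde p) \leq 2 \}, \]
which by a standard connectedness argument (the $\gamma F$ with $\gamma \in S$ cover the closed unit ball around $\tilde p$) generates $\Gamma$ and contains the identity.

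For any integer $r \geq 1$, every $\gamma \in S^r$ satisfies $d(\tilde p, \gamma \tilde p) \leq 2r$, so the pairwise disjoint translates $\{ \gamma F : \gamma \in S^r \}$ all lie inside $B(\tilde p, 2r+1)$, while $B(\tilde p, 1) \subseteq \bigcup_{\gamma \in S} \gamma F$. Dividing the resulting volume inequalities yields
\[ \frac{|S^r|}{|S|} \;\leq\; \frac{\vol(B(\tilde p, 2r+1))}{\vol(B(\tilde p, 1))} \;\leq\; \frac{V_{-\kappa}(2r+1)}{V_{-\kappa}(1)}, \]
where the second inequality is Bishop--Gromov applied with $\kappa := \eps/(d-1)$, and $V_{-\kappa}(R)$ denotes the volume of an $R$-ball in the simply-connected $d$-dimensional model space of constant sectional curvature $-\kappa$. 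The key observation is that for each fixed $R$ this ratio converges to its Euclidean counterpart $(2R+1)^d$ as $\kappa \to 0^+$, and in particular stays polynomially bounded in $r$ once $\eps$ is small.

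Now I would fix a positive integer $r = r(d) > R(d+1)$ (where $R(\cdot)$ is the threshold in Corollary \ref{petrunin}), taken large enough that $2(2r+1)^d \leq r^{d+1}$, and then choose $\eps(d) > 0$ so small that $V_{-\kappa}(2r+1)/V_{-\kappa}(1) \leq 2(2r+1)^d \leq r^{d+1}$ at this fixed scale. The above inequality then delivers $|S^r| \leq r^{d+1} |S|$, which is exactly the polynomial-growth hypothesis of Corollary \ref{petrunin} with exponent $d+1$. That corollary produces a subgroup $G_1 \leq \Gamma$ of index $O_d(1)$ together with a finite normal subgroup $N \lhd G_1$ such that $G_1/N$ is $O(d)$-nilpotent. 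To upgrade $G_1$ to a normal subgroup of $\Gamma$, I would replace it by its normal core $G_1' := \bigcap_{\gamma \in \Gamma} \gamma G_1 \gamma^{-1}$, still of index $O_d(1)$; then $N \cap G_1'$ is a finite normal subgroup of $G_1'$ whose quotient embeds into the $O(d)$-nilpotent group $G_1/N$, giving the required finite-by-($O(d)$-nilpotent) normal subgroup. The main technical point, and essentially the only non-formal step in the argument, is coordinating the choice of $r(d)$ and $\eps(d)$ so that Bishop--Gromov delivers exactly the discrete polynomial-growth hypothesis required; beyond this the proof is routine.
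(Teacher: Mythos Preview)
Your proof is correct and follows essentially the same route as the paper: pass to the universal cover, use a Dirichlet fundamental domain to relate $|S^r|/|S|$ to a ratio of ball volumes, bound that ratio via Bishop--Gromov, and invoke Corollary~\ref{petrunin}. The only cosmetic differences are your choice of radius $2$ rather than $3$ for $S$ and exponent $d+1$ rather than $2d$; your added normal-core step is not actually needed, since tracing Corollary~\ref{petrunin} back to Corollary~\ref{main-cor} shows $G_1$ is already constructed as a normal subgroup, but your argument for it is fine (a subgroup of an $\ell$-nilpotent group is again $\ell$-nilpotent, as one sees by intersecting the central series $\langle u_j,\ldots,u_\ell\rangle$ with the subgroup).
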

\begin{proof} Fix a base point $x_0$ on the universal cover $\widetilde{M}$ and let $\mathcal{F}$ be a Dirichlet fundamental domain based at $x_0$ for the action of $\Gamma:=\pi_1(M)$: that is,
\[ \mathcal{F}:=\{p \in \widetilde{M}: d(x_0,p) \leq d(\gamma \cdot x_0,p) \; \mbox{for all} \; \gamma \in \Gamma\}.\] Set $S:=\{\gamma \in \Gamma: d(\gamma \cdot x_0,x_0) \leq 3\}$. Note that $\operatorname{diam}(\mathcal{F}) \leq 1$ and that $S$ is symmetric and contains $1$. Observe further that $S$ generates $\Gamma$ and that for every integer $r \geq 1$ we have $B(x_0,r) \subset S^r \cdot \mathcal{F} \subset B(x_0,3r+1)$, where $B(x_0,r)$ is the ball of radius $r$ on $\widetilde{M}$ for the Riemannian metric lifted from $M$. It follows that

\begin{equation}\label{above-inequalities} \frac{|S^r|}{|S|} \leq \frac{|B(x_0,3r+1)|}{|B(x_0,1)|}.\end{equation}

From the assumed Ricci curvature bound and the Bishop-Gromov volume comparison estimates (see \cite[Theorem 4.19]{gallot-hulin}) we have the bound

$$\frac{|B(x_0,r)|}{|B(x_0,1)|} \leq \frac{|B_{-\eps}(r)|}{|B_{-\eps}(1)|},$$

\noindent where $B_{-\eps}(r)$ is a metric ball in the comparison model space with constant curvature $-\eps$ and dimension $d$. The volume of this ball is  $|B_{-\eps}(r)|=\frac{1}{(\sqrt{\eps})^d}|B_{-1}(r/\sqrt{\eps})|=c_d\int_{0}^{r} \big( \frac{\sinh(\sqrt{\eps}t)}{\sqrt{\eps}} \big)^{d-1} dt$, where $c_d>0$ is the volume of the $d-1$-dimensional unit sphere (see \cite[p. 138]{gallot-hulin} for this volume computation). As $\eps$ tends to $0$, this tends to $c_dr^d/d$.  Combining this with \eqref{above-inequalities} we obtain that for every $R_0 \geq 1$ there is some $\eps_0 = \eps_0(d, R_0)$ such that

$$\frac{|S^r|}{|S|} \leq 2(3r+1)^d$$
for all $r \leq R_0$ provided that $0 < \eps < \eps_0$. Letting $R_0 = R_0(2d)$ be as in Corollary \ref{petrunin}, we obtain the existence of some $\eps=\eps(d)>0$ for which the conclusion of that statement holds. This completes the proof.
\end{proof}

\begin{remark} The fact that $\pi_1(M)$ is virtually nilpotent under the above Ricci bounds assumptions was obtained by Cheeger and Colding in \cite{cheeger-colding} (and had been conjectured earlier by Gromov) and their proof was recently completed and extended by Kapovitch and Wilking \cite{kapovitch-wilking}, who also established that the index of the nilpotent subgroup is uniformly bounded by a constant depending on the dimension $d$ only, an improvement which seems beyond the scope of our methods. This extended earlier work of Kapovitch, Petrunin and Tuschmann in \cite{kapovitch-petrunin-tuschmann} which proved the same result under sectional curvature bounds instead of Ricci. The work of these authors is, unlike our work, differential-geometric in nature. The linear dependence in $d$ of the nilpotency length proven in our Corollary \ref{ricci} seems new however. \end{remark}

\textsc{An isoperimetric inequality.} It has been well-known since the work of Varopoulos on Kesten's conjecture (\cite{varopoulos,pittet-saloff}) that isoperimetric inequalities on Cayley graphs are closely related to lower bounds on the volume growth. Using this idea and Corollary \ref{finite-diameter}, we can derive the following property of finite Cayley graphs with a polynomial upper bound on the diameter.

\begin{corollary}[Isoperimetric inequality on finite groups] \label{isop} Let $\alpha>0$. Then there are $r_0 \in \N$ and $\beta>0$ with $\lim_{\alpha \rightarrow 0} \beta(\alpha) =0$ such that the following holds. Let $G$ be a finite group generated by a symmetric set $S$ and assume that the diameter of the associated Cayley graph satisfies $\operatorname{diam}_S(G) \leq (|G|/|S|)^\alpha$. Then for every subset $E$ in $G$ with $\frac{1}{2}r_0 \leq |E|\leq \frac{1}{2}|G|$, $|\partial E| \geq \frac{1}{8}|S|^\beta |E|^{1-\beta}$.
\end{corollary}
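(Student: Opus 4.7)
The plan is to combine the volume-growth lower bound of Corollary \ref{finite-diameter} with the classical Coulhon--Saloff-Coste isoperimetric inequality from \cite{varopoulos,pittet-saloff}. Recall that this inequality states: for any finite group $G$ with symmetric generating set $S$, and any $E \subseteq G$ with $|E| \leq |G|/2$,
\begin{equation*}
|\partial E| \geq \frac{|E|}{2\phi(2|E|)}, \qquad \phi(v) := \min\{n \in \N : |S^n| \geq v\}.
\end{equation*}
Its proof is a standard path-counting argument: for any $r$ with $|S^r| \geq 2|E|$, every $x \in E$ has at least $|E|$ elements of $xS^r$ lying in $G\setminus E$, and joining each such pair by a word of length at most $r$ in $S$ produces a boundary crossing; an averaging argument then yields $r\cdot|\partial E| \gtrsim |E|$.

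First I would apply Corollary \ref{finite-diameter} to the parameter $\alpha$, extracting $r_0=r_0(\alpha)$ and $\beta = \beta(\alpha)$ with $\beta(\alpha) \to 0$ as $\alpha \to 0$, such that under the diameter hypothesis one has $|S^r| \geq \min\{r^{1/\beta}|S|,|G|\}$ for every $r \geq r_0$. Given $E$ with $r_0/2 \leq |E| \leq |G|/2$, I would set $r := \max\{r_0,\lceil (4|E|/|S|)^\beta\rceil\}$. The volume bound then gives $|S^r| \geq \min\{4|E|,|G|\} \geq 2|E|$ (using $|E| \leq |G|/2$), so $\phi(2|E|) \leq r$, and the Coulhon--Saloff-Coste inequality yields
\begin{equation*}
|\partial E| \geq \frac{|E|}{2r} \geq \frac{|E|}{2\max\{r_0,(4|E|/|S|)^\beta\}}.
\end{equation*}

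In the generic regime $(4|E|/|S|)^\beta \geq r_0$, this immediately gives $|\partial E| \geq |S|^\beta|E|^{1-\beta}/(2\cdot 4^\beta) \geq |S|^\beta|E|^{1-\beta}/8$ after shrinking $\beta$ slightly to absorb the factor $4^\beta$. The complementary regime $|E| \leq r_0^{1/\beta}|S|/4$ --- in which $|E|$ is comparable to $|S|$ --- should be handled by an elementary direct argument: if $|E| \leq |S|/2$ then each $x \in E$ has at least $|S|/2$ generators $s \in S$ with $xs \notin E$, and the desired bound is immediate; while in the intermediate range $|S|/2 \leq |E| \leq r_0^{1/\beta}|S|/4$ the bound $|\partial E| \geq |E|/(2r_0)$ already exceeds $|S|^\beta|E|^{1-\beta}/8$ after a further small reduction of $\beta$.

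The main obstacle will be the bookkeeping across these regimes of $(|E|,|S|,|G|,r_0)$ and the calibration of $\beta$ so that the final numerical constant comes out to be $1/8$; the essential analytic content, namely the Coulhon--Saloff-Coste path-counting inequality, is entirely classical. An alternative route, bypassing the function $\phi$, would be to fix the scale $r$ at the outset and carry out the path-counting step directly, tracking the multiplicities of word representations in $S^r$ explicitly --- this is conceptually cleaner but requires slightly more care with the combinatorics.
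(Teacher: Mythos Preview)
Your proposal is correct and follows essentially the same route as the paper: both combine Corollary \ref{finite-diameter} with the isoperimetry-versus-growth inequality $|\partial E| \gtrsim |E|/r(E)$ (what you call Coulhon--Saloff-Coste), where $r(E)$ is the smallest radius at which the ball has at least $2|E|$ elements. The paper states this lemma separately and proves it by an averaging/convolution argument rather than your path-counting sketch, and then leaves the bookkeeping you spell out (choosing $r$, the case split on the size of $|E|$ relative to $|S|$, calibrating $\beta$) as ``almost immediate''; your more explicit treatment of those regimes is a reasonable elaboration of what the paper elides.
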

\begin{proof} This follows almost immediately from Corollary \ref{finite-diameter} and the following well-known lemma, which may be found in \cite[Chapter 5]{gromov-pansu-lafontaine} or \cite{pittet-saloff} and references therein. For the convenience of the reader we offer a self-contained proof.\end{proof}

\begin{lemma}[Isoperimetry versus growth] Let $G$ be group and $S$ some finite symmetric generating set containing $1$. Let $B(r)=S^r$ be the word ball of radius $r$ in the word metric. Let $\partial E= SE \setminus E$ be the boundary of a subset $E \subset G$. If $E \subseteq G$ is a set, write $r(E)$ for the infimum of those $r$ for which $|B(r)| \geq 2|E|$. Then for all $E$ with $|E|<|G|/2$ we have $|E| \leq 4r(E) |\partial E|$.
\end{lemma}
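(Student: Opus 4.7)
The plan is to compare an upper and a lower bound for $\sum_{g \in B(r)} |gE \triangle E|$, where $r := r(E)$ and $b := |B(r)|$. Note that by hypothesis $|E| < |G|/2$, so such an $r$ exists (and is a positive integer), and moreover $b \geq 2|E|$.

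First I would prove the telescoping Lipschitz estimate. If $s \in S$ then $sE \setminus E \subseteq SE \setminus E = \partial E$; since $S$ is symmetric, the same holds for $s^{-1}$, so $|E \setminus sE| = |s^{-1}E \setminus E| \leq |\partial E|$, giving $|sE \triangle E| \leq 2|\partial E|$. For a general $g \in B(r)$ written as $g = s_1 \cdots s_\ell$ with $\ell \leq r$, a telescoping decomposition along the partial products $s_1 \cdots s_i E$, together with the left-invariance of cardinality, yields
\begin{equation*}
|gE \triangle E| \leq \sum_{i=0}^{\ell-1} |s_{i+1} E \triangle E| \leq 2\ell\,|\partial E| \leq 2r\,|\partial E|.
\end{equation*}
Summing over $g \in B(r)$ gives the upper bound
\begin{equation*}
\sum_{g \in B(r)} |gE \triangle E| \leq 2rb\,|\partial E|.
\end{equation*}

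For the matching lower bound I would use the identity $|gE \triangle E| = 2|E| - 2|gE \cap E|$ together with a simple double-counting argument. For each $x \in E$, the set of $g$ with $x \in gE$ equals $xE^{-1}$, which has cardinality $|E|$. Summing,
\begin{equation*}
\sum_{g \in B(r)} |gE \cap E| = \sum_{x \in E} |B(r) \cap xE^{-1}| \leq \sum_{x \in E} |E| = |E|^2.
\end{equation*}
Therefore, using $b \geq 2|E|$,
\begin{equation*}
\sum_{g \in B(r)} |gE \triangle E| = 2b|E| - 2\sum_{g \in B(r)} |gE \cap E| \geq 2b|E| - 2|E|^2 \geq b|E|.
\end{equation*}

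Combining the two bounds gives $b|E| \leq 2rb\,|\partial E|$, and dividing by $b > 0$ yields $|E| \leq 2r\,|\partial E| \leq 4r(E)\,|\partial E|$, as claimed. There is no real obstacle here; the only point requiring some care is the telescoping step, which must be written with left translates so as to exploit the left-invariance that makes $|s_i E \triangle E|$ independent of the prefix.
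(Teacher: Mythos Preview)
Your proof is correct and follows essentially the same strategy as the paper: a telescoping Lipschitz bound $|gE\triangle E|\le 2|g|\,|\partial E|$ combined with an averaging (or summing) over $g\in B(r)$, using that each $x\in E$ lies in at most $|E|$ translates $gE$. The only difference is cosmetic: the paper passes through the averaged function $f_r=\frac{1}{|B(r)|}\sum_{g\in B(r)} 1_{gE}$ and bounds $\|f_r-f\|_1$ from below by $\tfrac12|E|$ using $f_r(x)\le\tfrac12$ on $E$, which yields $|E|\le 4r|\partial E|$; you instead sum $|gE\triangle E|$ directly and use the exact identity $|gE\triangle E|=2|E|-2|gE\cap E|$, which avoids one triangle inequality and gives the slightly sharper $|E|\le 2r|\partial E|$.
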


\begin{proof} We give a proof for the reader's convenience. Let $f=1_E$ the indicator function of the set $E$, and $f_r:=\frac{1}{|B(r)|} \sum_{g \in B(r)} g \cdot f$ be the average of $f$ over balls of radius $r$. By the triangle inequality we have $||g \cdot f -f||_1 \leq |g|\cdot \max_{s \in S} ||s \cdot f -f||_1$, where $|g|$ is the distance to the identity in the word metric. Moreover  $||s \cdot f -f||_1=|sE \vartriangle E|\leq 2|\partial E|$ for every $s\in S$. Hence $||f_r -f||_1 \leq 2r|\partial E|$. On the other hand for every $x \in E$, there are at most $|E|$ elements $g \in B(r)$ such that $g \cdot 1_E (x) \neq 0$. Therefore if $|B(r)| \geq 2|E|$ then $f_r(x)\leq \frac{1}{2}$ and hence $||f_r -f||_1 \geq \frac{1}{2}|E|$. The claim follows.
\end{proof}

In \cite{benjamini-kozma}, Benjamini and Kozma conjecture that one can take $\beta=\alpha$ in the Corollary \ref{isop} (at the expense of introducing a possible multiplicative constant $c_\alpha$ in place of $|S|^\beta/8$ in (ii)). This, however, is beyond the scope of our method. We would like to thank Itai Benjamini for drawing our attention to their work and its connection to Gromov-type theorems.\vspace{11pt}

\textsc{A generalized Margulis lemma.} In hyperbolic geometry, the Margulis lemma asserts that there is a constant $\eps=\eps(n)>0$, the Margulis constant, such for any discrete subgroup $\Gamma$ of isometries of the hyperbolic $n$-space $\mathbb{H}^n$, and any point $x\in \mathbb{H}^n$, the almost stabiliser $\Gamma_\eps(x): =\{ \gamma \in \Gamma: d(\gamma \cdot x,x) < \eps \}$ is virtually cyclic. This lemma is important for describing the geometry of cusps in hyperbolic manifolds, or for establishing volume lower bounds (see e.g. \cite{thurston}). Various generalisations of this lemma have been established in the past for more general Riemannian manifolds under curvature upper and lower bounds (e.g.  \cite[chap. 6]{burago-zalgaller}). Typically in these results, unless the manifold has strictly negative curvature, ``virtually cyclic'' in the conclusion of the lemma must be replaced by ``virtually nilpotent''.

In \cite[\S 5.F]{gromov-pansu-lafontaine} Gromov raises the issue of establishing a \emph{generalized Margulis lemma} under very weak assumptions on the metric space and he proposes a conjectural statement in this direction. Below we answer Gromov's question affirmatively.

A metric space $X$ is said to have \emph{bounded packing} with packing constant $K$ if there is $K>0$ such that every ball of radius $4$ in $X$ can be covered by at most $K$ balls of radius $1$. Say that a subgroup $\Gamma$ of isometries of $X$ acts \emph{discretely} on $X$ if every orbit is discrete in the sense that $\{\gamma \in \Gamma : \gamma \cdot x \in \Sigma\}$ is finite for every $x \in X$ and for every bounded set $\Sigma \subseteq X$.

\begin{corollary}[Generalized Margulis Lemma]\label{gromov-conj-again} Let $K \geq 1$ be a parameter. Then there is some $\eps(K) > 0$ such that the following is true. Suppose that $X$ is a metric space with packing constant $K$, and that $\Gamma$ is a subgroup of isometries of $X$ which acts discretely. Then for every $x \in X$ the ``almost stabiliser'' $\Gamma_\eps(x) = \langle S_{\eps}(x)\rangle$, where $S_{\eps}(x) : =\{ \gamma \in \Gamma: d(\gamma \cdot x,x) < \eps \}$, is virtually nilpotent.
\end{corollary}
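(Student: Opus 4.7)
The plan is to apply Corollary \ref{main-cor} to the group $G := \Gamma_\eps(x) = \langle S_\eps(x)\rangle$, taking $S := S_\eps(x)$ as the generating set and $A := S_2(x) \cap \Gamma_\eps(x)$ as the approximate subgroup, for a suitable $\eps = \eps(K) > 0$. First I would use the bounded packing assumption to establish the doubling bound $|A^2| \leq K|A|$. By hypothesis, $B(x,4)$ is covered by balls $B(y_1,1),\ldots,B(y_K,1)$. For each index $i$ such that $B(y_i,1)$ meets the $\Gamma_\eps(x)$-orbit of $x$, select a representative $\gamma_i \in \Gamma_\eps(x) \cap S_4(x)$ with $\gamma_i \cdot x \in B(y_i,1)$, and discard the remaining indices. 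Any $\gamma \in S_4(x)\cap \Gamma_\eps(x)$ has its orbit point $\gamma \cdot x$ in some such $B(y_i,1)$, whence $d(\gamma_i \cdot x, \gamma \cdot x) \leq 2$ and (by isometry) $\gamma_i^{-1}\gamma \in S_2(x) \cap \Gamma_\eps(x) = A$. The covering $A\cdot A \subseteq S_4(x) \cap \Gamma_\eps(x) \subseteq \bigcup_{i=1}^{K} \gamma_i A$ follows, which gives the desired doubling estimate. It is crucial here that we intersect with $\Gamma_\eps(x)$, both to keep the $\gamma_i$ inside the ambient group $G$ and to ensure that $A$ is itself a subset of $G$.

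Next I would synchronise the packing scale with the displacement scale by setting $\eps := 2/K'(K)$, where $K'(\cdot)$ denotes the function provided by Corollary \ref{main-cor} applied with doubling constant $K$. Then $S^{K'(K)} \subseteq S_{K'\eps}(x) \cap \Gamma_\eps(x) = S_2(x) \cap \Gamma_\eps(x) = A$, which is precisely the hypothesis $S^{K'} \subseteq A$ required by the corollary. All remaining hypotheses are immediate: $S$ and $A$ are finite by discreteness of the $\Gamma$-action on $X$ (the bounded set $B(x,2)$ contains only finitely many orbit points of $x$); both are symmetric and contain the identity since $d(\gamma^{-1}\cdot x,x)=d(x,\gamma\cdot x)$; and $S$ generates $G$ tautologically.

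Having verified the hypotheses of Corollary \ref{main-cor}, that corollary supplies a finite normal subgroup $N \lhd G$ and a subgroup $G_1 \leq G$ of index $O_K(1)$ with $G_1/N$ nilpotent of rank and step $O_K(1)$; in particular $G = \Gamma_\eps(x)$ is virtually nilpotent. There is no genuine obstacle in this argument: once one realises that bounded packing at the single scale $4 \to 1$ converts directly into a doubling bound at the matching scale (hence forcing the choice $A = S_2(x) \cap \Gamma_\eps(x)$), and that the remaining freedom in $\eps$ can be used to make $K' \eps \leq 2$, the result drops out of the main theorem with essentially no additional work.
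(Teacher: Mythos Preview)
Your proposal is correct and follows essentially the same approach as the paper: use bounded packing to show $S_4(x)$ is covered by at most $K$ translates of $S_2(x)$, hence $|S_2(x)^2|\leq K|S_2(x)|$, then set $\eps = 2/K'$ and invoke Corollary~\ref{main-cor}. The only difference is that you intersect $S_2(x)$ and $S_4(x)$ with $\Gamma_\eps(x)$ so that $A$ literally sits inside $G=\langle S\rangle$; the paper simply takes $A=S_2(x)$, which technically may contain elements of $\Gamma\setminus\Gamma_\eps(x)$, but this is harmless since the proof of Corollary~\ref{main-cor} works in any ambient group and one can always pass to $G_0\cap\langle S\rangle$. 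One small wording fix: your selection criterion should be ``for each $i$ such that some $\gamma\in S_4(x)\cap\Gamma_\eps(x)$ satisfies $\gamma\cdot x\in B(y_i,1)$'' rather than ``$B(y_i,1)$ meets the $\Gamma_\eps(x)$-orbit of $x$'', since the latter could pick up orbit points far from $x$ for which no $\gamma_i\in S_4(x)$ exists---but this is cosmetic, since only the former indices are needed for the covering.
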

\begin{proof}  Each set $S_{r}(x)$ is symmetric and contains the identity. Now by the assumption on $X$ the ball $B(x,4)$ can be covered by collection of balls $B(x_i, 1)$, $i = 1,2,\dots,K$.  Suppose that for $i = 1,2,\dots,k$ there is at least one element $\gamma_i \in S_4(x)$ with $\gamma_i \cdot x \in B(x_i,1)$. Suppose now that $\gamma \in S_4(x)$ is arbitrary; then there is some $i \in \{1,2,\dots,k\}$ such that $\gamma \cdot x \in B(x_i,1)$. But this means that $d(\gamma \cdot x, \gamma_i \cdot x) < 2$, and therefore $\gamma_i^{-1}\gamma \in S_2(x)$. This implies that $S_4(x) \subseteq \bigcup_{i=1}^k \gamma_i S_2(x)$, which yields (since $S_2(x)^2 \subseteq S_4(x)$) the doubling estimate $|S_2(x)^2| \leq K|S_2(x)|$.

Let $K'=K'(K)>0$ be the constant from Corollary \ref{main-cor}. Set $\eps:=2/K'$, $S=S_\eps(x)$ and $A=S_2(x)$. A direct application of Corollary \ref{main-cor} shows that $\Gamma_\eps(x)=\langle S \rangle$ is virtually nilpotent.\end{proof}

\begin{remark} This confirms Gromov's conjecture, which suggested the same conclusion under the slightly stronger hypotheses that every ball of radius $R$ in $X$ can be covered by at most $C(R/r)^m$ balls of radius $r$ for all $0<r<R\leq 1$ and some fixed constants $C,m>0$.\end{remark}

The assumptions of this generalized Margulis lemma are satisfied for example if $X$ is a complete Riemannian manifold with a lower bound on its Ricci curvature, by an immediate application of the Bishop-Gromov volume comparison estimates. In this case, the result was proved by Cheeger-Colding \cite{cheeger-colding} and Kapovitch-Wilking \cite{kapovitch-wilking}, namely:

\begin{corollary}\label{gromov-conj-2} Let $d \geq 1$ be an integer. Then there is $\eps=\eps(d)>0$ with the following property. Suppose that $M$ is a $d$-dimensional complete Riemannian manifold with a Ricci curvature lower bound $\operatorname{Ric} \geq -(d-1)$ and that $\Gamma$ is a subgroup of $\operatorname{Isom}(M)$ which acts properly discontinuously by isometries on $M$. Then for every $x \in M$ the ``almost stabliser'' $\Gamma_\eps(x) :=\{ \gamma \in \Gamma:  d(\gamma \cdot x,x) < \eps \}$ is virtually nilpotent.
\end{corollary}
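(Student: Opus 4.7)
The plan is to deduce Corollary \ref{gromov-conj-2} directly from the purely metric generalized Margulis lemma, Corollary \ref{gromov-conj-again}. To do this, the only real work is to check that the two hypotheses of Corollary \ref{gromov-conj-again}---bounded packing and discreteness of the action---follow from the Riemannian hypotheses on $M$ and the properly discontinuous hypothesis on $\Gamma$, with a packing constant $K$ that depends only on the dimension $d$.

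First I would verify the bounded packing property: every ball of radius $4$ in $M$ can be covered by $K(d)$ balls of radius $1$. Fix $y \in M$ and let $\{y_1, \ldots, y_N\} \subseteq B(y,4)$ be a maximal $1$-separated set, so that the balls $B(y_i,1)$ cover $B(y,4)$ and the balls $B(y_i, 1/2)$ are disjoint. Since each $y_i \in B(y,4)$, these disjoint half-balls all lie in $B(y_i, 9)$ (for any given $i$), because $B(y, 4.5) \subseteq B(y_i, 8.5)$. Thus by Bishop-Gromov volume comparison (see \cite[Theorem 4.19]{gallot-hulin}), which applies since $\operatorname{Ric} \geq -(d-1)$,
\begin{equation*}
\frac{\operatorname{vol}(B(y_i,9))}{\operatorname{vol}(B(y_i,1/2))} \leq \frac{V_{-1,d}(9)}{V_{-1,d}(1/2)} =: K(d),
\end{equation*}
where $V_{-1,d}(r)$ is the volume of a ball of radius $r$ in the hyperbolic space of constant curvature $-1$ and dimension $d$. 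Summing the disjointness and using that all $B(y_i,1/2)$ lie in $B(y_i,9)$ for each $i$, one sees $N \leq K(d)$, which proves bounded packing with constant depending only on $d$.

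Second, I would check that the properly discontinuous action of $\Gamma$ on $M$ is discrete in the sense of Corollary \ref{gromov-conj-again}. Since $M$ is a complete Riemannian manifold, every bounded set $\Sigma \subseteq M$ has compact closure by Hopf--Rinow. For any $x \in M$, the set $\{\gamma \in \Gamma : \gamma \cdot x \in \Sigma\}$ is contained in $\{\gamma \in \Gamma : \gamma \cdot \{x\} \cap \overline{\Sigma} \neq \emptyset\}$, which is finite by proper discontinuity applied to the compact set $\{x\} \cup \overline{\Sigma}$.

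With both hypotheses verified, I apply Corollary \ref{gromov-conj-again} with the packing constant $K(d)$ above, obtaining a corresponding $\varepsilon(d) := \varepsilon(K(d)) > 0$ such that for every $x \in M$, the group $\Gamma_\varepsilon(x) = \langle S_\varepsilon(x)\rangle$ is virtually nilpotent, which is the conclusion. I do not expect any serious obstacle here: the step that needs the most care is the Bishop-Gromov estimate for the packing constant, but this is a standard packing-covering argument once one observes that $d(y_i, y) \leq 4$ lets one compare the small ball at $y_i$ with a large ball at $y_i$ that swallows all the other half-balls.
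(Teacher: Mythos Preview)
Your proposal is correct and follows exactly the paper's own approach: the paper states that Corollary~\ref{gromov-conj-2} follows from Corollary~\ref{gromov-conj-again} because ``the assumptions of this generalized Margulis lemma are satisfied for example if $X$ is a complete Riemannian manifold with a lower bound on its Ricci curvature, by an immediate application of the Bishop-Gromov volume comparison estimates.'' You have simply written out the details of that one-line reduction (the packing argument via Bishop--Gromov and the discreteness via Hopf--Rinow plus proper discontinuity), and the argument is sound.
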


In fact the result in \cite[Theorem 1]{kapovitch-wilking} is a stronger version of Corollary \ref{gromov-conj-2},  establishing that $\Gamma_\eps(x)$ has a nilpotent subgroup of index $O_d(1)$. This stronger result seems to be beyond the scope of our method.

We also note that Corollary \ref{polycyclic} applies to the Margulis lemma in the context of Riemannian $d$-manifolds with a lower bound on sectional curvature, because then the Gromov short basis has bounded cardinality from Toponogov's theorem (see for instance \cite[37.3]{burago-zalgaller}). We thus get this way an alternate proof of the Fukaya-Yamaguchi theorem \cite{fukaya-yamaguchi} according to which almost non-negatively curved $n$-manifolds have $O_n(1)$-virtually polycyclic fundamental group. Again, by \cite{kapovitch-petrunin-tuschmann} we know better, namely that they are $O_n(1)$-virtually nilpotent, but once again this seems beyond the scope of our method.

Finally we would like to remark that the usual proofs of the classical Margulis lemma bear some resemblance to the proof of our main theorem in as much as they use a similar ``shrinking commutator trick'' to establish nilpotence. While we proved this shrinking commutator estimate for the escape norm associated to an approximate group as part of the Gleason lemmas (Theorem \ref{gleason-thm}), in the Margulis lemma, one proves a similar estimate for the norm $\Vert \gamma \Vert_x=d(\gamma \cdot x,x)$ by a riemannian geometric argument using the assumed curvature bounds. This ``shrinking commutator trick'' dates back at least to Bieberbach \cite{bieberbach} in his proof of Jordan's theorem on finite linear groups.

\appendix

\section{Basic theory of ultralimits and ultraproducts}\label{nsa-app}

In this appendix we review the machinery of ultralimits and ultraproducts.  We will borrow some terminology from nonstandard analysis in order to do this, although we will not rely too heavily on nonstandard machinery in this paper.

We will assume the existence of a \emph{standard universe} ${\mathfrak U}$ which contains all the objects and spaces that one is interested in (such as the natural numbers $\N$, the real numbers $\R$, the classical Lie groups, etc.).  The precise construction of this universe is not particularly important for our purposes, so long as it forms a set.  We refer to objects and spaces inside the standard universe as \emph{standard objects} and \emph{standard spaces}, with the latter being sets whose elements are in the former category.

We will rely heavily on the existence of a \emph{nonprincipal ultrafilter}.

\begin{lemma}[Ultrafilter lemma]\label{ultralem}  There exists a collection $\alpha$ of subsets of the natural numbers $\N$ with the following properties:
\begin{enumerate}
\item \textup{(Monotonicity)} If $A \in \alpha$ and $B \supseteq  A$, then $B \in \alpha$.
\item \textup{(Closure under intersection)} If $A,B \in \alpha$, then $A \cap B \in \alpha$.
\item \textup{(Maximality)} If $A \subseteq  \N$, then either $A \in \alpha$ or $\N \backslash A \in \alpha$, but not both.
\item \textup{(Non-principality)} If $A \in \alpha$, and $A'$ is formed from $A$ by adding or deleting finitely many elements to or from $A$, then $A' \in \alpha$.
\end{enumerate}
We refer to a collection $\alpha$ obeying the above axioms as a \emph{nonprincipal ultrafilter}.
\end{lemma}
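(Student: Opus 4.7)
The plan is to construct $\alpha$ via a standard Zorn's lemma argument applied to the poset of filters on $\mathbb{N}$ extending the Fréchet filter. Recall that a \emph{filter} on $\N$ is a nonempty collection $\mathcal{F} \subseteq 2^{\N}$ satisfying (i) and (ii) of the lemma, together with the proper filter condition $\emptyset \notin \mathcal{F}$. The Fréchet filter $\mathcal{F}_0 := \{ A \subseteq \N : \N \setminus A \text{ is finite}\}$ is easily verified to be a filter.

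Let $\mathcal{P}$ denote the collection of all filters on $\N$ containing $\mathcal{F}_0$, partially ordered by inclusion. First I would observe that $\mathcal{P}$ is nonempty (it contains $\mathcal{F}_0$), and that any chain $(\mathcal{F}_i)_{i \in I}$ in $\mathcal{P}$ has an upper bound, namely $\bigcup_{i \in I} \mathcal{F}_i$. Indeed, the union is monotone since each $\mathcal{F}_i$ is, it is closed under intersection (any two elements lie in a common $\mathcal{F}_i$ by the chain property), it contains $\mathcal{F}_0$, and it does not contain $\emptyset$ (otherwise some $\mathcal{F}_i$ would, contradicting its properness). Zorn's lemma then produces a maximal element $\alpha \in \mathcal{P}$.

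Next I would verify the maximality axiom (iii): given any $A \subseteq \N$, at least one of $A$ or $\N \setminus A$ lies in $\alpha$. Suppose for contradiction that neither does. Consider $\alpha' := \{B \subseteq \N : B \supseteq A \cap F \text{ for some } F \in \alpha\}$. A routine check shows $\alpha'$ is a filter containing $\alpha \cup \{A\}$, provided $\emptyset \notin \alpha'$; but $\emptyset \in \alpha'$ would mean $A \cap F = \emptyset$ for some $F \in \alpha$, i.e.\ $F \subseteq \N \setminus A$, forcing $\N \setminus A \in \alpha$ by monotonicity, contrary to assumption. So $\alpha' \in \mathcal{P}$ strictly extends $\alpha$, contradicting maximality. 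The ``not both'' clause is immediate: if both $A, \N \setminus A \in \alpha$ then $\emptyset = A \cap (\N \setminus A) \in \alpha$ by (ii), contradicting properness.

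Finally I would check (iv), non-principality. Since $\alpha \supseteq \mathcal{F}_0$, every cofinite subset of $\N$ is in $\alpha$. If $A \in \alpha$ and $A'$ differs from $A$ by finitely many elements, then $A \cap A'$ is cofinite in both, hence $A \cap A' \in \mathcal{F}_0 \subseteq \alpha$; by monotonicity $A' \supseteq A \cap A'$ lies in $\alpha$. The main conceptual ingredient is Zorn's lemma (equivalently the axiom of choice); no technical obstacle arises beyond the routine filter manipulations above.
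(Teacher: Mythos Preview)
Your approach is exactly the paper's: start with the Fr\'echet filter and apply Zorn's lemma to the poset of proper filters extending it. The paper merely sketches this; you have spelled out the verifications.

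There is one slip in your check of (iv). You write that ``$A \cap A'$ is cofinite in both, hence $A \cap A' \in \mathcal{F}_0$'', but $\mathcal{F}_0$ consists of subsets cofinite in $\N$, and $A \cap A'$ need not be cofinite in $\N$ (take $A$ to be the even numbers, $A' = A \setminus \{2\}$). The fix is immediate: let $F := A \setminus A'$, which is finite; then $\N \setminus F \in \mathcal{F}_0 \subseteq \alpha$, so $A \cap (\N \setminus F) \in \alpha$ by (ii), and since $A \cap (\N \setminus F) = A \setminus F \subseteq A'$, monotonicity (i) gives $A' \in \alpha$.
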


\begin{proof}  The collection of cofinite subsets of $\N$ already obeys the monotonicity, closure under intersection, and non-principality properties.  Using Zorn's lemma\footnote{By using this lemma, our results thus rely on the axiom of choice, which we will of course assume throughout this paper.  On the other hand, it is possible to rephrase the purely combinatorial results in this paper, such as Theorem \ref{main-theorem}, in the language of Peano arithmetic.  Applying a famous theorem of G\"odel \cite{godel}, we then conclude that Theorem \ref{main-theorem} is provable in ZFC if and only if it is provable in ZF.  In fact it is possible, with significant effort, to directly translate these ultrafilter arguments to a much lengthier argument in which neither ultrafilters nor the axiom of choice are used.  However, this would require one to ``finitise'' or ``proof-mine'' such infinitary results as the Heine-Borel theorem or Theorem \ref{goldbring-thm}, and this in turn would require finitisations of the construction of Haar measure and the Peter-Weyl theorem. This would lead to a vastly messier argument.}, one can enlarge this collection to a maximal collection which, it may be verified, has all the required properties.
\end{proof}

Throughout the paper, we fix a non-principal ultrafilter $\alpha$.  A property $P(\n)$ depending on a natural number $\n$ is said to hold \emph{for $\n$ sufficiently close to $\alpha$} if the set of $\n$ for which $P(\n)$ holds lies in $\alpha$.  A set of natural numbers lying in $\alpha$ will also be called an \emph{$\alpha$-large set}.

Once we have fixed this ultrafilter, we can define nonstandard objects and spaces.

\begin{definition}[Nonstandard objects]
Given a sequence $(x_\n)_{\n \in \N}$ of standard objects in ${\mathfrak U}$, we define their \emph{ultralimit} $\lim_{\n \to \alpha} x_\n$ to be the equivalence class of all sequences $(y_\n)_{\n \in \N}$ of standard objects in ${\mathfrak U}$ such that $x_\n = y_\n$ for $\n$ sufficiently close to $\alpha$.  Note that the ultralimit $\lim_{\n \to \alpha} x_\n$ can also be defined even if $x_\n$ is only defined for $\n$ sufficiently close to $\alpha$.

An ultralimit of standard natural numbers is known as a \emph{nonstandard natural number}, an ultralimit of standard real numbers is known as a \emph{nonstandard real number}, and so on.

For any standard object $x$, we identify $x$ with its own ultralimit $\lim_{\n \to \alpha} x$.  Thus, every standard natural number is a nonstandard natural number, etc.

Any operation or relation on standard objects can be extended to nonstandard objects in the obvious manner.  Indeed, if $O$ is a $k$-ary operation, we define
$$ O( \lim_{\n \to \alpha} x^1_\n, \ldots, \lim_{\n \to \alpha} x^k_\n ) := \lim_{\n \to \alpha} O( x^1_\n, \ldots, x^k_\n ) $$
and if $R$ is a $k$-ary relation, we define $R(\lim_{\n \to \alpha} x^1_\n, \ldots, \lim_{\n \to \alpha} x^k_\n )$ to be true iff $R(x^1_\n,\ldots,x^k_\n)$ is true for all $\n$ sufficiently close to $\alpha$.  One easily verifies that these nonstandard extensions of $O$ and $R$ are well-defined.
\end{definition}

\begin{example}
The sum of two nonstandard real numbers $\lim_{\n \to \alpha} x_\n$, $\lim_{\n \to \alpha} y_\n$ is the nonstandard real number
$$\lim_{\n \to \alpha} x_\n + \lim_{\n \to \alpha} y_\n = \lim_{\n \to \alpha} x_\n + y_\n,$$
and the statement $\lim_{\n \to \alpha} x_\n < \lim_{\n \to \alpha} y_\n$ means that $x_\n < y_\n$ for all $\n$ sufficiently close to $\alpha$.
\end{example}

\begin{definition}[Ultraproducts]  Let $(X_\n)_{\n \in \N}$ be a sequence of standard spaces $X_\n$ in ${\mathfrak U}$ indexed by the natural numbers.  The \emph{ultraproduct} $\prod_{\n \to \alpha} X_\n$ of the $X_\n$ is defined to be the space of all ultralimits $\lim_{\n \to \alpha} x_\n$, where $x_\n \in X_\n$ for all $\n$.  We refer to the ultraproduct of standard sets as an \emph{nonstandard set}; in a similar vein, an ultraproduct of standard groups is a \emph{nonstandard group}, and an ultraproduct of standard finite sets is a \emph{nonstandard finite set}.  We refer to $\ultra X := \prod_{\n \to \alpha} X$ as the \emph{ultrapower} of a standard set $X$; the identification of $x$ with $\lim_{\n \to \alpha} x$ causes $X$ to be identified with a subset of $\ultra X$.  We will refer to the ultrapower  $\ultra {\mathfrak U}$ of the standard universe ${\mathfrak U}$ as the \emph{nonstandard universe}.
\end{definition}

\begin{remark} Nonstandard sets in nonstandard analysis behave analogously in some ways to measurable sets\footnote{Actually, the notion of an \emph{elementary set} (e.g. a finite union of intervals) would be an even closer analogy here than the notion of a measurable set.} in measure theory; for instance, the union or intersection of two nonstandard sets is again a nonstandard set.  Also, just as a subset of a measurable set need not be measurable, a subset of a nonstandard set need not be another nonstandard set.  For instance, the nonstandard natural numbers $\ultra \N$ is a nonstandard set (being the ultraproduct of the sequence $\N, \N, \ldots$), but the standard natural numbers $\N$, despite being a subset of $\ultra\N$, is not a nonstandard set.
\end{remark}

A fundamental property of ultralimits is that they preserve first-order statements and predicates, a fact known as \emph{{\L}os's theorem}.  Here is one formalisation of this theorem.

\begin{theorem}[{\L}os's theorem with parameters]\label{los-param-basic}  Let $m$ be a standard natural number, and for each $1 \leq i \leq m$, let $x_i = \lim_{\n \to \alpha} x_{i,\n}$ be a nonstandard object.  If $P(y_1,\ldots,y_m)$ is a predicate, then $P(x_1,\ldots,x_m)$ is true \textup{(}as quantified over the nonstandard universe $\ultra {\mathfrak U}$\textup{)} if and only if $P(x_{1,\n},\ldots,x_{m,\n})$ is true for all $\n$ sufficiently close to $\alpha$ \textup{(}as quantified over the standard universe ${\mathfrak U}$\textup{)}.
\end{theorem}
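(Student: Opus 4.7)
The plan is to proceed by structural induction on the syntactic complexity of the predicate $P$, proving the equivalence simultaneously for all choices of parameters $x_1,\ldots,x_m$. The parameters themselves play only a bookkeeping role: at each stage we carry the representative sequences $(x_{i,\n})_{\n \in \N}$ along, using that a finite intersection of $\alpha$-large sets remains $\alpha$-large.

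For the base case, $P$ is an atomic predicate, which (after absorbing functional symbols into the parameters) reduces to a relation $R(x_1,\ldots,x_m)$ applied directly to the parameters. Here the conclusion is immediate from the \emph{definition} of the nonstandard extension of a relation: $R(x_1,\ldots,x_m)$ holds in $\ultra {\mathfrak U}$ precisely when $R(x_{1,\n},\ldots,x_{m,\n})$ holds for $\n$ in an $\alpha$-large set. First I would spell this out and then handle the boolean connectives. The conjunction case uses that if $P = P_1 \wedge P_2$, then the set on which $P$ holds is the intersection of the sets on which $P_1$ and $P_2$ hold, and $\alpha$ is closed under intersection (property (ii) of Lemma \ref{ultralem}). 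Negation is where the \emph{ultrafilter} (rather than mere filter) structure is essential: if $P = \neg P_1$, then $\{\n : P(x_{1,\n},\ldots,x_{m,\n})\} = \N \setminus \{\n : P_1(x_{1,\n},\ldots,x_{m,\n})\}$, and property (iii) ensures exactly one of these is $\alpha$-large. Disjunction and implication then follow.

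The main obstacle, and the genuinely nontrivial step, is the quantifier case. Consider an existential quantifier $P(x_1,\ldots,x_m) = \exists y \, Q(x_1,\ldots,x_m,y)$, where $y$ ranges over some standard class (an ultraproduct $\prod_{\n \to \alpha} Y_\n$, or a standard set viewed as its ultrapower). Suppose first that $P(x_1,\ldots,x_m)$ holds in $\ultra {\mathfrak U}$: there is a nonstandard witness $y = \lim_{\n \to \alpha} y_\n$. By the inductive hypothesis applied to $Q$ with the enlarged parameter list $(x_1,\ldots,x_m,y)$, the set $\{\n : Q(x_{1,\n},\ldots,x_{m,\n},y_\n)\}$ is $\alpha$-large, which forces $\{\n : \exists y \, Q(x_{1,\n},\ldots,x_{m,\n},y)\}$ to be $\alpha$-large as well. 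Conversely, suppose the latter set is $\alpha$-large; then for each such $\n$ one may, by the axiom of choice, select a witness $y_\n \in Y_\n$, and assemble $y := \lim_{\n \to \alpha} y_\n$. By the inductive hypothesis, $Q(x_1,\ldots,x_m,y)$ holds in $\ultra {\mathfrak U}$, whence $P(x_1,\ldots,x_m)$ does. The universal quantifier case reduces to the existential one via the negation case already handled.

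The only subtle point in the quantifier case worth flagging is the need to ensure $y$ ranges over a class which is itself an ultraproduct (so that $y = \lim_{\n \to \alpha} y_\n$ is meaningful within the nonstandard universe $\ultra {\mathfrak U}$); this is exactly the reason the theorem is stated as a correspondence between standard quantifiers ranging over the $Y_\n$ and nonstandard quantifiers ranging over $\prod_{\n \to \alpha} Y_\n$, as opposed to quantification over arbitrary external subsets of these ultraproducts. Modulo this convention, the induction closes and the theorem follows.
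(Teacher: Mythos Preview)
Your proof is correct and follows essentially the same approach as the paper: structural induction on the complexity of the predicate, with the atomic case following from the definition of nonstandard relations, the boolean connectives handled via the ultrafilter axioms (closure under intersection for $\wedge$, maximality for $\neg$), and the existential quantifier treated by unpacking a nonstandard witness into coordinates in one direction and assembling coordinatewise witnesses via choice in the other. The paper's write-up is slightly more terse and also explicitly remarks that any predicate can be rewritten so that compound terms built from operations are unnested into a chain of atomic equalities, which is the same issue you address by ``absorbing functional symbols into the parameters.''
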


\begin{proof} (Sketch)  By definition, {\L}os's theorem is true for ``primitive'' predicates which take the form $R(x_1,\ldots,x_k)$ for some primitive $k$-ary relation $R$ and objects $x_1,\ldots,x_k$, or of the form $x_{k+1} = O(x_1,\ldots,x_k)$ for some primitive $k$-ary operator $O$.  From the ultrafilter axioms, we also see that {\L}os's theorem is closed with respect to boolean operations; for instance, if Theorem \ref{los-param} holds for $P(x_1,\ldots,x_m)$ and $Q(x_1,\ldots,x_m)$, then it also holds for $\neg P$ or $P \wedge Q$.

Now, we claim that if {\L}os's theorem holds for the predicate $P(x_1,\ldots,x_m)$, then it also holds for the quantified predicates $\exists x_m: P(x_1,\ldots,x_m)$ and $\forall x_m: P(x_1,\ldots,x_m)$ (where now there are only $m-1$ free variables $x_1,\ldots,x_{m-1}$, with $x_m$ being bound).  We show this just for the existential quantifier $\exists$, as the case of the universal quantifier $\forall$ is similar (and can be deduced from the existential case by negation).  Suppose first that $\exists x_m: P(x_1,\ldots,x_m)$ is true in $\ultra {\mathfrak U}$. Then there exists $x_m = \lim_{\n \to \alpha} x_{m,\n}$ such that $P(x_1,\ldots,x_m)$ holds; by hypothesis, this implies that $P( x_{1,\n},\ldots,x_{m,\n})$ holds for $\n$ sufficiently close to $\alpha$, and thus $\exists x_m: P( x_{1,\n},\ldots,x_{m-1,\n}, x_m)$ holds for $\n$ in ${\mathfrak U}$ sufficiently close to $\alpha$ as desired.  Conversely, if
$\exists x_m: P( x_{1,\n},\ldots,x_{m-1,\n}, x_m )$ holds in ${\mathfrak U}$ for $\n$ sufficiently close to $\alpha$, then by the axiom of (countable) choice, we may find $x_{m,\n} \in {\mathfrak U}$ for such $\n$ such that $P( x_{1,\n},\ldots,x_{m-1,\n}, x_{m,\n} )$ holds.  Setting $x_m := \lim_{\n \to \alpha} x_{m,\n}$, we conclude that $P(x_1,\ldots,x_m)$ holds, and the claim follows.

The above discussion yields {\L}os's theorem for any predicate that can be built out of primitive predicates by a finite number of boolean operations and quantifications.  However, it is easy to see that all predicates are logically equivalent to a predicate of this form.  For instance, $\forall a \forall b \forall c:(a+b)+c=a+(b+c)$ is equivalent to
$$ \forall a \forall b \forall c \exists d \exists e \exists f: (d=a+b) \wedge (e=b+c) \wedge (f=d+c) \wedge (f=a+e).$$
This completes the proof.
\end{proof}

In applications, we will actually use a slight generalisation of {\L}os's theorem.

\begin{theorem}[{\L}os's theorem with parameters and ultraproducts]\label{los-param}  Let $m, k$ be standard natural numbers.  For each $1 \leq i \leq m$, let $x_i = \lim_{\n \to \alpha} x_{i,\n}$ be a nonstandard object, and for each $1 \leq j \leq k$, let $A_j = \prod_{\n \to \alpha} A_{j,\n}$ be a nonstandard set.  If $P(y_1,\ldots,y_m; B_1,\ldots,B_k)$ is a predicate over $m$ objects and $k$ sets, with the sets $A_1,\ldots,A_k$ only appearing in $P$ through the membership predicate $x \in B_j$ for various $j$ and various objects $B_j$, then $P(x_1,\ldots,x_m; A_1,\ldots,A_k)$ is true \textup{(}as quantified over the nonstandard universe $\ultra {\mathfrak U}$\textup{)} if and only if $P(x_{1,\n},\ldots,x_{m,\n}; A_{1,\n},\ldots,A_{k,\n})$ is true for all $\n$ sufficiently close to $\alpha$ \textup{(}as quantified over the standard universe ${\mathfrak U}$\textup{)}.
\end{theorem}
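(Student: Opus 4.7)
The plan is to reduce Theorem \ref{los-param} to Theorem \ref{los-param-basic} by absorbing each set parameter $A_j$ into a unary predicate $R_j$. Since the hypothesis forces the sets $A_1,\ldots,A_k$ to appear in $P$ only through the membership relations $y_i \in A_j$ (applied to objects $y_i$ of the appropriate sort), we may replace every atomic subformula of the form ``$y_i \in A_j$'' by a new atomic formula ``$R_j(y_i)$'' to obtain a predicate $\tilde P(y_1,\ldots,y_m)$ in the $m$ parameters alone. In the standard universe, $R_j$ will be interpreted as the characteristic predicate $R_{j,\n}(x) := (x \in A_{j,\n})$, and in the nonstandard universe as the characteristic predicate of $A_j$.

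The key step is the base case: for any $x = \lim_{\n \to \alpha} x_\n \in \ultra{\mathfrak U}$, the statement $x \in A_j$ (as an object of the nonstandard universe) holds if and only if $x_\n \in A_{j,\n}$ for $\n$ sufficiently close to $\alpha$. This, however, is literally the definition of the ultraproduct $A_j = \prod_{\n \to \alpha} A_{j,\n}$: two ultralimit representatives define the same element of $A_j$ iff they agree on an $\alpha$-large set, and an ultralimit $\lim x_\n$ lies in $A_j$ iff $x_\n \in A_{j,\n}$ on an $\alpha$-large set. Thus the unary predicate $R_j$ fits into the same framework as the primitive relations considered in the proof of Theorem \ref{los-param-basic}: it is compatible with ultralimits in exactly the way required for that proof to go through.

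With this base case in hand, I would then simply replay the proof of Theorem \ref{los-param-basic} verbatim, adding $R_1,\ldots,R_k$ to the list of primitive relations from the start. The induction on the structure of formulas proceeds identically: boolean combinations are handled by the ultrafilter axioms (monotonicity, closure under finite intersections, and maximality of $\alpha$), and the existential and universal quantifiers are handled, respectively, by countable choice and its negation-dual. Every predicate in the nonstandard universe (including $\tilde P$) is logically equivalent to one built from primitive relations (now including the $R_j$) by finitely many boolean operations and quantifications, so the induction covers all cases.

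The main ``obstacle'' is really only cosmetic: one has to be careful that the predicate $P$ genuinely interacts with each $A_j$ only through $y \in A_j$ and never through, say, quantification over subsets of $A_j$ or cardinality of $A_j$; this is precisely what the hypothesis of the theorem guarantees. Once this syntactic restriction is respected, the reduction to primitive predicates is purely formal and the theorem follows from Theorem \ref{los-param-basic} with no new ingredients.
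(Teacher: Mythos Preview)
Your proposal is correct and follows essentially the same idea as the paper: replace each membership test $x \in A_j$ by a new primitive relation and reduce to the basic form of {\L}os's theorem. The one difference in execution is worth noting. You introduce unary predicates $R_j(x)$ whose standard interpretation varies with $\n$ (namely $R_{j,\n}(x) \equiv (x \in A_{j,\n})$), and then rerun the induction of Theorem \ref{los-param-basic} with these added to the stock of primitives. The paper instead introduces a single \emph{binary} relation $R_j(x,\n)$ on ${\mathfrak U}$ (interpreted as $x \in A_{j,\n}$), adds $\n$ itself as an $(m+1)$-th object parameter with ultralimit $\lim_{\n\to\alpha}\n$, and then applies Theorem \ref{los-param-basic} as a black box to the predicate $Q(y_1,\ldots,y_m,\n)$. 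This buys the paper a one-line proof that avoids re-examining the induction, at the cost of the small trick of promoting the index $\n$ to a bona fide parameter; your version is more explicit but requires you to check again that the inductive steps go through (they do, since you verified the base case and the boolean/quantifier steps are insensitive to the choice of primitives).
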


\begin{proof}  We replace each appearance of $x \in B_j$ in $P$ with a new primitive relation $R_j(x,\n)$, which is interpreted in ${\mathfrak U}$ as $x \in A_{j,\n}$.  This replaces the predicate $P(y_1,\ldots,y_m; B_1,\ldots,B_k)$ by a predicate $Q(y_1,\ldots,y_m,\n)$, with $P(x_{1,\n},\ldots,x_{m,\n}; A_{1,\n},\ldots,A_{k,\n})$ logically equivalent to $Q(x_{1,\n},\ldots,x_{m,\n},\n)$.  One easily verifies that
$P(x_{1},\ldots,x_{m}; A_{1},\ldots,A_{k})$ is logically equivalent to $Q(x_1,\ldots,x_m, \lim_{\n \to \alpha} \n)$, and the claim now follows from Theorem \ref{los-param-basic}.
\end{proof}

\begin{example}  Any ultraproduct $G := \prod_{\n \to \alpha} G_\n$ of groups $G_\n$ is again a group, because one can write the property of $G$ being a group as a predicate $P(G)$ that involves membership in $G$ (as well as the constant $\id$ and the group operations $\cdot, ()^{-1}$, of course).  Conversely, if $G = \prod_{\n \to \alpha} G_\n$ is a group, then $G_\n$ is a group for all $\n$ sufficiently close to $\alpha$.
\end{example}

\begin{example}   Let $G = \prod_{\n \to \alpha} G_\n$ be an ultraproduct of groups (and thus also a group), and let $A = \prod_{\n \to \alpha} A_\n$ and $B = \prod_{\n \to \alpha} B_\n$ be subsets of $G$ that are nonstandard sets.  Then, for $\n$ sufficiently close to $\alpha$, $A_\n$ and $B_\n$ are subsets of $G_\n$ and $B_\n$ (because this statement can be written as a predicate involving membership in $A_\n, B_\n, G_\n$).  In a similar (but more complicated) spirit, for any standard $K \in \N$, $A$ can be covered by $K$ left-translates of $B$ if and only if, for $\n$ sufficiently close to $\alpha$, $A_\n$ can be covered by $K$ left-translates of $B_\n$.
\end{example}

A nonstandard real number $x \in \ultra \R$ is said to be \emph{bounded} if one has $|x| \leq C$ for some standard $C>0$, and \emph{unbounded} otherwise.  Similarly, we say that $x$ is \emph{infinitesimal} if $|x| \leq c$ for all standard $c>0$; in the former case we write $x = O(1)$, and in the latter $x=o(1)$.  For every bounded real number $x \in \ultra \R$ there is a unique standard real number $\st(x) \in \R$, called the \emph{standard part} of $\R$, such that $x = \st(x)+o(1)$, or equivalently that $\st(x) - \eps \leq x \leq \st(x)+\eps$ for all standard $\eps>0$.  Indeed, one can set $\st(x)$ to be the supremum of all the real numbers $y$ such that $x>y$ (or equivalently, the infimum of all the real numbers $y$ such that $x<y$).
We write $X = O(Y)$, $X \ll Y$, or $Y \gg X$ if we have $X \leq CY$ for some standard $C$.

Given a sequence $f_\n: X_\n \to Y_\n$ of standard functions between standard sets $X_\n, Y_\n$, one can form the \emph{ultralimit} $f :=\lim_{\n \to \alpha} f_\n$, which is a function from the ultraproduct $X := \prod_{\n \to \alpha} X_\n$ to the ultraproduct $Y := \prod_{\n \to \alpha} Y_\n$ defined by the formula
$$ f( \lim_{\n \to \alpha} x_\n ) := \lim_{\n \to \alpha} f_\n(x_\n).$$
Such ultralimits will be called \emph{nonstandard functions} (and are also known as \emph{internal functions} in the nonstandard analysis literature).  In particular, since standard finite sequences $(a_n)_{n=1}^N$ of standard reals $a_n \in \R$ with some standard length $N \in \N$ can be viewed as a function $n \mapsto a_n$ from $\{1,\ldots,N\}$ to $\R$, one can thus define \emph{nonstandard finite sequences} $(a_n)_{n=1}^N$ of nonstandard reals $a_n \in \ultra \R$ with some nonstandard length $N \in \ultra \N$ as an ultralimit of standard finite sequences $(a_{n_\n,\n})_{n_\n = 1}^{N_\n}$, thus $N = \lim_{\n \to \alpha} N_\n$ and
$$ a_{\lim_{\n \to \alpha} n_\n} = \lim_{\n \to \alpha} a_{n_\n,\n}.$$
One can then transplant various operations on standard finite sequences to their nonstandard counterparts, and can in particular define the sum
$$ \sum_{n=1}^N a_n \in \ultra \R$$
of a nonstandard finite sequence $(a_n)_{n=1}^N = \lim_{\n \to \alpha} (a_{n_\n,\n})_{n_\n = 1}^{N_\n}$ by the formula
$$\sum_{n=1}^N a_n := \lim_{\n \to \alpha} \sum_{n_\n=1}^{N_\n} a_{n_\n,\n}.$$

\section{Local groups}\label{local-sec}

In this appendix we recall the basic definitions and notations of (symmetric) local group theory, following Goldbring \cite{goldbring-local}.

\begin{definition}[Local group]\label{local-def}  A \emph{symmetric local group} $G = (G, \id, \cdot, ()^{-1})$ is a topological space $G$ with a distinguished element $\id \in G$ (the \emph{identity element}), together with a globally defined inversion map $()^{-1}: G \to G$ and a partially defined product map $\cdot: \Omega \to G$, obeying the following axioms:
\begin{itemize}
\item[(i)] (Partial closure) $\Omega$ is an open neighbourhood of $(G \times \{1\}) \cup (\{1\} \times G)$ in $G \times G$.
\item[(ii)] (Continuity)  The maps $()^{-1}: x \mapsto x^{-1}$ and $\cdot: (x,y) \mapsto x \cdot y$ are continuous on $G$ and $\Omega$ respectively.
\item[(iii)] (Local associativity) If $g,h,k \in G$ are such that $(g \cdot h) \cdot k$ and $g \cdot (h \cdot k)$ are well-defined (thus $(g,h)$, $(g \cdot h, k)$, $(h,k)$, $(g, h \cdot k)$ all lie in $\Omega$), then $(g \cdot h) \cdot k = g \cdot (h \cdot k)$.
\item[(iv)] (Identity)  For any $g \in G$, one has $\id \cdot g = g \cdot \id = g$.
\item[(v)] (Invertibility) If $g \in G$, then $g \cdot g^{-1}$ and $g^{-1} \cdot g$ are well-defined (i.e. $(g,g^{-1}), (g^{-1},g) \in \Omega$) and are equal to $\id$.
\end{itemize}
If necessary, we will write $\id, \Omega$ as $\id_G, \Omega_G$ to reduce confusion.  If $\Omega = G \times G$, we call $G$ a \emph{global group} or a \emph{topological group}.

If $G$ has the structure of a smooth finite-dimensional real manifold, and the inversion map $()^{-1}$ and product map $\cdot$ are smooth maps, we say that $G$ is a \emph{local Lie group}.
\end{definition}

\begin{remark}
One can also consider non-symmetric local groups, in which the inversion map $()^{-1}$ is
only defined on an open neighbourhood $\Lambda$ of the identity. However, the theory of non-symmetric local
groups contains some minor additional technicalities caused by the existence of non-invertible elements
which we wish to avoid here.  As we will not consider non-symmetric local groups anywhere in this paper, we will often omit the adjective ``symmetric'' from the term ``local group'' when there is no chance of confusion.

Following \cite{goldbring-local}, we do not explicitly assume that $G$ is Hausdorff.  In practice, though, one can reduce to the Hausdorff case because the closure of the identity element will turn out to be a closed normal subgroup that one can quotient out by.
\end{remark}

\begin{example}\label{restrict}  If $G$ is a symmetric local group and $U$ is a symmetric open neighbourhood of the identity (thus $g^{-1} \in U$ whenever $g \in U$), then $U$ can also be viewed as a symmetric local group, by restricting the domain $\Omega$ of the product maps to $\{ (g,h) \in \Omega \cap (U \times U): g \cdot h \in U \}$ (and also restricting the topological structure of $G$ to $U$).  We will sometimes write this symmetric local group as $G\downharpoonright_U$ to emphasise that it is the restriction of $G$ to $U$.   In particular, an important source of local groups comes from restricting a global group to an open symmetric neighbourhood of the identity.

One can also restrict $G$ to non-open symmetric neighbourhoods of the identity, but the resulting object obtained is not necessarily a symmetric local group (see e.g. Example \ref{closint} below).
\end{example}

We say that two symmetric local groups $G, G'$ are \emph{locally identical} if they have a common restriction, thus there exists a $U$ which is an open symmetric neighbourhood of the identity $1_G = 1_{G'}$ in both $G$ and $G'$ for which the group operations on $G$ and $G'$, when restricted to $U$, agree completely (in particular, they have the same domain and range).  This is an equivalence relation, and we will focus on those properties of symmetric local groups that are preserved up to local identity.

In a similar spirit, we say that two subsets $A, B$ of a symmetric local group in $G$ are \emph{locally identical} if there exists an open neighbourhood $U$ of the identity in $G$ such that $A \cap U = B \cap U$.  For instance, all neighbourhoods of the identity are locally identical.  Note that every open neighbourhood if the identity contains an open symmetric neighbourhood, so we can assume here that $U$ is symmetric without loss of generality.

\begin{remark}  Symmetric local groups are defined as topological groups, but if one wishes, one can restrict attention to \emph{discrete} symmetric local groups, in which every set is open.  In this case, all references to continuity, openness, and the Hausdorff property in Definition \ref{local-def} can be omitted as being automatically satisfied.  On the other hand, all discrete local groups are locally equivalent to the trivial local group $\{\id\}$.
\end{remark}

\begin{example} If ${\mathfrak g}$ is a (finite-dimensional) Lie algebra, and $B$ is a sufficiently small symmetric open neighbourhood of the identity in ${\mathfrak g}$, then $\exp(B)$ is a symmetric local group, with the multiplication law given by the Baker-Campbell-Hausdorff formula.
\end{example}

\begin{example}\label{closint}  The closed interval $[-1,1]$ in $\R$ with the addition operation is \emph{not} a symmetric local group, because the set $\{ (x,y) \in [-1,1] \times [-1,1]: x+y \in [-1,1]\}$ is not open in $[-1,1] \times [-1,1]$.  However, the \emph{open} interval $(-1,1)$ is a symmetric local group.
\end{example}

Given any finite number of elements $g_1,\ldots,g_m$ in a global group $G$, one can use the associativity axiom to unambiguously define the product $g_1 \ldots g_m$.  In a symmetric local group, one can only define this product $g_1 \ldots g_m$ locally.  We formalise this as a definition:

\begin{definition}[Finite products]  Let $g_1,\ldots,g_m$ be a finite number of elements in a symmetric local group $G$.  We say that the product $g_1 \ldots g_m$ is \emph{well-defined in $G$} (or \emph{well-defined} for short) if, for each $1 \leq i \leq j \leq m$, we can find a group element $g_{[i,j]} \in G$ with the following properties:
\begin{itemize}
\item For each $1 \leq i \leq m$, we have $g_{[i,i]} = g_i$.
\item If $1 \leq i \leq j < k \leq m$, the product $g_{[i,j]} \cdot g_{[j+1,k]}$ is well-defined (i.e. $(g_{[i,j]}, g_{[j+1,k]}) \in \Omega$) and equal to $g_{[i,k]}$.
\end{itemize}
By induction we see that if these group elements $g_{[i,j]}$ exist, then they are unique.  We then define $g_1 \ldots g_k := g_{[1,k]}$.  If $g_1 = \ldots = g_k = g$, we abbreviate $g_1 \ldots g_k$ as $g^k$.  By abuse of notation, we also write $g_1 \ldots g_m \in G$ to denote the assertion that $g_1 \ldots g_m$ is defined in $G$.

We adopt the convention that $g_1 \ldots g_m = \id$ when $m=0$.
\end{definition}

An easy induction using the local associativity axiom shows that if $g_1,\ldots,g_m \in G$ is such that $g_i \ldots g_j$ is well-defined whenever $1 \leq i < j \leq m$ with $(i,j) \neq (1,m)$, and $(g_i \ldots g_j) \cdot (g_{j+1} \ldots g_k)$ is well-defined whenever $1 \leq i \leq j < k \leq m$, then $g_1 \ldots g_m$ is well-defined, and we have
$$ (g_i \ldots g_k) = (g_i \ldots g_j) \cdot (g_{j+1} \ldots g_k)$$
for all $1 \leq i \leq j < k \leq m$.

\begin{remark} It is worth pointing out one subtlety here: in order for $g_1 \ldots g_m$ to be well-defined, it is necessary that \emph{all} possible ways of decomposing this $m$-fold product into pairwise products be well-defined.  For instance, for $g_1 g_2 g_3$ to be well-defined, both $(g_1 \cdot g_2) \cdot g_3$ and $g_1 \cdot (g_2 \cdot g_3)$ need to be well-defined.  Similarly, if $g_1,g_2,g_3,g_4$ are such that $g_1 g_2 g_3$, $(g_1 g_2 g_3) \cdot g_4$, $g_2 g_3 g_4$, and $g_1 \cdot (g_2 g_3 g_4)$ are well-defined, this is not yet sufficient to deduce that $g_1 g_2 g_3 g_4$ is well-defined, because $(g_1 g_2) \cdot (g_3 g_4)$ need not be well-defined.  For instance, in the (additive) local group $\{-1,0,+1\}$, the expression $(+1) + (-1) + (-1) + (+1)$ is not well-defined, because $(-1)+(-1)$ is not well-defined.

Related to this is the well-known fact that local associativity does not imply global associativity: it is possible for two different ways of decomposing an $m$-fold product into pairwise products to both exist, but give distinct values; see \cite{olver} for further discussion.  For instance, there exists a local group $G$ and elements $g_1,g_2,g_3,g_4 \in G$ such that $((g_1 \cdot g_2) \cdot g_3) \cdot g_4$ and $g_1 \cdot (g_2 \cdot (g_3 \cdot g_4))$ both exist, but are not equal to one another.  Of course, in this case, we do not consider $g_1 g_2 g_3 g_4$ to be well-defined.
\end{remark}

Another easy induction also shows that for each $m \geq 1$, the set of tuples $(g_1,\ldots,g_m) \in G^m$ for which $g_1 \ldots g_m$ is well-defined is an open subset of $G^m$.

Now we extend the notion of products and inverses from individual group elements to sets of such elements.

\begin{definition}  Let $G$ be a symmetric local group. A subset $A$ of $G$ is said to be \emph{symmetric} if the set $A^{-1} := \{g^{-1}: g \in A \}$ is contained in $A$. If $A_1,\ldots,A_m$ are subsets of $G$, we say that $A_1 \ldots A_m$ is \emph{well-defined in $G$} (or \emph{well-defined} for short) if $g_1 \ldots g_m$ is well-defined for all $g_1 \in A_1, \ldots, g_m \in A_m$, in which case we write $A_1 \ldots A_m := \{ g_1 \ldots g_m: g_1 \in A_1, \ldots, g_m \in A_m\}$.  If $A_1=\ldots=A_m=A$, we abbreviate $A_1 \ldots A_m$ as $A^m$.  By abuse of notation, we write $A_1 \ldots A_m \subset G$ for the assertion that $A_1 \ldots A_m$ is well-defined in $G$. We adopt the convention that $A_1 \ldots A_m = \{\id\}$ when $m=0$.  In particular, $A^0 = \{\id\}$ for any $A \subset G$.
\end{definition}

An easy induction (see \cite[Lemma 2.5]{goldbring-local}) shows that for any local group $G$ and any open neighbourhood $U_0$ of the identity, there exists a nested sequence $U_0 \supset U_1 \supset U_2 \supset \ldots$ of symmetric open neighbourhoods of the identity such that $U_{m+1}^2 \subset U_m$ for every $m \geq 0$, which in particular implies that $U_m^m$ is well-defined in $U_0$, and thus $A_1 \ldots A_{m}$ is well-defined in $U_0$ whenever $A_1,\ldots,A_{m} \subset U_m$.

We make the trivial remark that multiplication of sets is associative: if $A_1 \ldots A_m$ is well-defined, then for any $1 \leq i \leq j < k \leq m$, $(A_i \cdot A_j) \cdot (A_{j+1} \ldots A_k)$ and $A_i \ldots A_k$ are well-defined and equal to each other.

By passing to neighbourhoods such as $U_m$, one can improve the group-like properties of a local group.  To illustrate this principle, let us first introduce the following definition.

\begin{definition}[Cancellative local groups]\label{cancel-def}  A symmetric local group $G$ is said to be \emph{cancellative} if the following assertions hold:
\begin{itemize}
\item[(i)] Whenever $g,h,k \in G$ are such that $gh$ and $gk$ are well-defined and equal to each other, then $h=k$.  (Note that this implies in particular that $(g^{-1})^{-1} = g$.)
\item[(ii)] Whenever $g,h,k \in G$ are such that $hg$ and $kg$ are well-defined and equal to each other, then $h=k$.
\item[(iii)] Whenever $g,h \in G$ are such that $gh$ and $h^{-1}g^{-1}$ are well-defined, then $(gh)^{-1} = h^{-1}g^{-1}$.  (In particular, if $U \subset G$ is symmetric and $U^m$ is well-defined in $G$ for some $m \geq 1$, then $U^m$ is also symmetric.)
\end{itemize}
\end{definition}

Clearly all global groups are cancellative.  A local group need not be cancellative everywhere; however, we can restrict to a large subset on which it is cancellative, by using the following proposition.

\begin{proposition}\label{cancelled}  Let $G$ be a symmetric local group, and let $U$ be an open symmetric neighbourhood of the identity in $G$ such that $U^6$ is well-defined.  Then the restriction of $G$ to $U$ is cancellative.
\end{proposition}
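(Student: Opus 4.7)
The proof reduces to verifying the three cancellativity axioms of Definition~\ref{cancel-def} for the restricted local group $G\downharpoonright_U$. The driving idea is that $U$ is symmetric and $U^6$ is well-defined in $G$, which gives enough room to perform the usual global-group manipulations (multiplying by an inverse, applying associativity) using at most four factors drawn from $U$, each of which legitimately multiplies in the ambient local group $G$.

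For left cancellation, given $g,h,k\in U$ with $gh=gk$ well-defined in $U$, I form the triple product $g^{-1}gh$; since $g^{-1}\in U$ by symmetry and $U^3\subseteq U^6$ is well-defined in $G$, local associativity in $G$ gives $g^{-1}(gh)=(g^{-1}g)h=\id\cdot h=h$, and the same identity applied to $gk$ forces $h=k$. Right cancellation is handled identically on the other side, and the parenthetical remark $(g^{-1})^{-1}=g$ is a special case. For the inversion-of-products axiom, given $g,h\in U$ with $gh$ and $h^{-1}g^{-1}$ both well-defined in $U$, I form the four-fold product $ghh^{-1}g^{-1}$ in $U^4\subseteq U^6$ and use local associativity to compute
$$(gh)(h^{-1}g^{-1})=g(hh^{-1})g^{-1}=g\cdot\id\cdot g^{-1}=gg^{-1}=\id.$$
Hence $h^{-1}g^{-1}$ is a right inverse of $gh$ in $G$, and it remains to identify this element with the distinguished inverse $(gh)^{-1}$. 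For this I will establish, once and for all, an auxiliary uniqueness-of-right-inverses lemma valid in any local group: if $xy=\id$ in $G$, then $x^{-1}xy$ is a well-defined triple product in $G$ (the intermediate pairs $(x^{-1},x)$, $(x^{-1}x,y)=(\id,y)$, $(x,y)$, $(x^{-1},xy)=(x^{-1},\id)$ all lie in $\Omega$ by axioms~(i) and~(v) of Definition~\ref{local-def} together with the assumption that $xy$ is defined), so local associativity yields $y=(x^{-1}x)y=x^{-1}(xy)=x^{-1}$.

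There is no conceptual obstacle; the only thing to monitor carefully is that every intermediate pair in each multi-factor product lies in $\Omega$, so that local associativity can in fact be applied. The hypothesis that $U^6$ is well-defined is comfortably larger than the maximum number of factors needed (namely four, in the proof of axiom~(iii)), so the verifications go through directly and the three axioms of Definition~\ref{cancel-def} follow.
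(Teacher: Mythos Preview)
Your proof is correct and follows essentially the same approach as the paper: both verify axioms (i)--(iii) by forming short products of elements of $U$ and evaluating them in two ways using local associativity. The only cosmetic difference is that for axiom~(iii) the paper handles both steps at once via the single five-fold product $(gh)^{-1}ghh^{-1}g^{-1}$, whereas you first establish $(gh)(h^{-1}g^{-1})=\id$ from the four-fold product and then invoke your (correct) auxiliary uniqueness-of-right-inverses lemma; the content is the same.
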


In particular, the restriction of $G$ to the open symmetric neighbourhood $U_6$ discussed earlier is cancellative.  We shall see later that the property of being cancellative is \emph{hereditary} in that it is inherited by passing to subgroups and quotients, and because of this we will be able to easily restrict attention to the cancellative case in our arguments.

\begin{proof}  If $g, h \in U$, then $(gh)^{-1} g h h^{-1} g^{-1}$ is well-defined in $G$.  By evaluating this well-defined expression in two different ways we conclude property (iii).  In a similar spirit, by evaluating $g^{-1} gh$ and $g^{-1} gk$ for $g,h,k \in U$ in two different ways, we obtain (i); and similarly for (ii).
\end{proof}

\begin{lemma}\label{sym} Let $G$ be a symmetric local group, and let $U, V$ be open sets with $\id \in V$.  Then $\overline{U} \subset U \cdot V$ if $U \cdot V$ is well-defined, and similarly $\overline{U} \subset V \cdot U$ if $V \cdot U$ is well-defined.
\end{lemma}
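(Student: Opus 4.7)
The plan is to fix $x \in \overline{U}$ and produce $u \in U$, $v \in V$ with $x = u \cdot v$ by choosing $u$ close enough to $x$ that $v := u^{-1} \cdot x$ is defined and lies in $V$, then using local associativity to collapse $u \cdot (u^{-1} \cdot x)$ to $x$.

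First I would locate a neighbourhood $W$ of $x$ in which $u \mapsto u^{-1} \cdot x$ is defined and takes values in $V$. By the invertibility axiom $(x^{-1}, x) \in \Omega$, and $\Omega$ is open in $G \times G$ (partial closure axiom). Combined with the continuity of inversion, this gives an open neighbourhood $W_0$ of $x$ such that $(u^{-1}, x) \in \Omega$ for every $u \in W_0$. Continuity of the product on $\Omega$ applied to $(x^{-1}, x) \mapsto \id$ then yields an open neighbourhood $W \subseteq W_0$ of $x$ such that $u^{-1} \cdot x \in V$ for every $u \in W$, using that $V$ is open and contains $\id$.

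Next, since $x \in \overline{U}$ and $W$ is open containing $x$, I can pick $u \in U \cap W$, and set $v := u^{-1} \cdot x \in V$. It remains to verify that $u \cdot v = x$. For this I would apply local associativity with $g = u$, $h = u^{-1}$, $k = x$: the pair $(u, u^{-1})$ lies in $\Omega$ by invertibility; $(u \cdot u^{-1}, x) = (\id, x) \in \Omega$ by partial closure; $(u^{-1}, x) \in \Omega$ by choice of $W$; and $(u, u^{-1} \cdot x) = (u, v) \in \Omega$ because $u \in U$, $v \in V$, and $U \cdot V$ is well-defined by hypothesis. Local associativity then gives
\[
 u \cdot v = u \cdot (u^{-1} \cdot x) = (u \cdot u^{-1}) \cdot x = \id \cdot x = x,
\]
showing $x \in U \cdot V$. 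The statement for $V \cdot U$ is proved by the symmetric argument, writing $x = v \cdot u$ with $v := x \cdot u^{-1}$ for $u \in U$ sufficiently close to $x$.

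There is no real obstacle here; the only mild subtlety is to be careful about the domains at each step, in particular verifying all four membership conditions in $\Omega$ needed to invoke local associativity, and to use the openness of $\Omega$ (rather than mere continuity of multiplication on its domain) to guarantee that the expression $u^{-1} \cdot x$ is even defined for $u$ near $x$.
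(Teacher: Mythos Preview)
Your proof is correct and follows essentially the same approach as the paper: pick an element of $U$ close to the limit point and use continuity to write the limit point as a product landing in $U \cdot V$, invoking local associativity to collapse the expression. One small difference worth noting: you explicitly use the hypothesis that $U \cdot V$ is well-defined to verify the fourth membership $(u,v) \in \Omega$ needed for associativity, whereas the paper instead arranges a four-fold product $g \cdot g^{-1} \cdot W \cdot Y^{-1}$ to be well-defined purely from continuity and openness of $\Omega$, so that the hypothesis is in fact not needed there; your route is slightly more direct but leans on the stated assumption.
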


\begin{proof}  We prove the first claim only, as the second is similar.  Suppose that $g$ is an adherent point of $U$.  By continuity, we can find an open neighbourhood $W$ of $g$ and an open neighbourhood $Y$ of the identity such that $g \cdot g^{-1} \cdot W \cdot Y^{-1}$ is well-defined and $Y^{-1} \subset V$.  By continuity, the set $\{ h \in W: g^{-1} h \in Y \}$ is an open neighbourhood of $g$, and thus contains an element $h$ of $U$.  Writing $v := g^{-1} h$ and expanding out $g \cdot g^{-1} \cdot h \cdot v^{-1}$ in two different ways, we conclude that $g = h v^{-1}$, and thus $g \in U \cdot V$ as required.
\end{proof}

We can give the class of local groups the structure of a category by defining the notion of a (continuous) homomorphism.

\begin{definition}[Homomorphisms]\label{hom-def}  Let $G, H, K$ be symmetric local groups.  A \emph{continuous homomorphism} $\phi: G \to H$ is a continuous map from $G$ to $H$ with the following properties:
\begin{itemize}
\item[(i)] $\phi$ maps the identity of $G$ to the identity of $H$: $\phi(\id_G) = \id_H$.
\item[(ii)] For every $g \in G$, we have $\phi(g)^{-1}=\phi(g^{-1})$.
\item[(iii)] If $g,h \in G$ are such that $g \cdot h$ is well-defined, then $\phi(g) \cdot \phi(h)$ is well-defined and is equal to $\phi(g \cdot h)$.
\end{itemize}
We will often omit the adjective ``continuous'' when $G$ is discrete.

A \emph{local homomorphism} from $G$ to $H$ is a continuous homomorphism $\phi: U \to H$ from a symmetric open neighbourhood $U$ of the identity of $G$ to $H$, where of course we give $U$ the structure of the restricted local group $G\downharpoonright_U$ from Example \ref{restrict}.  Two local homomorphisms $\phi: U \to H$, $\phi': U' \to H$ are \emph{equivalent} if there exists a neighbourhood $V$ of the identity contained in both $U$ and $U'$ such that $\phi$ and $\phi'$ agree on $V$; this is an equivalence relation.  A \emph{local morphism} is an equivalence class of local homomorphisms.

Given two local homomorphisms $\phi: U \to H$ and $\psi: V \to K$ from $G$ to $H$ and $H$ to $K$ respectively, we define the composition map $\psi \circ \phi: U' \to K$ by $\psi \circ \phi(g) := \psi(\phi(g))$, where $U' := \{ g \in U: \phi(u) \in V \}$.  This allows one to define a composition of two local morphisms in the obvious manner.
\end{definition}

\begin{example}  There are no non-trivial global morphisms from the unit circle $\R/\Z$ to $\R$.  However, there do exist non-trivial local morphisms, such as (the equivalence class of) the map $\phi$ from $(-1/4,1/4) \mod 1$ to $\R$ defined by setting $\phi(x \mod 1) := x$ for all $x \in (-1/4,1/4)$.  The concept of a local homomorphism is closely related to that of a \emph{Freiman homomorphism} in additive combinatorics, as discussed for example in \cite{tv-book}.
\end{example}

One easily verifies that continuous homomorphisms and local morphisms both obey the axioms of a category; in particular, the composition of two continuous homomorphisms is a continuous homomorphism, and the composition of two local morphisms is again a local morphism.  As usual in category theory, we can now say that two local groups $G, G'$ are \emph{locally isomorphic} if there exists a local morphism $\phi$ from $G$ to $G'$ with an inverse $\phi'$ from $G'$ to $G$ which is also a local morphism, such that the compositions $\phi \circ \phi'$ or $\phi' \circ \phi$ are equivalent to the identity.  Thus, for instance, the unit circle $\R/\Z$ and the line $\R$ are locally isomorphic.  This notion of local isomorphism generalises the notion of local identity from Remark \ref{restrict}.

\begin{definition}[Sub-local groups \cite{goldbring-local}]  Given two symmetric local groups $G'$ and $G$, we say that $G'$ is a \emph{sub-local group} of $G$ if $G'$ is the restriction of $G$ to a symmetric neighbourhood of the identity, and there exists an open neighbourhood $V$ of $G'$ with the property that whenever $g, h \in G'$ are such that $gh$ is defined in $V$, then $gh \in G'$; we refer to $V$ as an \emph{associated neighbourhood} for $G'$.  If $G'$ is also a global group, we say that $G'$ is a \emph{subgroup} of $G$.

If $G'$ is a sub-local group of $G$, we say that $G'$ is \emph{normal} if there exists an associated neighbourhood $V$  for $G'$ with the additional property that whenever $g' \in G', h \in V$ are such that $h g' h^{-1}$ is well-defined and lies in $V$, then $hg'h^{-1} \in G'$.  We call $V$ a \emph{normalising neighbourhood} of $G'$.
\end{definition}

\begin{example}  If $G, G'$ are the (additive) local groups $G := \{-2,-1,0,+1,+2\}$ and $G' := \{-1,0,+1\}$, then $G'$ is a sub-local group of $G$ (with associated neighbourhood $V = G'$).  Note that this is despite $G'$ not being closed with respect to addition in $G$; thus we see why it is necessary to allow the associated neighbourhood $V$ to be strictly smaller than $G$.  In a similar vein, the open interval $(-1,1)$ is a sub-local group of $(-2,2)$.

The interval $(-1,1) \times \{0\}$ is also a sub-local group of $\R^2$; here, one can take for instance $(-1,1)^2$ as the associated neighbourhood.  As all these examples are abelian, they are clearly normal.
\end{example}

\begin{example}  Let $T: V \to V$ be a linear transformation on a finite-dimensional vector space $V$, and let $G := \Z \ltimes_T V$ be the associated semi-direct product.  Let $G' := \{0\} \times W$, where $W$ is a subspace of $V$ that is not preserved by $T$.  Then $G'$ is not a normal subgroup of $G$, but it is a normal sub-local group of $G$, where one can take $\{0\} \times V$ as a normalising neighbourhood of $G'$.
\end{example}

Observe that any sub-local group of a cancellative local group is again a cancellative local group.

One also easily verifies that if $\phi: U \to H$ is a local homomorphism from $G$ to $H$ for some open neighbourhood $U$ of the identity in $G$, then $\ker(\phi)$ is a normal sub-local group of $U$, and hence of $G$.  Note that the kernel of a local morphism is well-defined up to local identity.  If $H$ is Hausdorff, then the kernel $\ker(\phi)$ will also be closed.

Conversely, normal sub-local groups give rise to local homomorphisms into quotient spaces.

\begin{lemma}[Quotient spaces \cite{goldbring-local}]\label{quotient}  Let $G$ be a cancellative local group, and let $H$ be a normal sub-local group with normalising neighbourhood $V$.  Let $W$ be a symmetric open neighbourhood of the identity such that $W^6 \subset V$.  Then there exists a cancellative local group $W/H$ and a surjective continuous homomorphism $\phi: W \to W/H$ such that, for any $g, h \in W$, one has $\phi(g)=\phi(h)$ if and only if $gh^{-1} \in H$, and for any $E \subset W/H$, one has $E$ open if and only if $\phi^{-1}(E)$ is open.
\end{lemma}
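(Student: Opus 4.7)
The plan is to construct $W/H$ as the quotient of $W$ by the equivalence relation $g \sim h \iff gh^{-1} \in H$, topologize it with the quotient topology, and transport the local group structure of $W$ through the projection $\phi: W \to W/H$.

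First I would verify that $\sim$ is an equivalence relation on $W$. Reflexivity is immediate from $\id \in H$. For symmetry, if $g,h \in W$ and $gh^{-1} \in H$, then by cancellativity in $G$ (which holds because $W^6 \subset V$ and Proposition \ref{cancelled}) we have $(gh^{-1})^{-1} = hg^{-1} \in H$, using that the sub-local group $H$ is symmetric. For transitivity, if $gh^{-1}, hk^{-1} \in H$ with $g,h,k \in W$, then the product $(gh^{-1})(hk^{-1})$ is well-defined in $V$ (since $W^4 \subset V$ via $W^6 \subset V$) and evaluates to $gk^{-1}$; by the closure property of the sub-local group $H$ under multiplication in its associated neighbourhood $V$, we conclude $gk^{-1} \in H$.

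Next I would define multiplication and inversion on $W/H$. For $[g], [h] \in W/H$ with $gh$ defined in $W$, set $[g]\cdot [h] := [gh]$, and define $[g]^{-1} := [g^{-1}]$. The critical task is well-definedness of multiplication. If $g \sim g'$ and $h \sim h'$ with $gh, g'h'$ defined in $W$, write
\begin{equation*}
(gh)(g'h')^{-1} = g(hh'^{-1})g^{-1} \cdot gg'^{-1},
\end{equation*}
noting all these products can be arranged inside $V$ thanks to $W^6 \subset V$. Since $hh'^{-1} \in H$ and $g \in W \subset V$, normality of $H$ gives $g(hh'^{-1})g^{-1} \in H$; multiplying by $gg'^{-1} \in H$ (inside the associated neighbourhood $V$) then yields $(gh)(g'h')^{-1} \in H$, so $[gh] = [g'h']$. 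Well-definedness of inversion is similar (and easier). One then equips $W/H$ with the quotient topology, which by construction makes the characterization of open sets in the statement automatic.

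I would then verify the local group axioms for $W/H$ and continuity of its operations. Take the domain $\Omega_{W/H}$ to be the image of $\{(g,h) \in W\times W: gh \text{ is defined in } W\}$ under $\phi \times \phi$; this is open in $W/H \times W/H$ because its preimage under $\phi \times \phi$ is the saturation (by $H \times H$) of an open set, and such saturations are open since left and right translation by elements of $H$ are local homeomorphisms (using normality and $W^6\subset V$ once more). Local associativity, the identity axiom, and inversion then descend directly from $G$ by lifting to representatives. Continuity of multiplication and inversion in $W/H$ follows from continuity in $G$ together with the universal property of the quotient topology.

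Finally I would check cancellativity. If $[g]\cdot[h]=[g]\cdot[k]$ in $W/H$, then $(gh)(gk)^{-1} \in H$; manipulating inside $V$ this equals $g(hk^{-1})g^{-1}$, and conjugating by $g^{-1}$ using normality (all operations taking place in $V$, which is guaranteed by $W^6 \subset V$) gives $hk^{-1} \in H$, so $[h]=[k]$. Right cancellativity and the inverse-of-product identity are handled symmetrically. The main obstacle throughout will be bookkeeping of definedness: every identity one writes down has to be verified to lie inside $V$ so that one may apply the sub-local group and normality properties, and the choice $W^6 \subset V$ is precisely calibrated to give enough room for the six-fold products $(gh)(g'h')^{-1}$ and the conjugations $g(\cdot)g^{-1}$ that appear in the well-definedness and cancellativity arguments.
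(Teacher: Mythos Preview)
Your proposal is correct and follows essentially the same approach as the paper: define the equivalence relation $g\sim h \iff gh^{-1}\in H$, push the local group structure through the quotient map, and use $W^6\subset V$ to justify all the required products and conjugations. The only cosmetic difference is the algebraic identity used to verify well-definedness of multiplication: the paper writes $(gh)(g'h')^{-1} = (g(g')^{-1})\,g'(h(h')^{-1})(g')^{-1}$ (conjugating by $g'$), whereas you write $(gh)(g'h')^{-1} = g(hh'^{-1})g^{-1}\cdot gg'^{-1}$ (conjugating by $g$); both are valid and require the same six-fold definedness. In fact you supply more detail than the paper does on the openness of the multiplication domain and on cancellativity, which the paper leaves to the reader.
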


\begin{proof}  We define an equivalence relation on $W$ by declaring $g \sim h$ if $g h^{-1} \in H$.   Using the cancellative properties of $V$ (and hence of $W^6$) we see that this is indeed an equivalence relation.  We let $W/H := \{ [g]_\sim: g \in W \}$ be the set of equivalence classes $[g]_\sim := \{ h \in W: g \sim h \}$, with the obvious projection map $\pi: W \to W/H$.  We define an inversion relation on $W/H$ by setting $[g]_\sim^{-1} := [g^{-1}]_\sim$, and a product operation by setting $[g]_\sim [h]_\sim$ to equal $[g'h']_\sim$ if $g'h' \in W$ for at least one representative $g', h'$ of $[g]_\sim, [h]_\sim$ respectively.

We now verify that these relations are well-defined.  To make the inversion relation well-defined, we need to verify that if $g \sim h$, then $g^{-1} \sim h^{-1}$.  But from the cancellative properties of $W^6$, we have $g^{-1} (h^{-1})^{-1} = g^{-1} (gh^{-1})^{-1} g$, and the claim follows as $W^6$ is a normalising neighbourhood for $H$.  Similarly, to make the multiplication relation well-defined, we need to verify that if $g, g', h, h'$ are such that $g \sim g'$, $h \sim h'$, and $gh, g'h' \in W$, then $gh \sim g'h'$.  But $(gh) (g'h')^{-1} = (g (g')^{-1}) g' (h (h')^{-1}) (g')^{-1}$, and the claim follows as $W^6$ is a normalising neighbourhood for $H$.  Similar arguments (which we omit) show that $W/H$ obeys the identity, inverse, and local associativity axioms.

Next, we give $W/H$ the quotient topology, declaring a set $E$ in $W/H$ open iff its inverse image $\pi^{-1}(E)$ is open in $W$ (or equivalently, in $G$).  One easily verifies that $W/H$ becomes a symmetric local group, and the claim follows.
\end{proof}

\begin{example}  Let $G$ be the additive local group $G := (-2,2)^2$, and let $H$ be the sub-local group $H := \{0\} \times (-1,1)$, with normalising neighbourhood $V := (-1,1)^2$.  If we then set $W := (-0.1,0.1)^2$, then the hypotheses of Lemma \ref{quotient} are obeyed, and $W/H$ can be identified with $(-0.1,0.1)$, with the projection map $\phi: (x,y) \mapsto x$.
\end{example}

\begin{example} Let $G$ be the torus $(\R/\Z)^2$, and let $H$ be the sub-local group $H = \{ (x,\alpha x) \mod \Z^2: x \in (-0.1,0.1)\}$, where $0 < \alpha < 1$ is an irrational number, with normalising neighbourhood $(-0.1,0.1)^2 \mod \Z^2$.  Set $W := (-0.01, 0.01)^2 \mod\Z^2$.  Then the hypotheses of Lemma \ref{quotient} are again obeyed, and $W/H$ can be identified with the interval $I := (-0.01(1+\alpha),0.01(1+\alpha))$, with the projection map $\phi: (x,y) \mod \Z^2 \mapsto y - \alpha x$ for $(x,y) \in (-0.01,0.01)^2$.  Note, in contrast, that if one quotiented $G$ by the \emph{global} group $\langle H \rangle = \{ (x,\alpha x) \mod \Z^2: x \in \R \}$ generated by $H$, the quotient would be a non-Hausdorff space (and would also contain a dense set of torsion points, in contrast to the interval $I$ which is ``locally torsion free'').  It is because of this pathological behaviour of quotienting by global groups that we need to work with local group quotients instead.
\end{example}

\begin{remark}  As we have seen in the above discussion, many familiar concepts in (global) group theory have analogues in the local group setting.  We will however mention one important global group-theoretic concept that does not have a convenient local analogue, and that is the notion of the global group $\langle A \rangle$ generated by a set $A$ of generators.  The problem is that this global group $\langle A \rangle$ consists of words in $A$ of \emph{arbitrarily} length, whereas in a local group one can typically only multiply together a bounded number of elements of $A$.  However, sets such as $A^m$ or $(A \cup A^{-1} \cup \{\id\})^m$ for various choices of exponent $m$ can sometimes serve as a partial substitute for this concept in local group theory, though one of course has to keep track of the precise value of $m$ throughout the argument.
\end{remark}

\textsc{Locally compact local groups.} Recall that a topological space $X$ is said to be \emph{locally compact} if and every point in $X$ has a compact neighbourhood.  In particular, one can speak of a locally compact symmetric local group.

To verify local compactness of a symmetric local group, it suffices to do so at the identity.

\begin{lemma}\label{locid}  Let $G$ be a symmetric local group.  Then $G$ is locally compact if and only if there is a compact symmetric neighbourhood of the identity.
\end{lemma}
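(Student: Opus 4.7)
The forward direction is immediate. If $G$ is locally compact then $\id$ admits some compact neighbourhood $K_0$, and I would then symmetrise by setting $K := K_0 \cap K_0^{-1}$. The globally defined inversion $()^{-1}$ is a continuous involution and hence a self-homeomorphism of $G$, so $K_0^{-1}$ is again compact and a neighbourhood of $\id^{-1} = \id$, making $K$ a compact symmetric neighbourhood of $\id$.

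For the reverse direction, starting from a compact symmetric neighbourhood $K$ of $\id$ and an arbitrary $g \in G$, my plan is to build a compact neighbourhood of $g$ by translating $K$ via the left multiplication map $L_g(h) := g \cdot h$. Since $(g,\id) \in \Omega$ and $\Omega$ is open in $G \times G$, there is an open neighbourhood $V$ of $\id$ with $\{g\} \times V \subseteq \Omega$, so $L_g$ is defined and continuous on $V$. Invoking openness of $\Omega$ at $(g^{-1},g)$ as well, I would shrink $V$ so that $L_{g^{-1}}$ is defined on $g \cdot V$ and inverts $L_g$ there (using the local associativity axiom to verify $g^{-1} \cdot (g \cdot v) = v$). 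Thus $L_g$ becomes a homeomorphism of $V$ onto an open set $g \cdot V$ which is a neighbourhood of $g$.

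The remaining step is to locate a compact neighbourhood $K'$ of $\id$ contained in $V$: once this is done, $g \cdot K' = L_g(K')$ is the continuous image of the compact set $K'$, hence compact, and it contains the open set $L_g(\operatorname{int}(K'))$ which is a neighbourhood of $g$, so $g \cdot K'$ is the desired compact neighbourhood of $g$.

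The construction of $K'$ is where the main technical care is required. In the Hausdorff case the argument is routine: $K$ is then closed, and for any open $V'$ with $\id \in V' \subseteq V \cap \operatorname{int}(K)$ the closure of $V'$ in $G$ is contained in $K$ and hence is a compact neighbourhood $K'$ of $\id$ inside $V$. Since the excerpt permits $G$ to be non-Hausdorff, the cleanest way to handle the general case is the reduction already hinted at by the excerpt itself: the closure of $\{\id\}$ is a closed normal sub-local group, and quotienting by it produces a Hausdorff local group to which the preceding argument applies, after which the compact neighbourhood of $g$ may be pulled back to $G$ along the (continuous, with compact fibres in a suitable neighbourhood) quotient map. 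This last reduction is the step I would expect to require the most bookkeeping.
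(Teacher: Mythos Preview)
Your translation strategy is the right idea and is essentially what the paper does, but your execution of the ``if'' direction has a gap even in the Hausdorff case: you assert that $K' := \overline{V'}$ is a compact neighbourhood of $\id$ lying \emph{inside $V$}, yet you only arranged $V' \subseteq V \cap \operatorname{int}(K)$. Nothing prevents $\overline{V'}$ from spilling outside $V$, and since $L_g$ is only known to be defined on $V$, you cannot then form $g \cdot K'$. Your proposed detour through the Hausdorff quotient for the general case is also left vague and would require nontrivial bookkeeping that you do not supply.

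Both issues dissolve once you invoke Lemma~\ref{sym}, which is how the paper proceeds and which needs no Hausdorff hypothesis: for any open $U$ and any open $W \ni \id$ with $U \cdot W$ well-defined, one has $\overline{U} \subset U \cdot W$. In your framework, simply choose an open symmetric $V'$ with $\id \in V'$ and $(V')^2$ well-defined and contained in $V \cap \operatorname{int}(K)$; then Lemma~\ref{sym} gives $\overline{V'} \subset (V')^2 \subset V \cap K$, so $\overline{V'}$ is closed in the compact set $K$ (hence compact) and lies inside the domain of $L_g$. The paper runs the translation in the opposite direction---taking $V$ to be a neighbourhood of $g$ and pushing forward by $L_{g^{-1}}$ into $K$---but the decisive step is the same appeal to Lemma~\ref{sym} to control closures without separation axioms. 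A minor aside: your forward-direction claim that $K_0 \cap K_0^{-1}$ is compact also tacitly uses Hausdorffness, since in general the intersection of two compact sets need not be compact; the same Lemma~\ref{sym} device (take $\overline{V'}$ for a small symmetric open $V'$ with $(V')^2 \subset \operatorname{int}(K_0)$) repairs this too.
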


\begin{proof}  \cite[Lemma 2.16]{goldbring-local} The ``only if'' part is clear (since $\id$ already has a compact neighbourhood).  Now we turn to the ``if'' part.  Let $K$ be a compact symmetric neighbourhood of the identity.  By continuity, there exists an open neighbourhood $V$ of $g$ such that $g \cdot g^{-1} \cdot V \cdot V$ is well-defined and $g^{-1} \cdot V \cdot V \subset K$.  In particular, $h \mapsto g^{-1} h$ is a homeomorphism from $V \cdot V$ to $g^{-1} \cdot V \cdot V$ which is inverted by the map $k \mapsto gk$.  By Lemma \ref{sym}, we conclude that $h \mapsto g^{-1} h$ is also a homeomorphism from $\overline{V}$ to $g^{-1} \cdot \overline{V} = \overline{g^{-1} \cdot V}$.  In particular, since $\overline{g^{-1} \cdot V}$ is a closed subset of $K$, it is compact, and so $\overline{V}$ is compact also.  Thus $g$ has a precompact neighbourhood as required.
\end{proof}

\begin{corollary}\label{oik}  If $G$ is a locally compact symmetric local group, and $U$ is a symmetric open neighbourhood of the identity, then $U$ is also a locally compact local group.
\end{corollary}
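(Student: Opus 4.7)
The plan is to reduce to Lemma \ref{locid} and produce a compact symmetric neighbourhood of the identity that sits inside $U$. By Lemma \ref{locid} applied to $G$, there is a compact symmetric neighbourhood $K$ of the identity in $G$. Since $U \cap K^\circ$ is an open neighbourhood of the identity in $G$ and the multiplication map is continuous at $(\id,\id)$ with $\id \cdot \id = \id$, there exists a symmetric open neighbourhood $V$ of the identity such that $V \cdot V$ is well-defined in $G$ and $V \cdot V \subset U \cap K^\circ$. (Symmetry of $V$ can be arranged by replacing $V$ with $V \cap V^{-1}$, which is still open since inversion is a homeomorphism.)

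Next, I would invoke Lemma \ref{sym} with the roles of both sets played by $V$: this yields $\overline{V} \subset V \cdot V$, and consequently $\overline{V} \subset K$. A closed subset of a compact space is compact (a standard argument not requiring the Hausdorff axiom: any open cover of $\overline{V}$, together with the complement of $\overline{V}$ in $K$, covers $K$ and hence admits a finite subcover). Thus $\overline{V}$ is compact. It is symmetric because $V$ is and $\overline{V^{-1}} = \overline{V}^{\,-1}$, and it contains the open set $V \ni \id$, so it is a neighbourhood of the identity. Moreover $\overline{V} \subset V \cdot V \subset U$.

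Finally, I would verify that $\overline{V}$ serves as a compact symmetric identity neighbourhood within the restricted local group $G \downharpoonright_U$: since $V \subset U$ is open in $G$, it is open in $U$ with the subspace topology, so $\overline{V} \supset V$ is a neighbourhood of $\id$ in $U$; and the subspace topology on $\overline{V}$ inherited from $U$ coincides with that inherited from $G$, so compactness is preserved. Applying Lemma \ref{locid} in the reverse direction to the symmetric local group $U$ then yields the local compactness of $U$. The only mild subtlety is the handling of the non-Hausdorff case, which is why the use of Lemma \ref{sym} (rather than simply taking the closure inside $K$) is essential; once that is in place the argument is routine.
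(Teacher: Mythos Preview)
Your proof is correct and follows essentially the same approach as the paper: both arguments choose a small symmetric open neighbourhood whose square is well-defined and contained in $U$ intersected with a compact set, invoke Lemma \ref{sym} to trap the closure inside that square, deduce compactness, and then apply Lemma \ref{locid} to $U$. The only cosmetic difference is that the paper starts from a precompact open neighbourhood rather than a compact one, but the logic is identical.
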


\begin{proof}  By Lemma \ref{locid}, $G$ contains a symmetric precompact open neighbourhood $V$ of the identity.  By continuity, one can find a symmetric open neighbourhood $W$ of the identity such that $W \cdot W$ is well-defined in $V \cap U$.  By Lemma \ref{sym}, we conclude that the closure $\overline{W}$ in $U$ is the same as the closure of $\overline{W}$ in $G$; as it is contained in the precompact set $V$, it is thus precompact.  The claim then follows from another application of Lemma \ref{locid}.
\end{proof}

An important subclass of the locally compact local groups are the \emph{(symmetric) local Lie groups}, defined as those (symmetric) local groups which are also smooth finite-dimensional real manifolds, such that the group operations are smooth on their domain of definition.  We have the following basic theorem.

\begin{theorem}[Lie's third theorem]\label{lie-third}  Every local Lie group is locally isomorphic to a global Lie group.  Furthermore, one can take the global Lie group to be both connected and simply connected.
\end{theorem}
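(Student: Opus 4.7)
The plan is to reduce Theorem \ref{lie-third} to the classical Lie's third theorem for Lie algebras, using the Baker-Campbell-Hausdorff formula to transport the local isomorphism. First I would construct the Lie algebra $\mathfrak{l}$ of the local Lie group $L$: as a vector space, $\mathfrak{l}$ is the tangent space $T_{\id} L$, and the Lie bracket is defined in the usual way via the commutator of left-invariant vector fields. Because multiplication in $L$ is only defined on an open neighbourhood $\Omega$ of $(L \times \{\id\}) \cup (\{\id\} \times L)$, one should take left-invariant vector fields only on a small symmetric neighbourhood $U$ of the identity where all the relevant products are defined; the bracket of two such vector fields remains left-invariant on a possibly smaller neighbourhood, and its value at the identity is independent of the choice of neighbourhood. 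This yields a well-defined finite-dimensional real Lie algebra $\mathfrak{l}$ of the same dimension as $L$.

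Next I would invoke the classical Lie's third theorem for Lie algebras: every finite-dimensional real Lie algebra $\mathfrak{l}$ arises as the Lie algebra of a unique (up to isomorphism) connected, simply-connected global Lie group $G$. This is a standard but genuinely non-trivial result, provable either via Ado's theorem (embedding $\mathfrak{l}$ into some $\mathfrak{gl}_n(\mathbb{R})$, integrating to a connected immersed subgroup of $GL_n(\mathbb{R})$, and then passing to the universal cover) or through Cartan's more intrinsic path-space construction. I would simply take this result as a black box from the standard Lie theory literature (e.g.\ Serre's book cited in the paper).

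Finally I would show that $L$ and $G$ are locally isomorphic. Since both $L$ and $G$ have Lie algebra $\mathfrak{l}$, the exponential maps $\exp_L : V_L \to L$ and $\exp_G : V_G \to G$ are diffeomorphisms from suitable open neighbourhoods $V_L, V_G$ of $0 \in \mathfrak{l}$ onto open neighbourhoods of the identity in $L$ and $G$ respectively. On a sufficiently small symmetric convex neighbourhood $V \subseteq V_L \cap V_G$ of $0$, the pullback of the multiplication laws of $L$ and $G$ to $V$ are both given by the same Baker-Campbell-Hausdorff series $X \ast Y = X + Y + \tfrac{1}{2}[X,Y] + \cdots$, which depends only on $\mathfrak{l}$ and converges on a neighbourhood of $0$. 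Consequently $\phi := \exp_G \circ \exp_L^{-1}$, defined on $\exp_L(V)$, is a smooth local homomorphism from $L$ to $G$; shrinking $V$ further so that $\exp_L(V)$ and $\exp_G(V)$ are cancellative symmetric local Lie groups and $\phi$ is a diffeomorphism, we see that $\phi$ has a local inverse $\exp_L \circ \exp_G^{-1}$ which is also a local homomorphism. Thus $\phi$ is a local isomorphism, establishing the theorem.

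The main obstacle is of course the invocation of the classical Lie's third theorem for Lie algebras, which is a deep integration result; however, in the context of this paper it is legitimate to cite it as a known theorem (as indeed the paper does by referring to Serre). The remaining work — constructing $\mathfrak{l}$ from $L$ and checking the BCH-based local isomorphism — is essentially routine once one is careful to keep all operations within domains where the local group multiplication is defined.
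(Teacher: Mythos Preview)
Your proposal is correct and is essentially the standard argument; note that the paper does not actually prove Theorem \ref{lie-third} but simply cites Serre's book for a proof, so there is no paper-specific proof to compare against. Your sketch is precisely the approach one finds in such references.
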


See e.g. \cite{serre} for a proof.

We have the following deep structure theorem for locally compact global groups, due to Gleason and Yamabe \cite{yamabe2}.

\begin{theorem}[Gleason-Yamabe]\label{gleason-yamabe}
Suppose that $G$ is a locally compact global group. Then there is an open subgroup $G'$ of $G$ with the following property: inside any neighbourhood of the identity $U\subseteq G'$, there is a compact normal subgroup $H$ such that $G'/H$ is isomorphic to a connected global Lie group.
\end{theorem}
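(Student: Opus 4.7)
The plan is to prove this by reducing to a compactly generated open subgroup and then running the classical Gleason--Montgomery--Zippin--Yamabe scheme to produce small compact normal subgroups with Lie quotient. First I would handle the easy reduction: pick any precompact open symmetric neighbourhood $V$ of the identity and set $G' := \bigcup_{n \geq 1} V^n$. Since $V \cdot V$ is open, $G'$ is an open (hence also closed) subgroup of $G$, and it is $\sigma$-compact and locally compact. Every neighbourhood $U$ of the identity in $G'$ is a neighbourhood in $G$, so it suffices to prove the approximation statement for $G'$ in place of $G$; that is, to show that every neighbourhood of the identity in a $\sigma$-compact, compactly generated, locally compact group contains a compact normal subgroup with connected Lie quotient.

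For the core step I would combine two ingredients. The first is the Peter--Weyl theorem: in any compact group $K$, every neighbourhood of the identity contains the kernel of some finite-dimensional unitary representation, and such a kernel is a compact normal subgroup of $K$ with Lie quotient. Applied to compact subgroups sitting inside $G'$, this lets one ``kill'' the compact directions of $G'$ that live near the identity, producing a quotient with the No Small Subgroups (NSS) property near the identity. The second ingredient is the Gleason--Montgomery--Zippin theorem that a locally compact NSS group is a Lie group. The plan here is to import the Gleason lemmas (product, conjugation, and commutator inequalities for a suitable escape norm) -- these are exactly the classical analogues of the lemmas proved in Section~\ref{gleason-sec} for approximate groups. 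Using them one constructs a left-invariant ``Gleason metric'' on a neighbourhood of the identity via a Birkhoff--Kakutani type construction (compare Lemmas~\ref{birkhoff-kakutani}--\ref{normal-birkhoff-kakutani}), shows that the space of one-parameter subgroups forms a finite-dimensional real vector space under $(X+Y)(t) := \lim_{n \to \infty}(X(t/n)Y(t/n))^n$ (this is where the product bound on the escape norm is crucial), and verifies that the exponential map is a local homeomorphism, giving $G'$ the structure of a Lie group near the identity; Lie's third theorem (Theorem~\ref{lie-third}) then produces a global Lie group structure.

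Putting these together, given $U \subseteq G'$, I would use the Peter--Weyl step to select a compact normal subgroup $H \subseteq U$ of $G'$ such that the quotient $G'/H$ has no small subgroups in some neighbourhood of the identity, then apply the NSS-implies-Lie result to conclude that $G'/H$ is locally Lie and hence (passing to the identity component, or applying Lie's third theorem plus connectedness of the component generated by a sufficiently small neighbourhood) is a connected Lie group. If one insists on connectedness of the quotient, one can at the end replace $G'$ by the open subgroup $V_0 \cdot H$ for a small connected symmetric neighbourhood $V_0$ of the identity, after verifying that $H$ remains normal there.

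The main obstacle will clearly be the Gleason--Montgomery--Zippin step: establishing the three escape-norm inequalities and the vector-space structure on one-parameter subgroups in the honest locally compact setting, with no approximate-group hypothesis. In practice I would quote this as a black box from \cite{montgomery-zippin-book} (or, for the local-group variant used later in the paper, from \cite{goldbring-thesis}), just as the present paper does; the genuine novelty of Section~\ref{gleason-sec} of the paper is precisely that the analogous lemmas can also be made to work in the much weaker setting of strong approximate groups, and not that they hold in the classical setting, which is already known.
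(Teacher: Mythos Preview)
The paper does not give its own proof of this theorem: it is stated in the appendix as a classical result ``due to Gleason and Yamabe \cite{yamabe2}'' and used as a black box, exactly as you anticipated in your final paragraph. So there is no paper-proof to compare against; your proposal goes strictly beyond what the paper contains.

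Your sketch is a fair high-level summary of the classical route, but one step is too optimistic as written. You propose to ``use the Peter--Weyl step to select a compact normal subgroup $H \subseteq U$ of $G'$ such that the quotient $G'/H$ has no small subgroups''. Peter--Weyl applies to \emph{compact} groups, not to the (generally non-compact) $\sigma$-compact group $G'$; one cannot directly find a compact normal $H$ in $G'$ this way. In the actual Yamabe argument this reduction to NSS is the hard part: one first shows (via Gleason's lemmas and a subtle induction) that $G'$ is a projective limit of Lie groups, and only then extracts the desired small compact normal subgroup. The Peter--Weyl theorem enters in handling the compact pieces that arise along the way, not as a one-shot device on $G'$ itself. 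If you are going to quote the theorem as a black box anyway this doesn't matter, but if you actually try to execute the sketch you would need to replace that step with Yamabe's genuine approximation-by-Lie-groups argument.
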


The analogous theorem for locally compact local groups was established more recently by Goldbring.

\begin{theorem}[Goldbring]\label{goldbring-thm}
Suppose that $G$ is a locally compact local group. Then some restriction $G'$ of $G$ to a symmetric neighbourhood of the identity has the following property. Inside any neighbourhood of the identity $U\subseteq G'$, there is a compact normal subgroup $H$ such that $G'/H$ is isomorphic to a local Lie group.
\end{theorem}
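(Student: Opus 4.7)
\textbf{Proof plan for Theorem \ref{goldbring-thm}.} The plan is to reduce the local Gleason--Yamabe theorem to its global counterpart (Theorem \ref{gleason-yamabe}) via a local-to-global embedding, and then to pull the global structure back through the embedding. The central technical input will be the Goldbring--van den Dries embedding theorem (cited after Theorem \ref{local-global}): every locally compact local group is locally isomorphic, near the identity, to an open neighbourhood of the identity in a locally compact \emph{global} group.

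First, I would normalise $G$. Using Proposition \ref{cancelled}, pass to a symmetric open neighbourhood $U_0$ of the identity with $U_0^{6}$ well-defined, so that $G\downharpoonright_{U_0}$ is cancellative; by Corollary \ref{oik} this restriction remains locally compact. Next, apply the Goldbring--van den Dries embedding to obtain a locally compact global group $\widetilde G$ together with an open symmetric neighbourhood $V\subseteq U_0$ of $\id_G$ and a local isomorphism $\iota\colon V\to\iota(V)\subseteq\widetilde G$ onto an open neighbourhood of $\id_{\widetilde G}$. The restriction $G':=G\downharpoonright_{V'}$ for a still smaller symmetric $V'\subseteq V$ (with $(V')^{6}$ well-defined inside $V$) is the candidate in the theorem; note that it is locally isomorphic to $\iota(V')$ inside $\widetilde G$.

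Now apply Theorem \ref{gleason-yamabe} to $\widetilde G$, obtaining an open subgroup $\widetilde G_1\leq\widetilde G$ with the Gleason--Yamabe approximation property. Given any neighbourhood $U\subseteq G'$ of the identity, shrink if necessary so that $U\subseteq V'$; then $\iota(U)\cap\widetilde G_1$ is an open neighbourhood of the identity in $\widetilde G_1$, so Theorem \ref{gleason-yamabe} produces a compact normal subgroup $\widetilde H\subseteq \iota(U)\cap\widetilde G_1$ with $\widetilde G_1/\widetilde H$ a connected Lie group. Set $H:=\iota^{-1}(\widetilde H)$. Because $\iota$ is a homeomorphism onto its image, $H$ is a compact subset of $U$; because $\widetilde H$ is a subgroup of $\widetilde G_1$ and all relevant products take place inside $\iota(V')$, the local isomorphism transfers the subgroup axioms, so $H$ is a genuine (compact) subgroup of $G'$. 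Normality of $H$ inside a suitable normalising neighbourhood of $G'$ follows similarly: for $g\in V'$ and $h\in H$, the conjugate $ghg^{-1}$ is computed in $V$ (shrink $U$ if needed to ensure this), and $\iota(ghg^{-1})=\iota(g)\widetilde H\iota(g)^{-1}\subseteq\widetilde H$, whence $ghg^{-1}\in H$.

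Finally, for the quotient structure, use Lemma \ref{quotient} to form the local quotient $W/H$ inside a sufficiently small $W\subseteq G'$ with $W^{6}$ contained in the normalising neighbourhood of $H$. The local isomorphism $\iota$ descends to a local isomorphism between $W/H$ and an open neighbourhood of the identity in $\widetilde G_1/\widetilde H$, and the latter is an open subset of a connected Lie group, hence a local Lie group. Thus $G'/H$ is a local Lie group, which completes the proof. The principal obstacle is the embedding step: the Goldbring--van den Dries construction of $\widetilde G$ from the germ of $G$ at the identity is delicate, involving either a direct category-theoretic completion or an ultraproduct-style argument to promote the partial multiplication on $G$ to an everywhere-defined group law. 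Everything after that is a straightforward transfer through the local isomorphism combined with the already-available global Gleason--Yamabe theorem.
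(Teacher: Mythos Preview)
Your approach is correct and is in fact one of three routes the paper explicitly lists for establishing this result. The paper does not give its own proof but instead cites the literature: the primary reference is Goldbring's thesis, where the result is obtained directly by first reducing to the NSS case and then handling NSS local groups; a second route combines a proposition from the van den Dries--Goldbring paper (the NSS reduction) with a section of Goldbring's published paper (the NSS case). Your proposal---apply the van den Dries--Goldbring local-to-global embedding and then invoke the global Gleason--Yamabe theorem---is exactly the third route the paper mentions, which it describes as valid but ``ultimately more circuitous.''

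The reason for the ``circuitous'' label is worth making explicit. The local-to-global embedding theorem of van den Dries and Goldbring is itself proved by first establishing the local Hilbert's fifth problem machinery (in particular, results essentially equivalent to the theorem you are trying to prove). So while your black-box use of the embedding followed by the global theorem is logically sound as a citation chain, it does not furnish an independent argument: the embedding step already contains the substance of the local Gleason--Yamabe theorem within it. The more direct references the paper prefers isolate exactly the needed ingredients (NSS reduction plus treatment of the NSS case) without this redundancy. Your transfer of the compact normal subgroup and quotient structure back through the local isomorphism is carried out correctly; one small point to tidy is that $V'$ should be chosen after $\widetilde G_1$ is produced, so that $\iota(V')\subseteq\widetilde G_1$ and normality of $H$ in all of $G'$ follows cleanly.
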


\begin{proof}
The only self-contained proof of Theorem \ref{goldbring-thm} in the literature is in the thesis \cite{goldbring-thesis}, where it follows from a combination of \S 4.5 and \cite[Proposition 4.7.1]{goldbring-thesis}.  A more easily accessible account of essentially the same material follows by combining \cite[Proposition 4.1]{dries} (reduction to the NSS case) with \cite[\S 8]{goldbring-local} (treatment of the NSS case). Alternatively (though ultimately more circuitously) one may apply the main result of \cite{dries}, which shows that $G$ has a restriction in common with a \emph{global} locally compact group, followed by Theorem \ref{gleason-yamabe}.  For our applications, we only need to apply Theorem \ref{goldbring-thm} when $G$ is metrisable, although the general case can be deduced from the metrisable case without much effort.
\end{proof}

\section{Nilprogressions and related objects}\label{nilprogression-sec}

In this appendix we prove two basic facts about coset nilprogressions in normal form, namely that after shrinking the length parameter slightly they are approximate groups, and are globalisable: that is to say isomorphic to subsets of a global group.  The proofs of these facts are quite short due to the strength of the normal form axioms.  One can establish similar assertions without the normal form hypothesis, but the arguments are much more complicated in that they require one to work with an explicit basis for the free nilpotent group. They are not needed in this paper.

\begin{lemma}\label{cosn}  Let $P = P_H(u_1,\ldots,u_r;N_1,\ldots,N_r)$ be a coset nilprogression in $C$-normal form.  Then for all $\eps > 0$ that are sufficiently small depending on $r, C$, one has
\begin{equation}\label{1nn}
 (1+N_1) \ldots (1+N_r) |H| \ll_{\eps,C,r} |P_H(u_1,\ldots,u_r; \eps N_1,\ldots,\eps N_r)| \ll_{C} (1+N_1) \ldots (1+N_r) |H|.
 \end{equation}
and hence, by the volume bounds on $P$,
$$
 |P_H(u_1,\ldots,u_r; \eps N_1,\ldots,\eps N_r)| \gg_{\eps,C,r} |P|.
$$
Furthermore, $P_H(u_1,\ldots,u_r; \eps N_1,\ldots,\eps N_r)$ is a $O_{\eps,C,r}(1)$-approximate group.
\end{lemma}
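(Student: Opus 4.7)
The plan is first to reduce to the nilprogression case and then to exploit the upper-triangular form to run a collection (or ``basic commutator'') process that rewrites arbitrary words in the generators into the canonical form $u_1^{n_1}\ldots u_r^{n_r}$ with controlled exponents.

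\textbf{Step 1: Reduction to $H=\{\id\}$.} Write $P = \pi^{-1}(Q)$ where $Q = P(\bar u_1,\ldots,\bar u_r;N_1,\ldots,N_r)$ is a nilprogression in $C$-normal form in $W/H$, and $\pi$ is the quotient map from Lemma \ref{quotient}. Each fibre of $\pi$ over $W/H$ has cardinality $|H|$, and the homomorphism property lets us pull back the approximate-group structure from $Q_\eps := P(\bar u_1,\ldots,\bar u_r;\eps N_1,\ldots,\eps N_r)$ to $\pi^{-1}(Q_\eps) = P_H(u_1,\ldots,u_r;\eps N_1,\ldots,\eps N_r)$. Thus it suffices to prove both assertions for the nilprogression $Q$, with the volume bound $|Q| \asymp (1+N_1)\cdots(1+N_r)$ from the hypothesis in \eqref{cnn} replacing $|P|$.

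\textbf{Step 2: Collection lemma.} Using the upper-triangular form \eqref{comm} one can run the standard nilpotent collection process. I would prove by reverse induction on $i=r,r-1,\ldots,1$ the following statement: if $\eps$ is small enough in terms of $C$ and $r$, then every element of $P(u_1,\ldots,u_r;\eps N_1,\ldots,\eps N_r)$ can be written as $u_1^{n_1} u_2^{n_2} \ldots u_r^{n_r}$ with $|n_i| \leq C_1(C,r)\,\eps\,N_i$ for every $i$. Indeed, any occurrence of a generator $u_i^{\pm 1}$ appearing to the left of a previously collected $u_j^{\pm 1}$ with $i<j$ can be moved past $u_j^{\pm 1}$ at the cost of introducing a commutator which, by \eqref{comm}, is itself a short word in $u_{j+1},\ldots,u_r$ whose lengths are of order $\eps^2 N_k/N_i N_j \cdot (\ldots)$. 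Because each such exchange loses a factor of roughly $\eps$, the total contribution to the exponent $n_k$ of any collected commutator is geometrically summable in $\eps$, hence of order $O_{C,r}(\eps N_k)$; this is the induction hypothesis that keeps the process under control. Moreover every intermediate word remains in $P(u_1,\ldots,u_r;O_{C,r}(\eps)N_1,\ldots,O_{C,r}(\eps)N_r)$, hence well-defined in $G$ by the hypothesis that $(u_1,\ldots,u_r;N_1,\ldots,N_r)$ was well-defined as a nilprogression (or, if needed, by passing to a slightly larger well-defined nilprogression).

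\textbf{Step 3: Volume bounds and approximate-group property.} Conversely, any product $u_1^{n_1}\ldots u_r^{n_r}$ with $|n_i|\leq\eps N_i$ obviously lies in $P(u_1,\ldots,u_r;\eps N_1,\ldots,\eps N_r)$. Combining this with Step 2, and with the local properness axiom (which gives injectivity of $(n_1,\ldots,n_r)\mapsto u_1^{n_1}\ldots u_r^{n_r}$ on the box $|n_i|\leq N_i/C$), we get
\[
\prod_{i=1}^r (2\lfloor\eps N_i\rfloor+1) \leq |P(u_1,\ldots,u_r;\eps N_1,\ldots,\eps N_r)| \leq \prod_{i=1}^r (2\lfloor C_1\eps N_i\rfloor+1),
\]
which yields \eqref{1nn} after multiplying by $|H|$. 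The lower bound $|P_H(u_1,\ldots,u_r;\eps N_1,\ldots,\eps N_r)|\gg_{\eps,C,r}|P|$ then follows from the upper bound on $|P|$ in \eqref{cnn}. Finally, to see that $A_\eps:=P_H(u_1,\ldots,u_r;\eps N_1,\ldots,\eps N_r)$ is an $O_{\eps,C,r}(1)$-approximate group, note that $A_\eps \cdot A_\eps \subseteq P_H(u_1,\ldots,u_r;2\eps N_1,\ldots,2\eps N_r)$, and Step 2 shows that the latter set is contained in the collected box $\{u_1^{n_1}\ldots u_r^{n_r}h : h\in H,\ |n_i|\leq 2C_1\eps N_i\}$. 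A straightforward pigeonhole on the exponent lattice $(n_1,\ldots,n_r)$, using that each coordinate box $[-2C_1\eps N_i,2C_1\eps N_i]$ is covered by $O_{C,r}(1)$ translates of $[-\eps N_i,\eps N_i]$, exhibits $A_\eps\cdot A_\eps$ as a union of $O_{\eps,C,r}(1)$ left-translates of $A_\eps$ by explicit words in the generators (which themselves lie in $A_\eps^{O_r(1)}$, hence can be absorbed after a harmless enlargement).

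\textbf{Main obstacle.} The single delicate point is the bookkeeping in the collection process of Step 2: one must verify that the geometric series of commutator corrections really does sum to $O_{C,r}(\eps N_i)$ rather than blowing up, and that every intermediate word stays inside a nilprogression in which all operations are well-defined. Both are consequences of the normalisation \eqref{comm}, in which the ``size'' $N_{j+1}/(N_iN_j)$ of a commutator is precisely what makes the exchange losses compound favourably, but the induction has to be set up so that this smallness is preserved at each stage.
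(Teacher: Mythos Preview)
Your argument is essentially correct but takes a considerably longer route than the paper, and the very last step has a gap.

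The paper's proof is a few lines. After quotienting by $H$ (your Step~1), the upper bound in \eqref{1nn} is \emph{immediate} from the containment $P(u_1,\ldots,u_r;\eps N_1,\ldots,\eps N_r)\subseteq P$ together with the volume hypothesis \eqref{cnn}, which already bounds $|P|$ by $C\prod(2\lfloor N_i\rfloor+1)$. No collection process is needed. The lower bound comes from local properness, as you have it. For the approximate-group property, the paper simply invokes the Ruzsa covering lemma (Lemma~\ref{loc}): with $A=P_{\eps/2}$ and $B=P_\eps^2$ one has $|BA|\leq |P_{5\eps/2}|\leq |P|\ll_C\prod(1+N_i)$ and $|A|\gg_{\eps,C,r}\prod(1+N_i)$, whence $P_\eps^2\subseteq Y\cdot P_{\eps/2}^2\subseteq Y\cdot P_\eps$ for some $Y\subseteq P_\eps^2$ of bounded size.

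Your Step~2 collection lemma is valid (and indeed the paper carries out essentially this computation in the proof of Lemma~\ref{normglob}), but it is overkill here: you are reproving from the upper-triangular axiom something weaker than what the volume axiom \eqref{cnn} hands you for free.

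Your Step~3 lattice pigeonhole, however, does not work as written. In a non-abelian group, left-translating by $u_1^{m_1}\cdots u_r^{m_r}$ does \emph{not} simply shift the exponent vector $(n_1,\ldots,n_r)$, so covering the coordinate boxes $[-2C_1\eps N_i,2C_1\eps N_i]$ by translates of $[-\eps N_i,\eps N_i]$ in $\Z^r$ does not yield a covering of the collected set by left-translates of $A_\eps$. The clean fix is to abandon the explicit covering and instead observe that your collection lemma gives $|P_{m\eps}|\ll_{C,r,m}|P_\eps|$ for each fixed $m$, hence $P_\eps$ has bounded tripling, and then Ruzsa covering (or Corollary~\ref{isgroup}) finishes---which is exactly the paper's route.
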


\begin{proof}  By quotienting out the finite group $H$, which is normalised by $P_H(u_1,\ldots,u_r; \eps N_1,\ldots,\eps N_r)^6$ (say) if $\eps$ is small enough, we may assume that $H$ is trivial.  The upper bound in \eqref{1nn} is then immediate from the upper bound in \eqref{cnn}, while the lower bound follows from the local properness axiom in Definition \ref{normal-def}.

From \eqref{1nn} and the Ruzsa covering lemma we see that for $\eps$ small enough, \[ P_H(u_1,\ldots,u_r; 2\eps N_1,\ldots,2\eps N_r)\] is covered by $O_{\eps,C,r}(1)$ translates of $P_H(u_1,\ldots,u_r; \eps N_1,\ldots,\eps N_r)$, and so the final claim follows from Lemma \ref{loc}.
\end{proof}

\begin{remark} It is in fact possible to show that $|P_H(u_1,\ldots,u_r; \eps N_1,\ldots,\eps N_r)|$ decays at a polynomial rate in $\eps$, and that
$P_H(u_1,\ldots,u_r; \eps N_1,\ldots,\eps N_r)$ is a $O_{C,r}(1)$-approximate group uniformly in $\eps$, but we will not need these stronger conclusions here.
\end{remark}

\begin{lemma}\label{normglob} Let $P = P_H(u_1,\ldots,u_r;N_1,\ldots,N_r)$ be a coset nilprogression in $C$-normal form.  Then for all $\eps > 0$ that are sufficiently small depending on $r, C$, the set $P_H(u_1,\ldots,u_r;\eps N_1,\ldots,\eps N_r)$ is isomorphic to a subset of a global group $G$.
\end{lemma}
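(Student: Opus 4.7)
The plan is to construct a global group $G^*$ that encodes, at the level of an abstract presentation, the multiplicative structure of $P$ as recorded by the $C$-normal form, and then embed $P_H(u_1,\ldots,u_r;\eps N_1,\ldots,\eps N_r)$ into $G^*$ via its unique normal-form representation. The two ingredients are a symbolically presented global nilpotent group $\bar G$ (which mimics the nilprogression part of $P$) and an extension of $\bar G$ by $H$ (which reinstates the finite group).

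First I would lift the commutator relations \eqref{comm} from $W/H$ back to $W$. Since $H$ is normal in $G_0\supset W^6$, we may choose, for each $1\le i<j\le r$ and each sign pattern $\epsilon_i,\epsilon_j\in\{\pm 1\}$, a word $w_{ij\epsilon_i\epsilon_j}$ in $u_{j+1}^{\pm 1},\ldots,u_r^{\pm 1}$ (a canonical lift of the normal-form expression for $[\bar u_i,\bar u_j]$ in $W/H$) and an element $h_{ij\epsilon_i\epsilon_j}\in H$ with $[u_i^{\epsilon_i},u_j^{\epsilon_j}]=w_{ij\epsilon_i\epsilon_j}\cdot h_{ij\epsilon_i\epsilon_j}$ in $W$. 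Conjugation by each $u_i$ also defines an automorphism $\alpha_i$ of $H$. Next, I would define $\bar G$ by the abstract presentation with generators $v_1,\ldots,v_r$ and the relations $[v_i,v_j]=\widetilde w_{ij,+,+}(v_{j+1},\ldots,v_r)$ for $1\le i<j\le r$, where $\widetilde w$ is the symbolic transcription of $w$. Because $[v_i,v_r]$ is trivial for $i<r$ (there are no generators beyond $v_r$), the element $v_r$ is central; iterating, $\langle v_{j+1},\ldots,v_r\rangle$ is central in $\bar G/\langle v_{j+2},\ldots,v_r\rangle$, so $\bar G$ is nilpotent of step $\le r$, and an iterated application of Hall's collection process \cite{MHall} furnishes a unique normal form $v_1^{a_1}\cdots v_r^{a_r}$ (with $a_i\in\Z$) for every element of $\bar G$. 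I would then form a global group $G^*$ fitting into a short exact sequence $1\to H\to G^*\to\bar G\to 1$ using $\{h_{ij\epsilon_i\epsilon_j}\}$ as the factor set and $\{\alpha_i\}$ as the action.

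Finally, by Lemma \ref{cosn} combined with local properness of the normal form, every element of $P_H(u_1,\ldots,u_r;\eps N_1,\ldots,\eps N_r)$ can be written uniquely, for $\eps$ sufficiently small depending on $r,C$, as $u_1^{n_1}\cdots u_r^{n_r}\cdot h$ with $|n_i|=O_{r,C}(\eps N_i)$ and $h\in H$. Define $\phi$ to send this element to $(v_1^{n_1}\cdots v_r^{n_r},h)\in G^*$. Injectivity is immediate; to verify that $\phi$ respects multiplication wherever it is defined in $W$, one reduces a product of two normal-form representatives to normal form by iteratively moving generators past each other using the commutator relations, and observes that the identical symbolic rewriting in $G^*$ produces the image of the product, all intermediate products staying inside the definedness range of $W$ provided $\eps$ is small enough.

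The main obstacle is to verify that the abstract presentation really defines a nilpotent group with the expected normal form, and relatedly that the data $(\alpha_i,h_{ij\epsilon_i\epsilon_j})$ defines a genuine $2$-cocycle so that $G^*$ exists. Both reduce to the same consistency check underlying Hall's collection process: every identity that must hold for the rewriting to be confluent is expressible as an equality of products of $u_i$'s and $h$'s in $W$, all of which can be arranged (by further shrinking $\eps$) to stay within the domain of multiplication in $W$, and hence holds automatically there. A cleaner alternative would be to bypass abstract presentations and construct $\bar G$ directly on the set $\Z^r$ with multiplication given by polynomial ``BCH-like'' formulas derived from the structure constants $w_{ij,+,+}$; nilpotence of step $\le r$ ensures these polynomials are well-defined over $\Z$, sidestepping the consistency verification entirely.
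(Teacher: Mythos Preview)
Your approach is essentially the same as the paper's: lift the commutator relations to $W$, build a global group by presentation from those relations together with the conjugation action on $H$, and embed via the normal-form representation. Two points are worth noting, one a simplification and one a genuine gap.

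First, the paper sidesteps the obstacle you flag (confluence of the presentation / cocycle condition) entirely. Rather than proving that the global group $G$ is nilpotent with a unique normal form, the paper simply takes $G$ to be the group generated by $H$ and formal symbols $e_1,\ldots,e_r$ subject to the lifted relations $[e_i,e_j]=e_{j+1}^{n_{i,j,j+1}}\cdots e_r^{n_{i,j,r}}h_{i,j}$ and $e_i h e_i^{-1}=\phi_i(h)$, whatever that group may be. Injectivity of $\phi(v_1^{n_1}\cdots v_r^{n_r}h)=e_1^{n_1}\cdots e_r^{n_r}h$ is then proved directly: the relations make $G$ an iterated semidirect product, so one can project $G\to\Z$ killing $e_2,\ldots,e_r,H$ to read off $n_1$, then work in the subgroup generated by $e_2,\ldots,e_r,H$ and repeat. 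This avoids any appeal to Hall collection or extension theory, and the homomorphism property follows by performing the identical rewriting in $W$ and in $G$ in parallel.

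Second, you have not addressed the case where some of the $N_i$ are bounded in terms of $r,C$. The rewriting that puts a product into the form $u_1^{n_1}\cdots u_r^{n_r}h$ requires all intermediate words to stay inside the domain of definition; the paper's estimates for this need the $N_i$ sufficiently large. The paper handles the remaining case at the end by a pigeonhole argument: choose a threshold $M=O_{r,C}(1)$ so that each $N_i$ is either below $M$ or far above it, discard the short directions (absorbing them into the implied constants of a new $O_{C,r,M}(1)$-normal form on the remaining generators), and apply the large-$N_i$ case.
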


From this lemma (and Lemma \ref{cosn}) we see that Theorem \ref{local-global} follows immediately from Theorem \ref{main-theorem}.

\begin{proof}  We first establish the claim under the additional hypothesis that the $N_1,\ldots,N_r$ are sufficiently large depending on $r,C$; we will remove this hypothesis at the end of the argument.

Let $v_1,\ldots,v_r$ be lifts of the generators $u_1,\ldots,u_r$ of $P/H$ to $P$.
By Definition \ref{normal-def} and the normality of $H$, one has
\begin{equation}\label{vivj}
[v_i,v_j] \in P\left( v_{j+1},\ldots,v_r; O_C( \frac{N_{j+1}}{N_i N_j} ), \ldots, O_C( \frac{N_r}{N_i N_j} ) \right) H
\end{equation}
for all $1 \leq i < j \leq r$; note that the hypothesis that the $N_i$ are large ensure that the right-hand side is well-defined in $P$.

Consider a word in $P\left( v_{j+1},\ldots,v_r; O_C( \frac{N_{j+1}}{N_i N_j} ), \ldots, O_C( \frac{N_r}{N_i N_j} ) \right)$, which therefore contains $O_C( \frac{N_{j+1}}{N_i N_j} )$ copies of $v_{j+1}^{\pm 1}$, $O_C( \frac{N_{j+2}}{N_i N_j} )$ copies of $v_{j+2}^{\pm 1}$, and so forth.  Let us the leftmost copy of $v_{j+1}^{\pm 1}$ and move it all the way to the left.  Each time it passes through a $v_k^{\pm 1}$ for some $j+1 < k \leq r$, we ue \eqref{vivj} and create $O_C( \frac{N_l}{N_{j+1} N_k} )$ new copies of $v_l^{\pm 1}$ for each $l>k$, plus an element of $H$ which can be pushed all the way to the right using the normality of $H$. Thus, if one initially had $a_k \frac{N_k}{N_i N_j}$ copies of $v_k^{\pm 1}$ for each $j+1 < k \leq r$ before one started moving the leftmost $v_{j+1}^{\pm 1}$ to the left, then by the end of the move, one would have
\begin{equation}\label{ain}
 a_l \frac{N_l}{N_i N_j} + O_C\left( \sum_{j+1 < k < l} \frac{a_k N_k}{N_i N_j} \frac{N_l}{N_{j+1} N_k} \right)
 \end{equation}
copies of $v_l^{\pm 1}$ for each $j+1 < l \leq r$.  We may simplify the expression \eqref{ain} as
$$ \left(a_l + O_C\left(\sum_{j+1<k<l} \frac{1}{N_{j+1}} a_k\right) \right) \frac{N_l}{N_i N_j}.$$
Thus we have effectively replaced the sequence $(a_k)_{j+1 < k \leq r}$ by the sequence
$$ \left(a_l + O_C\left(\sum_{j+1<k<l} \frac{1}{N_{j+1}} a_k\right) \right)_{j+1 < l \leq r}.$$
We iterate this process $O_C( \frac{N_{j+1}}{N_i N_j} ) = O_C(N_{j+1})$ times, and note that the $a_k$ were initially of size $O_C(1)$, and end up at a sequence, all of whose entries are of size $O_{C,r}(1)$.  In other words, after moving all copies of $v_{j+1}^{\pm 1}$ to the left, and all copies of $H$ to the right, we end up with $O_{C,r}( \frac{N_k}{N_i N_j} )$ copies of $v_k^{\pm 1}$ in the middle for each $j+1 < k \leq r$.
  We conclude that
$$ [v_i,v_j] \in v_{j+1}^{n_{i,j,j+1}} P\left( v_{j+2},\ldots,v_r; O_{C,r}( \frac{N_{j+2}}{N_i N_j} ), \ldots, O_{C,r}( \frac{N_r}{N_i N_j} ) \right) H$$
for some $n_{i,j,j+1} = O_C( \frac{N_{j+1}}{N_i N_j} )$; note that as long as the $N_i$ are large enough, all words that appear in this reorganisation will lie inside $P$ and so the algebraic manipulations can be justified.  Iterating this procedure $r-j$ times (which will be justified if the $N_i$ are large enough) we see that
\begin{equation}\label{vivj-2}
[v_i,v_j] = v_{j+1}^{n_{i,j,j+1}} \ldots v_r^{n_{i,j,r}} h_{i,j}
\end{equation}
for some $n_{i,j,k} = O_{C,r}(\frac{N_k}{N_i N_j})$ and $h_{i,j} \in H$.  Also, one has
\begin{equation}\label{vii}
v_i h v_i^{-1} = \phi_i(h)
\end{equation}
for some (outer) automorphism $\phi_i: H \to H$ of $H$.

Now let $G$ be the global group generated by $H$ and formal generators $e_1,\ldots,e_r$, subject to the relations
\begin{equation}\label{eij}
[e_i,e_j] = e_{j+1}^{n_{i,j,j+1}} \ldots e_r^{n_{i,j,r}} h_{i,j}
\end{equation}
and
\begin{equation}\label{eit}
e_i h e_i^{-1} = \phi_i(h)
\end{equation}
for $1 \leq i < j \leq r$.
We claim that for $\eps$ small enough, there is an injective homomorphism from $P_H(u_1,\ldots,u_r;\eps N_1,\ldots,\eps N_r)$ to $G$, which will give the claim.

To see this, first observe from the normality of $H$ that
$$ P_H(u_1,\ldots,u_r;\eps N_1,\ldots,\eps N_r) = P(v_1,\ldots,v_r;\eps N_1,\ldots,\eps N_r) H.$$
Organising the words in $P(v_1,\ldots,v_r;\eps N_1,\ldots,\eps N_r)$ by moving all occurrences of $v_1$ to the left (using \eqref{vivj-2}) and all occurrences of $H$ to the right (using the normality of $H$) we then have
\begin{equation}\label{phur-0}
 P_H(u_1,\ldots,u_r;\eps N_1,\ldots,\eps N_r) \subseteq   P(v_1;\eps N_1) P(v_2,\ldots,v_r; O_{C,r}(\eps N_2), \ldots, O_{C,r}(\eps N_r)) H
 \end{equation}
assuming $\eps$ is small enough in order to justify all the algebraic manipulations.  Iterating this we see that
\begin{equation}\label{phur}
 P_H(u_1,\ldots,u_r;\eps N_1,\ldots,\eps N_r) \subseteq   P(v_1;O_{C,r}(\eps N_1)) \ldots P(v_r; O_{C,r}(\eps N_r)) H.
\end{equation}

Thus it suffices to establish an injective homomorphism $\phi$ from the set
\begin{equation}\label{vvv}
 \{ v_1^{n_1} \ldots v_r^{n_r} h: n_i = O_{C,r}(\eps N_i); h \in H \}
\end{equation}
to $G$.  From the local properness property in Definition \ref{normal-def}, all the products in \eqref{vvv} are distinct if $\eps$ is small enough.  We may thus define $\phi$ by the formula
$$ \phi( v_1^{n_1} \ldots v_r^{n_r} h ) := e_1^{n_1} \ldots e_r^{n_r} h.$$

Next, we show that $\phi$ is injective.  Indeed, suppose that there exist $n_i,n'_i = O_{C,r}(\eps N_i)$ and $h,h' \in H$ with
$$\phi( v_1^{n_1} \ldots v_r^{n_r} h )  = \phi( v_1^{n'_1} \ldots v_r^{n'_r} h' ) $$
and thus
$$ e_1^{n_1} \ldots e_r^{n_r} h = e_1^{n'_1} \ldots e_r^{n'_r} h'.$$
By the universal properties of $G$, there is a homomorphism from $G$ to $\Z$ that maps $e_1$ to $1$ and annihilates the other $e_i$ and $H$.  This implies that $n_1 = n'_1$.  We can then eliminate $n_1, n'_1$ and work with the subgroup $G_2$ of $G$ generated by $e_2,\ldots,e_r$ and $H$.  From abstract nonsense we see that $G_2$ is universal with respect to the constraints \eqref{eij}, \eqref{eit} for $i \geq 2$, and that $G$ is the semidirect product of $G_2$ with $\Z$ using the conjugation action of $e_1$ on $G_2$ defined using \eqref{eij}, \eqref{eit} for $i=1$.  In particular, there is a homomorphism from $G_2$ to $\Z$ that maps $e_2$ to $1$ and annihilates the $e_i$ and $H$ for $i>2$.  This gives $n_2=n'_2$.  Continuing in this fashion we see that $n_i = n'_i$ for all $i$ and hence $h=h'$, which establishes injectivity.

Finally, we need to show that $\phi$ is a homomorphism.  It suffices to show that if $n_i,n'_i,n''_i = O_{C,r}(\eps N_i)$ and $h,h',h'' \in H$ are such that
\begin{equation}\label{vvhvvh}
v_1^{n_1} \ldots v_r^{n_r} h v_1^{n'_1} \ldots v_r^{n'_r} h' = v_1^{n''_1} \ldots v_r^{n''_r} h''
\end{equation}
then
\begin{equation}\label{eeheeh}
e_1^{n_1} \ldots e_r^{n_r} h e_1^{n'_1} \ldots e_r^{n'_r} h' = e_1^{n''_1} \ldots e_r^{n''_r} h''.
\end{equation}
To see this, we rearrange the word on the left-hand side of \eqref{vvhvvh} by moving all occurrences of $v_1$ to the left, and all occurrences of elements of $H$ to the right, using \eqref{vivj-2} and \eqref{vii}; if $\eps$ is small enough, then all manipulations take place inside $P$ and can thus be justified.  Iterating this process, we must eventually be able to express this word in the form $v_1^{\tilde n_1} \ldots v_r^{\tilde n_r} \tilde h$ for some $\tilde n_i = O_{C,r}(\eps N_i)$ and $\tilde h \in H$.  By injectivity, we then have $\tilde n_i = n''_i$ and $\tilde h = h''_i$.  But then if one formally replaces all the $v_i$ by $e_i$ and uses \eqref{eij}, \eqref{eit} in place of \eqref{vivj-2}, \eqref{vii} in the rearrangement procedure just described, we conclude \eqref{eeheeh}, and the claim follows.

Now we remove the hypothesis that the $N_1,\ldots,N_r$ are sufficiently large depending on $r,C$.  Let $F: \R^+ \to \R^+$ be a function depending on $r, C$ to be chosen later.  By the pigeonhole principle, we can find a threshold $M \ge 1$ with $M = O_{F}(1)$ such that every length $N_i$ is either less than $M$, or larger than $F(M)$.  If we let $1 \leq i_1 < \ldots < i_{r'} \leq r$ be those indices $i_j$ with $N_{i_j} > F(M)$, then we see (if $F$ is sufficiently rapidly growing) that $P_H( u_{i_1},\ldots,u_{i_{r'}}; N_{i_1},\ldots,N_{i_r})$ will be a coset nilprogression in $O_{C,r,M}(1)$-normal form.  For $F$ sufficiently rapidly growing, the preceding argument then applies to conclude that
$P_H( u_{i_1},\ldots,u_{i_{r'}}; \eps N_{i_1},\ldots,\eps N_{i_r})$ is isomorphic to a subset of a global group if $\eps$ is small enough depending on $C,r,M$, and the claim follows.
\end{proof}

\begin{remark}  From \eqref{phur} we see that every element in $P_H(u_1,\ldots,u_r;\eps N_1,\ldots,\eps N_r)$ takes the form
\begin{equation}\label{vih}
 v_1^{a_1} \ldots v_r^{a_r} h
\end{equation}
for some integers $a_i,\ldots,a_r$ with $a_i = O_{C,r}(\eps N_i)$ and $h \in H$.  Conversely, it is clear that if $|a_i| \leq \eps N_i$ then all expressions of the form \eqref{vih} lie in $P_H(u_1,\ldots,u_r;\eps N_1,\ldots,\eps N_r)$.  Informally, we thus see that the nilprogression $P_H(u_1,\ldots,u_r;\eps N_1,\ldots,\eps N_r)$  is comparable in some sense to the \emph{nilbox}
$$ \{v_1^{a_1} \ldots v_r^{a_r} h: |a_i| \leq \eps N_i; h \in H \}.$$
We will however not exploit this description of nilprogressions in this paper.
\end{remark}

A variant of the above analysis also gives polynomial growth of progressions in $C$-normal form in the global case.

\begin{proposition}[Polynomial growth]\label{poly} Let $P = P_H(u_1,\ldots,u_r;N_1,\ldots,N_r)$ be a coset nilprogression in $C$-normal form in a global group.  Then for all $m \geq 1$, one has $|P^m| \ll_{C,r} m^{O_{C,r}(1)} |P|$.
\end{proposition}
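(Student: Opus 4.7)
The plan is to show that every element of $P^m$ can be represented in a ``collected'' form $u_1^{b_1}\cdots u_r^{b_r} h$ for some integers $b_i$ with $|b_i|\ll_{C,r} m^{O_{C,r}(1)} N_i$ and some $h\in H$; the proposition then follows by counting such expressions and using the volume bound on $|P|$. Let $u_1,\ldots,u_r$ be lifts to the ambient group $G$ of the generators of the nilprogression $Q$ associated with $P$, so that $P=\tilde{Q}\cdot H$ where $\tilde{Q}:=P(u_1,\ldots,u_r;N_1,\ldots,N_r)$, and $H$ is normalised by $\langle u_1,\ldots,u_r\rangle$. The upper triangular axiom in Definition~\ref{normal-def} lifts to the relation
\[
[u_i^{\pm 1},u_j^{\pm 1}]\in P\Bigl(u_{j+1},\ldots,u_r;\,\tfrac{CN_{j+1}}{N_iN_j},\ldots,\tfrac{CN_r}{N_iN_j}\Bigr)\cdot H
\]
for all $1\leq i<j\leq r$.

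A first application of the collection procedure, carried out exactly as in the proof of Lemma~\ref{normglob} (now unconstrained by the local setting, hence valid for all $N_1,\ldots,N_r$ in the present global setup), shows that every element of $\tilde{Q}$ can be written in normal form $u_1^{a_1}\cdots u_r^{a_r} h$ with $|a_i|\leq C_0 N_i$ and $h\in H$, for some constant $C_0=C_0(C,r)$. Given now $g\in P^m$, I would write $g$ as a product of $m$ such normal forms, push all $H$-factors (and their conjugates by $u_i$-words, still lying in $H$) to the extreme right, and then run a second collection procedure in $r-1$ phases: in phase $t$, bubble every occurrence of $u_t^{\pm 1}$ past every later generator to its right. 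Each swap of $u_t^{\pm 1}$ with $u_j^{\pm 1}$ for $j>t$ inserts, by the above commutator relation, a word in $u_{j+1},\ldots,u_r$ containing at most $CN_k/(N_tN_j)$ copies of $u_k^{\pm 1}$ for each $k>j$, plus an extra $H$-factor which is again pushed further right.

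Letting $T_k^{(t)}$ denote the total count of $u_k^{\pm 1}$ after phase $t$, the initial count satisfies $T_k^{(0)}\leq mC_0 N_k$, and the recursion
\[
T_k^{(t)}\;\leq\;T_k^{(t-1)}\;+\;\frac{CT_t^{(t-1)}}{N_t}\sum_{j=t+1}^{k-1}T_j^{(t)}\,\frac{N_k}{N_j},
\]
combined with the invariance $T_t^{(s)}=T_t^{(t-1)}$ for $s\geq t$ (no new $u_t^{\pm 1}$ letters are ever generated once phase $t$ begins, since all commutator terms land in strictly higher generators), can be solved by induction on $k$ and $t$ to give $\rho_k^{(t)}:=T_k^{(t)}/N_k\ll_{C,r} m^{k-1}$. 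In particular, after all $r-1$ phases one has $|b_i|\leq T_i^{(r-1)}\ll_{C,r} m^{r-1} N_i$, so that
\[
|P^m|\;\leq\;\prod_{i=1}^{r}(2|b_i|+1)\cdot|H|\;\ll_{C,r}\;m^{r(r-1)}\prod_{i=1}^{r}(2N_i+1)\cdot|H|\;\ll_{C,r}\;m^{O_{C,r}(1)}|P|,
\]
using the volume bound for coset nilprogressions in $C$-normal form in the last step.

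The main obstacle is the combinatorial bookkeeping in the second collection procedure: one must check that the cascading insertion of commutators does not cause the letter counts $T_k^{(t)}$ to blow up super-polynomially in $m$. This is ensured by the crucial strict upper-triangularity of the commutator relations, which guarantees that at phase $t$ only generators $u_k^{\pm 1}$ with $k>t$ can acquire new letters, so that the recursion above is a \emph{finite} triangular system whose solution grows polynomially in $m$ rather than exponentially.
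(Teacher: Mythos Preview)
Your approach — collecting every element of $P^m$ into the form $u_1^{b_1}\cdots u_r^{b_r}h$ by repeatedly moving each generator to the left using the upper-triangular commutator relations — is exactly the mechanism the paper uses. The paper quotients out $H$ at the start and phrases the collection as an iterated containment
\[
P(u_1,\ldots,u_r; mN_1,\ldots,mN_r)\;\subseteq\;P(u_1; O(m^{O(1)}N_1))\cdots P(u_r; O(m^{O(1)}N_r)),
\]
whereas you keep $H$ around (handled via normality) and track the letter counts through an explicit recursion; the substance is the same.

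There is, however, a quantitative slip in your solution of the recursion. The bound $\rho_k^{(t)} \ll_{C,r} m^{k-1}$ survives phases $t=1,2$ (because the multipliers $\rho_1,\rho_2$ are only $O(m)$), but not beyond. In phase $3$ the multiplier is $\rho_3=\rho_3^{(2)}=O(m^2)$, and your recursion already gives, for instance,
\[
\rho_5^{(3)}\;\leq\;\rho_5^{(2)}+C\rho_3\,\rho_4^{(3)}\;=\;O(m^4)+O(m^2)\cdot O(m^3)\;=\;O(m^5),
\]
not $O(m^4)$; the exponents compound further at each subsequent phase. So the honest conclusion from your recursion is only $\rho_k^{(t)}\ll_{C,r} m^{O_r(1)}$, with an exponent that can be substantially larger than $k-1$. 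This is harmless for the proposition, which only claims $|P^m|\ll_{C,r} m^{O_{C,r}(1)}|P|$: simply replace the asserted bound $m^{k-1}$ and the displayed final exponent $r(r-1)$ by an unspecified $O_{C,r}(1)$, and your argument goes through.
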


\begin{proof}  We allow all implied constants to depend on $C,r$.  As $H$ is normalised by $P$, we may quotient out by $H$ and reduce to the case when $H$ is trivial.  Then
$$ P^m \subseteq   P(u_1,\ldots,u_r;mN_1,\ldots,mN_r)$$
and so it suffices (by the volume bound \eqref{cnn}) to show that
$$ |P(u_1,\ldots,u_r;mN_1,\ldots,mN_r)| \ll m^{O(1)} (N_1+1) \ldots (N_r+1).$$
By modifying the proof of \eqref{phur-0}, one easily verifies that
$$ P(u_1,\ldots,u_r;m N_1,\ldots,m N_r) \subseteq   P(v_1;m N_1) P(v_2,\ldots,v_r; O(m^2 N_2), \ldots, O(m^2 N_r));$$
iterating this, one sees that
$$ P(u_1,\ldots,u_r;m N_1,\ldots,m N_r) \subseteq   P(v_1;O(m^{O(1)}) N_1) \ldots P(v_r; O(m^{O(1)} N_r)),$$
and the claim follows.
\end{proof}

\bibliographystyle{abbrv}
\bibliography{bibfile}

\begin{thebibliography}{10}

\bibitem{benjamini-kozma}
I.~Benjamini and G.~Kozma.
\newblock A resistance bound via an isoperimetric inequality.
\newblock {\em Combinatorica}, 25(6):645--650, 2005.

\bibitem{bieberbach}
L.~Bieberbach.
\newblock \"{U}ber einen satz des herrn c. jordan in der theorie der endlichen
  gruppen linearer substitutionen.
\newblock {\em Sitzber. Preuss. Akad. Wiss}, 1911.

\bibitem{bilu-2}
Y.~Bilu.
\newblock Addition of sets of integers of positive density.
\newblock {\em J. Number Theory}, 64(2):233--275, 1997.

\bibitem{bilu}
Y.~Bilu.
\newblock Structure of sets with small sumset.
\newblock {\em Ast\'erisque}, (258):xi, 77--108, 1999.
\newblock Structure theory of set addition.

\bibitem{breuillard-green}
E.~Breuillard and B.~Green.
\newblock Approximate groups. {I}: the torsion-free nilpotent case.
\newblock {\em J. Inst. Math. Jussieu}, 10(1):37--57, 2011.

\bibitem{bg}
E.~Breuillard and B.~Green.
\newblock Approximate groups. {II}: the solvable linear case.
\newblock {\em Quart. J. of Math., Oxford}, 62(3):513--521, 2011.

\bibitem{bgt}
E.~Breuillard, B.~Green, and T.~Tao.
\newblock Approximate subgroups of linear groups.
\newblock {\em Geom. And Funct. Anal.}, 21(4):774--819, 2011.

\bibitem{burago-zalgaller}
Y.~D. Burago and V.~A. Zalgaller.
\newblock {\em Geometric inequalities}, volume 285 of {\em Grundlehren der
  Mathematischen Wissenschaften [Fundamental Principles of Mathematical
  Sciences]}.
\newblock Springer-Verlag, Berlin, 1988.
\newblock Translated from the Russian by A. B. Sosinski{\u\i}, Springer Series
  in Soviet Mathematics.

\bibitem{chang-freiman}
M.-C. Chang.
\newblock A polynomial bound in {F}reiman's theorem.
\newblock {\em Duke Math. J.}, 113(3):399--419, 2002.

\bibitem{cheeger-colding}
J.~Cheeger and T.~H. Colding.
\newblock Lower bounds on {R}icci curvature and the almost rigidity of warped
  products.
\newblock {\em Ann. of Math. (2)}, 144(1):189--237, 1996.

\bibitem{corwin}
L.~J. Corwin and F.~P. Greenleaf.
\newblock {\em Representations of nilpotent {L}ie groups and their
  applications. {P}art {I}}, volume~18 of {\em Cambridge Studies in Advanced
  Mathematics}.
\newblock Cambridge University Press, Cambridge, 1990.
\newblock Basic theory and examples.

\bibitem{croot-sisask}
E.~Croot and O.~Sisask.
\newblock A probabilistic technique for finding almost-periods of convolutions.
\newblock {\em Geom. Funct. Anal.}, 20(6):1367--1396, 2010.

\bibitem{fisher-katz-peng}
D.~Fisher, N.~H. Katz, and I.~Peng.
\newblock Approximate multiplicative groups in nilpotent {L}ie groups.
\newblock {\em Proc. Amer. Math. Soc.}, 138(5):1575--1580, 2010.

\bibitem{freiman-book}
G.~A. Freiman.
\newblock {\em Foundations of a structural theory of set addition}.
\newblock American Mathematical Society, Providence, R. I., 1973.
\newblock Translated from the Russian, Translations of Mathematical Monographs,
  Vol 37.

\bibitem{fukaya-yamaguchi}
K.~Fukaya and T.~Yamaguchi.
\newblock The fundamental groups of almost non-negatively curved manifolds.
\newblock {\em Ann. of Math. (2)}, 136(2):253--333, 1992.

\bibitem{gallot-hulin}
S.~Gallot, D.~Hulin, and J.~Lafontaine.
\newblock {\em Riemannian geometry}.
\newblock Universitext. Springer-Verlag, Berlin, 1987.

\bibitem{gill-helfgott}
N.~Gill and H.~Helfgott.
\newblock Growth in solvable subgroups of
  $\mbox{GL}_r(\mathbb{Z}/p\mathbb{Z})$.
\newblock {\em arXiv:1008.5264}, 2010.
\newblock Preprint.

\bibitem{gill-helfgott-small}
N.~Gill and H.~Helfgott.
\newblock Growth of small generating sets in
  $\mbox{SL}_n(\mathbb{Z}/p\mathbb{Z})$.
\newblock {\em arXiv:1002.1605}, 2010.
\newblock Preprint.

\bibitem{gleason}
A.~M. Gleason.
\newblock The structure of locally compact groups.
\newblock {\em Duke Math. J.}, 18:85--104, 1951.

\bibitem{gleason2}
A.~M. Gleason.
\newblock Groups without small subgroups.
\newblock {\em Ann. of Math. (2)}, 56:193--212, 1952.

\bibitem{godel}
K.~G\"odel.
\newblock Consistency of the axiom of choice and of the generalized
  continuum-hypothesis with the axioms of set theory.
\newblock {\em Proc. Nat. Acad. Sci}, 24:556--557, 1938.

\bibitem{goldbring-thesis}
I.~Goldbring.
\newblock Nonstandard methods in lie theory.
\newblock Ph.D. Thesis, University of Illinois at Urbana-Champaign, 2009.

\bibitem{goldbring-local}
I.~Goldbring.
\newblock Hilbert's fifth problem for local groups.
\newblock {\em Ann. of Math. (2)}, 172(2):1269--1314, 2010.

\bibitem{green-ruzsa}
B.~Green and I.~Z. Ruzsa.
\newblock Freiman's theorem in an arbitrary abelian group.
\newblock {\em J. Lond. Math. Soc. (2)}, 75(1):163--175, 2007.

\bibitem{green-tao-compression}
B.~Green and T.~Tao.
\newblock Compressions, convex geometry and the {F}reiman-{B}ilu theorem.
\newblock {\em Q. J. Math.}, 57(4):495--504, 2006.

\bibitem{gromov}
M.~Gromov.
\newblock Groups of polynomial growth and expanding maps.
\newblock {\em Inst. Hautes \'Etudes Sci. Publ. Math.}, (53):53--73, 1981.

\bibitem{gromov-pansu-lafontaine}
M.~Gromov.
\newblock {\em Metric structures for {R}iemannian and non-{R}iemannian spaces}.
\newblock Modern Birkh\"auser Classics. Birkh\"auser Boston Inc., Boston, MA,
  english edition, 2007.
\newblock Based on the 1981 French original, With appendices by M. Katz, P.
  Pansu and S. Semmes, Translated from the French by Sean Michael Bates.

\bibitem{MHall}
M.~Hall, Jr.
\newblock {\em The theory of groups}.
\newblock Chelsea Publishing Co., New York, 1976.
\newblock Reprinting of the 1968 edition.

\bibitem{helfgott-sl2}
H.~A. Helfgott.
\newblock Growth and generation in {${\rm SL}_2(\Z/p\Z)$}.
\newblock {\em Ann. of Math. (2)}, 167(2):601--623, 2008.

\bibitem{helfgott-sl3}
H.~A. Helfgott.
\newblock Growth in {${\rm SL}_3(\Z/p\Z)$}.
\newblock {\em J. Eur. Math. Soc. (JEMS)}, 13(3):761--851, 2011.

\bibitem{hirschfeld}
J.~Hirschfeld.
\newblock The nonstandard treatment of {H}ilbert's fifth problem.
\newblock {\em Trans. Amer. Math. Soc.}, 321(1):379--400, 1990.

\bibitem{hrush}
E.~Hrushovski.
\newblock Stable group theory and approximate subgroups.
\newblock {\em Preprint}, 2010.
\newblock Preprint.

\bibitem{kaplansky}
I.~Kaplansky.
\newblock {\em Lie algebras and locally compact groups}.
\newblock The University of Chicago Press, Chicago, Ill.-London, 1971.

\bibitem{kapovitch-petrunin-tuschmann}
V.~Kapovitch, A.~Petrunin, and W.~Tuschmann.
\newblock Nilpotency, almost nonnegative curvature, and the gradient flow on
  {A}lexandrov spaces.
\newblock {\em Ann. of Math. (2)}, 171(1):343--373, 2010.

\bibitem{kapovitch-wilking}
V.~Kapovitch and B.~Wilking.
\newblock Structure of fundamental groups of manifolds with ricci curvature
  bounded below.
\newblock {\em arXiv:1105.5955}, 2011.
\newblock Preprint.

\bibitem{kleiner}
B.~Kleiner.
\newblock A new proof of {G}romov's theorem on groups of polynomial growth.
\newblock {\em J. Amer. Math. Soc.}, 23(3):815--829, 2010.

\bibitem{lee-makarychev}
J.~Lee and Y.~Makarychev.
\newblock Eigenvalue multiplicity and volume growth.
\newblock {\em arXiv:0806.1745}, 2008.
\newblock Preprint.

\bibitem{montgomery}
D.~Montgomery and L.~Zippin.
\newblock Small subgroups of finite-dimensional groups.
\newblock {\em Ann. of Math. (2)}, 56:213--241, 1952.

\bibitem{montgomery-zippin-book}
D.~Montgomery and L.~Zippin.
\newblock {\em Topological transformation groups}.
\newblock Interscience Publishers, New York-London, 1955.

\bibitem{olver}
P.~J. Olver.
\newblock Non-associative local {L}ie groups.
\newblock {\em J. Lie Theory}, 6(1):23--51, 1996.

\bibitem{pittet-saloff}
C.~Pittet and L.~Saloff-Coste.
\newblock A survey on the relationships between volume growth, isoperimetry,
  and the behavior of simple random walk on cayley graphs, with examples.
\newblock {\em Survey}, 2000.
\newblock Preprint.

\bibitem{pyber-szabo}
L.~Pyber and E.~Szab\'o.
\newblock Growth in finite simple groups of lie type of bounded rank.
\newblock {\em arXiv:1005.1858}, 2010.
\newblock Preprint.

\bibitem{ruzsa-freiman}
I.~Z. Ruzsa.
\newblock Generalized arithmetical progressions and sumsets.
\newblock {\em Acta Math. Hungar.}, 65(4):379--388, 1994.

\bibitem{ruzsa-finite-field}
I.~Z. Ruzsa.
\newblock An analog of {F}reiman's theorem in groups.
\newblock {\em Ast\'erisque}, (258):xv, 323--326, 1999.
\newblock Structure theory of set addition.

\bibitem{sanders-monomial}
T.~Sanders.
\newblock From polynomial growth to metric balls in monomial groups.
\newblock {\em arXiv:0912.0305}, 2009.
\newblock Preprint.

\bibitem{sanders}
T.~Sanders.
\newblock On a non-abelian balog-szemer\'edi-type lemma.
\newblock {\em Preprint}, 2010.
\newblock Preprint.

\bibitem{sanders-freiman}
T.~Sanders.
\newblock On the bogolyubov-ruzsa lemma.
\newblock {\em arXiv:1011.0107}, 2010.
\newblock Preprint.

\bibitem{sanders-nonabelian-idempotent}
T.~Sanders.
\newblock A quantitative version of the non-abelian idempotent theorem.
\newblock {\em Geom. Funct. Anal.}, 21(1):141--221, 2011.

\bibitem{serre}
J.-P. Serre.
\newblock {\em Lie algebras and {L}ie groups}, volume 1500 of {\em Lecture
  Notes in Mathematics}.
\newblock Springer-Verlag, Berlin, 2006.
\newblock 1964 lectures given at Harvard University, Corrected fifth printing
  of the second (1992) edition.

\bibitem{shalom-tao}
Y.~Shalom and T.~Tao.
\newblock A finitary version of {G}romov's polynomial growth theorem.
\newblock {\em Geom. Funct. Anal.}, 20(6):1502--1547, 2010.

\bibitem{tao-noncommutative}
T.~Tao.
\newblock Product set estimates for non-commutative groups.
\newblock {\em Combinatorica}, 28(5):547--594, 2008.

\bibitem{tao-solvable}
T.~Tao.
\newblock Freiman's theorem for solvable groups.
\newblock {\em Contrib. Discrete Math.}, 5(2):137--184, 2010.

\bibitem{tv-book}
T.~Tao and V.~Vu.
\newblock {\em Additive combinatorics}, volume 105 of {\em Cambridge Studies in
  Advanced Mathematics}.
\newblock Cambridge University Press, Cambridge, 2006.

\bibitem{thurston}
W.~P. Thurston.
\newblock {\em Three-dimensional geometry and topology. {V}ol. 1}, volume~35 of
  {\em Princeton Mathematical Series}.
\newblock Princeton University Press, Princeton, NJ, 1997.
\newblock Edited by Silvio Levy.

\bibitem{dries}
L.~van~den Dries and I.~Goldbring.
\newblock Globalizing locally compact local groups.
\newblock {\em J. Lie Theory}, 20(3):519--524, 2010.

\bibitem{vdd-goldbring}
L.~van~den Dries and I.~Goldbring.
\newblock Seminar notes on hilbert's 5th problem.
\newblock {\em Preprint}, 2010.
\newblock Preprint.

\bibitem{dries-wilkie}
L.~van~den Dries and A.~J. Wilkie.
\newblock Gromov's theorem on groups of polynomial growth and elementary logic.
\newblock {\em J. Algebra}, 89(2):349--374, 1984.

\bibitem{varopoulos}
N.~T. Varopoulos, L.~Saloff-Coste, and T.~Coulhon.
\newblock {\em Analysis and geometry on groups}, volume 100 of {\em Cambridge
  Tracts in Mathematics}.
\newblock Cambridge University Press, Cambridge, 1992.

\bibitem{yamabe}
H.~Yamabe.
\newblock A generalization of a theorem of {G}leason.
\newblock {\em Ann. of Math. (2)}, 58:351--365, 1953.

\bibitem{yamabe2}
H.~Yamabe.
\newblock On the conjecture of {I}wasawa and {G}leason.
\newblock {\em Ann. of Math. (2)}, 58:48--54, 1953.

\end{thebibliography}

\end{document}